\newcommand{\N}{\mathbb{N}}
\newcommand{\Z}{\mathbb{Z}}
\newcommand{\R}{\mathbb{R}}
\newcommand{\E}{\mathbb{E}}
\newcommand{\supp}{\text{supp}}
\newtheorem{info}{}
\newtheorem{theorem}[info]{Theorem}
\newtheorem{corr}[info]{Corollary}
\newtheorem{defin}[info]{Definition}
\newtheorem{lem}[info]{Lemma}
\newtheorem{prop}[info]{Proposition}
\newtheorem{remark}[info]{Remark}
\numberwithin{info}{section}
\numberwithin{equation}{section}
\renewcommand{\[}{\begin{equation}}
	\renewcommand{\]}{\end{equation}}
\g@addto@macro\normalsize{%
	\setlength\abovedisplayskip{5pt}
	\setlength\belowdisplayskip{5pt}
	\setlength\abovedisplayshortskip{4pt}
	\setlength\belowdisplayshortskip{4pt}}
\newcommand{\lam}{\lambda}
\newcommand{\Om}{\Omega}
\renewcommand{\P}{\mathbb{P}}
\renewcommand{\cal}{\mathcal}
\newcommand{\sq}{\sqrt}
\newcommand{\<}{\langle}
\renewcommand{\>}{\rangle}
\newcommand{\Sum}{\mathrm{\Sum}}
\newcommand{\To}{\Rightarrow}
\newcommand{\D}{\nabla}
\newcommand{\disteq}{\stackrel{d}{=}}
\renewcommand{\l}{\left}
\renewcommand{\r}{\right}
\renewcommand{\b}{\bm}
\newcommand{\tr}{\mathrm{tr}}
\newcommand{\qc}{q_c}
\newcommand{\zetac}{\zeta_c}
\begin{document}
	\title{The Larkin Mass and Replica Symmetry Breaking in the Elastic Manifold}
	
	\author{Gérard Ben Arous, Pax Kivimae}
	
	\maketitle
	
	\begin{abstract}
		
		This is the second of a series of three papers about the Elastic Manifold model. This classical model proposes a rich picture due to the competition between the inherent disorder and the smoothing effect of elasticity. In this paper, we analyze our variational formula for the free energy obtained in our first companion paper \cite{Paper1}.
		
		We show that this variational formula may be simplified to one which is solved by a unique saddle point. We show that this saddle point may be solved for in terms of the corresponding critical point equation. Moreover, its terms may be interpreted in terms of natural statistics of the model: namely the overlap distribution and effective radius of the model at a given site. 
		
		Using this characterization, obtain a complete characterization of the replica symmetry breaking phase. From this we are able to confirm a number of physical predictions about this boundary, namely those involving the Larkin mass \cite{mezardparisi,balentsRG,FRGDouWeise}, an important critical mass for the system. The zero-temperature Larkin mass has recently been shown to be the topological trivialization threshold, following work of Fyodorov and Le Doussal \cite{fyodorov-manifold,fyodorov-manifold-minimum}, made rigorous by the first author, Bourgade and McKenna \cite{gerardbenpaul,gerardbenpaulCompanionPaper}.
	\end{abstract}
	
	\tableofcontents

	\section{Introduction}
	
	This paper is the second in a series of works focusing on the elastic manifold, a paradigmatic model in the study of disordered elastic systems. Our first paper \cite{Paper1}, computed the asymptotic quenched free energy of the model in the large-$N$ limit in terms of a variational problem over a certain Parisi functional. In this paper, we show that the asymptotic quenched free energy may be computed in terms of a simpler variational problem, which we show is solved by a unique saddle point pair $(\qc,\zetac)$. The critical points equations for this pair are explicit, and thus allow for a concrete method to compute the value of the free energy.
	
	Next, we show that both entries of the saddle point may be interpreted as the asymptotic values of important statistics from the Gibbs measure in the large-$N$ limit. In particular, we show that $\zetac$ is given by the limiting overlap distribution of two samples from the Gibbs measure, taken at any given site. In addition, we show that $\qc$ is the square of the effective radius of the model, in the sense that Gibbs measure, which is supported on a Euclidean space, concentrates in a neighborhood of a product of spheres of radius $\sqrt{N\qc}$. In particular, our equations also give a method to relate and compute both of these quantities.
	
	Employing this, we study the replica symmetry breaking in this model. We find a complete characterization of the boundary between the replica symmetry breaking phases (RSB) and replica symmetric (RS), which we use to confirm a number of partial characterizations in the physics literature. In particular, we confirm that the zero-temperature critical mass is precisely the Larkin mass. We further show that the positive temperature Larkin mass is a critical boundary, as predicted, but show that the Larkin equation, which captures local stability of the RS-solution, does not always coincide with the RS-RSB phase boundary. Moreover, in the RS-phase, we explicitly compute both $(\qc,\zetac)$ and the value of the asymptotics quenched free energy. In addition, we show a series of analogous results for a family of spherical models with elastic interaction.
	
	We now turn to introducing the elastic manifold. Its study was first undertaken by Fisher \cite{fischerRG}, who used it as an effective model for the behavior of disordered elastic systems. Such manifolds are usually rugged, shaped by a competition between an elastic self-interaction that seeks to flatten the object and random spatial impurities which encourage rougher configurations. Some well-studied models in this class are random polymers \cite{polymerbook1,polymerbook2} and the domain walls in the random field Ising model and the Edwardson-Anderson model \cite{EAinterface}. For more background on these models, as well as an overview of their physical phenomenology, we suggest the review \cite{elasticReview}.
	
	We now give a heuristic description of the elastic manifold model. Its phase space is the collection of functions $\b{u}:\Omega\to \R^N$, for some choice of $\Omega\subset \R^d$. The Hamiltonian is given by
	\[\cal{H}(\b{u})=\frac{1}{2}\int_{\Omega}\left(\mu \|\b{u}(x)\|^2-t(\b{u}(x),\Delta \b{u}(x))\right)dx^d+\int_{\Omega}V(x,\b{u}(x))dx^d,\]
	where $V(x,\b{u}(x))$ is the disorder potential, which is taken to be a centered Gaussian process on $\Omega\times \R^N$ with covariance given for $(x,u),(x',u')\in \Omega\times \R^N$ by
	\[\E[V(x,u)V(x',u')]=\delta(x-x')B(\|u-u'\|^2),\]
	where $\delta(x-x')$ denotes the Dirac $\delta$-function and $B$ is some fixed correlation function.
	
	%The model was known to essentially become trivial for $d\ge 4$, so they studied the model in dimensions $d=4-\epsilon$ perturbatively in $\epsilon$ by employing a functional renormalization group flow (FRG). While these results have no direct mathematical interpretation due to the fractional dimension, they were able to provide convincing evidence that certain prevailing methods used in the study of disordered elastic systems (such as the Imry-Ma argument) may fail to accurately predict their behavior. We also mention a number of later physical works on the model (often at fixed or large $N$) employing FRG and other methods \cite{FRGDouWeise,FRG2}
	
	A major breakthrough in the study of this came from the work of M{\'e}zard and Parisi \cite{mezardparisi,mezardparisi2}, who realized that results could be obtained if one worked in the large-$N$ limit. While non-rigorous, their results not only allowed them to compute the limiting quenched free energy of the model, but predicted that Gibbs measure should enter a replica symmetry breaking phase when the strength of the disorder is large compared to the temperature and mass, which our results partially confirm. For an introduction to the physical predictions about such phases, we recommend the work \cite{RSBintro}.
	
	%Briefly though, in the strong-disorder regime, the system is expected to exhibit "glassy" behavior, characterized by a decomposition, with each state roughly "pinning" the surface, resulting in deviation of the wandering and fluctuation exponents from their Flory estimates. Analysis of these questions in the $d=1$ (i.e. polymer)
	
	This phase is also expected to be matched by an abundance of meta-stable states in the energy landscape, thought to cause a notable difference in the equilibrium behavior of the model. These results mirror predictions for the ``glassy" phase in various other random manifold models. As before, we turn to the overview \cite{elasticReview} for a more complete introduction, but in particular, such aspects have appeared in the studies of flux vortices in type-II superconductors \cite{vortex1,vortex2,vortex3}, as well in the study of the folding of heterogeneous proteins \cite{bio1,bio2,bio3}.
	
	%Thus the first key result of this paper is an analysis of this optimization problem, showing that there is in fact a unique solution to its optimization problem, which may then be characterized by its associated saddle point equations. This will allow us to confirm in particular that the Larkin mass derived in the physics literature serves as the boundary between the RS and RSB phases of the model at low-temperature, as well as to provide a Plefka-type condition to clarify the behavior of the positive temperature Larkin mass.

	\subsection{Results for the Elastic Manifold Model}
	
	Here we give our main results for the elastic manifold model. Namely, we give our simplified formula for the limiting free energy in two forms, Theorems \ref{theorem:intro:main:critical points euclidean} and \ref{theorem:intro:main:Euclidean}. Theorem \ref{theorem:intro:main:Euclidean} expresses this quantity as a variational formula, while Theorem \ref{theorem:intro:main:critical points euclidean} gives it as the value of the functional on saddle point pair, which is uniquely specified by the saddle point equations listed in the theorem. We then give Theorem \ref{theorem:intro:main:Euclidean parameters identification}, which identifies the entries of this pair in terms of limits of certain statistics of the Gibbs measure.
	
	Now to begin, we recall the (discrete) elastic manifold model. As in our previous paper \cite{Paper1}, we specifically study the discrete version introduced by Fyodorov and Le Doussal \cite{fyodorov-manifold}\footnote{This differs from the model in \cite{mezardparisi}, who instead work with the continuum model on $[1,L]^d$ with energy cut offs.}. To begin, fix integers $L\ge 1$ and $d\ge 0$, which will be referred to as the length and internal dimension, respectively. Additionally, fix positive numbers $\mu $ and $t $, referred to as the mass and interaction strength. We will denote by $\Om$ the graph $[[1,L]]^d\subseteq \Z^d$ understood in the periodic sense. Next, we let $V_N:\R^N\to \R$ be a centered Gaussian random field with isotropic covariance given for $u,v\in \R^N$ by 
	\[\E[V_N(u)V_N(v)]=N B\left(\|u-v\|^2_N\right).\label{eqn:intro:V-def}\]
	where here $\|x\|^2_N=(x,x)_N$ where $(x,y)_N=N^{-1}\sum_{i=1}^{N}x_iy_i$ and $B:[0,\infty)\to [0,\infty)$ is some fixed function. It is a result of Schoenberg \cite{schoenberg} that for a fixed function $B$, such a function $V_N$ exists for each $N\ge 1$ if and only if $B$ admits a representation of the form
	\[B(x)=c_0+\int_0^{\infty}\exp(-\lam^2 x)\nu(d\lam),\label{eqn:B-decomposition}\]
	where $\nu$ is a finite non-negative measure on $(0,\infty)$ and $c_0\ge 0$. For convenience, we will assume that there is $\epsilon>0$ such that $\int_0^\infty \exp(\lam^2 \epsilon) \nu(d\lam)<\infty$ and that $c_0=0$. This implies, in particular, that $B$ extends to a smooth function on $(-\epsilon,\infty)$.
	
	We define our model as the function on point configurations $\b{u}\in (\R^N)^\Om$, which may be thought of as $\R^N$-valued functions on the discrete graph $\Omega$. As such, for $x\in \Om$, we will use the notation we will use the notation $\b{u}(x)\in \R^N$ to denote the point such that $\b{u}(x)_i=\b{u}_{(i,x)}$. However, when $\b{f}$ is a function taking values in $A^{\Om}$, for some space $A$, we will use the notation $f_x(y):=\b{f}(y)(x)$. 
	
	With these notations set, we define our Hamiltonian for $\b{u}\in (\R^N)^\Om$ by
	\[\cal{H}_N(\b{u})=\frac{1}{2}\sum_{x,y\in \Om}(\mu I-t\Delta)_{xy}(\b{u}(x),\b{u}(y))+\sum_{x\in \Om}V_{N,x}(\b{u}(x))+\sq{N}h\sum_{x\in \Om}\b{u}_1(x),\label{eqn:def:main-model-D}\]
	where here $V_{N,x}$ are a collection of i.i.d copies of $V_N$, and $\Delta$ denotes the graph Laplacian of $\Omega$ (where we take the sign convention where $\Delta$ is negative semi-definite, following \cite{mezardparisi}). We define the associated partition function, for any choice of inverse-temperature $\beta>0$, as
	\[Z_{N,\beta}=\int_{(\R^{N})^\Om}e^{-\beta \cal{H}_N(\b{u})}d\b{u},\]
	where $d\b{u}$ is the standard Lebesgue measure on $(\R^N)^{\Om}$. 
	
	The main result of our companion paper \cite{Paper1}, offers a computation of the limiting free energy of this model, which we now recall. Here and elsewhere in this paper, we will use a prefix of I to denote results from \cite{Paper1}. That is Theorem I.1.3 is Theorem 1.3 of \cite{Paper1}.
	
	\begin{theorem}[\cite{Paper1}, Theorem I.1.3]
		\label{theorem:intro:old euclidean result}
		We have a.s. that
		\[
		\lim_{N\to \infty}N^{-1}L^{-d}\log Z_{N,\beta}=\lim_{N\to \infty}N^{-1}L^{-d}\E \log Z_{N,\beta}=F_{\Om}(\beta,t,\mu),
		\label{eqn:intro:euclidean:parisi-formula}
		\]
		where $F_{\Om}(\beta,t,\mu)$ is an explicit deterministic quantity.
	\end{theorem}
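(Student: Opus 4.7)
The statement combines two assertions: first, self-averaging, namely that the random quantity $N^{-1}L^{-d}\log Z_{N,\beta}$ and its expectation share a common a.s.\ limit; and second, identification of this common value with the specific deterministic functional $F_\Om(\beta,t,\mu)$. My plan would tackle these in order.

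For the self-averaging I would use Gaussian concentration. Viewed as a function of the Gaussian disorder $\{V_{N,x}\}_{x\in\Om}$, the map $\log Z_{N,\beta}$ has functional derivative equal to $-\beta$ times a Gibbs expectation, so it is Lipschitz in the Cameron--Martin norm with constant of order $\beta\sqrt{L^d N B(0)}$. The Borell--TIS inequality then yields sub-Gaussian tail bounds of the form $\exp(-c \e^2 N L^d)$ for the event $|N^{-1}L^{-d}(\log Z_{N,\beta}-\E\log Z_{N,\beta})|>\e$, and Borel--Cantelli closes the a.s.\ equality of the two limits.

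For the mean I would exploit the isotropy encoded in the Schoenberg representation \eqref{eqn:B-decomposition}: each $V_{N,x}$ realizes as a $\nu$-indexed superposition of isotropic Gaussian fields, and the natural order parameters in the large-$N$ limit are the site-wise squared radii $q_x=\|\b{u}(x)\|^2_N$ together with the cross-replica overlaps. The candidate $F_\Om$ is the Parisi-type variational formula produced by the Mézard--Parisi RSB ansatz. To obtain it rigorously I would run a Guerra--Talagrand interpolation against a Ruelle-cascade-augmented Hamiltonian, using convexity of $B$ (implied by \eqref{eqn:B-decomposition}) to sign the interpolation derivative and produce the upper bound, and then close the matching lower bound via an Aizenman--Sims--Starr cavity computation.

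The main obstacle would be adapting the cavity and interpolation machinery to the peculiarities of the elastic manifold. Two features distinguish it from the SK or spherical $p$-spin setting. First, the configuration space $(\R^N)^\Om$ is non-compact, so one must show a priori that the quadratic term $\mu\|\b{u}\|^2$ effectively confines the Gibbs measure to a bounded range of radii uniformly in $N$, before the cavity estimates can be given a meaning; the assumption $c_0=0$ and the exponential integrability of $\nu$ should be exactly what makes this possible. Second, the elastic coupling $t\Delta$ introduces correlations between different sites in $\Om$, so the overlap order parameter must carry a spatial component and the Parisi functional must be sensitive to the spectrum of $\mu I-t\Delta$ on $\Om$. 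Controlling both features jointly in the cavity and interpolation arguments is, I expect, the bulk of the technical work carried out in the companion paper \cite{Paper1}.
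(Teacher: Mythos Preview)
The present paper does not contain a proof of this theorem: it is imported verbatim from the companion paper \cite{Paper1} (there labeled Theorem~I.1.3), and the explicit expression for $F_\Om(\beta,t,\mu)$ is simply recalled later as Theorem~\ref{theorem:paper 1:euclidean result}. So there is no in-paper argument to compare your proposal against.

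That said, your outline is a sensible plan for what \cite{Paper1} presumably does. The self-averaging step via Gaussian concentration is standard and indeed the paper invokes exactly such a bound elsewhere (Corollary~I.A.3). Your identification of the two structural obstacles---non-compactness of $(\R^N)^\Om$ requiring a priori radial confinement, and the elastic coupling $t\Delta$ forcing a site-indexed order parameter sensitive to the spectrum of $\mu I - t\Delta$---matches the shape of the Parisi functional $\widehat{\cal{P}}_{\b{q}}$ that the paper eventually recalls: the supremum over $\b{q}\in(0,\infty)^\Om$ encodes the radial degree of freedom, and the function $\b{K}^D$ built from the resolvent of $D=-t\Delta$ encodes the spectral dependence. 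One refinement worth noting is that the paper handles the non-compactness not by a direct a priori estimate but by first restricting to products of spheres of variable radii (Corollary~I.2.8 is invoked for exactly this), computing the spherical free energy there, and then optimizing over the radii---so the Euclidean problem is reduced to a family of spherical ones rather than treated head-on by cavity on $\R^N$.
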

	
	Our earlier expression $F_{\Om}(\beta,t,\mu)$ is recalled below in Theorem \ref{theorem:paper 1:euclidean result}, however, our first result is to give a simpler and more effective formula for it.
	
	For this we must define a Parisi functional. First, we will the normalized resolvent of $\Delta$ as 
	\[R_1(u;t):=\tr(uI- t\Delta)^{-1}, \label{def:intro:R}\] 
	where $\tr(A)=\frac{1}{|\Om|}\sum_{x\in \Om}A_{xx}$. The main function of interest is the functional inverse of $R_1$.
	
	\begin{defin}
		Let $K(*;t)$ be the functional inverse of the normalized resolvent $R_1(*;t)$. That, for fixed $t$, $K(*;t):(0,\infty)\to (0,\infty)$ is the unique function which satisfies
		\[\tr(K(u; t)I- t \Delta)^{-1}=u.\label{def:intro:K}\]
	\end{defin}
	
	Now moving to the functional's domain, let $\mathscr{Y}(q)$ denote the subset of probability measures on $[0,q]$ whose support does not contain $q$. Given a choice of $\zeta\in \mathscr{Y}(q)$, we may define for $s\in [0,q]$
	\[\delta(s)=\int_s^q\zeta([0,u])du.\label{eqn:def:delta-P}\]
	Furthermore, let us take a choice of $q_*>0$ such that $\zeta([q_*,q])=0$. Given this data, we define the functional
	
	\[
	\begin{split}
		\cal{P}_{\beta,q}(\zeta)=\frac{1}{2}\bigg(\log\left(\frac{2\pi}{\beta }\right)+&\frac{\beta h^2}{\mu}-\beta\mu q+\beta(q-q_*)K(\beta(q-q_*);t)\\-\frac{1}{L^d}\log\left(K(\beta(q-q_*);t)I-t\Delta\right)
		+&\int_{0}^{q_*}\beta  K(\beta\delta(u);t)du
		-2\beta^2\int_0^q \zeta([0,u])B'(2(q-u)) du\bigg).	
	\end{split}
	\]
	
	\begin{remark}
		\label{remark:beginning point doesn't matter part}
		It is easily verified that if we define
		\[\Lambda^{-t\Delta}(u):=uK(u;t)\\-\frac{1}{L^d}\log\left(K(u;t)I+t\Delta\right),\]
		then we have that $(\Lambda^{-t\Delta})'(u)=K(u;t)$. In particular, for $q>q'>q*$, $\delta(q')=q-q'$, so that
		\[\Lambda^{-t\Delta}(\beta(q-q_*))=\Lambda^{-t\Delta}(\beta(q-q'))+\int_{q_*}^{q'}\beta K(\beta\delta(u);t)du.\]
		From this we see that $\cal{P}_{\beta,q}(\zeta)$ is independent of the choice of $q_*$.
	\end{remark}
	
	Our expression for $F_{\Om}(\beta,t,\mu)$ will be given by this functional evaluated at a specific choice of $(\zeta,q)$.
	
	\begin{theorem}[A New Parisi Formula]
		\label{theorem:intro:main:critical points euclidean}
		We have that
		\[F_{\Om}(\beta,t,\mu)=\cal{P}_{\beta,\qc}(\zetac),\]
		where  here the pair $(\qc,\zetac)$ is the unique $(q,\zeta)$ which solves the following equations:
		\[\beta \int_0^q\zeta([0,u])du=R_1(\mu;t), \label{eqn:intro:minimization eqn:Euclidean: Larkin}\]
		\[\zeta\left(\big\{s\in [0,q]:f_{\beta,q}(s)=\sup_{0<s'<q}f_{\beta,q}(s') \big\}\right)=1,\label{eqn:intro:minimization eqn:Euclidean: measure}\]
		where for $s\in (0,q)$, we define
		\[f_{\beta,q}(s)=\int_0^sF_{\beta,q}(u)du,\;\;F_{\beta,q}(s)=-2B'(2(q-s))+\int_0^u  K'(\beta \delta(s);t)\]
		with $\delta$ is as in (\ref{eqn:def:delta-P}).
	\end{theorem}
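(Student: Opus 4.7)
The plan is to derive this simplified Parisi formula by reducing the variational problem from Theorem \ref{theorem:paper 1:euclidean result} of the companion paper, which expresses $F_\Om$ as the extremum of a more complex Parisi-type functional over a larger set of parameters. The reduction to $\cal{P}_{\beta,q}(\zeta)$ should come from eliminating auxiliary variables by stationarity: using that $K=(\Lambda^{-t\Delta})'$ (Remark \ref{remark:beginning point doesn't matter part}), explicit optimization in the extra parameters produces precisely the $K(\beta(q-q_*);t)$ and $\Lambda^{-t\Delta}$-type terms appearing in $\cal{P}_{\beta,q}$. This step is analogous to the Legendre/Crisanti--Sommers transform familiar from the spherical Parisi formula, where the Lagrange multiplier enforcing the spherical constraint is eliminated by first-order conditions. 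What distinguishes the elastic manifold case is that this transform is carried out against the resolvent of $-t\Delta$ rather than a scalar, but the structural computation is the same.

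Next, I would derive the critical point equations by perturbing $\cal{P}_{\beta,q}(\zeta)$ in each of its two arguments. Differentiating in $q$ and using $K = (\Lambda^{-t\Delta})'$, the explicit derivatives of the $\Lambda$-terms cancel with those of the integral term, and interior stationarity reduces to $\mu = K\bigl(\beta\int_0^q\zeta([0,u])du;t\bigr)$. Inverting via $R_1(\cdot;t) = K(\cdot;t)^{-1}$ gives \eqref{eqn:intro:minimization eqn:Euclidean: Larkin}. Stationarity with respect to $\zeta \in \mathscr{Y}(q)$ is the standard constrained optimization: the first variation in $\zeta$ is integration against $f_{\beta,q}$ up to the probability-normalization constraint, so any optimal $\zeta$ must be supported on the argmax set of $f_{\beta,q}$, giving \eqref{eqn:intro:minimization eqn:Euclidean: measure}.

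The main obstacle is uniqueness of the saddle point $(\qc,\zetac)$. For fixed $q$, uniqueness of the inner optimizer $\zeta$ should follow from strict convexity of the $\zeta$-dependent part of the functional. The principal input is that $-B'$ is completely monotone as a consequence of the Schoenberg representation \eqref{eqn:B-decomposition}, so that the disorder term $-2\beta^2\int_0^q \zeta([0,u])B'(2(q-u))du$ is strictly convex in the distribution function $u\mapsto\zeta([0,u])$, in the spirit of the Talagrand--Panchenko treatment of mixed $p$-spin and spherical models. The remaining $\zeta$-terms contribute through the convex $\Lambda^{-t\Delta}$ composed with an affine map of $\zeta$, preserving convexity. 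For the outer variable $q$, I would establish strict monotonicity in $q$ of $\beta\int_0^q \zeta_q^*([0,u])du$, where $\zeta_q^*$ is the unique inner optimizer; combined with the strict monotonicity of $K$, this forces a unique $\qc$ satisfying \eqref{eqn:intro:minimization eqn:Euclidean: Larkin}. Once uniqueness is in hand, the identity $\cal{P}_{\beta,\qc}(\zetac) = F_\Om(\beta,t,\mu)$ follows because the reduction in the first step proceeds by identities and the critical point equations correctly pin down the unique extremum of the original Paper 1 formula.
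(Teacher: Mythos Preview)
Your proposal has two substantive gaps.

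First, the reduction from Theorem~\ref{theorem:paper 1:euclidean result} is not a Legendre/Crisanti--Sommers elimination of a Lagrange multiplier. The companion-paper functional $\widehat{\cal{P}}_{\b{q}}(\zeta,\b{\Phi})$ is parametrized by a vector $\b{q}\in(0,\infty)^{\Om}$ of dimension $L^d$ and by a continuous map $\b{\Phi}:[0,q_t]\to\prod_x[0,\b{q}(x)]$; the simplified functional $\cal{P}_{\beta,q}(\zeta)$ corresponds to restricting $\b{q}$ to constant vectors and $\b{\Phi}$ to the identity. Stationarity alone does not force this restriction: the first-order conditions produce coupled equations across sites, not a collapse to the diagonal. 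The paper obtains the reduction in two separate steps. For $\b{\Phi}=\b{I}$ it passes through the spherical model, perturbs the mixing coefficients $\b{\beta}_p(x)$, and uses differentiability of the free energy in these coefficients together with transitivity of $-t\Delta$ to force all the push-forwards $(\Phi_x)_*\zeta$ to coincide (Section~\ref{section:identification}). For $\b{q}$ constant it proves concavity of $\b{q}\mapsto\widehat{\mathscr{P}}(\b{q})$ over all of $(0,\infty)^{\Om}$ via a family of shift maps $\varphi_{\b{q},\b{q}+\b{r}}$ between the domains $\widehat{\mathscr{Y}}(\b{q})$ (Section~\ref{section:concavity}), and then rules out a nontrivial convex set of maximizers by an invertibility argument for the Hessian of $\b{K}^{-t\Delta}$. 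Neither of these is captured by ``explicit optimization in the extra parameters''.

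Second, your strict-convexity argument in $\zeta$ is incorrect. The disorder term $-2\beta^2\int_0^q\zeta([0,u])B'(2(q-u))\,du$ is \emph{linear} in $\zeta$, not strictly convex; complete monotonicity of $-B'$ is irrelevant here. In the Crisanti--Sommers form of the functional (which is what $\cal{P}_{\beta,q}$ is), strict convexity comes entirely from the term $\int_0^{q_*}K(\beta\delta(u);t)\,du$, via strict convexity of $K(\cdot;t)$ and the fact that $\delta$ is linear in $\zeta$ (see the proof of Theorem~\ref{theorem:intro:parisi-measure:sphere}). You appear to be importing the convexity mechanism from the Parisi-PDE form of the Ising functional, which does not apply here. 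Your proposed route to uniqueness in $q$ via monotonicity of $q\mapsto\beta\int_0^q\zeta_q^*([0,u])\,du$ may ultimately work, but you give no argument for it; the paper instead deduces it from the global concavity of $\widehat{\mathscr{P}}$ on $(0,\infty)^{\Om}$.
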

	
	A related result to this is the following alternative representation of $F_{\Om}(\beta,t,\mu)$ in terms of a variational problem.
	
	\begin{theorem}[A Variational Form of The Parisi Formula]
		\label{theorem:intro:main:Euclidean}
		We have that
		\[F_{\Om}(\beta,t,\mu)=\sup_{q\in (0,\infty)}\left(\inf_{\zeta \in \mathscr{Y}(q)}\cal{P}_{\beta,q}(\zeta)\right).\]
	\end{theorem}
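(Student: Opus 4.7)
My plan is to deduce the variational representation from Theorem \ref{theorem:intro:main:critical points euclidean}, which supplies a pair $(\qc,\zetac)$ with $F_{\Om}(\beta,t,\mu)=\cal{P}_{\beta,\qc}(\zetac)$. It suffices to prove that this pair is a minimax saddle point for $\cal{P}$: that $\zetac$ attains $\inf_{\zeta\in\mathscr{Y}(\qc)}\cal{P}_{\beta,\qc}(\zeta)$ and that $\qc$ attains the outer supremum of $q\mapsto \inf_\zeta \cal{P}_{\beta,q}(\zeta)$, with common value $\cal{P}_{\beta,\qc}(\zetac)$. I would therefore analyze $\cal{P}$ separately in each variable, matching the first-order conditions against (\ref{eqn:intro:minimization eqn:Euclidean: Larkin})--(\ref{eqn:intro:minimization eqn:Euclidean: measure}).

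\textbf{Inner infimum.} For fixed $q>0$, I would parametrize $\zeta\in\mathscr{Y}(q)$ by its distribution function $G(u)=\zeta([0,u])$; then $\delta(s)=\int_s^q G(u)du$ is affine in $G$. The term $-2\beta^2\int_0^q G(u)B'(2(q-u))du$ is linear in $G$, while $\int_0^{q_*}\beta K(\beta\delta(u);t)du$ is convex in $G$ because $K(\cdot;t)$ is convex, which in turn follows because $K$ is the functional inverse of the decreasing convex function $R_1(u;t)=L^{-d}\sum_\lambda(u+t\lambda)^{-1}$ (the inverse of a decreasing convex function is decreasing and convex). Computing the first variation of $\cal{P}_{\beta,q}$ in $\zeta$ with a Lagrange multiplier for normalization shows that at a critical point $f_{\beta,q}$ is constant on $\supp(\zeta)$ and equal to $\sup_{0<s'<q}f_{\beta,q}(s')$; this is exactly (\ref{eqn:intro:minimization eqn:Euclidean: measure}). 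Convexity ensures uniqueness, and Theorem \ref{theorem:intro:main:critical points euclidean} identifies this minimizer as $\zetac$ when $q=\qc$.

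\textbf{Outer supremum.} For the outer variable I would differentiate the reduced functional $q\mapsto \inf_\zeta \cal{P}_{\beta,q}(\zeta)$ via the envelope theorem: at the inner minimizer the $\zeta$-partials vanish, so this reduces to the partial $\partial_q \cal{P}_{\beta,q}(\zeta)$ at fixed $\zeta$. Using $(\Lambda^{-t\Delta})'(u)=K(u;t)$ from Remark \ref{remark:beginning point doesn't matter part} and collecting terms, setting this derivative to zero produces the equation $\beta\int_0^q\zeta([0,u])du=R_1(\mu;t)$, namely (\ref{eqn:intro:minimization eqn:Euclidean: Larkin}). By Theorem \ref{theorem:intro:main:critical points euclidean} this has the unique solution $\qc$. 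To verify that $\qc$ is a maximum rather than a saddle, I would compute the second derivative of the reduced functional using strict monotonicity of $R_1$ and complete monotonicity of $B$ (from the Schoenberg representation (\ref{eqn:B-decomposition})), which together force the required sign.

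\textbf{Main obstacle.} The principal technical difficulty is that $\mathscr{Y}(q)$ is not compact in any natural topology: the constraint $q\notin\supp(\zeta)$ is open and can be violated under weak limits, so minimizing sequences might concentrate mass near $q$. A direct existence proof for the inner minimizer therefore requires an a priori bound on $\sup\supp(\zeta)$. I would extract such a bound from the first-order condition itself: as $s\uparrow q$ the term $-2B'(2(q-s))$ tends to the finite value $-2B'(0)$, so $F_{\beta,q}$ cannot vanish arbitrarily close to $q$ unless $\int_0^s K'(\beta\delta(u);t)du$ precisely compensates. Controlling this compensation via the asymptotic behaviour of $K'$ near $R_1(\infty;t)$ gives enough rigidity to keep the support uniformly away from $q$ and to close the saddle-point argument.
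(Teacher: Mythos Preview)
Your inner-infimum step is correct and matches the paper (strict convexity of $\cal{P}_{\beta,q}$ in $\zeta$ via convexity of $K(\cdot;t)$, with minimizer characterized by \eqref{eqn:intro:minimization eqn:Euclidean: measure}; this is Corollary~\ref{corr:intro:Euclidean theorem at a point q}). Your overall logical route---deducing Theorem~\ref{theorem:intro:main:Euclidean} from Theorem~\ref{theorem:intro:main:critical points euclidean}---is, however, the reverse of the paper's: there both theorems are obtained \emph{simultaneously} from Corollary~\ref{corr:main:critical points euclidean}, which in turn rests on the multi-site formula $F_\Om=\sup_{\b q\in(0,\infty)^\Om}\widehat{\mathscr P}(\b q)$ from \cite{Paper1} together with Proposition~\ref{prop:concavity:unique concavity in q} (concavity of $\widehat{\mathscr P}$ on all of $(0,\infty)^\Om$ with a unique, hence symmetric, maximizer).

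The genuine gap is your outer-supremum argument. The envelope theorem does give $g'(q)=\tfrac{\beta}{2}\bigl(K(\beta\delta_{\zeta_q}(0);t)-\mu\bigr)$, so \eqref{eqn:intro:minimization eqn:Euclidean: Larkin} is the first-order condition and $\qc$ is its unique root by Theorem~\ref{theorem:intro:main:critical points euclidean}. But your plan to certify the maximum by ``computing the second derivative'' is problematic: $g''(q)=\tfrac{\beta^2}{2}K'(\beta\delta_{\zeta_q}(0);t)\,\partial_q\delta_{\zeta_q}(0)$, and the second factor involves the implicit dependence of the minimizer $\zeta_q$ on $q$, which neither monotonicity of $R_1$ nor complete monotonicity of $B$ controls in any evident way (already in the RS case one gets mixed-sign terms). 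Even a local sign on $g''$ does not yield a global maximum without boundary behaviour of $g$, which you do not address.

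The paper bypasses this via a \emph{shift map}: translating $\zeta$ by $r$ and passing from $q$ to $q+r$ gives the exact identity
\[
\cal{P}_{\beta,q+r}(\zeta(\cdot-r))=\cal{P}_{\beta,q}(\zeta)+\tfrac{\beta r}{2}\bigl(K(\beta\delta(0);t)-\mu\bigr),
\]
which is affine in $r$ (cf.\ \eqref{eqn:concavity:value of Parisi under varphi forward}, \eqref{eqn:lemma:shifts around max in q}). Once one knows that every minimizer has support bounded away from $0$ (Corollary~\ref{corr:concavity:avoids 0}, coming from $B_q'(0)=-2qB'(2q)>0$), the infimum defining $g$ can locally be rewritten over a fixed domain as a pointwise infimum of these affine functions, yielding concavity directly (Lemmas~\ref{lem:concavity:concavity in q}--\ref{lem:concavity:localness}). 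Your ``main obstacle'' worries about support approaching $q$; the decisive constraint for this argument is support approaching $0$, and the lower bound on $\supp(\zeta_q)$---which enables the backward shift---is the ingredient your outline is missing.
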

	
	From the perspective of Theorem \ref{theorem:intro:main:Euclidean}, the pair $(\qc,\zetac)$ is a saddle point of the functional $\cal{P}_{\beta,q}(\zeta)$, with equations (\ref{eqn:intro:minimization eqn:Euclidean: Larkin}) and  (\ref{eqn:intro:minimization eqn:Euclidean: measure}) being the associated stationary equations for $q$ and $\zeta$, respectively. While these equations give an effective computational tool, we also show that this pair may be interpreted in terms of the key statistics of the model. 
	
	For this, we recall the Gibbs measure of the model. This is a random probability measure over $(\R^N)^{\Om}$ given by
	\[G_{\beta,N}(d\b{u})=\frac{1}{Z_{N,\beta}}e^{-\beta \cal{H}_N(\b{u})}d\b{u}\]
	where $d\b{u}$ is the standard Lebesgue measure on $(\R^N)^{\Om}$. We will use the traditional notations of $\<f(\b{u})\>$ and $\<f(\b{u},\b{u}')\>$ for the the expectations of a function $f$ with respect to $G_{\beta,N}$ and $G_{\beta,N}^{\otimes 2}$, respectively. We will also adopt this notation elsewhere in the document, when the underlying space is clear.
	
	Our next result effectively shows that, after an $h$-dependent shift, the quantity $\qc$ can be interpreted in terms of the typical value of mean-squared radius at any given site when sampling from the Gibbs measure. Meanwhile $\zetac$ can be interpreted in terms of the overlap distribution at a fixed site, for which reason we will refer to $\zetac$ as the Parisi measure.
	
	\begin{theorem}[Identification of the Overlap Distribution and Effective Radius]
		\label{theorem:intro:main:Euclidean parameters identification}
		For all $x\in \Om$
		\[\lim_{N\to \infty}\E\left\<\left|\|\b{u}(x)\|^2_N-\left(\qc+\frac{h^2}{\mu^2}\right)\right|\right\>=0.\label{eqn:intro:main:Euclidean parameters identification:radius}\]
		Moreover, any bounded continuous function $f:\R \to \R$ and choice of $x\in \Om$
		\[\lim_{N\to \infty}\E\<f((\b{u}(x),\b{u}'(x))_N)\>=\int_{0}^{q} f\left(r+\frac{h^2}{\mu^2}\right)\zetac(dr).\label{eqn:intro:main:Euclidean parameters identification:overlap}\]
	\end{theorem}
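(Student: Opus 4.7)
The plan is to identify each entry of the saddle point $(\qc,\zetac)$ as a Gibbs statistic by perturbing an appropriate parameter of the model, differentiating $N^{-1}L^{-d}\log Z_{N,\beta}$ directly to produce a Gibbs expectation, and matching with the derivative of the Parisi identity $F_{\Om}(\beta,t,\mu)=\cal{P}_{\beta,\qc}(\zetac)$ computed via the envelope theorem (valid since Theorem \ref{theorem:intro:main:critical points euclidean} gives uniqueness of the saddle point).

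For (\ref{eqn:intro:main:Euclidean parameters identification:radius}), I would perturb $\mu$. Only the piece $\tfrac{1}{2}(\beta h^2/\mu-\beta\mu q)$ of $\cal{P}_{\beta,q}(\zeta)$ depends explicitly on $\mu$, so the envelope theorem gives $\partial_\mu F_{\Om}=-\tfrac{\beta}{2}(\qc+h^2/\mu^2)$. On the other hand, direct differentiation of $\log Z_{N,\beta}$ combined with translation invariance of the periodic graph $\Om$ yields $\partial_\mu(N^{-1}L^{-d}\E\log Z_{N,\beta})=-\tfrac{\beta}{2}\E\<\|\b{u}(x)\|^2_N\>$ for any $x\in\Om$. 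Convexity of $N^{-1}L^{-d}\log Z_{N,\beta}$ in $\mu$ permits exchange of derivative and large-$N$ limit, producing $\E\<\|\b{u}(x)\|^2_N\>\to\qc+h^2/\mu^2$. To upgrade to the $L^1$ statement, I would combine (i) a bound on the Gibbs variance of $\|\b{u}(x)\|^2_N$ extracted from a second-derivative estimate on $\log Z_{N,\beta}$ in $\mu$, with (ii) Gaussian concentration of the free energy in the disorder $V$. Both (i) and (ii) are enabled by the confining $\mu$-term in the Hamiltonian with $\mu>0$, which forces sub-Gaussian tail bounds on the Gibbs measure.

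For (\ref{eqn:intro:main:Euclidean parameters identification:overlap}), the analogous route is to perturb $B\mapsto B+\lambda g$, choosing $g$ so that $B+\lambda g$ remains in the Schoenberg class (\ref{eqn:B-decomposition}) for small $\lambda$; the family $g(x)=e^{-sx}$, $s>0$, sits naturally in this class and is distribution-determining via Laplace transforms. Gaussian integration by parts applied across two replicas, together with translation invariance, yields
\[
\partial_\lambda\bigl(N^{-1}L^{-d}\E\log Z_{N,\beta}\bigr)\big|_{\lambda=0}=\tfrac{\beta^2}{2}\bigl[g(0)-\E\<g(\|\b{u}(x)-\b{u}'(x)\|^2_N)\>\bigr].
\]
On the other side, only the term $-2\beta^2\int_0^q\zeta([0,u])B'(2(q-u))du$ of $\cal{P}_{\beta,q}(\zeta)$ depends on $B$, so the envelope theorem gives
\[
\partial_\lambda F_{\Om}\big|_{\lambda=0}=-\beta^2\int_0^{\qc}\zetac([0,u])g'(2(\qc-u))du,
\]
which, after the change of variables $w=\qc-u$ and an integration by parts (using that $\qc\notin\mathrm{supp}(\zetac)$ to kill the boundary terms), reduces to $\tfrac{\beta^2}{2}\bigl[g(0)-\int_0^{\qc}g(2(\qc-r))\zetac(dr)\bigr]$. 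Matching the two sides across the distribution-determining family of $g$'s identifies the weak limit under $\E\<\,\cdot\,\>$ of $\|\b{u}(x)-\b{u}'(x)\|^2_N$ with the law of $2(\qc-r)$, $r\sim\zetac$. Combined with the Step 1 concentration and the polarization identity $2(\b{u}(x),\b{u}'(x))_N=\|\b{u}(x)\|^2_N+\|\b{u}'(x)\|^2_N-\|\b{u}(x)-\b{u}'(x)\|^2_N$, this delivers (\ref{eqn:intro:main:Euclidean parameters identification:overlap}).

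The main obstacle is the exchange of $\partial_\lambda$ and $\lim_N$ in the covariance perturbation: unlike the $\mu$-case, $\E\log Z_{N,\beta}^{(\lambda)}$ is not a priori convex in $\lambda$, so convexity alone is unavailable. The standard remedy is to first establish the Parisi formula of Theorem \ref{theorem:intro:main:critical points euclidean} uniformly for the perturbed correlator $B+\lambda g$ in a neighborhood of $\lambda=0$, and then to prove continuity at $\lambda=0$ of the saddle point $\lambda\mapsto(\qc(\lambda),\zetac(\lambda))$ via the uniqueness statement and a compactness argument applied to the saddle point equations (\ref{eqn:intro:minimization eqn:Euclidean: Larkin})--(\ref{eqn:intro:minimization eqn:Euclidean: measure}); together these give differentiability of $\lambda\mapsto F_{\Om}^{(\lambda)}$ at $0$ and justify the interchange. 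A secondary difficulty is the Gibbs-concentration step (i) above, which is more subtle here than in compact-spin models because the configuration space $(\R^N)^{\Om}$ is unbounded; the quadratic $\mu$-term in the Hamiltonian is what supplies the necessary tail bounds.
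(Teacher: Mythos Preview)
Your approach is conceptually reasonable and differs genuinely from the paper's route. For the radius claim, the paper does \emph{not} differentiate in $\mu$; instead it uses the restricted partition functions $Z_N(D_N^\epsilon(\b{q}_c))$ and $Z_N(D_N^\epsilon(\b{q}_c)^c)$, whose limiting free energies are computed in the companion paper, and combines this with the uniqueness of the maximizer $\b{q}_c$ from Proposition~\ref{prop:concavity:unique concavity in q} to show directly that the Gibbs mass outside any $\epsilon$-annulus is exponentially small (Lemma~\ref{lem:euclidean proofs:bound on e-complement}). This yields the $L^1$ statement immediately, without the second-derivative/variance step you sketch; note also that your second-derivative argument would control the variance of $\sum_{x}\|\b{u}(x)\|^2_N$, not of a single-site norm, so more is needed to conclude sitewise concentration.

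For the overlap claim, the obstacle you flag is real, and your proposed remedy (prove the Parisi formula for $B+\lambda g$ and continuity of $(\qc(\lambda),\zetac(\lambda))$) is nontrivial and not obviously easier than the problem itself; in particular, for $\lambda<0$ the function $B+\lambda e^{-sx}$ need not lie in the Schoenberg class, so two-sided perturbation is unavailable, and one-sided information together with convexity in $\sqrt{\lambda}$ does not yield the derivative in $\lambda$ at $0$. The paper circumvents this entirely: having already localized to the annulus $D_N^\epsilon(\b{q}_c)$, it decomposes the restriction of $V_N$ to the sphere $S_N(\qc)$ into pure $p$-spin components $V_{N,p}$ (which extend as homogeneous polynomials to the annulus), and perturbs the annulus Hamiltonian by $\beta\sum_x V_{N,p,x}$. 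Because $\beta$ multiplies a centered Gaussian field, $\E\log Z_{N,\epsilon,\qc}(\beta,p)$ is convex in $\beta$, so the derivative-limit exchange is available; the $\epsilon\to 0$ limit is then identified with the spherical Parisi functional (Lemmas~\ref{lem:euclidean proofs: free energy with beta} and~\ref{lem:euclidean proofs:p differential}), whose differentiability comes from Section~\ref{section:identification}. In short, the paper's key idea---replace the $B$-perturbation by a $p$-spin perturbation after localization---is precisely what restores convexity and closes the gap you identified.
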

	
	\subsection{Results on the Replica Symmetric phase}
	
	We now turn to our results characterizing the Replica Symmetric (RS) phase of the model. We say that the model is RS, for fixed choice of $(\beta,\mu,t,L,d)$, if the corresponding Parisi measure $\zetac$ is a Dirac mass. This phase tends to have fairly simple behavior compared to the Replica Symmetry Breaking (RSB) phase where the Parisi measure $\zetac$ is supported on more than one point.
	
	Our main result provides an explicit analytic characterization of the RS-phase, as well as the value of the limiting free energy, and the pair $(\qc,\zetac)$ in this phase.
	
	\begin{theorem}[Characterization of The RS-Phase]
		\label{theorem:Euclidean RS description}
		Fix a choice of $(\beta,\mu,t,L,d)$. Then the model is RS if and only if 
		\[\sup_{0<s<R_1(\mu;t)}g_{\beta}(s)=g_{\beta}(R_1(\mu;t)),\label{eqn:ignore-luna-2}\]
		where here
		\[
		g_{\beta}(s)=\beta^2 B\left(\frac{2s}{\beta}\right)+\beta sK(s;t)-\frac{1}{L^d}\log(K(s;t)I-t\Delta)
		-s\left(2\beta B'\left(\frac{2}{\beta}R_1(\mu;t)\right)+\mu\right).\label{eqn:ignore-luna-10}
		\]
		Moreover, if we define for $i\ge 1$
		\[R_i(\mu;t)=\tr(\mu I-t \Delta)^{-i},\]
		then when the model is RS, the free energy is given by
		\[
		\cal{P}_{\beta,\qc}(\zetac)=\frac{1}{2}\left(\log\left(\frac{2\pi}{\beta}\right)+\frac{1}{L^d}\log \det(\mu I-t\Delta)+\beta^2 \left(B(0)-B\left(\frac{2}{\beta}R_1(\mu;t)\right)\right)\right),
		\]
		and if we define $q_{L,\beta}:=-2B'\left(\frac{2}{\beta}R_1(\mu;t)\right)R_2(\mu;t)$, the saddle point pair is given by
		\[(\zetac,\qc)=(\delta_{q_{L,\beta}},q_{L,\beta}+\beta^{-1}R_1(\mu;t)).\]
	\end{theorem}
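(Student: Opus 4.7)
The plan is to use the critical point characterization of $(\qc, \zetac)$ from Theorem~\ref{theorem:intro:main:critical points euclidean} and specialize it to the ansatz $\zetac = \delta_{q_0}$, which is exactly the RS condition. Substituting the Dirac measure into equation (\ref{eqn:intro:minimization eqn:Euclidean: Larkin}) gives $\beta(\qc - q_0) = R_1(\mu;t)$, so $\qc = q_0 + \beta^{-1} R_1(\mu;t)$. Under this ansatz, $\delta$ is piecewise explicit: $\delta(u) = \beta^{-1} R_1(\mu;t)$ on $[0, q_0]$ (so that $K(\beta\delta(u); t) = \mu$ there by the defining property of $K$), and $\delta(u) = \qc - u$ on $[q_0, \qc]$.

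Next I would determine $q_0$ from the interior stationarity condition at the support of $\zetac$ implied by (\ref{eqn:intro:minimization eqn:Euclidean: measure}), namely $F_{\beta, \qc}(q_0) = 0$. Plugging the piecewise form of $\delta$ into $F_{\beta, \qc}$ and evaluating at $s = q_0$ reduces this equation to $q_0 K'(R_1(\mu;t);t) = 2 B'(2\beta^{-1} R_1(\mu;t))$. Differentiating the identity $R_1(K(u;t);t) = u$ in $u$ and using $R_1'(\mu;t) = -R_2(\mu;t)$ yields $K'(R_1(\mu;t);t) = -R_2(\mu;t)^{-1}$, whence $q_0 = -2 B'(2\beta^{-1} R_1(\mu;t)) R_2(\mu;t) = q_{L,\beta}$, confirming the claimed value of the Dirac atom.

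For the characterization of the RS-RSB boundary, I would check that the global maximization content of (\ref{eqn:intro:minimization eqn:Euclidean: measure}) --- that $q_0$ is a global maximizer of $f_{\beta, \qc}$ on $(0, \qc)$ --- is equivalent to the stated condition on $g_\beta$. The key step is the change of variable $s \mapsto a := \beta(\qc - s)$, which maps $[q_0, \qc]$ bijectively onto $[0, R_1(\mu;t)]$ with $q_0 \leftrightarrow R_1(\mu;t)$. Using the identity $(\Lambda^{-t\Delta})'(a) = K(a;t)$ from Remark~\ref{remark:beginning point doesn't matter part}, one identifies $f_{\beta, \qc}$ restricted to $[q_0, \qc]$, up to additive constants independent of $a$, with a scalar multiple of $g_\beta(a)$. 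Thus $q_0$ is a maximizer of $f_{\beta, \qc}$ on $[q_0, \qc]$ if and only if $\sup_{0 < a < R_1(\mu;t)} g_\beta(a) = g_\beta(R_1(\mu;t))$. A separate check on $[0, q_0]$, using the monotonicity of $-B'$ (a consequence of the Bernstein representation (\ref{eqn:B-decomposition})) and the value $q_0 = q_{L,\beta}$ from the previous step, handles that sub-interval.

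Finally I would obtain the closed form by substituting the pair $(\qc, \zetac)$ directly into $\cal{P}_{\beta, \qc}$. Using the freedom in $q_*$ from Remark~\ref{remark:beginning point doesn't matter part}, choose $q_* \in (q_0, \qc)$. The integral $\int_0^{q_*}\beta K(\beta\delta(u);t) du$ splits as $\beta\mu q_0 + \Lambda^{-t\Delta}(R_1(\mu;t)) - \Lambda^{-t\Delta}(\beta(\qc - q_*))$, the last term cancelling the corresponding $q_*$-dependent contribution in the functional; the $B'$-integral evaluates in closed form to $\tfrac{1}{2}(B(2\beta^{-1}R_1(\mu;t))-B(0))$; and the linear-in-$\qc$ terms combine, using $\beta(\qc - q_0) = R_1(\mu;t)$, to give the stated expression. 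The main obstacle will be the identification in the third step: the $q_0$-dependent contributions arising from the linear-in-$s$ term $s K'(R_1(\mu;t);t)$ on $[0, q_0]$ must be tracked carefully and shown to cancel, so that the RS-RSB boundary depends on the parameters $(\beta, \mu, t, L, d)$ only through the explicit function $g_\beta$ and not on $q_0$ itself.
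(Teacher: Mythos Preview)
Your proposal is correct and follows essentially the same route as the paper: specialize the critical-point equations of Theorem~\ref{theorem:intro:main:critical points euclidean} to $\zeta=\delta_{q_0}$, read off $\beta(\qc-q_0)=R_1(\mu;t)$ from (\ref{eqn:intro:minimization eqn:Euclidean: Larkin}), solve $F_{\beta,\qc}(q_0)=0$ for $q_0=q_{L,\beta}$ via $K'(R_1(\mu;t);t)=-R_2(\mu;t)^{-1}$, then pass to $g_\beta$ by the substitution $a=\beta(\qc-s)$, and finally evaluate the functional directly. The paper does exactly this, borrowing the details of the sub-interval argument from its proof of the spherical analogue (Theorem~\ref{theorem:Spherical RS description}).

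One small sharpening: for the interval $[0,q_0]$ the relevant property is not just monotonicity of $-B'$ but monotonicity of $B''$ (equivalently, complete monotonicity of $-B'$). Concretely, on $[0,q_0]$ one has $F'_{\beta,\qc}(s)=4B''(2(\qc-s))+K'(R_1(\mu;t);t)$; since $B''$ is decreasing (immediate from (\ref{eqn:B-decomposition})), this expression is increasing in $s$, so $F'_{\beta,\qc}(s)\le F'_{\beta,\qc}(q_0)$. The right-hand side is $\le 0$ whenever the condition (\ref{eqn:ignore-luna-2}) holds (it is the second-order condition at the endpoint $a=R_1(\mu;t)$), hence $f_{\beta,\qc}$ is concave on $[0,q_0]$ with a critical point at $q_0$, and no extra constraint arises from that sub-interval. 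This is precisely the argument the paper gives in the spherical case and invokes by reference here.
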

	
	From this and Theorem \ref{theorem:intro:main:Euclidean parameters identification}, we note the following immediate corollary. 
	
	\begin{corr}
		When the model is RS, for any $x\in \Om$ we have that
		\[\lim_{N\to \infty}\E\<\|\b{u}(x)-\b{u}'(x)\|^2_N\>=\beta^{-1}R_1(\mu;t).\]
	\end{corr}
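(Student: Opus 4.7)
The proof is a short direct computation: unpack the squared distance into radius and overlap contributions, then invoke Theorem \ref{theorem:intro:main:Euclidean parameters identification} with the explicit RS-phase values furnished by Theorem \ref{theorem:Euclidean RS description}.

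First I would polarize. For any $\b{u},\b{u}'\in(\R^N)^\Om$ and any $x\in\Om$,
\[\|\b{u}(x)-\b{u}'(x)\|^2_N=\|\b{u}(x)\|^2_N+\|\b{u}'(x)\|^2_N-2(\b{u}(x),\b{u}'(x))_N.\]
Integrating against $\E\<\cdot\>$ and using the symmetry of $G_{\beta,N}^{\otimes2}$ in $(\b{u},\b{u}')$ collapses the two radius terms, giving
\[\E\<\|\b{u}(x)-\b{u}'(x)\|^2_N\>=2\E\<\|\b{u}(x)\|^2_N\>-2\E\<(\b{u}(x),\b{u}'(x))_N\>.\]

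Next I identify both limits. The first is immediate from (\ref{eqn:intro:main:Euclidean parameters identification:radius}), which actually gives $L^1(\E\<\cdot\>)$-convergence of $\|\b{u}(x)\|^2_N$ to the constant $\qc+h^2/\mu^2$. For the second, the RS-phase description from Theorem \ref{theorem:Euclidean RS description} gives $\zetac=\delta_{q_{L,\beta}}$ and $\qc=q_{L,\beta}+\beta^{-1}R_1(\mu;t)$, so formally applying (\ref{eqn:intro:main:Euclidean parameters identification:overlap}) to the identity function $f(r)=r$ identifies the limit of $\E\<(\b{u}(x),\b{u}'(x))_N\>$ with $q_{L,\beta}+h^2/\mu^2$. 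Substituting both limits, the $h^2/\mu^2$ shifts cancel and the remaining difference $2(\qc-q_{L,\beta})$ reproduces the announced constant.

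The only nontrivial point is that the weak convergence in (\ref{eqn:intro:main:Euclidean parameters identification:overlap}) is only stated for bounded continuous test functions, whereas I need it for the unbounded $f(r)=r$. To close this gap I would argue uniform integrability of the overlap under $\E\<\cdot\>$ using the Cauchy--Schwarz bound
\[|(\b{u}(x),\b{u}'(x))_N|\le\tfrac{1}{2}\l(\|\b{u}(x)\|^2_N+\|\b{u}'(x)\|^2_N\r),\]
combined with the uniform integrability of $\|\b{u}(x)\|^2_N$ implied by its $L^1$-convergence to a constant. A standard truncation of $r\mapsto r$ at height $M$, application of (\ref{eqn:intro:main:Euclidean parameters identification:overlap}) to the bounded truncation, and then $M\to\infty$ upgrades the weak limit to convergence of means. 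I expect this uniform-integrability step to be the sole obstacle; everything else is direct bookkeeping from the results already established in Theorems \ref{theorem:intro:main:Euclidean parameters identification} and \ref{theorem:Euclidean RS description}.
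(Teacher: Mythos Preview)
Your approach is exactly what the paper intends: it states the corollary as immediate from Theorems \ref{theorem:intro:main:Euclidean parameters identification} and \ref{theorem:Euclidean RS description} without further argument, and your polarization-plus-identification is the natural way to unpack that. Your uniform-integrability patch to extend (\ref{eqn:intro:main:Euclidean parameters identification:overlap}) to the unbounded test function $f(r)=r$ is correct and is the only point that requires care.

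There is, however, an arithmetic discrepancy in your final sentence. Since $\qc=q_{L,\beta}+\beta^{-1}R_1(\mu;t)$, your computation actually gives
\[2\bigl(\qc-q_{L,\beta}\bigr)=2\beta^{-1}R_1(\mu;t),\]
which is twice the constant stated in the corollary, not equal to it. The Gaussian-free-field sanity check mentioned immediately after the corollary confirms $2\beta^{-1}R_1(\mu;t)$: with $B=0$ and $h=0$ the coordinates $u_i(x)$ are i.i.d.\ centered Gaussians of variance $\beta^{-1}R_1(\mu;t)$, so $\E\<\|\b{u}(x)-\b{u}'(x)\|^2_N\>=2\beta^{-1}R_1(\mu;t)$ for all $N$. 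Your method is therefore sound; the constant in the stated corollary appears to be missing a factor of $2$, and your phrase ``reproduces the announced constant'' should be amended accordingly.
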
	
	
	Note that the expression on the right is independent of the disorder term. In particular, the per-site distance between two samples in the RS-phase is the same if one sets $B=0$. In this case, the Hamiltonian is deterministic, and coincides with the Hamiltonian of the massive discrete Gaussian free field, where this formula is true for all $N$ by a quick computation.
	
	Our next corollary compares the formula for the quenched free energy to that of the annealed free energy. The annealed free energy is defined as $N^{-1}L^{-d}\log \E Z_{N,\beta}$, and is easily computed to be 
	\[N^{-1}L^{-d}\log \E Z_{N,\beta}=\frac{1}{2}\left(\log\left(\frac{2\pi}{\beta}\right)+\frac{1}{L^d}\log \det(\mu I-t\Delta)+\beta^2 B(0)\right).\]
	By Jensen's inequality we have that
	\[N^{-1}L^{-d}\E\log Z_{N,\beta}\le N^{-1}L^{-d}\log \E Z_{N,\beta},\]
	so the annealed free energy always serves as an upper bound for the quenched free energy, but often this inequality is strict. Our result gives an explicit formula for this gap.
	\begin{corr}
		When the model is RS, we have that
		\[\lim_{N\to \infty}\left(N^{-1}L^{-d}\log \E Z_{N,\beta}-N^{-1}L^{-d}\E\log Z_{N,\beta}\right)=\frac{\beta^2}{2}B\left(\frac{2}{\beta}R_1(\mu;t)\right).\]
		In particular,
		\[\lim_{\mu\to 0}\lim_{N\to \infty}\left(N^{-1}L^{-d}\log \E Z_{N,\beta}-N^{-1}L^{-d}\E\log Z_{N,\beta}\right)=0.\]
	\end{corr}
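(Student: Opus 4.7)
The plan is to derive both assertions from a single cancellation between two explicit closed-form expressions, supplemented by one short spectral estimate. First I would verify the displayed annealed formula: by Fubini and the Gaussianity of each $V_{N,x}$, $\E Z_{N,\beta}$ factorises as $\exp\!\bigl(\tfrac{\beta^2}{2}NL^d B(0)\bigr)$ times a non-degenerate Gaussian integral on $(\R^N)^\Om$ with quadratic form $\beta(\mu I-t\Delta)\otimes I_N$; the linear contribution from $\sq{N}\,h\sum_{x}\b{u}_1(x)$ is absorbed through the constant-in-$\Om$ mode of $-\Delta$ (on which $\Delta$ vanishes) in precisely the way that produces the stated expression.

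Next I would form the difference of this annealed expression and the RS quenched formula of Theorem \ref{theorem:Euclidean RS description}. The three common summands $\log(2\pi/\beta)$, $L^{-d}\log\det(\mu I-t\Delta)$, and $\beta^2 B(0)$ appear identically in both (as do any $h$-dependent contributions, which arise from the same zero mode), so they cancel, leaving precisely $\tfrac{\beta^2}{2}B\!\bigl(\tfrac{2}{\beta}R_1(\mu;t)\bigr)$. This is the first assertion.

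For the $\mu\to 0^+$ limit, it suffices to show $R_1(\mu;t)\to\infty$ and $B(x)\to 0$ as $x\to\infty$. The divergence of $R_1$ follows from the spectral expansion $R_1(\mu;t)=L^{-d}\sum_k(\mu+t\lambda_k)^{-1}$, since the graph Laplacian on the periodic box $[[1,L]]^d$ has eigenvalue $\lambda_0=0$ with eigenfunction $\mathbf{1}_\Om$, and this term alone contributes $(\mu L^d)^{-1}\to\infty$. The decay $B(x)\to 0$ is immediate from the Schoenberg representation (\ref{eqn:B-decomposition}): with $c_0=0$ and $\nu$ a finite measure on $(0,\infty)$, dominated convergence applied to $B(x)=\int_{0}^{\infty}e^{-\lam^{2}x}\nu(d\lam)$ gives the claim. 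Composing these two limits yields the second identity.

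I do not expect any genuine obstacle: the corollary is essentially a bookkeeping cancellation followed by one spectral estimate and one application of dominated convergence. The only point worth double-checking is that the $h$-dependent and determinantal pieces in the annealed formula are matched by identical ones already absorbed into $\cal{P}_{\beta,\qc}(\zetac)$ in the RS regime, but this is automatic because both arise from the Gaussian marginal on the same zero mode of $-\Delta$ on the torus.
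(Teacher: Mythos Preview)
Your proposal is correct and is exactly the intended argument: the paper states this corollary as an immediate consequence of subtracting the RS quenched formula in Theorem \ref{theorem:Euclidean RS description} from the displayed annealed formula, with the second limit following from $R_1(\mu;t)\ge (L^d\mu)^{-1}\to\infty$ (via the zero eigenvalue of $-\Delta$) and $B(x)\to 0$ by dominated convergence using $c_0=0$. Your handling of the $h$-dependent and determinantal terms is also correct---they appear identically in both expressions and cancel.
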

	
	We now turn to comparing our results to those in the physics literature, particularly focusing on the boundary between the RS and RSB phases. This boundary is known as the AT-line, after the pioneering physics work of de Almeida and Thouless \cite{AT} who studied the corresponding line for the Sherrington-Kirkpatrick model. While their description for the AT-line has still not been rigorously established, the works of Toninelli \cite{at-ton}, Talagrand \cite{talagrandII}, 
	Jagannath and Tobasco \cite{at-tobasco}, Chen \cite{at-chen}, Bolthausen \cite{at-bolt}, and Brennecke and Yau \cite{at-yau} have verified their formula to an increasingly small margin of error.
	
	Our above result explicitly gives the AT-line for the elastic manifold model, which we may now compare to results in the physics literature. The most famous of these is the zero-temperature boundary (i.e. $\beta=\infty$). Here, for fixed $t$, the boundary from RS to RSB is expected to occur at the Larkin mass. This quantity, which we denote by $\mu_{Lar}(\infty;t)$ is given by the unique solution to
	\[4B''(0)R_2(\mu_{Lar}(\infty;t);t)=1\label{eqn:larkin-condition:zero-temp}\]
	(see, for example, eqn. (49) of \cite{fyodorov-manifold-minimum}).
	We confirm below that indeed this defines the boundary at sufficiently low temperatures.
	
	\begin{theorem}
		\label{theorem:intro:zero:temperature larkin}
		Fixing $(t,B,L,d)$, and considering the behavior of the model at parameters $(\beta,\mu)$, the following statements hold:
		\begin{itemize}
			\item If $\mu<\mu_{Lar}(\infty;t)$, the model with parameters $(\beta, \mu)$ is RSB for all sufficiently large $\beta$.
			\item If $\mu\ge \mu_{Lar}(\infty;t)$, the model with parameter $(\beta,\mu)$ is RS for all $\beta$.
		\end{itemize}
	\end{theorem}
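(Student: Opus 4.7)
The plan is to apply Theorem \ref{theorem:Euclidean RS description}, which reduces the question to whether the one-variable function $g_\beta$ attains its supremum on $(0,R_1(\mu;t))$ at the right endpoint, and then to study $g_\beta$ by direct calculus. Using the defining property $\tr(K(s;t)I-t\Delta)^{-1}=s$, one verifies $\frac{d}{ds}\frac{1}{L^d}\log\det(K(s;t)I-t\Delta)=sK'(s;t)$, while differentiating $R_1(K(s;t);t)=s$ gives $K'(s;t)=-1/R_2(K(s;t);t)$. After the cross terms cancel one finds
\[g_\beta'(R_1(\mu;t))=0,\qquad g_\beta''(s)=4B''(2s/\beta)-\frac{1}{R_2(K(s;t);t)},\]
so in particular $g_\beta''(R_1(\mu;t))=4B''(2R_1(\mu;t)/\beta)-1/R_2(\mu;t)$, and the Larkin relation (\ref{eqn:larkin-condition:zero-temp}) is precisely the $\beta\to\infty$ limit of this quantity.

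For the first statement, assume $\mu<\mu_{Lar}(\infty;t)$. Since $R_2(\cdot;t)$ is strictly decreasing, $4B''(0)R_2(\mu;t)>1$, and by continuity of $B''$ at $0$ (ensured by the integrability hypothesis on $\nu$) the limit above equals $4B''(0)-1/R_2(\mu;t)>0$. For all large enough $\beta$ we then have $g_\beta''(R_1(\mu;t))>0$; combined with $g_\beta'(R_1(\mu;t))=0$, the endpoint $s=R_1(\mu;t)$ is a strict local minimum of $g_\beta$. Hence $g_\beta(s)>g_\beta(R_1(\mu;t))$ for $s$ slightly less than $R_1(\mu;t)$, criterion (\ref{eqn:ignore-luna-2}) fails, and the model is RSB.

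For the second statement, assume $\mu\ge\mu_{Lar}(\infty;t)$, so $R_2(\mu;t)\le 1/(4B''(0))$. I would prove that $g_\beta$ is concave on $(0,R_1(\mu;t)]$ for every $\beta$; combined with $g_\beta'(R_1(\mu;t))=0$, this forces $g_\beta$ to be nondecreasing on the interval and its supremum to be attained at the right endpoint, giving RS. To verify concavity, fix $s\in(0,R_1(\mu;t)]$: since $K$ is strictly decreasing (as the inverse of the decreasing $R_1$), $K(s;t)\ge\mu$, whence $R_2(K(s;t);t)\le R_2(\mu;t)\le 1/(4B''(0))$ and $1/R_2(K(s;t);t)\ge 4B''(0)$; on the other hand, differentiating (\ref{eqn:B-decomposition}) under the integral yields $B'''(x)=-\int_0^\infty\lam^6 e^{-\lam^2 x}\nu(d\lam)<0$, so $B''$ is strictly decreasing and $B''(2s/\beta)\le B''(0)$. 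Combining these bounds gives $g_\beta''(s)\le 4B''(0)-4B''(0)=0$, as required. The main obstacle is this concavity step: it depends on a tight pairing between two competing monotonicities that pinches shut exactly at the Larkin threshold, and any weakening would permit interior maxima of $g_\beta$ exceeding the endpoint value, which is the more delicate positive-temperature Larkin phenomenon treated elsewhere in this paper.
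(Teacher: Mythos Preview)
Your proof is correct and follows essentially the same approach as the paper. Both arguments hinge on the formula $g_\beta''(s)=4B''(2s/\beta)-1/R_2(K(s;t);t)$, the fact that $g_\beta'(R_1(\mu;t))=0$, and the monotonicity of $B''$ and $R_2$; for the first item you and the paper both use $g_\beta''(R_1(\mu;t))>0$ for large $\beta$ to violate the RS criterion, and for the second item you both establish concavity of $g_\beta$ on $(0,R_1(\mu;t)]$. The only difference is organizational: the paper deduces the second item from Theorem \ref{theorem:larkin-mass} via the monotonicity $\mu_{Lar}(\beta;t)\uparrow\mu_{Lar}(\infty;t)$, whereas you prove the concavity directly from $B''(2s/\beta)\le B''(0)$ and $R_2(K(s;t);t)\le R_2(\mu;t)$, which is arguably more self-contained.
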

	
	As well as serving as the zero-temperature RS-RSB boundary, the Larkin mass is also the mass for which the Hamiltonian experiences "topological trivialization" \cite{topologytrivialization2}. This is where the Hamiltonian's expected number of critical points transitions from being exponentially large to one. This was predicted in the works of Fyodorov and Le Doussal \cite{fyodorov-manifold,fyodorov-manifold-minimum}, and made rigorous by the work of Bourgade, McKenna, and the first author \cite{gerardbenpaul} and Xu and Zeng \cite{xuzengelasticmanifold}. In particular, the formula of Xu and Zeng for asymptotic expected minimum value of the Hamiltonian above the Larkin mass is recovered from our free energy formula in Theorem \ref{theorem:Euclidean RS description} after taking $\beta\to \infty$.
	
	We mention that this relation the expected number of critical points, and RS-RSB boundary at zero-temperature was first found in the spherical SK-model by Cugliandolo and Dean \cite{topologytrivialFirst} and studied in more detail by Fyodorov and Le Doussal \cite{topologytrivialization2}. This has been expanded since by to pure spherical spin glass models  by Fyodorov \cite{topology3}, mixed spherical spin glass models by Belius, \v{C}ern\'{y}, Nakajima and Schmidt \cite{topology4}, and multi-species spherical spin models by Huang and Sellke \cite{topologyms}.
	
	Moreover, the Larkin mass is believed to be related to the critical parameter in the pinning-depinning transition (see \cite{larkin-review} for a review of this physical theory, as well as the discussion in \cite{fyodorov-manifold-minimum}). However, no rigorous results in this direction appear to be known.
	
	For our next result, we fix a positive temperature $\beta>0$. Then varying $\mu$, one may also try to determine if there is a critical mass determining the RS-RSB boundary. The quantity expected to fill this role is the positive temperature Larkin mass $\mu_{Lar}(\beta;t)$, and should satisfy the following equation:
	\[4B''\left (\frac{2}{\beta}R_1(\mu;t)\right)R_2(\mu;t)=1.\label{eqn:intro:fake-larkin}\]
	
	This appears in the original work of M{\'e}zard and Parisi \cite{mezardparisi} as (4.8) with $k=0$ (see also eqns. 17-18 of \cite{le2008cusps}). We show how one may arrive at this condition from Theorem \ref{theorem:Euclidean RS description} as a local stability condition for RS-solution. In particular, note that in the notation of Theorem \ref{theorem:Euclidean RS description}, one always has that $g_\beta'(R_1(\mu;t))=0$. Moreover, noting that one has that $K'(R_1(s;t);t)=-R_2(s;t)^{-1}$, one computes
	\[g_\beta''(R_1(\mu;t))=4 B''\left(\frac{2}{\beta}R_1(\mu;t)\right)-R_2(\mu;t)^{-1}.\label{eqn:intro-gprime}\]
	In particular, (\ref{eqn:intro:fake-larkin}) is equivalent to the equation $g_\beta''(R_1(\mu;t))=0$, and so marks the boundary of the regime where $R_1(\mu;t)$ is a local maximum of $g_\beta$ (i.e. $g_\beta''(R_1(\mu;t))> 0$). 
	
	However, while being a local maximum is necessary to be a global maximum, it is not sufficient. In particular, we know that the model is RSB if
	\[4B''\left (\frac{2}{\beta}R_1(\mu;t)\right)R_2(\mu;t)<1,\]
	but it is not clear the AT-line is given by the equation (\ref{eqn:intro:fake-larkin}).
	
	However, we will show that this equation does capture two aspects of the AT-line, both discussed in the work of Le Doussal and Wiese \cite{FRGDouWeise}. Here (\ref{eqn:intro:fake-larkin}) is discussed in more depth, and shown to also be the line at which FRG develops an analytic discontinuity. However, they also point out that (\ref{eqn:intro:fake-larkin}) may lack any solutions for sufficiently high temperature, in which case the RS solution remains stable for all masses, which we confirm. In particular, we show that there is some temperature $\beta_{DW}(t)\ge 0$, such that solutions exist for $\beta\ge \beta_{DW}(t)$, and for $\beta\le \beta_{DW}(t)$, the system is RS for any choice of $\mu$.
	
	However, for temperatures below $1/\beta_{DW}$, we have a more important subtly, namely, that (\ref{eqn:intro:fake-larkin}) may have multiple solutions. To fix this, when $\beta\ge \beta_{DW}(t)$, we define the Larkin mass as the largest value of $\mu$ which solves (\ref{eqn:intro:fake-larkin}). With this definition, we are able to show that $\mu_{Lar}(\beta;t)$ is indeed the largest critical mass. In particular, global stability is implied by the local stability for masses above $\mu_{Lar}(\beta;t)$. 
	
	\begin{theorem}
		\label{theorem:larkin-mass}
		Fixing $(t,B,L,d)$, and considering the behavior of the model at parameters $(\beta,\mu)$, the following statements hold:
		\begin{enumerate}
			\item If $\beta\le \beta_{DW}(t)$, then for any $\mu$, the model with parameters $(\beta,\mu)$ is RS.
			\item If $\beta>\beta_{DW}(t)$, and $\mu\ge \mu_{Lar}(\beta)$, then the model with parameters $(\beta,\mu)$ is RS.
		\end{enumerate}
	\end{theorem}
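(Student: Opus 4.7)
The plan is to combine the RS characterization in Theorem \ref{theorem:Euclidean RS description} with a second-derivative analysis of $g_\beta$. Since the paper already notes that $g_\beta'(R_1(\mu;t))=0$, it suffices to show that $g_\beta''(s)\le 0$ throughout $(0, R_1(\mu;t)]$: this forces $g_\beta'$ to be non-increasing on the interval, so $g_\beta'(s)\ge g_\beta'(R_1(\mu;t))=0$ there, $g_\beta$ is non-decreasing, and the supremum in (\ref{eqn:ignore-luna-2}) is attained at the right endpoint $R_1(\mu;t)$, which is precisely the RS condition.

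The first step is to generalize the computation (\ref{eqn:intro-gprime}) of $g_\beta''$ from the stationary point $s=R_1(\mu;t)$ to an arbitrary $s\in(0,\infty)$. Implicit differentiation of (\ref{def:intro:K}) gives $K'(u;t) = -R_2(K(u;t);t)^{-1}$, and a direct calculation then yields
\[g_\beta''(s) = 4B''\!\left(\frac{2s}{\beta}\right) - R_2(K(s;t);t)^{-1}.\]
Making the decreasing change of variable $u=K(s;t)$, which maps $(0,R_1(\mu;t)]$ bijectively onto $[\mu,\infty)$, the desired inequality $g_\beta''(s)\le 0$ on $(0,R_1(\mu;t)]$ becomes
\[F_\beta(u) := 4B''\!\left(\frac{2R_1(u;t)}{\beta}\right) R_2(u;t) \le 1 \qquad\text{for all } u\ge \mu,\]
where $F_\beta(u)$ is precisely the left-hand side of (\ref{eqn:intro:fake-larkin}), now evaluated at the point $u$. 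A preliminary check using the representation (\ref{eqn:B-decomposition}) of $B$ shows that $F_\beta$ is continuous and positive on $(0,\infty)$, with $F_\beta(u)\to 0$ as $u\to 0^+$ (since $R_1(u;t)\to\infty$ and $B''(x)\to 0$ as $x\to\infty$) and as $u\to\infty$ (since $R_2(u;t)\to 0$).

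With this single-variable reformulation, both parts of the theorem follow immediately. For (1), the hypothesis $\beta\le \beta_{DW}(t)$ means the equation $F_\beta(u)=1$ has no root, and continuity together with the endpoint behavior forces $\sup_{u>0}F_\beta(u) < 1$, so $F_\beta(u)\le 1$ for every $u\ge \mu$ regardless of $\mu$. For (2), $\mu_{Lar}(\beta;t)$ is by definition the largest root of $F_\beta(u)=1$, and the intermediate value theorem combined with $F_\beta(u)\to 0$ as $u\to\infty$ then implies $F_\beta(u)\le 1$ on $[\mu_{Lar}(\beta;t),\infty)$, hence on $[\mu,\infty)$ whenever $\mu\ge \mu_{Lar}(\beta;t)$. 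The main obstacle is thus the extension of the identity (\ref{eqn:intro-gprime}) to an arbitrary interior point; once that calculation is carried out, the remaining arguments reduce to routine continuity and monotonicity facts about the single positive function $F_\beta$.
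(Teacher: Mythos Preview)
Your proof is correct and essentially identical to the paper's: both establish $g_\beta''(s)=4B''(2s/\beta)-R_2(K(s;t);t)^{-1}$ on the whole interval, use the substitution $u=K(s;t)$ (the paper phrases this as varying $\mu$ in $g_\beta''(R_1(\mu;t))$) together with the intermediate value theorem and the behavior as $u\to\infty$ to obtain $g_\beta''\le 0$ on $(0,R_1(\mu;t)]$, and then conclude via concavity that the critical point $R_1(\mu;t)$ is the maximum. Your claim that $F_\beta(u)\to 0$ as $u\to 0^+$ is true (by dominated convergence applied to $(\lambda^2 y)^2 e^{-\lambda^2 y}$ against the finite measure $\nu$) but not actually needed---the limit at $u\to\infty$ alone already pins $F_\beta<1$ on the relevant side.
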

	
	However, these results leave open the behavior below the Larkin mass at positive temperature. This problem is discussed by \cite{FRGDouWeise}, particularly in their Appendix E, and the region of ambiguity appears as the grey region of their Figure 10. 
	
	In essence, this boundary is resolved by our Theorem \ref{theorem:Euclidean RS description}. However, we also find in this regime a complete break between the local and global stability criterion. Indeed, while the curve given by (\ref{eqn:intro:fake-larkin}) encloses much of the RSB region, and in many cases they coincide, in general they differ even in the case of $L^d=1$. Indeed, as Figure 1 shows, if at a fixed temperature one starts lowering the mass (starting at the Larkin mass), it is possible to exit the RSB phase, re-enter it, and then exit again, all at points which are not solutions to (\ref{eqn:intro:fake-larkin}). In our companion paper \cite{Paper3}, we show that a similar failure can occur for the continuum model with $d=1$ and $L\to \infty$.
	
	\begin{figure}[h]
		\includegraphics[width=17cm]{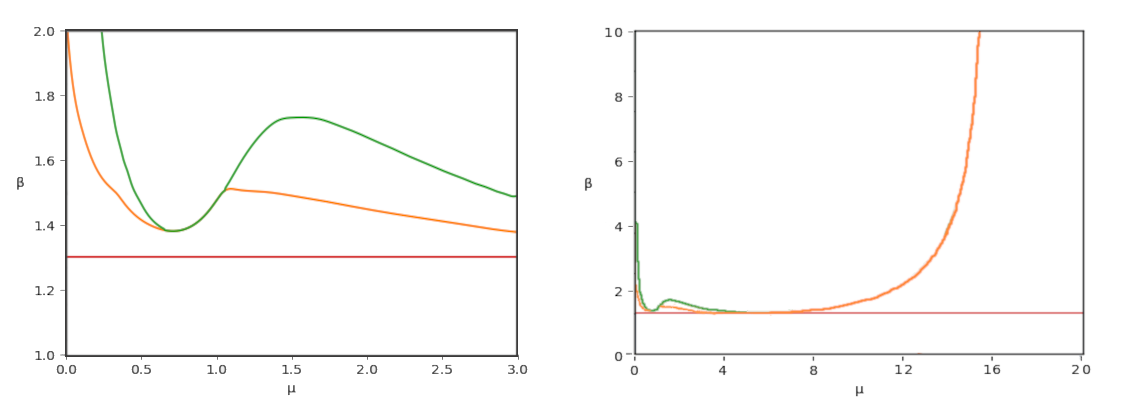}
		\caption{The phase diagram for $L^d=1$ and $B(x)=e^{-x}+e^{-8x}$ in terms of $(\beta,\mu)$. Solutions to (\ref{eqn:intro:fake-larkin}) are plotted in orange, the AT-line is in green, and the fixed inverse-temperature $\beta_{DW}$ is in red.}
		\label{figure}
	\end{figure}

	\iffalse
	\begin{remark}
		\label{remark:simple relation between A and P}
		The functional $\cal{P}_{\beta,q}$ can essentially be related to the functional $\cal{B}_{\beta,\xi}$ with the choice of mixing function $B_q(r)=B(2q(1-r))$. To state this, let us emphasis for the moment that $\cal{B}_{\beta,\xi}$ depends on the elastic parameter $t$, and so write $\cal{B}_{\beta,\xi,t}$ for the moment. Now for $\zeta^q\in \mathscr{Y}$, and define $\zeta\in \cal{X}=\mathscr{Y}$ by letting $\zeta([0,s])=\zeta^q([0,qs])$. Then we have that
		\[\cal{P}_{\beta,q}(\zeta)=\cal{B}_{\beta,B_q,tq}(\zeta^q)-\frac{1}{2}\log(q)-\frac{\mu q}{2}.\]
		In particular we have that
		\[\mathscr{P}_{\beta}(q)=\mathscr{B}_{\beta,tq}(B_q)-\frac{1}{2}\log(q)-\frac{\mu q}{2}.\]
	\end{remark}
	\fi

	\subsection{Results for the Spherical Model}
	
	We now introduce an additional model, which is similar to the above except that instead of taking an isotropic field on $\R^N$ in the presence of a quadratic potential, we consider an isotropic field on the sphere. Such isotropic fields are more well known, as they form the family of spherical spin glass models, which have seen extensive study.
	
	In particular, we fix, for each $N\ge 1$, a choice of centered Gaussian random field $H_N:S_N\to \R$ with isotropic covariance given for $\sigma,\tau\in S_N$ by
	\[\E[H_N(\sigma)H_N(\tau)]=N\xi((\sigma,\tau)_N),\label{eqn:intro:spin glass covariance}\]
	where $\xi:[-1,1]\to \R$ is some fixed function. As before, it is a result of Schoenberg \cite{schoenbergSpheres} that for such a family to exist for all $N\ge 1$ it is necessary and sufficient that $\xi$ is an analytic function on $[-1,1]$ with positive coefficients. That is, there are coefficients $\beta_p\ge 0$ such that
	\[\xi(x)=\sum_{p=0}^{\infty}\beta_p^2 x^p,\label{eqn:xi-decomposition}\]
	and such that $\xi(1)<\infty$. We will assume as before that for some  sufficiently small $\epsilon>0$, $\xi(1+\epsilon)<\infty$, so that $\xi$ extends to an analytic function on $(-1-\epsilon,1+\epsilon)$.
	
	We then take a sequence of i.i.d copies of $(H_{N,x})_{x\in \Om}$. We define the spherical version of our Hamiltonian by taking, for $\b{u}\in S_N^{\Om}$
	\[\cal{H}_{N,\rm{Sph}}(\b{u})=\frac{1}{2}\sum_{x,y\in \Om}t \Delta_{xy}(\b{u}(x),\b{u}(y))+\sum_{x\in \Om}H_{N,x}(\b{u}(x))+\sq{N}h\sum_{x\in \Om}\b{u}_1(x).\label{eqn:def:intro-spherical-model}\]
	Associated with this, we may define as well the partition function
	\[Z_{N,\beta,\rm{Sph}}=\int_{S_N^{\Om}}e^{-\beta \cal{H}_{N,\rm{Sph}}(\b{u})}\omega(d\b{u}),\]
	where here $\omega(d\b{u})$ is the surface measure on $S_N^{\Om}$ induced by the inclusion $S_N^{\Om}\subseteq (\R^N)^{\Om}$. As before, in \cite{Paper1} we gave a computation for free energy $N^{-1}L^{-d}\log Z_{N,\beta,\rm{Sph}}$.
	
	\begin{theorem}[\cite{Paper1}, Theorem I.1.6]
		We have a.s. that
		\[
		\lim_{N\to \infty}N^{-1}L^{-d}\log Z_{N,\beta}=\lim_{N\to \infty}N^{-1}L^{-d}\E \log Z_{N,\beta}=F_{\Om,\mathrm{Sph}}(\beta,t),
		\]
		where here $F_{\Om,\mathrm{Sph}}(\beta,t)$ is an explicit deterministic quantity.
	\end{theorem}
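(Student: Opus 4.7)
The plan is to combine Gaussian concentration with a Guerra-Talagrand interpolation scheme adapted to the elastic coupling between sites. First I would separate almost sure convergence from convergence of the mean. Viewed as a function of the Gaussian disorder $(H_{N,x})_{x\in\Om}$, the map $N^{-1}L^{-d}\log Z_{N,\beta,\rm{Sph}}$ is Lipschitz: on the compact set $S_N^{\Om}$ the Gaussian field $\sum_x H_{N,x}(\b{u}(x))$ has variance bounded by $NL^d\xi(1)$, so by the Borell-TIS inequality
\[
\P\left(\left|N^{-1}L^{-d}\log Z_{N,\beta,\rm{Sph}}-N^{-1}L^{-d}\E\log Z_{N,\beta,\rm{Sph}}\right|>\epsilon\right)\le 2\exp(-cN\epsilon^2)
\]
for a constant $c=c(L,d,\beta,\xi)>0$. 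Borel-Cantelli then reduces the theorem to the convergence of the expected normalized log-partition function.

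For the limit of the expectation I would adapt the interpolation-based analysis developed for the single-site spherical spin glass by Talagrand and Chen. The genuinely new feature here is the elastic term $\frac{t}{2}\sum_{x,y}\Delta_{xy}(\b{u}(x),\b{u}(y))$, which couples configurations at distinct sites. I would diagonalize $-t\Delta$ in the Fourier basis of the discrete torus $\Om$, decoupling the quadratic interaction into $L^d$ independent modes, and then run a Guerra interpolation between the original Hamiltonian and a comparison model built from Ruelle Probability Cascades, using a single overlap order parameter $\zeta$ on $[-1,1]$ common to all sites. The $O(N)$-invariance of each $H_{N,x}$ together with replica exchangeability makes this single-$\zeta$ ansatz natural. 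Guerra's convexity identity then yields an upper bound on the free energy, and a matching lower bound would follow from an Aizenman-Sims-Starr cavity argument combined with Panchenko's ultrametricity and overlap synchronization theorems.

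The main obstacle will be the synchronization step. A priori the family of site overlaps $\{(\b{u}(x),\b{u}'(x))_N:x\in\Om\}$ need not be pointwise equal under the limiting Gibbs measure, so one must justify collapsing it to a single random variable in order for the single-$\zeta$ Parisi ansatz to be sharp. I would introduce a Ghirlanda-Guerra perturbation supported independently at each site and combine the resulting identities with the $O(N)$-invariance of the disorder to force all site overlaps to equal a common value. Once synchronization is established, the elastic term restricted to a fixed common overlap becomes a deterministic quadratic form whose Fourier diagonalization produces resolvent-type contributions involving $\log\det(uI-t\Delta)$, paralleling the structure of the Euclidean functional $\cal{P}_{\beta,q}$ of Theorem \ref{theorem:intro:main:critical points euclidean} and identifying the explicit deterministic quantity $F_{\Om,\mathrm{Sph}}(\beta,t)$.
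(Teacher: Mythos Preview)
This theorem is not proved in the present paper: it is quoted from the companion work \cite{Paper1} (Theorem I.1.6), so there is no argument here to compare against directly. More importantly, the formula for $F_{\Om,\mathrm{Sph}}(\beta,t)$ actually established in \cite{Paper1} is \emph{not} the single-$\zeta$ variational expression you are aiming for. As recalled in Theorem \ref{theorem:bad Euclidean}, it is an infimum over pairs $(\zeta,\b{\Phi})\in\widehat{\mathscr{Y}}$, where $\b{\Phi}:[0,1]\to[0,1]^{\Om}$ is a site-dependent reparametrization. The reduction to a single $\zeta$ (i.e.\ $\b{\Phi}=\b{I}$) is precisely the content of Theorem \ref{theorem:intro:main:sphere} of the present paper. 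Your plan therefore fuses Theorem I.1.6 and Theorem \ref{theorem:intro:main:sphere} into a single argument by baking synchronization into the interpolation.

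The synchronization step is where your outline is thin. Ghirlanda--Guerra identities plus $O(N)$-invariance will, via Panchenko-type arguments, make each site overlap $(\b{u}(x),\b{u}'(x))_N$ a deterministic nondecreasing function $\Phi_x$ of a common master overlap, but they do not by themselves force the functions $\Phi_x$ to coincide across $x\in\Om$. The papers sidestep this entirely: \cite{Paper1} proves the general $(\zeta,\b{\Phi})$ formula by interpolation (the $\Phi_x$ being exactly these a priori distinct synchronization maps), and then Section \ref{section:identification} of the present paper identifies them a posteriori, differentiating the already-established Parisi formula in the coefficients of $\xi_x$ (Lemmas \ref{lem:identification:trivial} and \ref{lem:identification:spherical parisi}) and invoking transitivity of $\Delta$ (Definition \ref{def:transitive}) to conclude that $(\Phi_x)_*\zeta$ is independent of $x$. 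Your Fourier diagonalization of $-t\Delta$ is correct but less powerful than you suggest: it decouples the elastic quadratic form but not the site-local disorder, so it does not reduce the model to $L^d$ independent one-site spin glasses; its role is limited to producing the resolvent quantities $K(\cdot;t)$ and $\Lambda^{-t\Delta}$ that you correctly anticipate.
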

	
	As in the case above, our main goal again is to make the formula for $F_{\Om,\mathrm{Sph}}(\beta,t)$ more tractable.
	
	Given a choice of $\zeta\in \mathscr{Y}(1):=\mathscr{Y}$, we define for $s\in [0,1]$
	\[\delta(s)=\int_s^1\zeta([0,u])du.\label{eqn:def:delta-A}\]
	Furthermore, let us take a choice of $q_*>0$ such that $\zeta([q_*,1])=0$. Given this data, we may define the functional
	\[\begin{split}
		\cal{B}_{\beta}(&\zeta)=\frac{1}{2}\bigg(\log \left(\frac{2\pi}{\beta}\right)+\beta h^2K(\beta \delta(0);t)^{-1}+ \beta(1-q_*)K(\beta(1-q_*);t)\\
		&-\frac{1}{L^d}\log (K(\beta(1-q_*);t)I-t\Delta)+ \int_{0}^{q_*}\beta K(\beta \delta(u);t)du+\beta^2\int_0^1 \zeta([0,u])\xi'(u)du\bigg).	
	\end{split}\]
	
	A similar remark as to Remark \ref{remark:beginning point doesn't matter part} implies the choice of $q_*$ doesn't affect the value of the function. Our first result here is a computation of the quenched free energy.
	
	\begin{theorem}[A New Spherical Parisi Formula]
		\label{theorem:intro:main:sphere}
		We have that
		\[F_{\Om}(\beta,t,\mu)=\inf_{\zeta\in \mathscr{Y}}\cal{B}_{\beta}(\zeta).\]
	\end{theorem}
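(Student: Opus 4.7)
The strategy is to start from the Parisi-type formula for $F_{\Om,\mathrm{Sph}}(\beta,t)$ established in \cite{Paper1} and reduce it algebraically to the infimum of $\cal{B}_\beta$. This parallels the proof of Theorem~\ref{theorem:intro:main:Euclidean}, specialized to the spherical setting, which is genuinely simpler: the constraint $\b{u}(x)\in S_N$ pins what corresponds to $q$, so the outer $\sup_q$ from Theorem~\ref{theorem:intro:main:Euclidean} is absent and only the infimum over $\zeta\in\mathscr{Y}$ survives.

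The central manipulation is a Legendre-type computation. Paper~I's formula contains an auxiliary variational parameter (call it $A$), and a block of the form $uA - \frac{1}{L^d}\log(AI - t\Delta)$ with $A>0$ has stationarity condition $\tr(AI-t\Delta)^{-1}=u$, which by definition of $K$ gives $A=K(u;t)$; evaluating at this value produces the expression $\beta(1-q_*)K(\beta(1-q_*);t) - \frac{1}{L^d}\log(K(\beta(1-q_*);t)I - t\Delta)$. The remaining terms follow similarly: the external field produces $\beta h^2 K(\beta\delta(0);t)^{-1}$ through a top-layer Gaussian integration, the integral $\int_0^{q_*}\beta K(\beta\delta(u);t)\,du$ arises from the nested Parisi layers using $(\Lambda^{-t\Delta})'(u) = K(u;t)$ (cf.\ Remark~\ref{remark:beginning point doesn't matter part}), and $\beta^2\int_0^1 \zeta([0,u])\xi'(u)\,du$ is essentially the Crisanti--Sommers piece, up to an integration by parts using $\delta'(u) = -\zeta([0,u])$.

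Once this reduction is in place, I would establish that the infimum is attained. Strict convexity of $\cal{B}_\beta$ in $\zeta$ follows from a standard Auffinger--Chen-style argument, combined with convexity of the new elastic block (which in turn follows from concavity of $u\mapsto K(u;t)$, itself the inverse of the convex resolvent $R_1(\,\cdot\,;t)$). Together with tightness and lower semicontinuity on $\mathscr{Y}$, this gives existence of a unique minimizer, and then the identification with the Paper~I formula reduces to verifying that the stationarity equation in $\zeta$ agrees in both formulations.

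The main obstacle I anticipate is controlling the boundary limit $q_*\uparrow 1$: both $\beta(1-q_*)K(\beta(1-q_*);t)$ and the log-determinant diverge individually while their sum remains finite. The spherical analogue of Remark~\ref{remark:beginning point doesn't matter part} handles this, ensuring that $\cal{B}_\beta$ extends continuously across the boundary and that the reduction from Paper~I's formula matches $\cal{B}_\beta$ uniformly. A secondary technical point, worth flagging but not expected to be hard, is justifying the exchange of the infimum with the various limits coming from the Paper~I discretization scheme; the monotonicity provided by convexity should make this routine.
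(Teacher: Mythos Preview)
Your proposal rests on a misidentification of what the Paper~I formula actually looks like and hence of what must be reduced. The formula from \cite{Paper1} (Theorem~I.1.6, recalled here as Theorem~\ref{theorem:bad Euclidean}) is not a Legendre transform in a scalar Lagrange multiplier $A$ waiting to be collapsed; that transform has already been performed and is encoded in the functions $\b{K}^D$ and $\Lambda^D$ appearing in the functional $\widehat{\cal{B}}_{D,\b{\xi},\b{h}}$. The genuine extra variational parameter is a path function $\b{\Phi}:[0,1]\to[0,1]^{\Om}$ (one coordinate per site, subject to the averaging constraint~(\ref{eq:condition for Psi-spherical})), and the task is to show that the infimum of $\widehat{\cal{B}}$ over all pairs $(\zeta,\b{\Phi})\in\widehat{\mathscr{Y}}$ is achieved when $\b{\Phi}$ is the identity map $\b{I}$. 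Since $\widehat{\cal{B}}_{-t\Delta,\xi,h}(\zeta,\b{I})=\cal{B}(\zeta)$ by a direct computation (Remark~\ref{remark: A and A hat}), this is exactly the content of the theorem. None of the steps you outline --- the Legendre stationarity in $A$, the boundary analysis at $q_*\uparrow 1$, the interchange of infimum with discretization limits --- address this reduction.

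The paper's actual argument is a symmetry-plus-identification argument, not an algebraic collapse. One first shows (Lemmas~\ref{lem:identification:trivial} and~\ref{lem:identification:spherical parisi}) that both the finite-$N$ free energy and the limiting Parisi infimum are convex and differentiable in the disorder coefficients $\b{\beta}_p(x)$, and that the derivative of the latter at any minimizer $(\zeta,\b{\Phi})$ picks out the moments of $(\Phi_x)_*\zeta$. Matching these to the Gibbs overlap moments (Corollary~\ref{corr:identification:simple}) forces, in the generic case, $(\Phi_x)_*\zeta$ to equal the limiting per-site overlap law. But by transitivity of $-t\Delta$ that law is the same for every $x\in\Om$, whence $\Phi_x(s)=s$ on $\supp(\zeta)$, and since $\widehat{\cal{B}}$ depends on $\b{\Phi}$ only through its values on $\supp(\zeta)$ (Lemma~\ref{lem:general:parisi doesn't depend away from support}) one may replace $\b{\Phi}$ by $\b{I}$. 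The non-generic case follows by density and continuity. Your proposal contains none of this machinery and would not close the gap between $\widehat{\mathscr{Y}}$ and $\mathscr{Y}$.
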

	
	We give this result in the form a minimizer here, as in the case of $L^d=1$, one may check that it coincides with the general formula for the limiting quenched free energy of the normal spherical spin glass model obtained in the even case by Talagrand \cite{talagrandOG} and the general case by Chen \cite{weikuo}. As in the case of \cite{talagrandOG}, we may also use the convexity of the functional to obtain a similar characterization of the unique minimizer.
	
	\begin{theorem}[Identification of the Overlap Distribution]
		\label{theorem:intro:parisi-measure:sphere}
		The functional $\cal{B}_{\beta}$ is strictly convex on $\mathscr{Y}$ with a unique minimizer, $\zetac$, which is the unique solution to the following equation:
		\[\zeta\left(\big\{s\in [0,1]:f_{\xi}(s)=\sup_{0<s'<1}f_{\xi}(s') \big\}\right)=1.\label{eqn:intro:minimization sphere}\]
		where for $s\in (0,1)$, we define
		\[f_{\beta,\xi}(s)=\int_0^s F_{\beta,\xi}(u)du,\;\; F_{\beta,\xi}(s)=-h^2K(\beta \delta(0);t)^{-2}K'(\beta \delta(0);t)+\xi'(s)+\int_0^s K'(\beta \delta(u);t)du.\]
		and $\delta$ is as in (\ref{eqn:def:delta-A}).
	\end{theorem}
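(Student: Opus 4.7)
The plan is to follow the standard two-step approach for Parisi-type functionals: first establish strict convexity of $\cal{B}_\beta$ on the convex set $\mathscr{Y}$ (which forces uniqueness of the minimizer), then derive the Euler--Lagrange condition by computing a directional derivative and performing a Stieltjes integration by parts. Existence of a minimizer is supplied, modulo standard lower semicontinuity and compactness considerations, by the variational identity of Theorem \ref{theorem:intro:main:sphere}.

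\emph{Strict convexity.} Fix distinct $\zeta_0,\zeta_1\in\mathscr{Y}$ and pick a common $q_*<1$ with $\zeta_i([q_*,1])=0$ for both $i$. Set $\zeta_s=(1-s)\zeta_0+s\zeta_1$, so $\delta_s(u)=(1-s)\delta_0(u)+s\delta_1(u)$ is affine in $s$. Along this path the boundary terms $\beta(1-q_*)K(\beta(1-q_*);t)$ and $-L^{-d}\log(K(\beta(1-q_*);t)I-t\Delta)$ are constants, and $\beta^2\int_0^1\zeta_s([0,u])\xi'(u)du$ is affine, so all second-order behavior sits in the remaining two $K$-terms. Differentiating the defining relation $R_1(K(u);t)=u$ twice gives $K''(u)=-R_1''(K(u))/R_1'(K(u))^3>0$, so $K(\cdot;t)$ is strictly convex. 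The spectral representation $R_1(v;t)=L^{-d}\sum_i(v-t\lambda_i)^{-1}$ with eigenvalues $\lambda_i\le 0$ yields
\[vR_1''(v)+2R_1'(v)=2tL^{-d}\sum_i\lambda_i(v-t\lambda_i)^{-3}\le 0,\]
which is equivalent to $(1/K)''\ge 0$, so the $h^2$-term $\beta h^2 K(\beta\delta(0);t)^{-1}$ contributes nonnegatively to $\partial_s^2\cal{B}_\beta(\zeta_s)$. The integral term contributes
\[\beta^3\int_0^{q_*}K''(\beta\delta_s(u))(\delta_1(u)-\delta_0(u))^2du,\]
which is strictly positive whenever $\zeta_0\ne\zeta_1$: vanishing of the integrand a.e.\ forces $\delta_0\equiv\delta_1$ on $[0,q_*]$ and hence, by differentiation, $\zeta_0=\zeta_1$ as measures on $[0,1]$. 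Strict convexity of $\cal{B}_\beta$ then gives uniqueness of $\zetac$.

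\emph{The stationarity equation.} For $\eta\in\mathscr{Y}$ and $\zeta_s=(1-s)\zetac+s\eta$, differentiate term by term at $s=0$ (writing $\delta=\delta_{\zetac}$ throughout). The $h^2$-term yields $-\beta^2 h^2 K(\beta\delta(0))^{-2}K'(\beta\delta(0))\int_0^{q_*}(\eta-\zetac)([0,v])dv$; the $K$-integral yields $\beta^2\int_0^{q_*}K'(\beta\delta(u))\int_u^{q_*}(\eta-\zetac)([0,v])dv\,du$ after using $(\eta-\zetac)([0,v])=0$ for $v\ge q_*$; and the $\xi'$-term yields $\beta^2\int_0^{q_*}(\eta-\zetac)([0,v])\xi'(v)dv$. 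Applying Fubini in the middle term and combining,
\[\frac{d}{ds}\cal{B}_\beta(\zeta_s)\Big|_{s=0}=\frac{\beta^2}{2}\int_0^{q_*}(\eta-\zetac)([0,v])\,F_{\beta,\xi}(v)\,dv.\]
A Stieltjes integration by parts, with boundary terms vanishing because $(\eta-\zetac)([0,q_*])=0$ and $f_{\beta,\xi}(0)=0$, converts the right-hand side into $\tfrac{\beta^2}{2}\bigl(\int f_{\beta,\xi}d\zetac-\int f_{\beta,\xi}d\eta\bigr)$. At the minimizer this is nonnegative for every admissible $\eta$; testing against $\eta=\delta_{s'}$ for arbitrary $s'\in(0,1)$ forces $\int f_{\beta,\xi}d\zetac=\sup_{0<s'<1}f_{\beta,\xi}(s')$, which is precisely (\ref{eqn:intro:minimization sphere}). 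Conversely, any $\zeta$ satisfying (\ref{eqn:intro:minimization sphere}) makes the directional derivative nonnegative for every $\eta$, so by convexity $\zeta$ is a global minimizer, and strict convexity then pins it down to $\zetac$.

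The most delicate step is the source of strictness in the convexity argument: since $1/K$ is only weakly convex (and becomes linear when $t=0$ or $L^d=1$), strictness must come entirely from the integral term, and one must verify that vanishing of its second variation genuinely forces measure-equality of $\zeta_0,\zeta_1$ rather than merely a.e.\ equality of their cumulative distribution functions. A secondary technicality is performing the Stieltjes integration by parts cleanly when $\zetac$ places an atom at $0$, handled by passing to the left-continuous representative and exploiting $f_{\beta,\xi}(0)=0$.
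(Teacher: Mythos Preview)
Your proposal is correct and follows essentially the same approach as the paper: both arguments isolate the two nonlinear terms, establish convexity of $K(\cdot;t)^{-1}$ and strict convexity of the integral $\int_0^1 K(\beta\delta(u);t)\,du$ (with strictness coming from the fact that $\int_u^1(\zeta_1-\zeta_0)([0,s])\,ds\equiv 0$ forces $\zeta_0=\zeta_1$), and then compute the directional derivative and integrate by parts to obtain the support condition. The only cosmetic difference is that you verify $(1/K)''\ge 0$ via the spectral identity $vR_1''(v)+2R_1'(v)=2tL^{-d}\sum_i\lambda_i(v-t\lambda_i)^{-3}\le 0$, whereas the paper reaches the same conclusion through the trace inequality $\tr((K-t\Delta)^{-2})\ge K\tr((K-t\Delta)^{-3})$.
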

	
	Finally, we have a partial result which characteristics $\zetac$ in terms of the model. Unfortunately, we will need an additional assumption on $\xi$ in this case to obtain this characterization, as indeed, even in the one-site case (i.e. when $|\Om|=1$), a general characterization is not known.
	
	The condition we will need is the following: a mixing function $\xi$ is generic if
	\[\text{span}\{x^p:p\in \N,\text{ such that }\beta_p\neq 0\}\label{def:generic}\]
	is dense in $C([-1,1],\R)$ with respect to the uniform topology. Under this assumption, we obtain a version of Theorem \ref{theorem:intro:main:sphere}, which additionally provides an interpretation of the Parisi measure in terms of the Gibbs measure. In this case, this is given by
	\[\<f(\b{u})\>_{\rm{Sph}}=\frac{1}{ Z_{N,\beta,\rm{Sph}}}\int_{S_N^{\Om}}e^{-\beta \cal{H}_{N,\mathrm{Sph}}(\b{u})}\omega(d\b{u}).\]
	Essentially, we again find that $\zetac$ encodes the law of the limiting overlap distribution. 
	
	\begin{theorem}
		\label{theorem:intro:main:sphere-generic}
		Then if $\xi$ is generic, then we have for any continuous function $f:[-1,1]\to \R$ and $x\in \Om$ that
		\[\lim_{N\to \infty}\E\<f((\b{u}(x),\b{u}'(x))_N)\>_{\rm{Sph}}=\int_0^1 f(r)\zeta_c(dr).\]
	\end{theorem}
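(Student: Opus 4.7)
The plan is to follow the standard Panchenko--Talagrand perturbation strategy, using uniqueness of the Parisi minimizer (Theorem \ref{theorem:intro:parisi-measure:sphere}) as the essential input and promoting moment identities to weak convergence via the generic hypothesis. First I would perturb the mixing function, setting $\xi^{(\epsilon)} := \xi + \epsilon \xi_1$ for a monomial $\xi_1(x) = x^p$ with $\beta_p \neq 0$, which remains a valid spherical mixing for all sufficiently small $|\epsilon|$. Applying Gaussian integration by parts to the i.i.d.\ disorder fields $H_{N,x}$, and using the translation invariance of $\Om$ to invoke site exchangeability, I expect to derive
\[
\frac{d}{d\epsilon}\Big|_{\epsilon=0}N^{-1}L^{-d}\E\log Z_{N,\beta,\mathrm{Sph}}^{(\epsilon)} = \frac{\beta^2}{2}\left(\xi_1(1) - \E\langle \xi_1((\b{u}(x),\b{u}'(x))_N)\rangle_{\mathrm{Sph}}\right)
\]
for any fixed $x \in \Om$. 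On the variational side, Theorem \ref{theorem:intro:main:sphere} gives $F_{\Om,\mathrm{Sph}}(\xi^{(\epsilon)}) = \inf_{\zeta}\cal{B}_{\beta}(\zeta; \xi^{(\epsilon)})$, and since $\cal{B}_\beta$ depends on $\xi$ only through the affine-in-$\xi$ term $\tfrac{\beta^2}{2}\int_0^1 \zeta([0,u])\xi'(u)\,du$, Theorem \ref{theorem:intro:parisi-measure:sphere} combined with Danskin's envelope theorem yields
\[
\frac{d}{d\epsilon}\Big|_{\epsilon=0}F_{\Om,\mathrm{Sph}}(\xi^{(\epsilon)}) = \frac{\beta^2}{2}\int_0^1 \zetac([0,u])\xi_1'(u)\,du = \frac{\beta^2}{2}\left(\xi_1(1) - \int_0^1 \xi_1(r)\,\zetac(dr)\right),
\]
the second equality by Fubini.

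Next, I would match the two derivatives by exchanging the $N \to \infty$ limit with $d/d\epsilon$. The finite-$N$ function $\epsilon \mapsto N^{-1}L^{-d}\E\log Z_N^{(\epsilon)}$ is concave in $\epsilon$ (a standard consequence of the independent Gaussian decomposition $H_{N,x}^{(\epsilon)} \disteq H_{N,x}^{(0)} + \sqrt{\epsilon}\,G_{N,x}$), and its pointwise limit $F_{\Om,\mathrm{Sph}}(\xi^{(\epsilon)})$ is concave as an infimum of affine-in-$\epsilon$ functions and differentiable at $\epsilon = 0$ by the envelope calculation above. The standard convergence theorem for one-sided derivatives of pointwise-convergent concave functions at points of differentiability of the limit then yields
\[
\lim_{N\to\infty}\E\langle \xi_1((\b{u}(x),\b{u}'(x))_N)\rangle_{\mathrm{Sph}} = \int_0^1 \xi_1(r)\,\zetac(dr)
\]
for every monomial $\xi_1(x) = x^p$ with $\beta_p \neq 0$, hence by linearity for every element of the span in (\ref{def:generic}). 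By the generic assumption this span is dense in $C([-1,1],\R)$; since the overlap lies in $[-1,1]$ and $\zetac$ is supported in $[0,1]$, the identity extends to all continuous $f$, giving the claimed weak convergence.

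The hard part will be making rigorous the exchange of limits: one needs both the concavity of the finite-$N$ free energy in $\epsilon$ (clean but requiring a Gaussian interpolation argument of Guerra--Toninelli type for the present geometry on $\Om$) and the differentiability of the limit at $\epsilon = 0$, which is precisely what forces us to invoke uniqueness from Theorem \ref{theorem:intro:parisi-measure:sphere} rather than attempt a direct identification. A more robust alternative, if the concavity step in this specific elastic-manifold-on-$\Om$ setting proved troublesome, would be to add a small Hamiltonian perturbation generating the Ghirlanda--Guerra identities, and then combine the convergence of the free energy with GG to identify the limit overlap with $\zetac$; but given the strict uniqueness from Theorem \ref{theorem:intro:parisi-measure:sphere}, the envelope-theorem route above is the most transparent.
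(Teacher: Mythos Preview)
Your approach is essentially the paper's: perturb the mixing function, differentiate using Gaussian integration by parts on one side and an envelope argument on the other, interchange limit and derivative via convexity, and conclude by genericness. The paper packages this as Lemmas \ref{lem:identification:trivial}--\ref{lem:identification:spherical parisi} and Corollary \ref{corr:identification:simple}, working with the larger inhomogeneous functional $\widehat{\cal{B}}_{D,\b{\xi},\b{h}}$ so as to derive Theorems \ref{theorem:intro:main:sphere} and \ref{theorem:intro:main:sphere-generic} simultaneously in the generic case, whereas you take Theorems \ref{theorem:intro:main:sphere} and \ref{theorem:intro:parisi-measure:sphere} as given and work with $\cal{B}_\beta$ directly; both routes are fine.

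One step would fail as written: your concavity claim. From the decomposition $H^{(\epsilon)} \disteq H^{(0)} + \sqrt{\epsilon}\,G$ you get convexity of the free energy in $\sqrt{\epsilon}$ (H\"older), not concavity in $\epsilon$; a convex function of $\sqrt{\epsilon}$ need not be concave in $\epsilon$ (take $t\mapsto t^4$), and in fact the annealed approximation $\tfrac{\beta^2}{2}\xi^{(\epsilon)}(1)$ is affine in $\epsilon$, so there is no universal sign. The paper sidesteps this by parametrizing with $\beta_p$ rather than the covariance coefficient $\beta_p^2$: the finite-$N$ free energy is convex in $\beta_p$ by H\"older (Lemma \ref{lem:identification:trivial}), the limit is convex as a pointwise limit of convex functions, and differentiability of the limit follows not from uniqueness but from a sandwich using \emph{any} minimizer (Lemma \ref{lem:identification:spherical parisi}: the one-sided derivatives of the infimum are trapped above and below by the derivative of the functional at any fixed minimizer, and convexity forces equality). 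With that reparametrization your argument goes through unchanged; your use of uniqueness from Theorem \ref{theorem:intro:parisi-measure:sphere} plus Danskin is then simply an alternative to the paper's sandwich argument for the same differentiability conclusion.
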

	
	Finally, we have a similar characterization of the RS-phase in this case, though in this case the results are less explicit.
	
	\begin{theorem}
		\label{theorem:Spherical RS description}
		If the model is RS for some choice of $\beta$ then the free energy is given by
		\[
		\begin{split}
			\frac{1}{2}\inf_{q\in [0,1)}\bigg(&\log\left(\frac{2\pi}{\beta}\right)+\beta h^2K(\beta (1-q);t)^{-1}+\beta K(\beta(1-q);t)\\
			&-\frac{1}{L^d}\log \det(K(\beta(1-q)-t\Delta))+\beta^2(\xi(1)-\xi(q))\bigg),
		\end{split}
		\]
		and the Parisi measure is given by $\zetac=\delta_{q_*}$, where $q_*$ is the minimizer of this expression.
		\indent
		Moreover, a model is RS if and only if the minimizer $q_*$ satisfies
		\[\sup_{q_*\le s\le 1}g_\beta(s)=g_\beta(q_*),\label{eqn:intro:min equation RS for sphere}\]
		where here 
		\[\begin{split}
			g_\beta(s)=\beta^2 \xi(s)+\beta(1-s)K(\beta(1-s);t)-\frac{1}{L^d}\log(K(\beta(1-s);t)-t\Delta)\\-s\left(\beta^2\xi'(q_*)-\beta K(\beta(1-q_*);t\right).
		\end{split}\]
	\end{theorem}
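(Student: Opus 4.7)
The argument splits naturally into three stages, leveraging the strict convexity of $\cal{B}_\beta$ on $\mathscr{Y}$ from Theorem \ref{theorem:intro:parisi-measure:sphere}.

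\emph{Stage 1 (RS free energy formula).} I would evaluate $\cal{B}_\beta$ at a generic Dirac mass $\delta_q$ with $q\in[0,1)$. Using the spherical analog of Remark \ref{remark:beginning point doesn't matter part}, which lets the cutoff $q_{**}$ be taken arbitrarily close to $q$ from above, the terms $\beta(1-q_{**})K(\beta(1-q_{**});t)$ and $\int_0^{q_{**}}\beta K(\beta\delta(u);t)du$ combine (using $\delta(u) = 1-q$ for $u < q$) into $\beta K(\beta(1-q);t)$, while $\beta^2\int_0^1 \zeta([0,u])\xi'(u)du$ collapses to $\beta^2(\xi(1) - \xi(q))$. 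This produces the displayed expression for $\cal{B}_\beta(\delta_q)$, and in the RS case strict convexity forces the unique Parisi measure $\zetac$ to coincide with $\delta_{q_*}$ where $q_*$ is the 1D minimizer of $q\mapsto \cal{B}_\beta(\delta_q)$.

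\emph{Stage 2 (the key identity on $[q_*, 1]$).} By Theorem \ref{theorem:intro:parisi-measure:sphere}, $\delta_{q_*}$ is the global minimizer of $\cal{B}_\beta$ if and only if $q_*$ maximizes $f_{\beta,\xi}$ on $(0, 1)$, where $f_{\beta,\xi}$ is computed at $\zeta = \delta_{q_*}$. Differentiating the free energy expression from Stage 1 in $q$ and using $R_1(K(v;t);t) = v$ to cancel the $K'$-terms arising from $\beta K$ and $L^{-d}\log\det$, the 1D stationarity at $q_*$ becomes the scalar equation $F_{\beta,\xi}(q_*) = 0$. The key identity to establish is that for $s \in [q_*, 1]$,
\[\beta^2\bigl(f_{\beta,\xi}(s) - f_{\beta,\xi}(q_*)\bigr) = g_\beta(s) - g_\beta(q_*).\]
I would verify this by computing $g_\beta''(s) = \beta^2(\xi''(s) + K'(\beta(1-s);t)) = \beta^2 F_{\beta,\xi}'(s)$ for $s \geq q_*$ (again using the $R_1\circ K$ cancellation) and integrating twice, the two boundary values at $s = q_*$ vanishing by stationarity of $F_{\beta,\xi}$ and $g_\beta$. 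This reduces condition (\ref{eqn:intro:min equation RS for sphere}) to $\sup_{s \in [q_*, 1]} f_{\beta,\xi}(s) = f_{\beta,\xi}(q_*)$.

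\emph{Stage 3 (automatic control on $[0, q_*]$).} The subtlest step, which I anticipate as the main obstacle, is to show that (\ref{eqn:intro:min equation RS for sphere}) also implies $f_{\beta,\xi}(s) \leq f_{\beta,\xi}(q_*)$ for $s \in [0, q_*]$. For $s \leq q_*$ one has $F_{\beta,\xi}'(s) = \xi''(s) + K'(\beta(1-q_*);t)$, and by (\ref{eqn:xi-decomposition}) the function $\xi''(s) = \sum_{p \geq 2}p(p-1)\beta_p^2 s^{p-2}$ is non-decreasing on $[0, 1]$ (its derivative has coefficients $p(p-1)(p-2)\beta_p^2 \geq 0$). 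Condition (\ref{eqn:intro:min equation RS for sphere}) combined with $g_\beta'(q_*) = 0$ yields the second-order necessary inequality $g_\beta''(q_*) \leq 0$, which via $\beta^2 F_{\beta,\xi}'(q_*) = g_\beta''(q_*)$ gives $F_{\beta,\xi}'(q_*) \leq 0$, and hence $F_{\beta,\xi}'(s) \leq 0$ throughout $[0, q_*]$. Together with $F_{\beta,\xi}(q_*) = 0$ this forces $F_{\beta,\xi} \geq 0$ on $[0, q_*]$, so $f_{\beta,\xi}$ is non-decreasing there and $f_{\beta,\xi}(s) \leq f_{\beta,\xi}(q_*)$, completing the reduction.
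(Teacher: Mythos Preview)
Your proposal is correct and follows essentially the same three-step structure as the paper's own proof: evaluate $\cal{B}_\beta$ on Dirac masses (the paper's Lemma \ref{lem:larkin and minimization:RS evaluation for fixed q}), relate $f_{\beta,\xi}$ to $g_\beta$ on $[q_*,1]$ via the stationarity relation $F_{\beta,\xi}(q_*)=0$, and use monotonicity of $\xi''$ together with $F_{\beta,\xi}'(q_*)\le 0$ to handle $[0,q_*]$. The only cosmetic differences are that the paper verifies $g_\beta(s)-g_\beta(q_*)=\beta^2\int_{q_*}^s F_\beta$ by simplifying $F_\beta(s)-F_\beta(q_*)$ directly and integrating once (rather than matching second derivatives and integrating twice), and that the paper phrases Stage 3 as ``$f_\beta$ is concave on $[0,q_*]$ with $f_\beta'(q_*)=0$'' rather than ``$F_{\beta,\xi}\ge 0$ so $f_{\beta,\xi}$ is nondecreasing''; these are equivalent. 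One small point you glossed over: the paper explicitly checks $\partial_s\cal{B}_\beta(\delta_s)|_{s=0}\le 0$ to ensure the one-dimensional minimizer is a genuine critical point, which is what justifies invoking stationarity $F_{\beta,\xi}(q_*)=0$ in your Stage 2.
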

	
	\iffalse
	\begin{remark}
		\label{remark:simple relation between A and P}
		The functional $\cal{P}_{\beta,q}$ can essentially be related to the functional $\cal{B}_{\beta,\xi}$ with the choice of mixing function $B_q(r)=B(2q(1-r))$. To state this, let us emphasis for the moment that $\cal{B}_{\beta,\xi}$ depends on the elastic parameter $t$, and so write $\cal{B}_{\beta,\xi,t}$ for the moment. Now for $\zeta^q\in \mathscr{Y}$, and define $\zeta\in \cal{X}=\mathscr{Y}$ by letting $\zeta([0,s])=\zeta^q([0,qs])$. Then we have that
		\[\cal{P}_{\beta,q}(\zeta)=\cal{B}_{\beta,B_q,tq}(\zeta^q)-\frac{1}{2}\log(q)-\frac{\mu q}{2}.\]
		In particular we have that
		\[\mathscr{P}_{\beta}(q)=\mathscr{B}_{\beta,tq}(B_q)-\frac{1}{2}\log(q)-\frac{\mu q}{2}.\]
	\end{remark}
	\fi
	
	\subsection{Overview}
	
	We now provide an outline of our proofs, as well as a structural overview of the paper. The core of our method can be explained in how we show that the expression for $F_{\Om}(\beta,t,\mu)$ in Theorem \ref{theorem:intro:old euclidean result} is equal to the expression given in Theorem \ref{theorem:intro:main:Euclidean}. The reason for this is that in the course of proving this result, the methods we employ can be generalized to show most of the other main results.
	
	To outline this method, we need to discuss the expression for $F_{\Om}(\beta,t,\mu)$ derived in Theorem \ref{theorem:intro:old euclidean result}. The exact version of this is recalled below in Theorem \ref{theorem:paper 1:euclidean result}. However, to explain the key improvement, we only need to recall that it expresses the limiting free energy as an optimization problem of the form
	\[F_{\Om}(\beta,t,\mu)=\sup_{\b{q}\in (0,\infty)^{\Om}}\left(\inf_{(\zeta,\b{\Phi})\in \widehat{\mathscr{Y}}(\b{q})}\widehat{\cal{P}}_{\b{q}}(\zeta,\b{\Phi})\right).\]
	The functional $\widehat{\cal{P}}_{\b{q}}(\zeta,\b{\Phi})$ is not important now, as our main focus is on the domains of these optimizations. Already the supremum is over $\b{q}\in (0,\infty)^{\Om}$, a space of dimension $L^d$. Moreover, the infimum is over pairs $(\zeta,\b{\Phi})$, and while $\zeta\in \mathscr{Y}(q_{av})$ where $q_{av}$ is the average of the values of $\b{q}\in (0,\infty)^{\Om}$, the other parameter is a continuous function $\b{\Phi}:[0,q_{av}]\to \prod_{x\in \Om}[0,\b{q}(x)]$, subject to some conditions. In particular, both domains we are optimizing over are enormous, and growing in $L$.
	
	In effect, Theorem \ref{theorem:intro:main:Euclidean} shows that we may obtain the same value for $F_{\Om}(\beta,t,\mu)$ by restricting the supremum to values for which $\b{q}$ is the constant vector, and the infimum to the case where $\b{\Phi}$ is the identity. Not only does this shrink the domain of optimization massively to a now $L$-independent domain, but when evaluated on this domain, the expression for $\hat{\cal{P}}_{\beta,\b{q}}$ simplifies significantly to the $\cal{P}_{\beta,q}(\zeta)$ given above.
	
	We perform this restriction in two steps. In the first, we show that if we assume that $\b{q}\in (0,\infty)^{\Om}$ is a constant vector, then we may restrict the infimum to the case where $\b{\Phi}$ is the identity. We will show that this effectively a special case of the same question for the spherical model. In particular, this restriction follows directly from our formula for the spherical model, Theorem \ref{theorem:intro:main:sphere}.
	
	This is done in Section \ref{section:identification}, which roughly provides the proofs for Theorem \ref{theorem:intro:main:sphere},  as well as Theorems \ref{theorem:intro:parisi-measure:sphere} and \ref{theorem:intro:main:sphere-generic}. For the first theorem, we use a continuity argument in $\xi$ to reduce ourselves to the generic case. For such models, the gradient of the Parisi functional in the coefficients of $\b{\xi}$ provide sufficient information to identity any measure which is an minimizer of $\widehat{\cal{B}}$, the spherical version of $\widehat{\cal{P}}_{\b{q}}$. However, when combined with the inherent symmetry of the model, all of these derivatives coincide, which essentially forces the minimizer to be symmetric. This simultaneously shows the first two theorems. The final theorem then is an exercise in the variational calculus.
	
	The next step in showing Theorem \ref{theorem:intro:main:Euclidean}, is to show that the supremum over $\b{q}\in (0,\infty)^{\Om}$ occurs on a constant vector. This we do over two sections. First, we have Section \ref{section:minimization}, which establishes the existence of minimizers of $\widehat{\cal{B}}$, as well as a set of formulas obeyed by them. The method used here is just a mixture of variational calculus and analysis of the resulting minimization equations. These results immediately give formulas for the minimizers of $\widehat{\cal{P}}_{\b{q}}$ as well. 
	
	Next, in Section \ref{section:concavity}, we complete the proof of Theorem \ref{theorem:intro:main:Euclidean} and Theorem \ref{theorem:intro:main:critical points euclidean} as well. The main result is Proposition \ref{prop:concavity:unique concavity in q}, which shows that the  $\b{q}\mapsto \inf_{(\zeta,\b{\Phi})\in \mathscr{Y}(\b{q})}\cal{P}_{\beta,\b{q}}(\zeta,\b{\Phi})$ is concave and has a unique solution. The main tool are some identifications between subsets of the domains $\mathscr{Y}(\b{q})$ as one varies $\b{q}$. Employing these identifications, the functional $\cal{P}_{\beta,\b{q}}(\zeta,\b{\Phi})$ is essentially linear in the data of $(\zeta,\b{\Phi})$. The main difficulty is showing that we may restrict the domains for the minimization of $\cal{P}_{\beta,\b{q}}(\zeta,\b{\Phi})$, locally around a given point, to use these identifications uniformly. For this, we use the identities satisfied by the minimizers obtain in Section \ref{section:minimization}. This is enough to establish concavity, but not strict concavity, so we are left with the problem of showing there is unique maximizing $\b{q}$. For this we use the convexity to show that the set of maximizers forms a non-empty convex set. Thus if there is not a unique maximizer, this set will contain a line of maximizers. Differentiating along this line, and using the above identifications, this implies an additional equation which contradicts the equations for minimizers established before.
	
	Now we have Section \ref{section:euclidean proofs}, which establishes Theorem \ref{theorem:intro:main:Euclidean parameters identification}. After reducing to the case of $h=0$, the proof of (\ref{eqn:intro:main:Euclidean parameters identification:radius}), which identifies $\qc$ in terms of the effective squared radius, essentially follows from uniqueness of the maximizer of the previous section, as we show that one may identify $\inf_{(\zeta,\b{\Phi})\in \mathscr{Y}(\b{q})}\cal{P}_{\beta,\b{q}}(\zeta,\b{\Phi})$ with the contribution to the partition function from configurations whose squared radii are roughly $\b{q}$. However (\ref{eqn:intro:main:Euclidean parameters identification:overlap}) is much more complicated. Morally, this result should follow in the same way one obtains the spherical result, Theorem \ref{theorem:intro:main:sphere-generic}, as in view of (\ref{eqn:intro:main:Euclidean parameters identification:overlap}) one may imagine that Gibbs measure is essentially concentrated on a product of spheres of radius $\sqrt{\qc}$. If this were true, you could use the fact that the restricted Hamiltonian turns out to be a generic spin glass, and obtain it from Theorem \ref{theorem:intro:main:sphere-generic}. However, justifying this replacement is beyond our means. Instead, we show that we may replace the Gibbs measure with one supported on thin annuli of radius roughly $\sqrt{\qc}$. When this radius is small enough, we may attempt to mimic the proof of Section \ref{section:identification} by finding certain centered Gaussian fields on these annuli and correlate them to our restricted field. Once we produce enough of these, and send the thickness to zero, we are able to proceed in the same spirit as Section \ref{section:minimization}.
	
	Finally, we complete the paper with Section \ref{section:parisi-larkin}, which establishes our results for the RS-regime. In particular, it is where we prove Theorems \ref{theorem:Euclidean RS description},  \ref{theorem:intro:zero:temperature larkin}, \ref{theorem:larkin-mass} and \ref{theorem:Spherical RS description}. These follow from direct analysis of the saddle point equations which specify $(\qc,\zetac)$, and in particular, is essentially self-contained.
	
	\subsection{History and Related Works}
	
	We will now review some additional related works, beginning with those involving the one-site case (i.e. when $|\Om|=1$), where much more is known. Starting with the spherical model (\ref{eqn:def:intro-spherical-model}), an expression for the quenched free energy given by Crisanti and Sommers \cite{crisantisommersOG}, employing the replica-symmetry breaking scheme pioneered by Parisi \cite{parisiOG} compute the quenched free energy of Ising spin glass model. A rigorous derivation of these formulas in the even case was obtained by Talagrand \cite{talagrandOG, talagrandIsingOG}, building upon interpolation method used by Guerra to obtain the upper bound in the Ising case \cite{guerraOG}. The general Ising case was later obtained by Panchenko \cite{panchenkoUnipartite}, with the general spherical case soon after being obtained by Chen \cite{weikuo}.
	
	We also mention that a number of computations for the free energy of different multi-site models have recently been obtained. Many of these rely on the foundational work of Panchenko \cite{panchenkoms}, who computed the free energy of the multi-species Ising spin glass model in the convex case. Following this, Bates and Sohn \cite{erik,erikCS} computed the free energy of the multi-species spherical spin glass model, again in the convex case. We also mention the work of Ko \cite{justin}, which computes the free energy in the spherical case with contained overlaps.
	
	As we have mentioned above, another approach to studying this model is through its topological complexity. The computation of the annealed complexity of the elastic manifold model done by Fyodorov and Le Doussal \cite{fyodorov-manifold,fyodorov-manifold-minimum}, and later made rigorous through the work of Bourgade, McKenna, and the first author \cite{gerardbenpaul,gerardbenpaulCompanionPaper}. The works \cite{fyodorov-manifold,fyodorov-manifold-minimum} also build on related annealed complexity computations of Fyodorov et. al. \cite{Fyo04, FS07, FB08,fyodorovepointinabox}. Many of these results were later made rigorous, and extended, by Auffinger and Zeng \cite{tucazeng1,tucazeng2}.
	
	%This approach was pioneered by Fyodorov in the case of pure one-site spherical models \cite{topology3}, and indeed is the method used by \cite{xuzengelasticmanifold} to obtain their formula. This approach has since been generalized to mixed one-site spherical models by Belius, \v{C}ern\'{y}, Nakajima and Schmidt \cite{topology4}, and finally to the case of multi-species spherical models by Huang and Selke \cite{topologyms}.
	%Returning more to related work on complexity, the work \cite{FW07} actually studies a more general model than (\ref{eqn:def:main-model-D}) where the confining term $\frac{\mu}{2}\|x\|^2$ is related by a more general term $NU\l(\frac{1}{2}\|x\|^2_N\r)$ where $U$ is an increasing and convex function. Another common generalization is to instead weaken the condition of isotropy of $V_N$ to assume that $V_N$ only possesses isotropic increments. These, and similar models, were studied extensively by Fyodorov et. al. in \cite{FS07, FB08,fyodorovepointinabox}, many of whose results were later confirmed and extended by Auffinger and Zeng \cite{tucazeng1,tucazeng2}.
	
	Much more is known about the complexity in the one-site spherical case. In particular, the annealed complexity in this case was computed in works by Auffinger, \v{C}ern\'{y}, and the first author \cite{tucapure,tucamixed}. These results match those obtained earlier in physics literature \cite{complexityOG,complexityminimaOG}, for which they correspond to the zero-temperature case of a more general computation for the complexity of TAP states. 
	
	However, outside of the topological trivialization regime, computation of the annealed complexity only yields a lower bound on the ground state energy. Moreover, this bound is known not to be tight in general (see \cite{chensen}). So in general, to obtain the correct bound, one must compute the quenched complexity. A remarkable result of Subag \cite{subag}, later extended by Subag and Zeitouni \cite{subagofer}, that in the case of the pure $p$-spin glass model, the annealed and quenched complexity coincide. This was further extended to the quenched complexity of low-lying local minima for a certain class of $1$RSB mixed spin glass models by Subag, Zeitouni, and the first author in \cite{subag1RSB}. Not only does this lead to a computation of the ground state energy, but such results have led to a more complete understanding of the geometry of the low-temperature Gibbs measure, as shown in follow-up works by Subag and Zeitouni \cite{pspin-second-application1, pspin-second-application2}. In addition, these have played an important role in studying the associated Langevin dynamics, as in the work of Gheissari and Jagannath \cite{pspin-second-application3}. In addition, the work of Jagannath and the first author \cite{aukoshgerardshattering} discusses further ramifications of this study to concepts such as shattering and metastability.
	
	However, outside of these special cases, such results are scarce, with the only full extension being to the non-gradient generalization of the spherical pure $p$-spin model \cite{kivimae-non-gradient}, some partial results on the spherical bipartite model \cite{kivimae-bipartite}, both by the second author, and a general result for the total local minima of the mixed spin glass model by Belius and Schmidt \cite{david}, though with no control on their energy values. In all cases, one simply shows that the annealed and quenched complexity coincide, as no rigorous method to compute the quenched complexity outside of this case is known.
	
	Finally, we mention that identification of the parameters arising in Parisi-type variational formulas has also been studied in a number of spin glass models. In one-site models, this was approached in the works of Talagrand \cite{talagrand-differential} and Panchenko \cite{panchenko-differential}. Moreover, recent work by Bates and Sohn \cite{InvariantOptimizer1} used these methods of identification to simplify the Parisi-formula for the balanced Potts spin glass model. Following this was a recent result of Issa \cite{InvariantOptimizers2} which did the same for a symmetric version of the multi-species spin glass model. With an added genericness assumption, both of these works also allow one to identify this measure with an appropriate limiting overlap distribution. In addition, both use a method similar to the one we employ in Section \ref{section:identification} to simplify a symmetric multi-site variational formula to a single parameter.

	\section{Proof of Theorems \ref{theorem:intro:main:sphere}, \ref{theorem:intro:parisi-measure:sphere}, and \ref{theorem:intro:main:sphere-generic} \label{section:identification}}
	
	In this section we establish our simplified formula for the spherical model, and in particular give the proofs for Theorems \ref{theorem:intro:main:sphere}, \ref{theorem:intro:parisi-measure:sphere}, and \ref{theorem:intro:main:sphere-generic}. This establishes all of our results for the spherical model except for Theorem \ref{theorem:Spherical RS description}, which describes the model's RS phase. Before proceeding, we note that it is clear that it suffices to demonstrate our results in the case of $\beta=1$, which we will assume for the remainder of the section. In addition, we will write $Z_{N,\mathrm{Sph}}:=Z_{N,1,\mathrm{Sph}}$ and $\cal{B}:=\cal{B}_1$.
	
	We now outline the main steps in our method. To prove Theorem \ref{theorem:intro:main:sphere}, we will first only work in the case of generic $\xi$ (obtaining the general case later by a continuity result). We then recall a more general inhomogeneous Parisi functional, which gives the free energy for a more general class of inhomogeneous models recalled below. We show, though, that if one differentiates the infimum of this Parisi functional in the model parameters, then one can obtain certain statistics of any minimizer of the Parisi functional. When the model is generic, these are enough to uniquely determine the minimizer. 
	
	Moreover, the infimum of this Parisi functional is the limit as $N\to \infty$ of these models average free energy. Using this, we are able to relate the statistics of a minimizer to the limits of certain derivatives of the average free energy, which we relate to certain statistics of the Gibbs measure. As these will obviously be independent of $x\in \Om$ for the original spherical model considered above, this shows that the unique minimizer of the Parisi functional must also be independent of $x\in \Om$. This is enough to demonstrate Theorems \ref{theorem:intro:main:sphere} and \ref{theorem:intro:main:sphere-generic}, and in effect does so simultaneously in the generic case. After this all that is left is to Theorem \ref{theorem:intro:parisi-measure:sphere}, which is followed by a direct computation.
	
	For one-site models, differentiability of the infimum of the Parisi functional goes back to work of Talagrand \cite{talagrand-differential}. This was followed by work of Panchenko \cite{panchenko}, who used a similar method as outlined above to show the one-site version of Theorem \ref{theorem:intro:main:sphere-generic}. We essentially follow their line of reasoning, however in our case it allows us to simplify our formula considerably in the generic case, and in the general case as well by continuity. 
	
	We now recall some results and notations from our companion paper \cite{Paper1}. Let us fix a finite set $\Om$ and a positive semi-definite matrix $D\in \R^{\Om\times \Om}$. Next, take a collection, $\b{\xi}=\{\xi_x\}_{x\in \Om}$, such that each $\xi_x$ is a mixing function as in (\ref{eqn:xi-decomposition}), and let $\{H_{N,x}\}_{x\in \Om}$ be an independent family of functions with $H_{N,x}$ associated to the mixing function $\xi_x$ as in (\ref{eqn:intro:spin glass covariance}). Finally, let us fix a vector of external field strengths $\b{h}\in \R^{\Om}$. We then define a Hamiltonian, generalizing (\ref{eqn:def:intro-spherical-model}) by 
	\[\cal{H}_{N,D,\b{\xi},\b{h},\rm{Sph}}(\b{u})=\frac{1}{2}\sum_{x,y\in \Om}D_{xy}(\b{u}(x),\b{u}(y))+\sum_{x\in \Om}H_{N,x}(\b{u}(x))+\sq{N}\sum_{x\in \Om}\b{h}(x)\b{u}_1(x).\label{eqn:def:general-spherical-model}\]
	Associated with this, we may define as well the partition function
	\[Z_{N,\rm{Sph}}(D,\b{\xi},\b{h})=\int_{S_N^{\Om}}e^{- \cal{H}_{N,D,\b{\xi},\b{h},\rm{Sph}}(\b{u})}\omega(d\b{u}).\]
	
	The free energy of this partition function was computed in Theorem I.1.6. To state this result, we need to set more notation First, if one has $\b{u}\in \R^\Om$ and $D\in \R^{\Om\times \Om}$, we will use the shorthand $D+\b{u}\in \R^{\Om\times \Om}$ to denote the matrix $[D+\b{u}]_{xy}=D_{xy}+\delta_{xy}\b{u}(x)$. When $A$ and $B$ are symmetric matrices, we will use the notation $A>B$ to denote that $A-B$ is positive definite.
	
	Next, we will need some functions associated with $D$. Existence and claims about these functions were shown in Appendix C of \cite{Paper1}. When $\b{u}\in \R^\Om$ is such that $D+\b{u}>0$, we define $\b{R}^D(\b{u})\in (0,\infty)^{\Om}$ by
	\[R_x^D(\b{u})=[(D+\b{u})^{-1}]_{xx}\text{ for } x\in \Om.\]
	Next, we define $\b{K}^D$ as the functional inverse of $\b{R}^D$, so that for $\b{u}\in (0,\infty)^{\Om}$,
	\[[\left(D+\b{K}^D(\b{u})\right)^{-1}]_{xx}=\b{u}(x) \text{ for } x\in \Om.\label{eqn:def:KD}\]
	We also define for $\b{u}\in (0,\infty)^{\Om}$,
	\[\Lambda^D(\b{u})=\frac{1}{|\Om|}\left(\sum_{x\in \Om}K_x^D(\b{u})\b{u}(x)-\log \det(D+\b{K}^D(u))\right),\label{eqn:def:Lambda}\]
	This is in fact the Legendre transform of the concave function $\b{u}\mapsto \frac{1}{|\Om|}\log \det(D +\b{u})$, so that in particular $|\Om|\D \Lambda^D(\b{u})=\b{K}^D(\b{u})$.
	
	Next we focus on the domain of our functional. We first take a measure $\zeta\in \mathscr{Y}$. Then, for each $x\in \Om$, we choose a continuous, coordinate-wise non-decreasing function $\Phi_x:[0,1]\to [0,1]$. We require that for each $s\in [0,1]$ we have that
	\[\frac{1}{|\Om|}\sum_{x\in \Om}\Phi_x(s)=s.\label{eq:condition for Psi-spherical}\]
	Next we will assume as well that there is $0<q_*<q_t$ is such that $\zeta([0,q_*))=0$ and $\Phi_x(q_*)<1$ for each $x\in \Om$. We will denote the set of pairs $(\zeta,\b{\Phi})$ satisfying these conditions as $\widehat{\mathscr{Y}}$.
	Given these choices we define the continuous coordinate-wise non-increasing function $\b{\delta}:[0,1]\to [0,\infty)^{\Om}$ by
	\[\delta_x(s)=\int_{s}^{1}\zeta([0,u])\Phi'_x(u)du.\label{eqn:section2:delta-1}\footnote{Note that as each $\Phi_x$ is non-decreasing, (\ref{eq:condition for Psi}) implies that each $\Phi_x$ is absolutely continuous. In particular, $\Phi_x'$ exists a.e. and is Lebesgue integrable.}\]
	Then, we may define a functional on $(\zeta,\b{\Phi})\in \widehat{\mathscr{Y}}$ by
	\[
	\begin{split}
		\widehat{\cal{B}}_{D,\b{\xi},\b{h}}(\zeta,\b{\Phi})=&\log(\sqrt{2\pi})+\frac{1}{2}\Lambda^D(\b{\delta}(q_*))+\frac{1}{2|\Om|}\sum_{x\in \Om}\bigg(\int_{0}^{q_*}K_x^D(\b{\delta}(q))\Phi'_x(q)dq\\
		&+\int_0^{1}\zeta([0,u])\xi_x'(\Phi_x(q))\Phi'_x(q)dq+\sum_{y\in \Om}[D+\b{K}^D(\b{\delta}(0))^{-1}]_{xy}\b{h}(x)\b{h}(y)\bigg).\label{eqn:def:hat-A}
	\end{split}
	\]
	
	We can now state the main result on spherical models coming from our companion paper.
	\begin{theorem}[Theorem I.1.6]
		\label{theorem:bad Euclidean}
		We have that
		\[\lim_{N\to \infty}|\Om|^{-1}N^{-1}\E \log  Z_{N,\rm{Sph}}(D,\b{\xi},\b{h})= \inf_{(\zeta,\b{\Phi})\in \widehat{\mathscr{Y}}}\widehat{\cal{B}}_{D,\b{\xi},\b{h}}(\zeta,\b{\Phi}).\]
	\end{theorem}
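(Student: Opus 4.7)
The plan is to establish matching upper and lower bounds for $|\Om|^{-1}N^{-1}\E\log Z_{N,\mathrm{Sph}}(D,\b{\xi},\b{h})$ and identify them with $\inf_{(\zeta,\b{\Phi})\in \widehat{\mathscr{Y}}}\widehat{\cal{B}}_{D,\b{\xi},\b{h}}(\zeta,\b{\Phi})$. I would follow the by-now standard two-sided strategy for Parisi-type formulas: Guerra interpolation for the upper bound, and the Aizenman-Sims-Starr cavity scheme combined with Panchenko's synchronization and ultrametricity for the lower bound, adapted to the multi-site spherical setting.

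For the upper bound I would construct an interpolating Hamiltonian on $S_N^{\Om}$ parameterized by $t\in [0,1]$ and by a pair $(\zeta,\b{\Phi})\in \widehat{\mathscr{Y}}$. At $t=1$ it recovers $\cal{H}_{N,D,\b{\xi},\b{h},\mathrm{Sph}}$, while at $t=0$ it decouples into site-wise cavity-field integrals indexed by a Ruelle Probability Cascade whose weights are dictated by $\zeta$; the site-dependent distribution of the overlap budget is prescribed by $\b{\Phi}$, reflecting the multi-site nature of the coupling through $D$. Differentiating in $t$ and applying Gaussian integration by parts produces a Guerra-type remainder which, thanks to the normalization condition (\ref{eq:condition for Psi-spherical}) and the convexity of each $\xi_x$, has a definite sign. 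The term $\Lambda^D(\b{\delta}(q_*))$ in the functional arises naturally when one integrates out the quadratic $D$-coupling against the Gaussian cavity variables at the top of the cascade, via the Legendre-transform/concavity structure recorded after (\ref{eqn:def:Lambda}), and the $K_x^D(\b{\delta}(q))\Phi_x'(q)$ terms emerge from the successive cavity levels.

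For the lower bound I would use the Aizenman-Sims-Starr representation, writing the free energy as the limit of $\E\log Z_{N+1}-\E\log Z_N$ with an auxiliary system, then perturb each $\xi_x$ in the coefficients of its Taylor series so as to impose the extended Ghirlanda-Guerra identities jointly across the sites $x\in \Om$. This genericness assumption is removed at the end by continuity of both sides in the coefficients of $\b{\xi}$. The decisive step is Panchenko's multi-species synchronization theorem together with ultrametricity: jointly with the Ghirlanda-Guerra identities they force the site-overlaps $(R^{(x)}_{\ell,\ell'})_{x\in \Om}$ to be monotone functions of a single master overlap, which is precisely the data encoded in $\b{\Phi}$, and the averaging condition (\ref{eq:condition for Psi-spherical}) emerges from the $(\b{u}(x),\b{u}(x))_N=1$ constraint at each site. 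Once the overlap structure is synchronized, matching the cavity variational expression against $\widehat{\cal{B}}_{D,\b{\xi},\b{h}}$ is a now-standard Crisanti-Sommers-type computation on the sphere, following the lines of Chen's treatment of mixed spherical spin glasses.

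The main obstacle will be the lower-bound synchronization step. Standard multi-species arguments are formulated for models with prescribed species proportions whose coupling is diagonal or block-structured in a controlled way; here the coupling is mediated by an arbitrary positive semi-definite $D\in \R^{\Om\times \Om}$ and the constraint is a per-site spherical constraint rather than a global one. One must therefore verify both that the extended Ghirlanda-Guerra identities can be imposed jointly in all coordinates $x\in \Om$ (which requires a careful choice of perturbation independent across sites) and that the synchronization map produced by Panchenko's theorem is compatible with the normalization (\ref{eq:condition for Psi-spherical}), so that it really parameterizes an admissible $\b{\Phi}$. Once these points are secured, the remainder of the argument is routine and both bounds assemble into the claimed Parisi formula.
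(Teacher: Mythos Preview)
Your proposal sketches a plausible route to a Parisi formula, but you should be aware that this theorem is not proved in the present paper at all: it is \emph{cited} from the companion paper \cite{Paper1} (the prefix ``I'' in ``Theorem I.1.6'' is the paper's convention for results imported from \cite{Paper1}). There is therefore no in-paper proof to compare your attempt against; the authors simply invoke the result as input.

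As to the substance of your outline: the two-sided strategy you describe (Guerra interpolation for the upper bound, Aizenman--Sims--Starr plus Panchenko synchronization for the lower bound) is indeed the standard template for multi-site/multi-species Parisi formulas, and is in the spirit of the works of Panchenko \cite{panchenkoms} and Bates--Sohn \cite{erik,erikCS} that the paper itself cites as context. The points you flag as obstacles --- arranging a perturbation that enforces the extended Ghirlanda--Guerra identities jointly across sites, and verifying that the synchronization map produced is compatible with the normalization \eqref{eq:condition for Psi-spherical} so that it lands in $\widehat{\mathscr{Y}}$ --- are exactly the nontrivial parts, and your sketch does not resolve them. In particular, the emergence of the Crisanti--Sommers-type terms $\Lambda^D(\b{\delta}(q_*))$ and $\int K_x^D(\b{\delta}(q))\Phi_x'(q)\,dq$ from the cavity computation with a general positive semi-definite coupling $D$ is not automatic and requires a careful Lagrange-multiplier/Legendre-transform analysis specific to the per-site spherical constraints; asserting that it is ``routine'' once synchronization is in hand understates the work. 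If you wish to supply an actual proof, you would need to consult \cite{Paper1} and reproduce (or reference) those computations in detail.
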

	
	We will now focus on the spherical case. As such, we restrict ourselves for the moment to the case where there is some $(\xi,h)$ such that $\xi_x=\xi$ and $\b{h}(x)=h$ for all $x\in \Om$. In this case, we define
	\[\widehat{\cal{B}}_{D,\xi,h}(\zeta,\b{\Phi}):=\widehat{\cal{B}}_{D,\b{\xi},\b{h}}(\zeta,\b{\Phi}).\]
	Given Theorem I.1.6, we see that to show Theorem \ref{theorem:intro:main:sphere} we only need to show that
	\[\inf_{(\zeta,\b{\Phi})\in \widehat{\mathscr{Y}}}\widehat{\cal{B}}_{-t\Delta,\xi,h}(\zeta,\b{\Phi})=\inf_{\zeta\in \mathscr{Y}}\cal{B}(\zeta).\]
	A path for this is made clear by the following remark, which follows by a routine computation.
	\begin{remark}
		\label{remark: A and A hat}
		Let $\b{I}:[0,1]\to [0,1]^{\Om}$ denote the diagonal function such that $\b{I}_x(s)=s$ for $s\in [0,1]$ and $x\in \Om$ and choose $\zeta\in \mathscr{Y}$. Then we have that
		\[\widehat{\cal{B}}_{-t\Delta,\xi,h}(\zeta,\b{I})=\cal{B}(\zeta).\]
	\end{remark}
	
	Thus we are reduced to studying the minimizers of $\widehat{\cal{B}}_{-t\Delta,\xi,h}$. We find it helpful here to give the precise property of $-t\Delta$ we need to do so.
	
	\begin{defin}
		\label{def:transitive}
		For a finite set $\Om$, we say a symmetric matrix $M\in \R^{\Om\times \Om}$ is transitive if for each choice $x,y\in \Om$, there is a choice of permutation matrix $P\in \R^{\Om\times \Om}$, such that $[P]_{xy}=1$ and such that $P^{-1}MP=M$.
		
		Equivalently, $M$ is such that the subgroup of permutation matrices which commute with $M$ act transitively on the set $\Om$.
	\end{defin}
	
	This definition is adopted from the associated terminology for graphs. Indeed, one may check that a graph is vertex-transitive if and only if its graph Laplacian is transitive in the sense of Definition \ref{def:transitive}. In particular, the matrices $\mu  I-t \Delta$ and $\Delta$ satisfy Definition \ref{def:transitive}. 
	
	Our first main result will then be the following.
	
	\begin{theorem}
		\label{theorem:overview:spherical:generic and symmetric}
		Let us assume that $\xi$ is generic and that $D$ is transitive. Then there is a pair $(\zeta,\b{I})$ which minimizes $\widehat{\cal{B}}_{D,\xi,h}$ over $\widehat{\mathscr{Y}}$, so that in particular
		\[\inf_{(\zeta,\b{\Phi})\in \widehat{\mathscr{Y}}}\widehat{\cal{B}}_{D,\xi,h}=\inf_{\zeta\in \mathscr{Y}}\cal{B}_{D,\xi,h}.\label{eqn:theorem:A equals A hat}\]
		The measure $\zeta\in \mathscr{Y}$ is given specifically as the limiting law of the overlap distribution for any $x\in \Om$. More specifically, for any continuous function $f:[-1,1]\to \R$ and $x\in \Om$
		\[\lim_{N\to \infty}\E\<f((\b{u}(x),\b{u}'(x))_N)\>=\int_0^1 f(r)\zeta(dr),\]
		where $\<*\>$ is the Gibbs measure with respect to $\mathcal{H}_{N,D,\xi,h,\mathrm{Sph}}.$
	\end{theorem}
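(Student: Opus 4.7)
The plan is to identify, at any minimizer $(\zeta,\b{\Phi})$ of $\widehat{\cal{B}}_{D,\xi,h}$ on $\widehat{\mathscr{Y}}$, the pushforward measure $\zeta\circ\Phi_x^{-1}$ with the limiting law of the site-$x$ overlap $(\b{u}(x),\b{u}'(x))_N$ under the Gibbs measure of $\cal{H}_{N,D,\xi,h,\mathrm{Sph}}$. Since the Hamiltonian is $\Om$-homogeneous (common $\xi$ and $h$) and $D$ is transitive, this limiting overlap distribution is independent of $x$, forcing all $\Phi_x$ to agree; combined with the averaging constraint (\ref{eq:condition for Psi-spherical}) and monotonicity, this forces $\b{\Phi}=\b{I}$. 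Genericness enters only to ensure that moments with exponents $p$ for which $\beta_p\ne 0$ determine a probability measure on $[0,1]$.

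First I would establish existence of a minimizer via a standard compactness argument: the $\Phi_x$ are monotone maps $[0,1]\to[0,1]$ (so Helly selection provides convergent subsequences), $\mathscr{Y}$ is weak-$*$ compact, and $\widehat{\cal{B}}_{D,\xi,h}$ is lower semicontinuous along such sequences. A careful treatment is needed near the right endpoint, where the technical condition $\Phi_x(q_*)<1$ might become tight in the limit; this can be sidestepped by taking a minimizing sequence with $q_*\uparrow 1$ and then passing the envelope conclusions to the closure.

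Next, following the differentiation-of-Parisi-functional approach of Talagrand and Panchenko, I fix a minimizer $(\zeta,\b{\Phi})$ and, for each $x\in\Om$ and $p\ge 1$, perturb $\b{\xi}$ at coordinate $x$ by $\tau r^p$ to form $\b{\xi}^{(x,p,\tau)}$. On one hand, Theorem \ref{theorem:bad Euclidean} equates $\inf_{\widehat{\mathscr{Y}}}\widehat{\cal{B}}_{D,\b{\xi}^{(x,p,\tau)},h}$ with $\lim_{N\to\infty}|\Om|^{-1}N^{-1}\E\log Z_{N,\mathrm{Sph}}(D,\b{\xi}^{(x,p,\tau)},h)$, whose derivative in $\tau$ at $0$ equals $\tfrac{1}{2|\Om|}\bigl(1-\lim_{N\to\infty}\E\langle(\b{u}(x),\b{u}'(x))_N^p\rangle\bigr)$ by Gaussian integration by parts (using $\|\b{u}(x)\|_N^2=1$). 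On the other hand, an envelope argument computes this same derivative as $\partial_\tau\widehat{\cal{B}}_{D,\b{\xi}^{(x,p,\tau)},h}(\zeta,\b{\Phi})|_{\tau=0}$, which a direct calculation from (\ref{eqn:def:hat-A}) reduces to $\tfrac{1}{2|\Om|}\bigl(1-\int_0^1 r^p\,d(\zeta\circ\Phi_x^{-1})(r)\bigr)$. So the $p$-th moment of $\zeta\circ\Phi_x^{-1}$ matches the limiting $p$-th moment of the site-$x$ overlap, for every $p$ with $\beta_p\neq 0$. Transitivity of $D$ together with $\xi_x\equiv\xi$ and $\b{h}\equiv h$ makes $\cal{H}_{N,D,\xi,h,\mathrm{Sph}}$ invariant in law under a transitive family of permutations of $\Om$, so these limits (and hence the moments of $\zeta\circ\Phi_x^{-1}$) are independent of $x$. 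Genericness of $\xi$ promotes this to equality of the measures $\zeta\circ\Phi_x^{-1}$, and together with monotonicity of the $\Phi_x$ and the averaging constraint (\ref{eq:condition for Psi-spherical}) this forces $\Phi_x(s)=s$ for all $s$. Then Remark \ref{remark: A and A hat} yields (\ref{eqn:theorem:A equals A hat}), and the same $x$-independent moment identification furnishes the claimed convergence of the site-$x$ overlap distribution to $\zeta$.

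The hardest step will be rigorously justifying the envelope equality: that the infimum is differentiable in $\tau$ at $\tau=0$ and that its derivative is computed at any fixed minimizer. This is standard given joint continuity of $\widehat{\cal{B}}_{D,\b{\xi},h}$ in the parameter $\b{\xi}$ and in $(\zeta,\b{\Phi})$ uniformly over a pre-compact set containing the minimizers, but that uniformity must be established while controlling the potentially singular behaviour of $\Lambda^D$ and $\b{K}^D$ near the boundary of their domains. A secondary point is extending the conclusion $\Phi_x\equiv\Phi_y$ from $\mathrm{supp}(\zeta)$ to all of $[0,1]$: this uses monotonicity of each $\Phi_x$ together with the fact that the $\Phi_x$ average to the identity on the complement.
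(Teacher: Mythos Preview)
Your overall strategy matches the paper's: differentiate both the finite-$N$ free energy and the Parisi infimum in the coefficients of $\b{\xi}$, identify the two derivatives via an envelope argument, use Gaussian integration by parts on the free-energy side, and then exploit transitivity of $D$ to conclude that the pushforward $(\Phi_x)_*\zeta$ is independent of $x$. Two points of execution differ from the paper, and one of them is a genuine gap.

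For the envelope step you flag as hardest, the paper avoids any uniform-continuity or boundary analysis of $\Lambda^D$ and $\b{K}^D$ by using convexity. The map $\vec{\b{\beta}}\mapsto N^{-1}\E\log Z_{N,\mathrm{Sph}}(\vec{\b{\beta}})$ is convex (H\"older), hence so is its pointwise limit $\mathscr{B}(\vec{\b{\beta}})$. The trivial one-sided envelope inequalities $\partial^+_{\b{\beta}_p(x)}\mathscr{B}\le \partial_{\b{\beta}_p(x)}\widehat{\cal{B}}_{D,\b{\xi},\b{h}}(\zeta_{\b{\xi}},\b{\Phi}_{\b{\xi}})$ and $\partial^-_{\b{\beta}_p(x)}\mathscr{B}\ge \partial_{\b{\beta}_p(x)}\widehat{\cal{B}}_{D,\b{\xi},\b{h}}(\zeta_{\b{\xi}},\b{\Phi}_{\b{\xi}})$, combined with $\partial^-\le\partial^+$ from convexity, immediately give differentiability of $\mathscr{B}$ and the derivative formula at \emph{any} minimizer. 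Convergence of the finite-$N$ derivatives to $\partial\mathscr{B}$ then follows because a pointwise limit of differentiable convex functions with differentiable limit has convergent gradients. This is considerably lighter than the route you sketch.

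Your extension argument, on the other hand, does not work. From $(\Phi_x)_*\zeta=(\Phi_y)_*\zeta$ and monotonicity you only get $\Phi_x=\Phi_y$ on $\supp(\zeta)$; monotonicity together with the averaging constraint \eqref{eq:condition for Psi-spherical} does \emph{not} force $\Phi_x=\mathrm{id}$ off $\supp(\zeta)$. For instance, with $|\Om|=2$ and $\supp(\zeta)=\{1/2\}$, take $\Phi_1(s)=\min(2s,1/2)$ and $\Phi_2(s)=2s-\Phi_1(s)$ on $[0,1/2]$ (and a symmetric construction on $[1/2,1]$): both are continuous, non-decreasing, agree with the identity at $0,1/2,1$, and average to the identity, yet neither equals it. The paper's fix is different and cleaner: by Lemma~\ref{lem:general:parisi doesn't depend away from support} the value $\widehat{\cal{B}}_{D,\b{\xi},\b{h}}(\zeta,\b{\Phi})$ depends on $\b{\Phi}$ only through its restriction to $\supp(\zeta)$, so once $\Phi_x(s)=s$ on $\supp(\zeta)$ one may simply \emph{replace} $\b{\Phi}$ by $\b{I}$ without changing the value, yielding a minimizer of the required form $(\zeta,\b{I})$. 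You should invoke that lemma rather than attempt the extension.
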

	
	Before proceeding to the proof of this result, we first see how we may obtain two of our desired theorems from it.
	
	\begin{proof}[Proof of Theorem \ref{theorem:intro:main:sphere} and \ref{theorem:intro:main:sphere-generic}] In the case where $\xi$ is generic, Theorem \ref{theorem:intro:main:sphere} follows immediately from Theorem \ref{theorem:overview:spherical:generic and symmetric}, as does Theorem \ref{theorem:intro:main:sphere-generic}. To establish Theorem \ref{theorem:intro:main:sphere} in case of general $\xi$, we note that by Proposition I.B.6, both $\inf_{(\zeta,\b{\Phi})\in \widehat{\mathscr{Y}}}\widehat{\cal{B}}_{-t\Delta,\xi,h}$ and $\inf_{\zeta\in \mathscr{Y}}\cal{B}$ are continuous in $\xi$ (with respect to the metric on $\xi(r)=\sum_{p\ge 0}^{\infty}\beta_p^2r^p$ given by the $L^1$-norm on $\{\beta_p^2\}_{p\ge 0}\in \R^{\infty}$). As choices of generic $\xi$ are dense under metric, we see that (\ref{eqn:theorem:A equals A hat}) holds for all mixing functions $\xi$. In view of Theorem I.1.6, this completes the proof of Theorem \ref{theorem:intro:main:sphere} in the general case.
	\end{proof}
	
	We now proceed to the proof of Theorem \ref{theorem:overview:spherical:generic and symmetric}. To explain our method, we setting of inhomogeneous mixtures $\b{\xi}$, and write
	\[\xi_x(r)=\sum_{p\ge 0}\b{\beta}_{p}(x)^2r^p,\]
	for some parameters $\b{\beta}_{p}\in \R^{\Om}$ (subject to convergence conditions on $\b{\xi}$ above). With this notation, we may consider $\b{\xi}$ as a function of the array of vectors $\vec{\b{\beta}}=(\b{\beta}_{p}(x))_{x\in \Om,p\ge 0}$. In particular if we fix $(D,\b{h})$, we may emphasis this dependence with the abuse of notation
	\[Z_{N,\rm{Sph}}(\vec{\b{\beta}}):=Z_{N,\rm{Sph}}(D,\b{\xi},\b{h}).\]
	We first collect the following simple fact.
	
	\begin{lem}
		\label{lem:identification:trivial}
		The function $N^{-1}\E \log Z_{N,\rm{Sph}}(\vec{\b{\beta}})$ is convex and differentiable in $\vec{\b{\beta}}$, with 
		\[\frac{\partial}{\partial \b{\beta}_p(x)}N^{-1}\E\log  Z_{N,\rm{Sph}}(\vec{\b{\beta}})=\b{\beta}_p(x)\left(1-\E\<(\b{u}(x),\b{u}'(x))^p_N\>_{\b{\xi}}\right),\]
		where $\<*\>_{\b{\xi}}$ is the Gibbs measure with respect to $\cal{H}_{N,D,\b{\xi},\b{h},\rm{Sph}}$.
	\end{lem}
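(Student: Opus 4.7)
The plan is to decompose each Gaussian field $H_{N,x}$ into its pure $p$-spin components, so that the $\b{\beta}_p(x)$ appear as explicit linear coefficients in the Hamiltonian, and then to differentiate $\log Z_{N,\mathrm{Sph}}$ directly and apply Gaussian integration by parts. Specifically, I would write $H_{N,x}(\sigma)=\sum_{p\ge 0}\b{\beta}_p(x)H_{N,x}^{(p)}(\sigma)$, with the $H_{N,x}^{(p)}$ mutually independent centered Gaussian fields on $S_N$ satisfying $\E[H_{N,x}^{(p)}(\sigma)H_{N,x}^{(p)}(\tau)]=N(\sigma,\tau)_N^p$. The standing assumption $\xi_x(1+\epsilon)<\infty$ forces exponential decay of $\b{\beta}_p(x)$ in $p$, so the series converges absolutely and uniformly on $S_N$, and $\cal{H}_{N,D,\b{\xi},\b{h},\mathrm{Sph}}$ becomes jointly affine in $\vec{\b{\beta}}$.

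From this affine structure, joint convexity of $\log Z_{N,\mathrm{Sph}}(\vec{\b{\beta}})$ is the usual one-line H\"older argument: the Hamiltonian at a convex combination is the corresponding convex combination of Hamiltonians, so for $\lambda\in[0,1]$,
\[\int e^{-\lambda\cal{H}_{\vec{\b{\beta}}^{(1)}}-(1-\lambda)\cal{H}_{\vec{\b{\beta}}^{(2)}}}\omega(d\b{u})\le\Bigl(\int e^{-\cal{H}_{\vec{\b{\beta}}^{(1)}}}\omega\Bigr)^{\lambda}\Bigl(\int e^{-\cal{H}_{\vec{\b{\beta}}^{(2)}}}\omega\Bigr)^{1-\lambda},\]
and convexity persists after taking $\E$. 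For the derivative, direct differentiation under the integral sign gives
\[\frac{\partial}{\partial \b{\beta}_p(x)}\log Z_{N,\mathrm{Sph}}(\vec{\b{\beta}})=-\<H_{N,x}^{(p)}(\b{u}(x))\>_{\b{\xi}},\]
reducing the problem, after justifying the exchange with $\E$, to computing $\E\<H_{N,x}^{(p)}(\b{u}(x))\>_{\b{\xi}}$.

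Here I would apply the standard Gaussian integration-by-parts identity for spin-glass Gibbs averages. Since $H_{N,x}^{(p)}$ is independent of every other source of randomness and enters the Hamiltonian only through the single term $\b{\beta}_p(x)H_{N,x}^{(p)}(\b{u}(x))$, with covariance $N(\sigma,\tau)_N^p$, contracting the functional derivative of $\<H_{N,x}^{(p)}(\b{u}(x))\>_{\b{\xi}}$ against this covariance and grouping the one- and two-replica terms yields
\[\E\<H_{N,x}^{(p)}(\b{u}(x))\>_{\b{\xi}}=-N\b{\beta}_p(x)\bigl(\E\<(\b{u}(x),\b{u}(x))_N^p\>_{\b{\xi}}-\E\<(\b{u}(x),\b{u}'(x))_N^p\>_{\b{\xi}}\bigr).\]
On $S_N^{\Om}$ we have $(\b{u}(x),\b{u}(x))_N=1$, so the first term is exactly $N\b{\beta}_p(x)$; substituting, combining signs, and dividing by $N$ produces the claimed formula for the derivative of $N^{-1}\E\log Z_{N,\mathrm{Sph}}(\vec{\b{\beta}})$.

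The only substantive issue is technical bookkeeping. Because the decomposition of $H_{N,x}$ is an infinite series indexed by $p$, each interchange of derivative, expectation, and sphere integral requires uniform Gaussian moment estimates for the fields $H_{N,x}^{(p)}$ that are summable in $p$; these follow routinely from the exponential-tail assumption on $\xi_x$, after which dominated convergence closes the argument. This is a standard calculation in the spin-glass literature, so I would not write out the estimates in detail here.
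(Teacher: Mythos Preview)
Your proposal is correct and follows essentially the same approach as the paper: H\"older for convexity, Gaussian integration by parts for the derivative, and the observation $(\b{u}(x),\b{u}(x))_N=1$ on $S_N$ to simplify. The paper's proof is simply a terser version of what you wrote, leaving the $p$-spin decomposition and the dominated-convergence bookkeeping implicit.
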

	\begin{proof}
		Convexity in $\vec{\b{\beta}}$ follows immediately from H\"{o}lder's inequality. Moreover it is clear by Gaussian integration by parts
		\[\frac{\partial}{\partial \b{\beta}_p(x)}N^{-1}\E \log Z_{N,\rm{Sph}}(\vec{\b{\beta}})=\b{\beta}_p(x) \left(\E\<(\b{u}(x),\b{u}(x))_N^p\>_{\b{\xi}}-\E\<(\b{u}(x),\b{u}'(x))_N^p\>_{\b{\xi}}\right).\]
		Recalling that the measure is supported on $S_N^{\Om}$ produces the final claim.
	\end{proof}
	
	With this shown we define
	\[\mathscr{B}(\vec{\b{\beta}}):=\inf_{(\zeta,\b{\Phi})\in \widehat{\mathscr{Y}}}\widehat{\cal{B}}_{D,\b{\xi},\b{h}}(\zeta,\b{\Phi})=\lim_{N\to \infty}|\Om|^{-1}N^{-1}\E \log Z_{N,\rm{Sph}}(\vec{\b{\beta}})\]
	where in the final equality we are using Theorem I.1.6.
	
	As the point-wise limit of convex functions is convex, Lemma \ref{lem:identification:trivial} shows that $\mathscr{B}(\vec{\b{\beta}})$ is convex as a function of $\vec{\b{\beta}}$. Given this fact, we will be able to show the following.
	
	\begin{lem}
		\label{lem:identification:spherical parisi}
		The function $\mathscr{B}(\vec{\b{\beta}})$ is convex and differentiable as a function of $\vec{\b{\beta}}$. Moreover, if we fix $\vec{\b{\beta}}$, assume that  $(\zeta,\b{\Phi})$ is a minimizer of $\hat{\cal{B}}_{\b{\xi}}$, then we have that
		\[\frac{\partial}{\partial \b{\beta}_p(x)}\mathscr{B}(\vec{\b{\beta}})=\frac{\b{\beta}_p(x)}{|\Om|}\left(1-\int_0^1 r^{p}(\Phi_x)_*(\zeta)(dr)\right).\]
	\end{lem}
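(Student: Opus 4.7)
The plan is to prove convexity of $\mathscr{B}$ first, then deduce differentiability at every $\vec{\b{\beta}}$ via a convex envelope argument at the assumed minimizer, and finally compute the partial derivative by direct differentiation and integration by parts.

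For the convexity claim, I will combine Lemma \ref{lem:identification:trivial}, which shows that $\vec{\b{\beta}}\mapsto N^{-1}\E\log Z_{N,\mathrm{Sph}}(\vec{\b{\beta}})$ is convex for each fixed $N$, with Theorem \ref{theorem:bad Euclidean}, which identifies $\mathscr{B}(\vec{\b{\beta}})$ as the pointwise limit of these functions divided by $|\Om|$; convexity passes to pointwise limits. For differentiability, fix $\vec{\b{\beta}}$ and let $(\zeta,\b{\Phi})\in \widehat{\mathscr{Y}}$ be a minimizer of $\widehat{\cal{B}}_{D,\b{\xi},\b{h}}$ (existence at every parameter being established in Section \ref{section:minimization}). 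Define $G(\vec{\b{\beta}}'):=\widehat{\cal{B}}_{D,\b{\xi}',\b{h}}(\zeta,\b{\Phi})$. Inspecting (\ref{eqn:def:hat-A}), the only $\vec{\b{\beta}}'$-dependence is through the terms $\int_0^1\zeta([0,u])\xi_x'(\Phi_x(u))\Phi_x'(u)\,du$; since $\xi_x'(r)=\sum_{p\ge 1}p\b{\beta}'_p(x)^2 r^{p-1}$ involves only the squares of the parameters, $G$ is a sum of independent one-variable quadratics and hence smooth. By the minimizer property, $G(\vec{\b{\beta}}')\ge \mathscr{B}(\vec{\b{\beta}}')$ for every $\vec{\b{\beta}}'$ with equality at $\vec{\b{\beta}}'=\vec{\b{\beta}}$; hence any subgradient $v$ of the convex function $\mathscr{B}$ at $\vec{\b{\beta}}$ satisfies
\[G(\vec{\b{\beta}}')-G(\vec{\b{\beta}})\ge \mathscr{B}(\vec{\b{\beta}}')-\mathscr{B}(\vec{\b{\beta}})\ge \<v,\vec{\b{\beta}}'-\vec{\b{\beta}}\>,\]
which by smoothness of $G$ forces $v=\D G(\vec{\b{\beta}})$. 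The subdifferential is thus the singleton $\{\D G(\vec{\b{\beta}})\}$, yielding differentiability with $\D \mathscr{B}(\vec{\b{\beta}})=\D G(\vec{\b{\beta}})$.

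For the explicit formula, the only $\b{\beta}_p(x)$-dependent piece of $G$ is (up to a constant in $\vec{\b{\beta}}$) $\frac{\b{\beta}_p(x)^2}{2|\Om|}\int_0^1\zeta([0,u])\,p\Phi_x(u)^{p-1}\Phi_x'(u)\,du$, whose derivative in $\b{\beta}_p(x)$ equals $\frac{\b{\beta}_p(x)}{|\Om|}\int_0^1\zeta([0,u])\,d(\Phi_x(u)^p)$. Integrating by parts, using $\Phi_x(0)=0$ (forced by (\ref{eq:condition for Psi-spherical})) and $\Phi_x(1)=\zeta([0,1])=1$, and then applying the change of variables $r=\Phi_x(u)$ to write $\int_0^1\Phi_x(u)^p\zeta(du)=\int_0^1 r^p(\Phi_x)_*\zeta(dr)$, delivers the asserted identity. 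The main obstacle will be the differentiability step: an infimum of smooth functions need not be smooth, so the argument must combine the convexity obtained from the finite-$N$ side with the smooth tangent upper bound $G$ to pin down the subgradient uniquely. This convex envelope argument is standard in the Parisi-formula literature, going back to Talagrand and Panchenko as noted in the overview.
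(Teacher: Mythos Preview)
Your proof is correct and follows essentially the same approach as the paper: convexity from the pointwise limit of the finite-$N$ free energies, differentiability via an envelope argument at a minimizer, and the explicit formula by integration by parts against $d(\Phi_x(u)^p)$ followed by the pushforward identification. The only cosmetic difference is that the paper phrases the envelope step with one-sided derivative inequalities ($\partial^+\mathscr{B}\le \partial G$ and $\partial^-\mathscr{B}\ge \partial G$, then $\partial^-\le\partial^+$ by convexity), whereas you phrase it via uniqueness of the subgradient; these are equivalent.
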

	
	\begin{proof}
		
		We have already shown convexity, so we proceed to differentiability. For simplicity, we only treat partial derivatives, with the case of a general direction being identical. In particular, let us denote, for each choice of $\b{\xi}$, let us fix any choice of minimizer of $\widehat{\cal{B}}_{D,\b{\xi},\b{h}}$ as $(\zeta_{\b{\xi}},\b{\Phi}_{\b{\xi}})$ (we have shown the existence of such minimizers below in Lemma \ref{lem:minimization:existence of minimizers and bounds}). Let us further denote by $\partial^-_x$ and $\partial^+_x$, the left and right partial derivatives with respect to a variable $x$. Fix $p\ge 0$ and $x\in \Om$. Then with $\b{\xi}^{\epsilon}$ defined by taking $\b{\beta}_p^{\epsilon}(x)=\b{\beta}_p(x)+\epsilon$ and leaving all other values unaffected, we trivially have that \[\inf_{(\zeta,\b{\Phi})\in \widehat{\mathscr{Y}}}\widehat{\cal{B}}_{D,\b{\xi}^{\epsilon},\b{h}}(\zeta_{\b{\xi}^{\epsilon}},\b{\Phi}_{\b{\xi}^{\epsilon}})\le \widehat{\cal{B}}_{D,\b{\xi}^{\epsilon},\b{h}}(\zeta_{\b{\xi}},\b{\Phi}_{\b{\xi}}),\] so we conclude that
		\[\partial_{\b{\beta}_p(x)}^+\mathscr{B}(\vec{\b{\beta}})\le 
		\lim_{\epsilon^+\to 0}\frac{1}{\epsilon}\left(\widehat{\cal{B}}_{D,\b{\xi}^{\epsilon},\b{h}}(\zeta_{\b{\xi}},\b{\Phi}_{\b{\xi}})-\widehat{\cal{B}}_{D,\b{\xi},\b{h}}(\zeta_{\b{\xi}},\b{\Phi}_{\b{\xi}})\right)= \partial_{\b{\beta}_p(x)}\widehat{\cal{B}}_{D,\b{\xi},\b{h}}(\zeta_{\b{\xi}},\b{\Phi}_{\b{\xi}}).\]
		Similarly, we see that $\partial_{\b{\beta}_p(x)}^-\mathscr{B}(\vec{\b{\beta}})\ge \partial_{\b{\beta}_p(x)}\widehat{\cal{B}}_{D,\b{\xi},\b{h}}(\zeta_{\b{\xi}},\b{\Phi}_{\b{\xi}})$. On the other hand as $\mathscr{B}(\vec{\b{\beta}})$ is convex in $\b{\beta}_p(x)$, we have that $\partial_{\b{\beta}_p(x)}^-\mathscr{B}(\vec{\b{\beta}})\le \partial_{\b{\beta}_p(x)}^+\mathscr{B}(\vec{\b{\beta}})$ which not only establishes the existence of $\partial_{\b{\beta}_p(x)}\mathscr{B}(\vec{\b{\beta}})$, but also the identity 
		\[\partial_{\b{\beta}_p(x)}\mathscr{B}(\vec{\b{\beta}})=\partial_{\b{\beta}_p(x)}\widehat{\cal{B}}_{D,\b{\xi},\b{h}}(\zeta_{\b{\xi}},\b{\Phi}_{\b{\xi}}).\label{eqn:diagonal:ignore-1924}\]
		We now compute, recalling the form of $\widehat{\cal{B}}_{D,\b{\xi},\b{h}}$ (and justifying the change of variables with Equation (I.B.15)) the formula \[\partial_{\b{\beta}_p(x)}\widehat{\cal{B}}_{D,\b{\xi},\b{h}}(\zeta,\b{\Phi})=\frac{1}{|\Om|}\int_0^1(\Phi_x)_*(\zeta)([0,q])\b{\beta}_p(x)pr^{p-1}dr=\frac{\b{\beta}_p(x)}{|\Om|}\left(1-\int_0^1 r^p(\Phi_x)_*(\zeta)(dr)\right).\]
		This achieves the desired identity when combined with (\ref{eqn:diagonal:ignore-1924}).
	\end{proof}	
	
	Recall that if $f_n\To f$ is a pointwise limit of differentiable convex functions, such that $f$ is also differentiable, then we have that $\D f_n(x)\To \D f(x)$ (see Theorem 25.7 of \cite{convexanalysis}). From this and Lemmas \ref{lem:identification:trivial} and \ref{lem:identification:spherical parisi}, we obtain the following corollary.
	
	\begin{corr}
		\label{corr:identification:simple}
		$(\zeta,\b{\Phi})$ be any minimizer of $\widehat{\cal{B}}_{D,\b{\xi},\b{h}}$. Then for any $p\ge 0$ and $x\in \Om$ we have that
		\[\b{\beta}_p(x)\int_0^1 r^{p}(\Phi_x)_*(\zeta)(dr)=\b{\beta}_p(x)\lim_{N\to \infty} \E\<(\b{u}(x),\b{u}'(x))^p_N\>_{\b{\xi}}.\]
		In particular, if $\b{\xi}$ is generic, then for any continuous function $f:[-1,1]\to \R$, and $x\in \Om$,
		\[\int_0^1 f(r)(\Phi_x)_*(\zeta)(dr)=\lim_{N\to \infty} \E\<f((\b{u}(x),\b{u}'(x))_N)\>_{\b{\xi}}\]
	\end{corr}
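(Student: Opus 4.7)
The plan is to feed the three ingredients immediately preceding the statement into the convex-analytic fact recalled just above. Define $F_N(\vec{\b{\beta}}) := (|\Om|N)^{-1}\E\log Z_{N,\rm{Sph}}(\vec{\b{\beta}})$. Lemma \ref{lem:identification:trivial} tells us each $F_N$ is convex and differentiable in $\vec{\b{\beta}}$, with an explicit partial derivative in each $\b{\beta}_p(x)$; Theorem \ref{theorem:bad Euclidean} gives pointwise convergence $F_N \to \mathscr{B}$; and Lemma \ref{lem:identification:spherical parisi} shows the limit $\mathscr{B}$ is itself convex and differentiable, with gradient expressible through any fixed minimizer $(\zeta,\b{\Phi})$. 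The convergence-of-gradients theorem cited from \cite{convexanalysis} then forces pointwise convergence of partial derivatives, i.e.\ $\partial_{\b{\beta}_p(x)} F_N(\vec{\b{\beta}}) \to \partial_{\b{\beta}_p(x)} \mathscr{B}(\vec{\b{\beta}})$ for each $p$ and $x$. Equating the two gradient formulas and cancelling the common factor $|\Om|^{-1}$ yields the first identity directly; as a byproduct the limit $\lim_N \b{\beta}_p(x)\E\<(\b{u}(x),\b{u}'(x))^p_N\>_{\b{\xi}}$ is seen to exist.

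For the generic case, set $P_x := \{p \ge 0 : \b{\beta}_p(x) \ne 0\}$; dividing the first identity by $\b{\beta}_p(x)$ gives
\[\int_0^1 r^{p}(\Phi_x)_*(\zeta)(dr) = \lim_{N\to\infty} \E\<(\b{u}(x),\b{u}'(x))^p_N\>_{\b{\xi}} \quad \text{for every } p \in P_x.\]
The laws $\mu_N$ of $(\b{u}(x),\b{u}'(x))_N$ under $\E\<\cdot\>_{\b{\xi}}$ are supported on the compact set $[-1,1]$ and thus form a tight family. Any subsequential weak limit $\mu$ must agree with $(\Phi_x)_*(\zeta)$, viewed as a probability measure on $[-1,1]$, when integrated against any monomial $r^p$ with $p \in P_x$. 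Since $\b{\xi}$ is generic at site $x$, the linear span of these monomials is uniformly dense in $C([-1,1],\R)$, so the subsequential limit $\mu$ is uniquely determined and equals $(\Phi_x)_*(\zeta)$. Thus $\mu_N \To (\Phi_x)_*(\zeta)$ weakly, and testing against any continuous $f:[-1,1]\to \R$ yields the second identity.

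There is no serious obstacle here: the analytic work has been absorbed into Lemmas \ref{lem:identification:trivial} and \ref{lem:identification:spherical parisi}, and the upgrade from polynomial moments to arbitrary continuous test functions is a standard tightness-plus-density argument on a compact interval. The only subtle point worth noting is that Lemma \ref{lem:identification:spherical parisi} guarantees the formula for $\partial_{\b{\beta}_p(x)}\mathscr{B}$ holds for \emph{any} minimizer, which is what allows the identity to be stated without first selecting a canonical one.
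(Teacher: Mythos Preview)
Your proof is correct and follows exactly the approach the paper indicates: combining the convergence-of-gradients fact for convex functions (Theorem 25.7 of \cite{convexanalysis}) with Lemmas \ref{lem:identification:trivial} and \ref{lem:identification:spherical parisi}, then using a tightness-plus-density argument on $[-1,1]$ for the generic case. The paper states the corollary as an immediate consequence of these ingredients without spelling out details, and you have correctly supplied them.
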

	
	Finally recall the following result from our companion paper.
	
	\begin{lem}[\cite{Paper1}, Proposition I.B.4]
		\label{lem:general:parisi doesn't depend away from support}
		Let us fix $(\zeta,\b{\Phi})$ and $(\zeta,\b{\Phi}')$ in $\widehat{\mathscr{Y}}$. Let us assume that for any $s\in \supp(\zeta)$, we have $\b{\Phi}(s)=\b{\Phi}'(s)$. Then 
		\[\widehat{\cal{B}}_{D,\b{\xi},\b{h}}(\zeta,\b{\Phi})=\widehat{\cal{B}}_{D,\b{\xi},\b{h}}(\zeta,\b{\Phi}').\label{eqn:diagonal:ignore-1842}\]
	\end{lem}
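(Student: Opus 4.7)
The plan is to track each of the four $\b{\Phi}$-dependent pieces of $\widehat{\cal{B}}_{D,\b{\xi},\b{h}}(\zeta,\b{\Phi})$ separately and show that each collapses to a quantity depending only on $\b{\Phi}|_{\supp(\zeta)\cup\{0,1\}}$. Set $\b{\eta}:=\b{\Phi}-\b{\Phi}'$, which is coordinatewise absolutely continuous, vanishes on $\supp(\zeta)$ by hypothesis, and also vanishes at $0$ and $1$ because the normalization $\frac{1}{|\Om|}\sum_{x}\Phi_x(s)=s$ together with coordinatewise monotonicity and the range $[0,1]$ force $\Phi_x(0)=0$ and $\Phi_x(1)=1$ for every $x$. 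Using the $q_*$-independence of $\widehat{\cal{B}}$ analogous to Remark~\ref{remark:beginning point doesn't matter part}, I would align the two $q_*$'s to a common value. The structural observation driving the rest of the argument is that $[0,1]\setminus\supp(\zeta)$ decomposes as a countable disjoint union of open intervals $(a_i,b_i)$ with $a_i,b_i\in\supp(\zeta)\cup\{0,1\}$, and on each such interval $u\mapsto\zeta([0,u])$ equals the constant $c_i:=\zeta([0,a_i])$.

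The first step is to verify that $\b{\delta}$ agrees for $\b{\Phi}$ and $\b{\Phi}'$ at every point of $\supp(\zeta)\cup\{0,q_*\}$. Splitting the difference $\int_s^1\zeta([0,u])\eta_x'(u)\,du$ across $\supp(\zeta)\cap[s,1]$ and the complement intervals in $[s,1]$, the support piece vanishes (trivially if $\supp(\zeta)$ has Lebesgue measure zero, otherwise by the Lebesgue density theorem applied to $\eta_x\equiv 0$ on $\supp(\zeta)$), while each complement-interval piece is $c_i(\eta_x(b_i)-\eta_x(a_i))=0$. This handles both the $\Lambda^D(\b{\delta}(q_*))$ term and the $\b{h}$-quadratic form in one stroke. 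The mixing-function integral $\int_0^1\zeta([0,u])\xi_x'(\Phi_x(u))\Phi_x'(u)\,du$ is treated by exactly the same decomposition: on each complement interval it collapses to $c_i\bigl(\xi_x(\Phi_x(b_i))-\xi_x(\Phi_x(a_i))\bigr)$, which uses only boundary values of $\b{\Phi}$.

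The main obstacle is the remaining term $\sum_{x}\int_0^{q_*}K_x^D(\b{\delta}(q))\Phi_x'(q)\,dq$: on a complement interval $(a_i,b_i)\subseteq[0,q_*]$ with $c_i>0$ one has $\b{\delta}(q)=\b{\delta}(b_i)+c_i(\b{\Phi}(b_i)-\b{\Phi}(q))$, so the integrand depends genuinely on $\b{\Phi}(q)$ in the interior and no individual $x$-summand is invariant under the hypothesis. The resolution is to invoke the gradient identity $|\Om|\,\D\Lambda^D(\b{u})=\b{K}^D(\b{u})$ from~(\ref{eqn:def:Lambda}) together with $\b{\delta}'(q)=-c_i\b{\Phi}'(q)$ on $(a_i,b_i)$: summed over $x$, the integrand becomes a total derivative of $\Lambda^D\circ\b{\delta}$, yielding
\[
\sum_{x}\int_{a_i}^{b_i}K_x^D(\b{\delta}(q))\Phi_x'(q)\,dq=-\frac{|\Om|}{c_i}\bigl(\Lambda^D(\b{\delta}(b_i))-\Lambda^D(\b{\delta}(a_i))\bigr),
\]
which depends only on the already-matched values $\b{\delta}(a_i),\b{\delta}(b_i)$. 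The case $c_i=0$ is even easier: $\b{\delta}$ is constant on $(a_i,b_i)$, so the integral equals $K_x^D(\b{\delta}(b_i))(\Phi_x(b_i)-\Phi_x(a_i))$, again boundary-only. Collecting this with the (vanishing or $\Phi$-insensitive) contribution from $\supp(\zeta)\cap[0,q_*]$ and assembling all four pieces gives the claimed identity~(\ref{eqn:diagonal:ignore-1842}).
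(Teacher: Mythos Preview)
The paper does not prove this lemma; it is cited from the companion work \cite{Paper1} as Proposition~I.B.4, so there is no in-paper argument to compare against. Your overall strategy---decompose $[0,1]\setminus\supp(\zeta)$ into intervals and exploit the gradient identity $|\Om|\,\D\Lambda^D=\b{K}^D$ to collapse the $K^D$-integral into boundary data---is sound and is exactly the right mechanism.

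There is, however, a genuine gap in your treatment of the $\Lambda^D(\b{\delta}(q_*))$ term. You claim $\b{\delta}$ agrees at $q_*$, but this is false in general: since $q_*>q_M:=\sup\supp(\zeta)$, one has $\delta_x(q_*)=1-\Phi_x(q_*)$, and $q_*$ lies strictly inside the complement interval $(q_M,1)$, so your formula $c_i(\eta_x(b_i)-\eta_x(a_i))$ does not apply---the lower endpoint of the partial interval $[q_*,1]$ is $q_*$, not $q_M$, and there is no reason $\Phi_x(q_*)=\Phi_x'(q_*)$. (A two-site example with $\zeta=\delta_{1/2}$ and $\b{\Phi},\b{\Phi}'$ agreeing only at $s=0,\tfrac12,1$ already gives $\b{\delta}(q_*)\neq\b{\delta}'(q_*)$.)

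The fix is to stop treating $\Lambda^D(\b{\delta}(q_*))$ in isolation and instead absorb it into the $K^D$-integral using the very same total-derivative identity you invoke later. On $(q_M,q_*)$ one has $c_i=1$ and hence $\b{\delta}'(q)=-\b{\Phi}'(q)$, so
\[
\Lambda^D(\b{\delta}(q_*))+\frac{1}{|\Om|}\sum_x\int_{q_M}^{q_*}K_x^D(\b{\delta}(q))\Phi_x'(q)\,dq
=\Lambda^D(\b{\delta}(q_*))-\int_{q_M}^{q_*}\frac{d}{dq}\Lambda^D(\b{\delta}(q))\,dq
=\Lambda^D(\b{\delta}(q_M)),
\]
and $q_M\in\supp(\zeta)$ so $\b{\delta}(q_M)$ does match. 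After this repair, your handling of the remaining pieces (the $\b{h}$-term via $\b{\delta}(0)$, the $\xi$-integral, and the $K^D$-integral over $[0,q_M]$) goes through as written.
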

	
	We are now ready to give the proof of Theorem \ref{theorem:overview:spherical:generic and symmetric}
	
	\begin{proof}[Proof of Theorem \ref{theorem:overview:spherical:generic and symmetric}]
		Let $(\zeta,\b{\Phi})$ be a minimizer of $\hat{\cal{B}}_{\b{\xi}}$. By Corollary \ref{corr:identification:simple}, we observe that if we denote by $\cal{L}_N^x$, the law of $(\b{u}(x),\b{u}'(x))_N$ under the expected Gibbs measure of $\cal{H}_{N,\b{\xi}}$, then we have the convergence $\cal{L}_N^x\To (\Phi_x)_{*}(\zeta)$ in law. Note though, that by the symmetry of the underlying model, for any $x,y\in \Om$, $\cal{L}_N^x\disteq \cal{L}_N^y$ so that $(\Phi_x)_{*}(\zeta)\disteq(\Phi_y)_{*}(\zeta)$. As the functions $\Phi_x$ are monotone non-decreasing and continuous this implies that $\Phi_x(s)=\Phi_y(s)$ for each $s\in \supp(\zeta)$. In particular, at these points we have that $\Phi_x(s)=s$.
		
		From this, we see by Lemma \ref{lem:general:parisi doesn't depend away from support}, that we see we may replace each $\Phi_x$ with the identity map without changing the value of $\hat{\cal{B}}_{\b{\xi}}$. In particular, a minimizer occurs in this restricted subset, completing the proof.
	\end{proof}
	
	\iffalse
	We now that when $\xi$ is not generic, we are not able to identify the minimizing measure in Theorem \ref{theorem:intro:main:sphere} as in Theorem \ref{theorem:overview:spherical:generic and symmetric}. However, the above work can be modified to show that it satisfies some equations as in Corollary \ref{corr:identification:simple}. In particular, if we are considering symmetric models and we define can define the symmetric version of $\mathscr{B}(\vec{\b{\beta}})$ by 
	\[\mathscr{B}(\xi):=\inf_{\zeta\in \mathscr{Y}}\cal{B}_{\xi}(\zeta).\]
	Then if we write the coefficients of our mixing function as
	\[\xi(r)\sum_{p=0}^{\infty}=\beta_p r^p,\]
	the proof of Lemma \ref{lem:identification:spherical parisi} immediately generalizes to show that $\mathscr{B}(\xi)$ is differentiable in $(\beta_p)_{p\ge 0}$. Combined with Lemma \ref{lem:identification:trivial}, we obtain the following.
	\begin{corr}
		\label{corr:identification:simple and symmetric}
		Fix $\xi$, and let $\zeta_{\xi}$ be the minimizing measure from Theorem \ref{theorem:intro:main:sphere}. Then for any $p\ge 0$ such that $\beta_p\neq 0$, we have that 
		\[\int_0^1 r^{p}\zeta(dr)=\lim_{N\to \infty} \E\<(\b{u}(x),\b{u}'(x))^p_N\>.\]
	\end{corr}
	\fi
	
	Finally, we complete the section with the proof of Theorem \ref{theorem:intro:parisi-measure:sphere}.
	
	\begin{proof}[Proof of Theorem \ref{theorem:intro:parisi-measure:sphere}]
		To show convexity, note that the only terms of $2\cal{B}$ which are not constant or linear in $\zeta$ are
		\[ h^2 K(\delta(0);t)^{-1}+\int_0^1 K(\delta(u);t)du.\]
		To show convexity of the first term, we note that
		\[\frac{d^2}{dx^2}K(x;t)^{-1}=K(x;t)^{-3}\l(2 (K'(x;t))^2-K(x;t)K''(x;t)\r).\label{eqn:ignore-2045}\]
		Twice differentiating the expression
		\[\tr((K(x;t)-t  \Delta)^{-1})=x,\]
		we obtain that 
		\[\tr((K(x;t)-t  \Delta)^{-3})2 (K'(x;t))^2=\tr((K(x;t)-t  \Delta)^{-2})K''(x;t).\]
		In particular, we may rewrite (\ref{eqn:ignore-2045}) as
		\[K(x;t)^{-3} 2 (K'(x;t))^2\l(1-K(x;t)\frac{\tr((K(x;t)-t  \Delta)^{-3})}{\tr((K(x;t)-t  \Delta)^{-2})}\r).\label{eqn:ignore-2055}\]
		Then using that for $x,a\ge 0$, we have that $(a+x)^{-2}\ge x(a+x)^{-3}$, we see that
		\[\tr((K(x;t)-t  \Delta)^{-2})\ge \tr((K(x;t)-t  \Delta)^{-3})K(x;t).\]
		Together with (\ref{eqn:ignore-2055}), this implies the convexity of $K(x;t)^{-1}$. As the expression $\delta(u)$ is linear in $\zeta$, this implies convexity of the term $h^2 K(\delta(0);t)^{-1}$.
		
		Now finally to show strict convexity, we only need to show that $\int_0^1 K(\delta(u);t)du$ is strictly convex. For this fix $\mu\in \mathscr{Y}\setminus{\zeta}$, and for $0\le \epsilon\le 1$, consider the convex combination $\zeta_{\epsilon}:=(1-\epsilon)\zeta+\epsilon \mu$. It suffices to show that
		\[\frac{d^2}{d\epsilon^2}\l(\int_0^1 K(\delta(u);t)du\r)\bigg|_{\epsilon^+=0}>0\label{eqn:ignore-2108}\]
		Finally, let us denote for simplicity $\nu=\mu-\zeta$. Then we see that
		\[\frac{d^2}{d\epsilon^2}\l(\int_0^1 K(\delta(u);t)du\r)\bigg|_{\epsilon^+=0}=\int_0^1 K''(\delta(u);t)\l(\int_u^1 \nu([0,s])ds\r)^2.\]
		As we know that $K$ is strictly convex, we see that for (\ref{eqn:ignore-2108}) to fail, we must have that for each $u\in [0,1]$, that $\int_u^1 \nu([0,s])ds=0$. On the other hand, this implies that $\mu([0,s])=\zeta([0,s])$ for each $s\in [0,1]$, which implies that $\mu=\zeta$, which is a contradiction. In particular, this establishes (\ref{eqn:ignore-2108}), which completes the proof of strict convexity of $\cal{B}$.
		
		Now with notation as above, we see that
		\[
		\begin{split}
			\frac{d}{d\epsilon}\cal{B}(\zeta_{\epsilon})|_{\epsilon^+=0}=&- h^2 K(\delta(0);t)^{-2}K'(\delta(0);t)\int_0^1 \nu([0,s])ds\\
			&+\int_0^1 K'(\delta(s);t)\int_s^1\nu([0,u])du+\int_0^1\nu([0,s])\xi'(u)du=\\
			&\int_0^1 \nu([0,u])F_{\xi}(u)du=-\int_0^1 f_{\xi}(u)\nu(du).
		\end{split}
		\]
		In particular, we see that $\zeta$ is a local minimizer of $\cal{B}$ if and only if for all $\mu\in \mathscr{Y}$
		\[\int_0^1 f_{\xi}(u)\mu(du)\le \int_0^1 f_{\xi}(u)\zeta(du).\]
		This is clearly equivalent to the condition of (\ref{eqn:intro:minimization sphere}).
	\end{proof}

	\section{Minimization Identities for the Parisi Functional \label{section:minimization}}
	
	In this section, we will study the derivative of the functional $\widehat{\cal{B}}_{D,\b{\xi},\b{h}}$, recalled in the previous section, with respect to a certain family of perturbations. These perturbations essentially amount to a form of measure re-parametrization. While this family typically does not form a complete basis of tangent directions in the domain, they will give a number of important facts.
	
	The first is the existence of global minimizers for $\widehat{\cal{B}}_{D,\b{\xi},\b{h}}$, an important technical fact used above. The second is that if $\xi'_x(0)\neq 0$ for all $x\in \Om$, then any minimizer, say $(\zeta, \b{\Phi})$, is such that $\supp(\b{\Phi}_*(\zeta))\cap \partial [0,1]^{\Om}=\varnothing$. Roughly speaking, this shows that the (possibly plural) Parisi measures do not have support at either $0$ or $1$, and this result will be used crucially in our proof of concavity in Section \ref{section:concavity}. Finally, it gives explicit formulas that any minimizer must satisfy, which will be important in showing concentration of the radius of the model.
	
	We note the methodology of this section, especially Lemma \ref{lem:minimization:derivative in epsilon}, owes to the work of \cite{erikCS}, who compute the minimizers of similar functionals in the case of certain multi-species spherical spin glass models.
	
	To begin, let us fix $(\zeta,\b{\Phi})\in \widehat{\mathscr{Y}}$. Then, choosing some additional coordinate-wise non-decreasing absolutely continuous function $\b{\chi}:[0,1]\to [0,1]^{\Om}$, and we define $\b{\Psi}=\b{\chi}-\b{\Phi}$. We consider the family \[\b{\tilde{\Phi}}^{\epsilon}=\b{\Phi}+\epsilon\b{\Psi}=(1-\epsilon)\b{\Phi}+\epsilon \b{\chi}.\]
	We wish to compute the derivative of $\widehat{\cal{B}}_{D,\b{\xi},\b{h}}$ in the family $\b{\tilde{\Phi}}^{\epsilon}$, but as it may not live in $\widehat{\mathscr{Y}}$ due to violating the condition (\ref{eq:condition for Psi-spherical}), we need to introduce a slight reparametrization. Namely, define \[\alpha_{\epsilon}(q)=\frac{1}{|\Om|}\sum_{x\in \Om}\tilde{\Phi}^{\epsilon}_x(q)=q(1-\epsilon)+\epsilon\sum_{x\in \Om}\chi_x(q),\] 
	which is a continuous, strictly increasing function $\alpha_{\epsilon}:[0,1]\to [0,1]$. In particular, this shows that $\alpha_{\epsilon}$ is a homeomorphism from $[0,1]$ onto $[\alpha_{\epsilon}(0),\alpha_{\epsilon}(1)]$, so we may define for $q\in [\alpha_{\epsilon}(0),\alpha_{\epsilon}(1)]$,
	\[\b{\Phi}^{\epsilon}=\b{\tilde{\Phi}}^{\epsilon}\circ \alpha_{\epsilon}^{-1},\]
	and otherwise define it by the linear interpolations
	\[\b{\Phi}^{\epsilon}(q)=\frac{q}{\alpha_{\epsilon}(0)}\tilde{\b{\Phi}}^{\epsilon}(0),\; q\in [0,\alpha_{\epsilon}(0)),\;\; \b{\Phi}^{\epsilon}(q)=\frac{q-\alpha_{\epsilon}(1)+\tilde{\b{\Phi}}^{\epsilon}(1)(1-q)}{1-\alpha_{\epsilon}(1)},\; q\in (\alpha_{\epsilon}(1),1].\]
	We define $\zeta^{\epsilon}$ as the push-forward of $\zeta$ under $\alpha_{\epsilon}$, so that
	\[\zeta^{\epsilon}([0,u])=\begin{cases}
		1,\;\;\;\;\;\;\;\;\;\;\;\;\;\;\;\;\ u>\alpha_{\epsilon}(1)\\
		\zeta([0,\alpha_{\epsilon}^{-1}(u)]),\;\;\; u\in [\alpha_{\epsilon}(0),\alpha_{\epsilon}(1)]\\
		0,\;\;\;\;\;\;\;\;\;\;\;\;\;\;\;\;\;\;\; u<\alpha_{\epsilon}(0)
	\end{cases}.\]
	It is then easily checked that $(\zeta^{\epsilon},\b{\Phi}^{\epsilon})\in \widehat{\mathscr{Y}}$.
	
	We will now prepare to compute the derivative of $\widehat{\cal{B}}_{D,\b{\xi},\b{h}}$ in this family at $\epsilon=0$. There is a slight technicality, in that this only defines a family $\widehat{\mathscr{Y}}$ for positive $\epsilon>0$, so we may only consider the derivative in the positive direction. On the other hand if both $\Psi_x(0)=0$ and $\Psi_x(1)=1$ for all $x\in \Om$, one may actually check the above construction defines a family for $\epsilon\in (-\delta,\delta)$ for sufficiently small $\delta>0$. Thus in this case the derivative itself may be considered. For brevity, we will omit this from the notation, as it will not affect the value of the derivative, though we will return to this technicality in the proof of Lemma \ref{lem:minimization:eqns} below, as local minimization only implies that the derivative is non-negative in the prior case, while it implies actual vanishing in the latter.
	
	The computation of this derivative will be given by the following lemma, for which we will introduce some notation to make its statement simpler. Let consider the function $\b{\delta}:[0,1]\to [0,\infty)^{\Om}$ associated to $(\zeta,\b{\Phi})$ by (\ref{eqn:def:delta-A}). We first introduce the following constants, indexed by $y\in \Om$
	\[\cal{J}_y=\sum_{x,z\in \Om}[(D+\b{K}^D(\b{\delta}(0)))^{-1}]_{xy}[(D+\b{K}^D(\b{\delta}(0)))^{-1}]_{zy}\b{h}(x)\b{h}(z).\]
	Next, we introduce three families of functions, again indexed by $y\in \Om$
	\[\cal{G}_y(q)=\xi'_y(\Phi_y(q))+\sum_{x\in \Om}\left(\int_0^q \D_y K^D_x(\b{\delta}(u))\Phi'_x(u)du\right)+\cal{J}_y,\label{eqn:minimization:definition of G}\]
	\[\cal{H}_y(q)=K^D_y(\b{\delta}(q))+\int_q^1\zeta([0,u])\xi''_y(\Phi_y(u))\Phi_y'(u)du,\]
	\[\cal{F}_y(q)=\cal{H}_y(q)+\zeta([0,q])\cal{G}_y(q).\]
	
	\begin{lem}
		\label{lem:minimization:derivative in epsilon}
		With notation as above, and for any $q_*$ such that $\zeta([0,q_*))=1$ and $\Phi_x(q_*)<1$ for all $x\in \Om$, we have that
		\[\frac{d}{d\epsilon}\widehat{\cal{B}}_{D,\b{\xi},\b{h}}(\zeta^{\epsilon},\b{\Phi}^{\epsilon})=\frac{1}{2|\Om|}\sum_{y\in \Om}\left(\int_0^{q_*}\cal{F}_y(q)\Psi_y'(q)dq-\cal{F}_y(q_*)\Psi_y(q_*)+\cal{H}_y(0)\Psi_y(0)\right).\]
	\end{lem}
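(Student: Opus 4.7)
The plan is to reduce the lemma to a direct chain-rule differentiation by first undoing the reparametrization. I would begin by applying the substitution $q=\alpha_{\epsilon}(v)$ in each integral of $\widehat{\cal{B}}_{D,\b{\xi},\b{h}}(\zeta^{\epsilon},\b{\Phi}^{\epsilon})$. Using $(\tilde{\Phi}_x^{\epsilon})'(v)=(\Phi_x^{\epsilon})'(\alpha_{\epsilon}(v))\alpha_{\epsilon}'(v)$, the identity $\b{\delta}^{\epsilon}(\alpha_{\epsilon}(v))=\tilde{\b{\delta}}^{\epsilon}(v)$ with
\[
\tilde{\delta}_x^{\epsilon}(v):=\int_v^1\zeta([0,u])(\tilde{\Phi}_x^{\epsilon})'(u)\,du,
\]
and the natural choice $q_*^{\epsilon}=\alpha_{\epsilon}(q_*)$, each integrand transforms into one depending only on $\tilde{\b{\Phi}}^{\epsilon}=\b{\Phi}+\epsilon\b{\Psi}$ and $\zeta$ on $[0,q_*]$. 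The contributions from the linear-interpolation regions $[0,\alpha_{\epsilon}(0)]$ and $[\alpha_{\epsilon}(1),1]$, on which $\b{\delta}^{\epsilon}$ and $(\Phi_x^{\epsilon})'$ are piecewise constant in $q$, can be evaluated explicitly in closed form.

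Next, differentiate term by term at $\epsilon=0$ using $\partial_\epsilon\tilde{\Phi}_x^{\epsilon}|_0=\Psi_x$ and $\partial_\epsilon\tilde{\delta}_x^{\epsilon}|_0(q)=\int_q^1\zeta([0,u])\Psi_x'(u)\,du$. The identity $|\Om|\D\Lambda^D=\b{K}^D$ gives the $K_y^D(\b{\delta}(q_*))$ contribution to $\cal{H}_y(q_*)$; the chain rule applied to $K_x^D(\tilde{\b{\delta}}^{\epsilon}(v))$ produces the $\D_y K_x^D$-integral appearing in $\cal{G}_y$; the chain rule applied to $\xi_x'(\tilde{\Phi}_x^{\epsilon}(v))$ yields both the $\xi''$-integral in $\cal{H}_y$ and the $\xi_y'(\Phi_y(q))$ piece of $\cal{G}_y$; and the Jacobi formula $\partial_\epsilon M^{-1}=-M^{-1}(\partial_\epsilon M)M^{-1}$ applied to $M=D+\b{K}^D(\b{\delta}^{\epsilon}(0))$ produces the constants $\cal{J}_y$, multiplied by $\partial_\epsilon\b{\delta}^{\epsilon}(0)|_0$.

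The final step is bookkeeping. Apply Fubini to every double integral so that the variable attached to $\Psi_y'$ is outermost; then split the outer range into $[0,q_*]$ and $[q_*,1]$ using $\zeta([0,u])=1$ for $u\ge q_*$. The integrand multiplying $\Psi_y'(q)$ on $[0,q_*]$ reorganizes exactly into $\cal{F}_y(q)=\cal{H}_y(q)+\zeta([0,q])\cal{G}_y(q)$. The contributions at $q=q_*$ (arising from the $\Lambda^D$-derivative together with the integration-by-parts boundary terms on the $K^D$ and $\xi'$ integrals) collapse to $-\cal{F}_y(q_*)\Psi_y(q_*)$; at $q=0$, the interpolation-region contribution $K_y^D(\b{\delta}(0))\Psi_y(0)$ from the $K^D$-integral combines with the $\b{h}$-contribution and the residual $\xi'$-boundary term to give $\cal{H}_y(0)\Psi_y(0)$. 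All $\Psi_y(1)$ contributions cancel: the one produced by the $\Lambda^D$-derivative, the one from the endpoint-interpolation piece of the $\xi'$-integral on $[\alpha_{\epsilon}(1),1]$, and the one encoded in the mismatch $\partial_\epsilon\b{\delta}^{\epsilon}(0)|_0-\partial_\epsilon\tilde{\b{\delta}}^{\epsilon}(0)|_0=-\b{\Psi}(1)$ inside the $\b{h}$-derivative.

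The main obstacle I expect is precisely this bookkeeping: the derivative produces several disparate pieces, and only after careful Fubini-swaps, range-splittings, and integrations-by-parts do they consolidate into the stated form. A secondary subtlety is that the linear-interpolation extensions of $\b{\Phi}^{\epsilon}$ outside $[\alpha_{\epsilon}(0),\alpha_{\epsilon}(1)]$ are not merely technical conveniences ensuring $(\zeta^{\epsilon},\b{\Phi}^{\epsilon})\in\widehat{\mathscr{Y}}$; they supply exactly the boundary contributions that cancel the $\Psi_y(1)$ terms and produce the final $\cal{H}_y(0)\Psi_y(0)$ correction.
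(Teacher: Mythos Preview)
Your approach is essentially identical to the paper's: change variables via $q=\alpha_\epsilon(v)$ to work with $\tilde{\b{\Phi}}^\epsilon$, differentiate term-by-term, reorganize via Fubini and integration-by-parts, and verify the $\Psi_y(1)$ cancellations. One small correction: the identity you state as $\b{\delta}^{\epsilon}(\alpha_{\epsilon}(v))=\tilde{\b{\delta}}^{\epsilon}(v)$ is off by the upper-interpolation contribution, namely $\b{\delta}^{\epsilon}(\alpha_{\epsilon}(v))=\tilde{\b{\delta}}^{\epsilon}(v)-\epsilon\b{\Psi}(1)$; you clearly know this (you invoke the mismatch later), but as written this correction feeds into the $\Lambda^D$ and $K^D$ derivatives as well, not only the $\b{h}$-term, and is precisely what supplies the $\Psi_y(1)$ pieces you later cancel.
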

	\begin{proof}
		We restrict to $\epsilon$ is small enough that we still have for each $x\in \Om$ \[\Phi_x^{\epsilon}(\alpha_{\epsilon}(q_*))=\tilde{\Phi}_x^{\epsilon}(q_*)<1.\label{eqn:ignore-2002}\]
		We also denote by $\b{\delta}^{\epsilon}$ as the $\b{\delta}$-function defined with respect to $(\zeta^{\epsilon},\b{\Phi}^{\epsilon})$. We observe that		\[\delta_x^{\epsilon}(\alpha_\epsilon(1))=\Phi_x^{\epsilon}(1)-\Phi_x^{\epsilon}(\alpha_{\epsilon}(1))=1-\tilde{\Phi}^{\epsilon}_x(1)=-\epsilon \Psi_x(1),\]
		and that
		\[\delta_x^{\epsilon}(\alpha_\epsilon(q))=\delta_x(\alpha_\epsilon(1))+\int_{\alpha_{\epsilon}(q)}^{\alpha_{\epsilon}(1)}\zeta([0,q])(\Phi^{\epsilon}_x)'(q)dq=-\epsilon \Psi_x(1)+\int_q^1\zeta([0,q])(\tilde{\Phi}^{\epsilon}_x)'(q)dq.\]
		Finally note that for $q\in [0,\alpha_\epsilon(0)]$ we have that $\delta_x^{\epsilon}(q)=\delta_x^{\epsilon}(\alpha_\epsilon(0))$.
		
		Now we note that by using $\zeta^{\epsilon}([0,\alpha_\epsilon(q_*)))=1$ and (\ref{eqn:ignore-2002}), we may evaluate the functional using the $\epsilon$-dependent endpoint $\alpha_\epsilon(q_*)$. To do this, we note that
		\[\int_{0}^{\alpha_\epsilon(q_*)} K^D_x(\b{\delta}^{\epsilon}(q))(\Phi_x^{\epsilon})'(q)dq=K^D_x(\b{\delta}^{\epsilon}(0))\Phi_x^{\epsilon}(0)+\int_0^{q_*} K^D_x(\b{\delta}^{\epsilon}(\alpha_{\epsilon}(q)))(\tilde{\Phi}_x^{\epsilon})'(q)dq.\]
		Using these results we may compute that
		\[\begin{split}
			\frac{d}{d\epsilon}\int_{0}begin^{\alpha_\epsilon(q_*)} K^D_x(\b{\delta}^{\epsilon}(q))(\Phi_x^{\epsilon})'(q)dq=\Psi_x(0)K^D_x(\b{\delta}(0))+\int_0^{q_*} K^D_x(\b{\delta}(q))\Psi_x'(q)dq\\
			+\sum_{y\in \Om}\int_0^{q_*}\D_y K^D_x(\b{\delta}(q))\bigg(-\Psi_y(1)+\int_q^1\zeta([0,u])\Psi_y'(u)du \bigg)\Phi'_x(q)dq.
		\end{split}\]
		To contend with the final term we note that
		\[\begin{split}
			\int_0^{q_*}\D_y K^D_x(\b{\delta}(q))\bigg(\int_q^1\zeta([0,u])\Psi_y'(u)du \bigg)\Phi'_x(q)dq=\\
			\int_0^{q_*}\zeta([0,u])\left(\int_0^u \D_y K^D_x(\b{\delta}(q))\Phi'_x(q)dq\right)\Psi_y'(u)du+(\Psi_y(1)-\Psi_y(q_*))\int_0^{q_*}\D_y K^D_x(\b{\delta}(q))\Phi'_x(q)dq.
		\end{split}\]
		Next we note that 
		\[\delta^{\epsilon}_x(\alpha_{\epsilon}(q_*))=\Phi^{\epsilon}_x(1)-\Phi^{\epsilon}_x(\alpha_\epsilon(q_*))=1-\epsilon\Psi_x(q_*)\]
		so we may conclude that
		\[\frac{d}{d\epsilon}|\Om|\Lambda^D(\b{\delta}^{\epsilon}(\alpha_{\epsilon}(q_*)))=-\sum_{x\in \Om}K^D_x(\b{\delta}(q_*))\Psi_x(q_*).\]
		In conclusion, and noting the terms involving $\Psi_y(1)$ cancel, we see that
		\[\begin{split}
			\frac{d}{d\epsilon}\left(|\Om|\Lambda^D(\b{\delta}^{\epsilon}(\alpha_{\epsilon}(q_*)))+\sum_{x\in \Om}\int_{0}^{\alpha_\epsilon(q_*)} K^D_x(\b{\delta}^{\epsilon}(q))(\Phi_x^{\epsilon})'(q)dq\right)=\\
			\sum_{y\in \Om}\Psi_y(0)K^D_y(\b{\delta}(0))-\sum_{y\in \Om}\Psi_y(q_*)\left(\sum_{x\in \Om}\int_0^{q_*}\D_y K^D_x(\b{\delta}(q))\Phi'_x(q)dq+K^D_y(\b{\delta}(q_*))\right)\\
			+\sum_{y\in \Om}\int_0^{q_*}\left(\sum_{x\in \Om}\zeta([0,u])\left(\int_0^u \D_y K^D_x(\b{\delta}(q))\Phi'_x(q)dq\right)+K^D_y(\b{\delta}(u))\right)\Psi_y'(u)du.
		\end{split}\]
		Next, noting that $\zeta^{\epsilon}([0,\alpha_{\epsilon}(0)))=0$, we see that
		\[\int_0^1\zeta^{\epsilon}([0,q])\xi_x'(\Phi_x^{\epsilon}(q))(\Phi_x^{\epsilon})'(q)dq=\int_0^1\zeta([0,q])\xi_x'(\tilde{\Phi}_x^{\epsilon}(q))(\tilde{\Phi}_x^{\epsilon})'(q)dq+\xi_x(1)-\xi_x(\tilde{\Phi}^\epsilon_x(1)),\]
		so we compute as above that
		\[\begin{split}
			\frac{d}{d\epsilon}\int_0^1\zeta^{\epsilon}([0,q])\xi_x'(\Phi_x^{\epsilon}(q))(\Phi_x^{\epsilon})'(q)dq=\\
			\int_0^1\zeta([0,q])\xi'_x(\Phi_x(q))\Psi_x'(q)dq
			+\int_0^1\zeta([0,q])\xi''_x(\Phi_x(q))\Psi_x(q)\Phi'_x(q)dq
			-\xi_x'(1)\Psi_x(1).
		\end{split}\]
		Using integration by parts, we rewrite the second term as
		\[
		\int_0^1\left(\int_q^1\zeta([0,q])\xi''_x(\Phi_x(q))\Phi'_x(q)dq\right)\Psi_x'(u)du+\left(\int_0^1\zeta([0,q])\xi''_x(\Phi_x(q))\Phi'_x(q)dq\right)\Psi_x(0).
		\]
		We note that as $\zeta([0,q])=1$ for $q\in [q_*,1]$, we have that
		\[\int_{q_*}^1\left(\zeta([0,q])\xi'_x(\Phi_x(q))+\int_q^1\zeta([0,u])\xi''_x(\Phi_x(u))\Phi'_x(u)du\right)\Psi_x'(q)dq=\xi_x'(1)(\Psi_x(1)-\Psi_x(q_*)).\]
		Thus combined we see that
		\[\begin{split}
			\frac{d}{d\epsilon}\int_0^1\zeta^{\epsilon}([0,q])\xi_x'(\Phi_x^{\epsilon}(q))(\Phi_x^{\epsilon})'(q)dq=\left(\int_0^1\zeta([0,q])\xi''_x(\Phi_x(q))\Phi'_x(q)dq\right)\Psi_x(0)\\
			+\int_0^{q_*}\left(\zeta([0,q])\xi'_x(\Phi_x(q))+\int_q^1\zeta([0,u])\xi''_x(\Phi_x(u))\Phi'_x(u)du\right)\Psi_x'(q)dq-\xi'(1)\Psi_x(q_*).
		\end{split}\]
		Now finally we contend with the external field term to get
		\[\begin{split}
			\frac{d}{d\epsilon}[(D+\b{K}^D(\delta^{\epsilon}(0)))^{-1}]_{xy}=\\
			-\sum_{z\in \Om}[(D+\b{K}^D(\delta(0)))^{-1}]_{xz}[(D+\b{K}^D(\delta(0)))^{-1}]_{zy}\left(\int_0^1\zeta([0,q])d\Psi_z(q)-\Psi_z(1)\right).
		\end{split}\]
		Now we note that for $q\ge q_*$, we have that
		\[\cal{F}_y(q)=K^D_y(\b{\delta}(q))+\xi'_y(1)+\sum_{x\in \Om}\left(\int_0^{q}\D_y K^D_x(\b{\delta}(q))\Phi'_x(q)dq\right)+\cal{J}_y.\]
		Altogether, we obtain the desired result.
	\end{proof}
	
	To further make use of this computation, we will want the following result.
	
	\begin{lem}
		\label{lem:minimizers:hard lemma}
		Let us fix arbitrary $(\zeta,\b{\Phi})$ as above, and assume $q_*<1$ is such that $\zeta([0,q_*))=1$ and $\Phi_x(q_*)<1$ for all $x\in \Om$. Then for any smooth compactly supported function $f:(0,q_*)\to \R$ and $y\in \Om$, we have that
		\[\int_0^1 \cal{F}_y(q)f(q)dq=-\int_0^1 f'(q)\cal{G}_y(q)\zeta(dq).\]
	\end{lem}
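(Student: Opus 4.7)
My plan is to establish the key structural identity that, as signed Stieltjes measures on $[0,1]$, one has $d\cal{F}_y(q) = \cal{G}_y(q)\zeta(dq)$. Given this, the claim reduces to a standard integration by parts, made legitimate by the compact support of $f$ in $(0, q_*)$ so that all boundary terms vanish.

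\textbf{Step 1: Pointwise derivative of $\cal{H}_y$.} I would first show that $\cal{H}_y'(q) = -Z(q)\cal{G}_y'(q)$, where $Z(q) := \zeta([0,q])$. Since $\delta_x(q) = \int_q^1 Z(u)\Phi_x'(u)\,du$, we have $\delta_x'(q) = -Z(q)\Phi_x'(q)$. Applying the chain rule to $K_y^D(\b\delta(q))$ and Leibniz to the integral term in $\cal{H}_y$ yields
\[
\cal{H}_y'(q) = -Z(q)\Bigl[\sum_{x\in\Om} \D_x K_y^D(\b\delta(q))\,\Phi_x'(q) + \xi_y''(\Phi_y(q))\,\Phi_y'(q)\Bigr].
\]
The bracket is precisely $\cal{G}_y'(q)$, once one uses the symmetry $\D_x K_y^D = \D_y K_x^D$; this symmetry follows from $\b{K}^D = |\Om|\,\D\Lambda^D$ being the gradient of a scalar, which was recalled just below (\ref{eqn:def:Lambda}).

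\textbf{Step 2: Stieltjes differential of $\cal{F}_y$.} Writing $\cal{F}_y = \cal{H}_y + Z\cal{G}_y$, I would compute $d\cal{F}_y$. Since $dZ = \zeta$ as a Borel measure and $\cal{G}_y$ is absolutely continuous (with no common atoms to produce bilinear Stieltjes corrections), the product rule gives
\[
d(Z\cal{G}_y)(q) = \cal{G}_y(q)\,\zeta(dq) + Z(q)\cal{G}_y'(q)\,dq.
\]
Adding $d\cal{H}_y = -Z(q)\cal{G}_y'(q)\,dq$ from Step 1, the absolutely continuous parts cancel exactly, leaving
\[
d\cal{F}_y(q) = \cal{G}_y(q)\,\zeta(dq).
\]

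\textbf{Step 3: Integration by parts.} With the identity of Step 2 in hand, the stated formula follows from Stieltjes integration by parts applied to $f \in C_c^\infty((0,q_*))$: all boundary contributions vanish because $f$ is compactly supported away from $\{0,1\}$, leaving only the interior terms that rearrange to give the displayed equality.

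The main obstacle is the careful application of Stieltjes calculus in the presence of potential atoms of $\zeta$. The cancellation in Step 2 is what makes the identity clean; without the relation $\cal{H}_y' = -Z\cal{G}_y'$, one would be left with a non-trivial absolutely continuous residual. The symmetry of mixed partials of $\Lambda^D$ is therefore the crucial algebraic input, and it is exactly what forces the measure $d\cal{F}_y$ to be supported on $\supp(\zeta)$ (up to a density $\cal{G}_y$), which is the structural content of the lemma.
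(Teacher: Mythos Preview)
Your Steps~1 and~2 are correct and isolate exactly the mechanism behind the paper's argument: the relation $\cal{H}_y'(q)=-\zeta([0,q])\,\cal{G}_y'(q)$ (which indeed uses the symmetry $\D_xK_y^D=\D_yK_x^D$, a consequence of $\b{K}^D=|\Om|\,\D\Lambda^D$) combines with the product rule to give $d\cal{F}_y=\cal{G}_y\,d\zeta$ as a signed Stieltjes measure. The paper does not phrase it this way; it instead splits $\cal{F}_y$ into the $K^D$-block, the $\xi$-block, and the $\cal{J}_y$-block, and performs the integration by parts on each piece separately before recombining. Your packaging is cleaner, but the content is the same.

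One point needs correction in Step~3. From $d\cal{F}_y=\cal{G}_y\,d\zeta$, integration by parts against a smooth compactly supported test function gives
\[
\int_0^1 \cal{F}_y(q)\,f'(q)\,dq \;=\; -\int_0^1 f(q)\,\cal{G}_y(q)\,\zeta(dq),
\]
i.e.\ with the roles of $f$ and $f'$ exchanged relative to the displayed statement of the lemma. This version is what the paper's own computation actually establishes (each of its three displayed identities has $f'$ paired with a piece of $\cal{F}_y$ on the Lebesgue side and $f$ paired with a piece of $\cal{G}_y$ on the $\zeta$ side), and it is the version used immediately afterwards: once $\cal{F}_y$ is shown to be constant, one needs $\int \cal{F}_y f'\,dq=0$, which follows from $f(0)=f(q_*)=0$. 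The lemma as printed has a typographical swap of $f$ and $f'$; your argument proves the intended identity, so no change to your Steps~1--2 is needed, only the observation that the final integration by parts lands on the corrected formula above rather than on the one literally displayed.
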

	\begin{proof}
		By integration by parts, we have that
		\[\int_0^1 f'(q)K^D_y(\b{\delta}(q))dq=\int_0^1 f(q)\zeta([0,q])\sum_{x\in \Om}\left(\int_0^q \D_y K^D_x(\b{\delta}(u))\Phi'_x(u)du\right),\]
		and that
		\[
		\begin{split}
			\int_0^1& f'(q)\zeta([0,q])\sum_{x\in \Om}\left(\int_0^q \D_y K^D_x(\b{\delta}(u))\Phi'_x(u)du\right)dq\\
			=&-\int_0^1 f(q)\zeta([0,q])\sum_{x\in \Om}\left(\int_0^q \D_y K^D_x(\b{\delta}(u))\Phi'_x(u)du\right)dq\\
			&-\int_0^1 f(q)\sum_{x\in \Om}\left(\int_0^q \D_y K^D_x(\b{\delta}(u))\Phi'_x(u)du\right)\zeta(dq).
		\end{split}
		\]
		Canceling a common term, we see that 
		\[
		\begin{split}
			&\int_0^1 f'(q)\l(K^D_y(\b{\delta}(q))+\zeta([0,q])\sum_{x\in \Om}\left(\int_0^q \D_y K^D_x(\b{\delta}(u))\Phi'_x(u)du\right)\r)dq\\
			&=-\int_0^1 f(q)\sum_{x\in \Om}\left(\int_0^q \D_y K^D_x(\b{\delta}(u))\Phi'_x(u)du\right)\zeta(dq).
		\end{split}
		\]
		Similarly, we have that
		\[
		\begin{split}
			&\int_0^1 f'(q)\l(\xi'_y(\Phi_y(q))+\zeta([0,q])\l(\int_q^1\zeta([0,u])\xi''_y(\Phi_y(u))\Phi_y'(u)du\r)\r)dq\\
			&=-\int_0^1 f(q)\l(\int_q^1\zeta([0,u])\xi''_y(\Phi_y(u))\Phi_y'(u)du\r)\zeta(dq).
		\end{split}
		\]
		Combining these with the trivial observation
		\[\cal{J}_y\int_0^1 f'(q)\zeta([0,q])dq=-\cal{J}_y\int_0^1 f(q)\zeta(dq),\]
		completes the proof.
	\end{proof}
	
	Combining these lemmas, we will obtain a fairly tractable set of equations for any minimizers. However, as we have not even yet shown that any minimizer exists, we first consider the functional restricted to certain compact subsets of $\widehat{\mathscr{Y}}$. In particular, we define for $\epsilon\in (0,1)$
	\[\widehat{\mathscr{Y}}_\epsilon=\{(\zeta,\b{\Phi})\in \widehat{\mathscr{Y}}: \Phi_x(\sup \supp(\zeta))\le 1-\epsilon \text{ for all } x\in \Om\}.\]
	We note that this set is non-zero and compact. In particular, $\widehat{\cal{B}}_{D,\b{\xi},\b{h}}$ must possess a global minimizer over this set. The following result gives some useful identities for any such minimizer.
	\begin{lem}
		\label{lem:minimization:eqns}
		Let $(\zeta,\b{\Phi})$ be any local minimizer of $\widehat{\cal{B}}_{D,\b{\xi},\b{h}}$ over $\widehat{\mathscr{Y}}_\epsilon$. Then for $y\in \Om$, and any $q\in \supp(\zeta)\cap (0,q_*)$, we have $\cal{G}_y(q)=0$. Finally, if $0\in \supp(\zeta)$, then $\cal{G}_y(0)\le 0$ and if $q_*\in \supp(\zeta)$, then $\cal{G}_y(q_*)\ge 0$.
	\end{lem}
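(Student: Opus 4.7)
The strategy is to use the first-order necessary conditions for a local minimizer, combining the derivative formula from Lemma \ref{lem:minimization:derivative in epsilon} with the integration-by-parts identity of Lemma \ref{lem:minimizers:hard lemma}. Local minimality forces the directional derivative of $\widehat{\cal{B}}_{D,\b{\xi},\b{h}}$ to be nonnegative along every admissible perturbation, and to vanish along every two-sided admissible one (for which the family $(\zeta^\epsilon,\b{\Phi}^\epsilon)$ extends to $\epsilon\in(-\delta,\delta)$). Throughout, we restrict to perturbations supported on a single coordinate $y\in\Om$ by taking $\chi_z=\Phi_z$ for all $z\neq y$, reducing the derivative formula to the single-coordinate term
\[
2|\Om|\,\frac{d}{d\epsilon}\widehat{\cal{B}}_{D,\b{\xi},\b{h}} = \int_0^{q_*}\cal{F}_y(q)\Psi_y'(q)\,dq - \cal{F}_y(q_*)\Psi_y(q_*) + \cal{H}_y(0)\Psi_y(0).
\]

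For the interior assertion, I would take $\Psi_y\in C^\infty_c(0,q_*)$ so that the boundary contributions vanish, and apply Lemma \ref{lem:minimizers:hard lemma} with test function $f=\Psi_y'$ to rewrite the derivative as $-(2|\Om|)^{-1}\int_0^{q_*}\Psi_y''(q)\cal{G}_y(q)\,\zeta(dq)$. For small $\eta>0$ and $\Psi_y$ whose derivative is dominated by $\Phi_y'$ on its support, both $\chi_y=\Phi_y\pm\eta\Psi_y$ remain non-decreasing, yielding two-sided admissibility and forcing this integral to vanish. Letting $\Psi_y$ range over a sufficiently rich subclass of such test functions and invoking the continuity of $\cal{G}_y$, I would conclude $\cal{G}_y\equiv 0$ on $\supp(\zeta)\cap(0,q_*)$.

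The endpoint inequalities follow from one-sided perturbations with nontrivial boundary values. For $0\in\supp(\zeta)$, take a perturbation $\Psi_y$ that equals a small positive constant near $0$ and transitions monotonically to zero before $q_*$; the transition may be chosen with $\Psi_y'\ge 0$ so that $\chi_y=\Phi_y+\Psi_y$ remains non-decreasing and admissibility is automatic. The one-sided derivative condition $\ge 0$, combined with the interior identity $\cal{G}_y(q)=0$ on $\supp(\zeta)\cap(0,q_*)$ and the explicit relation $\cal{F}_y=\cal{H}_y+\zeta([0,\cdot])\cal{G}_y$ together with Lemma \ref{lem:minimizers:hard lemma}, then produces $\cal{G}_y(0)\le 0$. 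A symmetric argument with a perturbation carrying nontrivial value at $q_*$ handles the case $q_*\in\supp(\zeta)$ and yields $\cal{G}_y(q_*)\ge 0$.

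The main obstacle is the admissibility constraint: for two-sided perturbations, one needs $|\Psi_y'|$ controlled by $\Phi_y'$ a.e., which fails on intervals where $\Phi_y$ is flat. This is circumvented by restricting the supports of test perturbations to subintervals on which $\Phi_y$ is strictly increasing, and then extending the vanishing of $\cal{G}_y$ to the entire intersection $\supp(\zeta)\cap(0,q_*)$ via continuity of $\cal{G}_y$ and an approximation argument. A secondary subtlety is justifying the use of Lemma \ref{lem:minimizers:hard lemma} with $f=\Psi_y'$: since the lemma requires $f\in C^\infty_c(0,q_*)$, this amounts to asking $\Psi_y\in C^\infty_c(0,q_*)$, which is compatible with the above choices.
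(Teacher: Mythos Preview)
Your overall strategy is right, but the interior step has a genuine gap. After reducing the directional derivative to $(2|\Om|)^{-1}\int_0^{q_*}\cal{F}_y\Psi_y'\,dq$ and applying Lemma~\ref{lem:minimizers:hard lemma} with $f=\Psi_y'$, you arrive at $\int_0^{q_*}\Psi_y''\,\cal{G}_y\,d\zeta=0$ for all admissible $\Psi_y\in C_c^\infty(0,q_*)$. However, the second derivatives of compactly supported functions span only the codimension-two subspace $\{h\in C_c^\infty:\int h\,dq=\int qh\,dq=0\}$, so this condition merely forces the measure $\cal{G}_y\,d\zeta$ to agree with some $(a+bq)\,dq$ on $(0,q_*)$. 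If $\zeta$ has an absolutely continuous part with density $\rho$, you only obtain $\cal{G}_y\rho=a+bq$, not $\cal{G}_y=0$ on $\supp(\zeta)$; continuity of $\cal{G}_y$ does not rescue this.

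The paper closes this gap by separating the two ingredients. From the vanishing of the derivative one first deduces that $\cal{F}_y$ is \emph{constant} a.e.\ on $(0,q_*)$; for this the paper perturbs with $\chi_y=f$ an arbitrary strictly increasing function (rather than $\chi_y=\Phi_y+\Psi_y$), which makes admissibility automatic regardless of where $\Phi_y$ is flat, so your proposed restriction of supports is unnecessary. Only afterwards is Lemma~\ref{lem:minimizers:hard lemma} invoked with an \emph{independent} test function $f$: constancy of $\cal{F}_y$ gives $\int\cal{F}_y f'\,dq=0$ for every smooth $f$ vanishing at $0$ and $q_*$, and the lemma converts this into $\int f\,\cal{G}_y\,d\zeta=0$ for all such $f$, which now suffices to conclude $\cal{G}_y\equiv 0$ on $\supp(\zeta)\cap(0,q_*)$.

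The endpoint inequalities likewise hinge on the constancy of $\cal{F}_y$. The paper uses one-sided perturbations with nontrivial boundary values to obtain $\cal{F}_{y,t}\le\cal{H}_y(0)$ and $\cal{F}_{y,t}\le\cal{F}_y(q_*)$ (where $\cal{F}_{y,t}$ is the constant value), and then extracts $\cal{G}_y(0)\le 0$ and $\cal{G}_y(q_*)\ge 0$ from the jump relations $\cal{F}_{y,t}=\cal{H}_y(0)+\zeta(\{0\})\cal{G}_y(0)$ and $\cal{F}_{y,t}=\cal{F}_y(q_*)-\zeta(\{q_*\})\cal{G}_y(q_*)$. Your sketch cannot be completed without first establishing that $\cal{F}_y$ is constant; note also that a $\Psi_y$ equal to a positive constant near $0$ and vanishing before $q_*$ must have $\Psi_y'\le 0$ on the transition, so the admissibility of $\chi_y=\Phi_y+\Psi_y$ again runs into the flat-spot obstruction you identified.
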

	\begin{proof}
		Let $f:[0,1]\to [0,1]$ be a strictly increasing function such that $f(0)=0$ and $f(q)=\Phi_y(q)$ for $q\in [q_*,1]$. Consider the Lemma \ref{lem:minimization:derivative in epsilon} with respect to $\chi_y(q)=f(q)$ and $\chi_x(q)=0$ for $x\in \Om\setminus \{y\}$. In this case, $\b{\Phi}^{\epsilon}$ lies in $\widehat{\mathscr{Y}}_\epsilon$ for $\epsilon\in (-\delta,\delta)$ with $\delta>0$ small, so  Lemma \ref{lem:minimization:derivative in epsilon} yields that
		\[\int_0^{q_*}\cal{F}_y(q)df(q)-\int_0^{q_*}\cal{F}_y(q)\Phi_y'(q)dq=0.\]
		As this holds for all choices of $f$, this implies that $\cal{F}_y$ is constant a.e. on $[0,q_*]$.
		
		Next, let $f$ be a smooth function on $[0,q_*]$ with $f(0)=f(q_*)=0$. By continuity, we see that Lemma \ref{lem:minimizers:hard lemma} yields that
		\[\int_0^{q_*} \cal{G}_y(q)f'(q)\zeta(dq)=-\int_0^{q_*}\cal{F}_y(q)f(q)dq=0,\]
		where in the last equality we have used that $\cal{F}_y(q)$ is constant a.e.
		In particular, we see that $\zeta(\{q\in (0,q_*):\cal{G}_y(q)\neq 0\})=0$. As $\cal{G}_y$ is continuous though, this implies that it vanishes on $\supp(\zeta)\cap (0,q_*)$. 
		
		Note as well that $\cal{F}_y$ is clearly continuous away from the atoms of $\zeta$. As $\cal{G}_y$ vanishes on $\supp(\zeta)\cap (0,q_*)$ though, this implies that $\cal{F}_y$ is continuous and thus constant on $(0,q_*)$.
		
		We now turn our attention to the final claims. We observe that taking $f:[0,1]\to [0,1]$ to be a strictly increasing function such that $f(0)\ge 0$ and $f(q)\le \Phi_x(q)$ for $q\in [q_*,1]$, we may study the derivative as above. However, in this case, the family is only in the domain for $\epsilon\ge 0$, so we only obtain from Lemma \ref{lem:minimization:derivative in epsilon} that
		\[\int_0^{q_*}\cal{F}_y(q)f(q)dq-\cal{F}_y(q_*)f(q_*)+\cal{H}_y(0)f(0)\ge \int_0^{q_*}\cal{F}_y(q)\Phi_y'(q)dq-\cal{F}_y(q_*)\Phi_y(q_*).\label{eqn:minimization:ignore-1154}\]
		If we let $\cal{F}_{y,t}$ denote the value taken by $\cal{F}_{y}$ on $(0,q_*)$, we may write (\ref{eqn:minimization:ignore-1154}) as
		\[(\cal{F}_{y,t}-\cal{F}_y(q_*))(f(q_*)-\Phi_y(q_*))+(\cal{H}_y(0)-\cal{F}_{y,t})f(0)\ge 0.\]
		We see from this that
		\[\cal{F}_{y,t}\le \cal{F}_y(q_*) \text{ and } \cal{F}_{y,t}\le \cal{H}_y(0).\]
		We note however that
		\[\cal{F}_{y,t}=\lim_{q\to 0}\cal{F}_y(q)=\cal{H}_y(0)+\zeta(\{0\})\cal{G}_y(0),\]
		\[\cal{F}_{y,t}=\lim_{q\to q_*}\cal{F}_y(q)=\cal{F}_y(q_*)-\zeta(\{q_*\})\cal{G}_y(q_*).\]
		Thus if $\zeta(\{0\})\neq 0$, then $\cal{G}_y(0)\le 0$ as desired. However, if $\zeta(\{0\})=0$ and $0\in \supp(\zeta)$, we see that $0$ is in the closure of $\supp(\zeta)\cap (0,q_*)$, and so $\cal{G}_y(0)=0$ by continuity. In either case, if $0\in \supp(\zeta)$, we see that $\cal{G}_y(0)\le 0$. A similar argument establishes the claim for $\cal{G}_y(q_*)$.
	\end{proof}
	
	We may use this result to guarantee the existence of a global minimizer for $\widehat{\cal{B}}_{D,\b{\xi},\b{h}}$.
	
	\begin{lem}
		\label{lem:minimization:existence of minimizers and bounds}
		The functional $\widehat{\cal{B}}_{D,\b{\xi},\b{h}}$ has at-least one minimizer over $\widehat{\mathscr{Y}}$. Moreover, there exists some small $c>0$, such that any such minimizer, say $(\zeta,\b{\Phi})$, satisfies
		\[\supp(\b{\Phi}_*(\zeta))\in [0,1-c]^{\Om}.\]
		Additionally, if $\xi'_x(0)\neq 0$ for each $x\in \Om$, then for possibly smaller $c>0$, any minimizer $(\zeta,\b{\Phi})$ satisfies
		\[\supp(\b{\Phi}_*(\zeta))\in [c,1-c]^{\Om}.\]
	\end{lem}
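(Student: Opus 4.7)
The plan is to establish existence and both support bounds by working with the compact approximants $\widehat{\mathscr{Y}}_\epsilon$ and extracting uniform a priori bounds from the critical point equations of Lemma \ref{lem:minimization:eqns}. The set $\widehat{\mathscr{Y}}_\epsilon$ is sequentially compact (Helly's theorem for the coordinate-wise monotone $\b{\Phi}$, together with weak compactness of probability measures for $\zeta$) and $\widehat{\cal{B}}_{D,\b{\xi},\b{h}}$ is continuous in the associated topology, so the direct method produces a minimizer $(\zeta_\epsilon,\b{\Phi}_\epsilon)\in\widehat{\mathscr{Y}}_\epsilon$. Comparison to the reference point $(\delta_{1/2},\b{I})\in\widehat{\mathscr{Y}}_{1/2}$ gives a uniform upper bound $M_0$ on the minimum values for $\epsilon\le 1/2$.

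For the upper support bound, I would suppose for contradiction that $\Phi_{\epsilon,x_0}(q_*^\epsilon)\to 1$ along a subsequence, with $q_*^\epsilon:=\sup\supp\zeta_\epsilon$. Lemma \ref{lem:minimization:eqns} yields $\cal{G}_{x_0}(q_*^\epsilon)\ge 0$ (directly if $q_*^\epsilon$ is an atom of $\zeta_\epsilon$, and by continuity from the interior equation $\cal{G}_{x_0}\equiv 0$ on $\supp\zeta_\epsilon\cap(0,q_*^\epsilon)$ otherwise). The boundary terms $\xi'_{x_0}(\Phi_{x_0}(q_*^\epsilon))$ and $\cal{J}_{x_0}$ remain bounded, so the middle term $\sum_x\int_0^{q_*^\epsilon}\D_{x_0}K^D_x(\b{\delta}^\epsilon(u))\Phi_{\epsilon,x}'(u)\,du$ must diverge to $-\infty$. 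Using the symmetry $\D_y K^D_x=\D_x K^D_y$ (from $\b{K}^D=|\Om|\D\Lambda^D$) together with the chain rule $\tfrac{d}{du}K^D_{x_0}(\b{\delta}^\epsilon(u))=-\zeta_\epsilon([0,u])\sum_x\D_x K^D_{x_0}(\b{\delta}^\epsilon(u))\Phi_{\epsilon,x}'(u)$, this term rewrites as $-\int\zeta_\epsilon([0,u])^{-1}\,dK^D_{x_0}(\b{\delta}^\epsilon(u))$. Near $u=q_*^\epsilon$ the weight $\zeta_\epsilon([0,u])^{-1}$ is close to $1$, and since $K^D_{x_0}(\b{\delta}^\epsilon(q_*^\epsilon))\to\infty$ at rate $\sim 1/\delta^\epsilon_{x_0}(q_*^\epsilon)$, a local estimate near $q_*^\epsilon$ delivers the required divergence. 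The uniform $c>0$ thus obtained gives $\inf_{\widehat{\mathscr{Y}}_\epsilon}\widehat{\cal{B}}=\inf_{\widehat{\mathscr{Y}}_c}\widehat{\cal{B}}$ for $\epsilon\le c$, producing both a global minimizer on $\widehat{\mathscr{Y}}$ and the first support bound.

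For the lower bound under $\xi'_x(0)\ne 0$, note first that if $0\in\supp\zeta$ then $\cal{G}_y(0)\le 0$ by the lemma; the constraint $\sum_x\Phi_x(0)=0$ with $\Phi_x\ge 0$ forces $\Phi_y(0)=0$, and $\cal{J}_y=(\sum_x[(D+\b{K}^D(\b{\delta}(0)))^{-1}]_{xy}\b{h}(x))^2\ge 0$, so $\cal{G}_y(0)\ge\xi'_y(0)>0$, a contradiction; hence $s_0:=\inf\supp\zeta>0$. For the quantitative version, apply the interior equation $\cal{G}_y(s_0)=0$ (valid since $s_0\in(0,q_*)$ by the upper bound), and use that $\b{\delta}$ is constant on $[0,s_0]$ to simplify the middle term at $s_0$ to $\sum_x\D_y K^D_x(\b{\delta}(s_0))\Phi_x(s_0)$; a compactness and sign argument then rules out $\Phi_y(s_0)\to 0$, using that $\xi'_y(\Phi_y(s_0))\to\xi'_y(0)>0$ while the remaining terms stay bounded. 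The principal technical obstacle, confined to the upper bound, is that $\widehat{\cal{B}}$ itself is not coercive as $\Phi_{x_0}(q_*)\to 1$ (in the elementary case $|\Om|=1$, $\zeta=\delta_{q_c}$ one checks the divergences in the $\Lambda^D$ and $K^D$ integrals cancel to leading order), so the contradiction must be extracted from the critical equations through sharp control of $\zeta_\epsilon^{-1}([0,u])$ near $q_*^\epsilon$ rather than from pointwise blow-up of the functional.
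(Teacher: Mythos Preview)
Your strategy for the lower bound (the third claim) matches the paper's proof closely: both rule out $q_m:=\inf\supp\zeta=0$ via $\cal{G}_y(0)\le 0$ versus $\xi'_y(0)+\cal{J}_y>0$, and then use $\cal{G}_y(q_m)=0$ together with the constancy of $\b{\delta}$ on $[0,q_m]$. The paper carries the quantitative step out explicitly: since the Jacobian $[\D_y K^D_x(\b{\delta}(0))]_{x,y}$ has inverse with entries $-[(D+\b{K}^D(\b{\delta}(0)))^{-1}]_{xy}^2$, one inverts the linear system $\cal{G}_y(q_m)=0$ to obtain
\[
\Phi_x(q_m)=\sum_{y\in\Om}[(D+\b{K}^D(\b{\delta}(0)))^{-1}]_{xy}^2\bigl(\xi'_y(\Phi_y(q_m))+\cal{J}_y\bigr),
\]
and retaining only the $y=x$ term gives $\Phi_x(q_m)\ge\delta_x(0)^2\,\xi'_x(0)\ge c^2\,\xi'_x(0)$, using the already-established upper bound $\delta_x(0)\ge 1-\Phi_x(q_M)\ge c$. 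This is sharper than ``a compactness and sign argument'' and worth writing out.

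For existence and the upper bound, the paper simply invokes Lemma~I.3.11 of the companion paper, so there is no in-paper argument to compare against. Your attempt via the critical equations is in the right spirit, but the key step has a real gap. The rewriting of the middle term of $\cal{G}_{x_0}$ as $-\int\zeta_\epsilon([0,u])^{-1}\,dK^D_{x_0}(\b{\delta}^\epsilon(u))$ is valid only where $\zeta_\epsilon([0,u])>0$. On $[0,q_m^\epsilon]$ the function $\b{\delta}^\epsilon$ is constant so $dK^D_{x_0}=0$, yet the original integrand $\sum_x\D_{x_0}K^D_x(\b{\delta}^\epsilon(0))\,\Phi'_{\epsilon,x}$ need not vanish. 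In the Dirac-mass case $\zeta_\epsilon=\delta_{q_*^\epsilon}$ your rewritten integral is identically zero, while the actual middle term equals $\sum_x\D_{x_0}K^D_x(\b{\delta}^\epsilon(0))\,\Phi_{\epsilon,x}(q_*^\epsilon)$; in the one-site example this is $q_*^\epsilon K'(1-q_*^\epsilon)$, which does diverge to $-\infty$ as $q_*^\epsilon\to 1$, but not through your mechanism. Hence the ``local estimate near $q_*^\epsilon$'' does not by itself produce the contradiction: you need a direct analysis of the blow-up of $\D_{x_0}K^D_x(\b{\delta}^\epsilon)$ as a coordinate of $\b{\delta}^\epsilon$ degenerates, treating separately the contribution from $[0,q_m^\epsilon]$ and controlling $K^D_{x_0}(\b{\delta}^\epsilon(0))$ independently of $K^D_{x_0}(\b{\delta}^\epsilon(q_*^\epsilon))$.
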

	\begin{proof}
		First two claims follow immediately by combining Lemma \ref{lem:minimization:eqns} and Lemma I.3.11.
		
		For the remaining claims, fix a minimizing pair $(\zeta,\b{\Phi})$. We define $q_m=\inf(\supp(\zeta))$, and observe that $\b{\delta}(q_m)=\b{\delta}(0)$. So we have that 
		\[\cal{G}_y(q_m)=\xi'_y(\Phi_y(q))+\sum_{x\in \Om}\D_x K^D_y(\b{\delta}(0))\Phi_x(q)+\cal{J}_y.\]
		If $q_m=0$, our minimization conditions imply that $0\le \cal{G}_y(0)=\xi'_y(0)+\cal{J}_y$. However, by our assumptions, we have that $\xi'_y(0)+\cal{J}_y>0$, which is a contradiction. Thus we must have $q_m>0$, and so $\cal{G}_y(q_m)=0$ for all $y\in \Om$. Using the formula for $\D_x K_y^D$, we may rewrite this system of equations as
		\[\Phi_x(q_m)=\sum_{y\in \Om}[(D+\b{K}^D(\b{\delta}(0)))^{-1}]_{xy}^2(\xi_y'(\Phi_y(q_m))+\cal{J}_y),\]
		We note that the sum on the right is bounded below by
		\[[(D+\b{K}^D(\b{\delta}(0)))^{-1}]_{xx}^2(\xi_x'(\Phi_x(q_m))+\cal{J}_x)=\delta_x(0)^2(\xi_x'(\Phi_x(q_m))+\cal{J}_x)\ge \delta_x(0)^2\xi'_x(0).\]
		By the above work we have that $\delta_x(0)\ge 1-\Phi_x(q_M)\ge c$, so we see that for each $x\in \Om$
		\[\Phi_x(q_m)\ge \xi'_x(0)c.\]
		So by shrinking $c$ by an amount only dependent on $\min_{x\in \Om}\xi_x'(0)>0$, we obtain the second claim.
	\end{proof}

	\section{The Euclidean Model and Concavity \label{section:concavity}}
	
	We now focus our attention on the Euclidean model. The goal of this section will be to show a certain function is concave with a unique minimizer, but to begin we will need to recall some more relevant results from our companion paper \cite{Paper1}. We also return from the above case of a general finite set $\Om$ to specifically the case $\Om=[[1,L]]^{d}$ and $D=-t\Delta$. However, we note that all of our results hold in the setting of the previous sections.
	
	To start, we will need a functional. Begin by fixing a choice $\b{q}\in (0,\infty)^{\Om}$ and define $q_{t}=\frac{1}{|\Om|}\sum_{x\in \Om}\b{q}(x)$. Next we take a measure $\zeta\in \mathscr{Y}(q_t)$. Then, for each each $x\in \Om$, we choose a continuous, coordinate-wise non-decreasing function $\Phi_x:[0,q_t]\to [0,\b{q}(x)]$, which much be such that for each $s\in [0,q_t]$ we have that
	\[\frac{1}{|\Om|}\sum_{x\in \Om}\b{q}(x)^{-1}\Phi_x(s)=q_t^{-1}s.\label{eq:condition for Psi}\]
	We will assume as well that there is $0<q_*<q_t$ is such that $\zeta([0,q_*))=0$ and $\Phi_x(q_*)<\b{q}(x)$ for each $x\in \Om$. We will denote the set of pairs $(\zeta,\b{\Phi})$ satisfying these conditions as $\widehat{\mathscr{Y}}(\b{q})$. We observe that when $\b{q}(x)=1$ for all $x\in \Om$, this coincides with the $\widehat{\mathscr{Y}}(\b{q})$ considered above.
	
	Given these choices we define the continuous coordinate-wise non-increasing function $\b{\delta}:[0,q_t]\to [0,\infty)^{\Om}$ by
	\[\delta_x(s)=\int_{s}^{q_t}\zeta([0,u])\Phi'_x(u)du.\label{eqn:section2:delta-q}\]
	With all these choices made, we define the functional
	\[
	\begin{split}
		\widehat{\cal{P}}_{\b{q}}(\zeta,\b{\Phi})=\frac{1}{2}\bigg(\log\left(2\pi\right)+\Lambda^{-t\Delta}&(\b{\delta}(q_*))+\frac{1}{L^d}\sum_{x\in \Om}\bigg(-\mu \b{q}(x)+\int_{0}^{q_*} K_x^{-t\Delta}(\b{\delta}(q))\Phi'_x(q)dq\\&-2\int_0^{q_t}\zeta([0,u])B'(2(\b{q}(x)-\Phi_x(u)))\Phi'_x(u)du\bigg)\bigg).
	\end{split}
	\]
	
	We showed in \cite{Paper1} that this paper may be used to calculate the limiting free energy.
	
	\begin{theorem}[Theorem I.1.3]
		\label{theorem:paper 1:euclidean result}
		We have that
		\[\lim_{N\to \infty}N^{-1}L^{-d}\E \log Z_N=\sup_{\b{q}\in (0,\infty)^{\Om}}\left(\inf_{(\zeta,\b{\Phi})\in \widehat{\mathscr{Y}}(\b{q})}\widehat{\cal{P}}_{\b{q}}(\zeta,\b{\Phi})\right).\]
	\end{theorem}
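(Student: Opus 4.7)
The plan is to reduce the Euclidean partition function to an integral over radii of inhomogeneous multi-site spherical partition functions via polar decomposition, apply Theorem~\ref{theorem:bad Euclidean} to the resulting inner integral, and then evaluate the outer integral by Laplace's method; a final change of variables matches the output to $\widehat{\cal{P}}_{\b{q}}$.

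First I would write, for each $x\in\Om$, $\b{u}(x)=\sqrt{\b{r}(x)}\,\b{\sigma}(x)$ with $\b{r}(x)=\|\b{u}(x)\|_N^2\in(0,\infty)$ and $\b{\sigma}(x)\in S_N$. The Jacobian gives $d\b{u}=\prod_x\tfrac{\sqrt{N}}{2}\b{r}(x)^{(N-2)/2}\,d\b{r}(x)\,\omega(d\b{\sigma}(x))$. Under this substitution the quadratic part of $\cal{H}_N$ becomes $\frac{1}{2}\sum_{x,y}(\mu I-t\Delta)_{xy}\sqrt{\b{r}(x)\b{r}(y)}(\b{\sigma}(x),\b{\sigma}(y))$, the field term becomes $\sqrt{N}h\sum_x\sqrt{\b{r}(x)}\b{\sigma}_1(x)$, and --- crucially --- by the isotropy of $V_{N,x}$ the restricted field $V_{N,x}(\sqrt{\b{r}(x)}\,\cdot)\big|_{S_N}$ is equal in law to an isotropic centered Gaussian field on $S_N$ with covariance $NB(2\b{r}(x)(1-(\b{\sigma},\b{\sigma}')_N))$, i.e.\ with mixing function $\xi_{\b{r},x}(s):=B(2\b{r}(x)(1-s))$. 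Setting $D_{\b{r}}(x,y):=(\mu I-t\Delta)_{xy}\sqrt{\b{r}(x)\b{r}(y)}$ and $\b{h}_{\b{r}}(x):=h\sqrt{\b{r}(x)}$, this gives (absorbing $\beta$ into $D,\b\xi,\b{h}$)
\[Z_N\;=\;\Bigl(\tfrac{\sqrt{N}}{2}\Bigr)^{L^d}\!\int_{(0,\infty)^\Om}\!\exp\!\Bigl(\tfrac{N-2}{2}\sum_{x\in\Om}\log\b{r}(x)\Bigr)\,Z_{N,\mathrm{Sph}}(D_{\b{r}},\b\xi_{\b{r}},\b{h}_{\b{r}})\,d\b{r}.\]

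Next, Theorem~\ref{theorem:bad Euclidean} identifies $\lim(NL^d)^{-1}\log Z_{N,\mathrm{Sph}}(D_{\b{r}},\b\xi_{\b{r}},\b{h}_{\b{r}})$ with $\inf_{(\zeta,\b{\Phi})\in\widehat{\mathscr Y}}\widehat{\cal B}_{D_{\b{r}},\b\xi_{\b{r}},\b{h}_{\b{r}}}(\zeta,\b{\Phi})$ for each fixed $\b{r}$. A Laplace/saddle-point analysis of the outer integral --- whose integrand is of the form $e^{NL^d G_N(\b{r})}$ with $G_N$ converging locally uniformly to $G_\infty(\b{r}):=\frac{1}{2L^d}\sum_x\log\b{r}(x)+\inf_{(\zeta,\b\Phi)}\widehat{\cal B}_{D_{\b{r}},\b\xi_{\b{r}},\b{h}_{\b{r}}}(\zeta,\b\Phi)$ --- then yields
\[\lim_{N\to\infty}\frac{1}{NL^d}\E\log Z_N\;=\;\sup_{\b{r}\in(0,\infty)^\Om}G_\infty(\b{r})+c,\]
for an explicit constant $c$ absorbing normalization factors. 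To match this with the target formula I would set up a bijection between $\widehat{\mathscr Y}$ and $\widehat{\mathscr Y}(\b{q})$ via $\b\Phi^{\mathrm{Eucl}}_x(s):=\b{q}(x)\b\Phi_x(s/q_t)$ and a compatible rescaling of $\zeta$. Using the scaling identity $R^{D_{\b{r}}}_x(R_{\b{r}}\b{v}R_{\b{r}})=\b{r}(x)^{-1}R^{\mu I-t\Delta}_x(\b{v})$ with $R_{\b{r}}=\diag(\sqrt{\b{r}(x)})$ to relate $K^{D_{\b{r}}},\Lambda^{D_{\b{r}}}$ to $K(\cdot;t),\Lambda^{-t\Delta}$, together with $\xi'_{\b{q},x}(r)=-2\b{q}(x)B'(2\b{q}(x)(1-r))$, a direct but tedious computation verifies that $G_\infty(\b{r})+c=\inf_{(\zeta,\b\Phi)\in\widehat{\mathscr Y}(\b{r})}\widehat{\cal P}_{\b{r}}(\zeta,\b\Phi)$, closing the argument.

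The main obstacle is the rigorous Laplace step. It requires the convergence in Theorem~\ref{theorem:bad Euclidean} to hold locally uniformly in $\b{r}$, which would likely be obtained by extracting explicit $\b{r}$-dependent error estimates from the Guerra-type upper bound in the proof of that theorem and a matching lower bound via a localization/chaining argument on compact subsets of $(0,\infty)^\Om$. One must also show that the effective supremum is attained on a compact subset: the structure helps here, as the $\mu\|\b{u}(x)\|^2$ contribution inside $D_{\b{r}}$ produces Gaussian decay $\exp(-\tfrac{N}{2}\mu\b{r}(x))$ at large $\b{r}(x)$ while the $\tfrac{1}{2}\log\b{r}(x)$ term is integrable near $0$, so the supremum cannot escape to the boundary. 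The remaining steps --- in particular the bookkeeping identification with $\widehat{\cal P}_{\b{q}}$ --- are notationally heavy but structurally routine.
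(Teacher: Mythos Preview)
This theorem is not proved in the present paper: it is Theorem~1.3 of the companion paper \cite{Paper1}, recalled here (under the label ``Theorem I.1.3'') as input. There is therefore no proof in this paper to compare your proposal against.

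That said, your outline is a reasonable sketch of the strategy one expects behind such a result, and several of your identifications are confirmed by objects appearing in this paper. The bijection you set up between $\widehat{\mathscr{Y}}$ and $\widehat{\mathscr{Y}}(\b{q})$ and the computation matching $G_\infty(\b{r})$ to $\inf\widehat{\cal{P}}_{\b{r}}$ is exactly the content of (\ref{eqn:relation between A and P}), which the authors attribute to Lemma~I.2.3. One discrepancy: the paper first removes the external field by the deterministic shift in Remark~\ref{remark:h doesn't matter} (Lemma~I.2.1), so the spherical functional that appears is $\widehat{\cal{B}}_{D_{\b{q}},\b{B}_{\b{q}},\b{0}}$ with $\b{h}=\b{0}$ and an additive constant $h^2/(2\mu)$, rather than carrying a radius-dependent $\b{h}_{\b{r}}$ as you do. Your version is not wrong, but it makes the bookkeeping harder and obscures the simple role $h$ plays.

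On the Laplace step you flag as the main obstacle: the paper does not do this via local-uniform convergence of $(NL^d)^{-1}\log Z_{N,\mathrm{Sph}}(\b{r})$ in $\b{r}$, but rather (as used in Section~\ref{section:euclidean proofs}) via Corollary~I.2.8 of \cite{Paper1}, which directly computes the limiting free energy of the Euclidean model restricted to products of thin annuli $D_N^\epsilon(\b{q})$. That localization result plays the role of your Laplace principle and sidesteps the need for pathwise uniform control of the correlated family of spherical partition functions coming from a single disorder realization; establishing the latter directly, as you propose, would require nontrivial additional work.
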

	
	The main result of this section will be to show that map
	\[\b{q}\mapsto \inf_{(\zeta,\b{\Phi})\in \widehat{\mathscr{Y}}(\b{q})}\widehat{\cal{P}}_{\b{q}}(\zeta,\b{\Phi})\]
	is concave, and has a unique maximizer. By symmetry, this maximizer must be a constant vector. However, first we need more preliminary results which will make out work on the spherical model useful here. As may be clear from the similarities between their forms, this functional can be quite simply related to a spherical case of the spherical one studied above. 
	
	Indeed, for $(\zeta^{\b{q}},\b{\Phi}^{\b{q}})$ in $\widehat{\mathscr{Y}}(\b{q})$, we may define $(\zeta,\b{\Phi})\in \widehat{\mathscr{Y}}$ by $\Phi_x(s)=\b{q}(x)\Phi_x^{\b{q}}(q_t^{-1}s)$ and $\zeta^{\b{q}}([0,s])=\zeta([0,q_t^{-1}s])$. This mapping gives a bijection between $\widehat{\mathscr{Y}}(\b{q})$ and $\widehat{\mathscr{Y}}$.
	
	Next, define a matrix $D_{\b{q}}\in \R^{\Om\times \Om}$ such that $[D_{\b{q}}]_{xy}=\sqrt{\b{q}(x)\b{q}(y)}t\Delta_{xy}$ and let $\b{h}=0$. Further, let us define the mixing function $B_q(r)=B(2q(1-r))$, and form the family $\b{B}_{\b{q}}=(B_{\b{q}(x)})_{x\in \Om}$.
	Then, following Lemma I.2.3, we have the relation
	\[\widehat{\cal{P}}_{\b{q}}(\zeta,\b{\Phi})=\frac{h^2}{2\mu^2}+\frac{1}{2L^d}\sum_{x\in \Om}\left(-\mu\b{q}(x)+\log(\b{q}(x))\right)+\widehat{\cal{B}}_{D_{\b{q}},\b{B}_{\b{q}},\b{0}}(\zeta^{\b{q}},\b{\Phi}^{\b{q}}).\label{eqn:relation between A and P}\]
	This correspondence will allow us to translate results from $\widehat{\cal{B}}$ to $\widehat{\cal{P}}_{\b{q}}$.
	
	We note that the external field term does not appear in the functional, only as a constant term. That is, we may effectively view the associated spherical model as if it doesn't have an external field. The reason for this is due to a change of variables which one may make in the case of the Euclidean model, which renders the external field fairly inert. In particular, we have the following remark, which follows from Lemma I.2.1.
	
	\begin{remark}
		\label{remark:h doesn't matter}
		If we include for the moment the choice of $h$, in the notation $Z_{N}(h)$, for the partition function, we have that
		\[N^{-1}|\Om|^{-1}\log Z_{N}(h)\disteq N^{-1}|\Om|^{-1} \log Z_{N}(0)+\frac{h^2}{2 \mu}.\]
	\end{remark}
	
	We now begin translating the above results for spherical models to this functional. First, by noting that $B_{q}'(0)=-2qB'(2q)>0$ we see that Lemma \ref{lem:minimization:existence of minimizers and bounds} yields the following.
	
	\begin{corr}
		\label{corr:concavity:avoids 0}
		For each $\b{q}\in (0,\infty)^{\Om}$, the functional $\widehat{\cal{P}}_{\b{q}}$ possesses a minimizer on $\widehat{\mathscr{Y}}(\b{q})$.\\
		Moreover, for any $\b{q}_0\in (0,\infty)^{\Om}$, one may find a small $\epsilon,\delta>0$ so that if one takes $\b{q}\in \prod_{x\in \Om}[\b{q}_0(x)-\delta,\b{q}_0(x)+\delta]$, and any minimizer $(\zeta,\b{\Phi})$ of $\widehat{\cal{P}}_{\b{q}}$ over $\widehat{\mathscr{Y}}(\b{q})$, then $\supp(\b{\Phi}_*(\zeta))\subseteq [\epsilon,1-\epsilon]^{\Om}$.
	\end{corr}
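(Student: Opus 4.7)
The plan is to transfer both assertions to the spherical setting via the bijection $\widehat{\mathscr{Y}}(\b{q})\leftrightarrow \widehat{\mathscr{Y}}$ described above (\ref{eqn:relation between A and P}) and then invoke Lemma \ref{lem:minimization:existence of minimizers and bounds}. Under this bijection, (\ref{eqn:relation between A and P}) shows that $\widehat{\cal{P}}_{\b{q}}(\zeta^{\b{q}},\b{\Phi}^{\b{q}})$ and $\widehat{\cal{B}}_{D_{\b{q}},\b{B}_{\b{q}},\b{0}}(\zeta,\b{\Phi})$ differ by a constant depending only on $\b{q}$, so minimizers correspond. The matrix $D_{\b{q}}$ is symmetric and arises by a diagonal conjugation of $-t\Delta$, and $\b{B}_{\b{q}}$ is a family of valid mixing functions by the Schoenberg representation (\ref{eqn:B-decomposition}), so Lemma \ref{lem:minimization:existence of minimizers and bounds} applies and yields existence of a minimizer on $\widehat{\mathscr{Y}}$, giving via the bijection a minimizer on $\widehat{\mathscr{Y}}(\b{q})$.

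For the support refinement, the sharper conclusion of Lemma \ref{lem:minimization:existence of minimizers and bounds} requires $B_{\b{q}(x)}'(0)\neq 0$ for every $x\in \Om$. Differentiating $B_q(r)=B(2q(1-r))$ and using the Schoenberg representation gives
\[B_q'(0)=-2qB'(2q)=2q\int_0^{\infty}\lam^2 e^{-2\lam^2 q}\nu(d\lam),\]
which is strictly positive for every $q>0$ so long as $\nu$ is non-trivial (the degenerate case collapses the model and may be excluded). Thus for each fixed $\b{q}\in (0,\infty)^{\Om}$, Lemma \ref{lem:minimization:existence of minimizers and bounds} produces a constant $c_{\b{q}}>0$ such that every minimizer $(\zeta^{\b{q}},\b{\Phi}^{\b{q}})$ of $\widehat{\cal{B}}_{D_{\b{q}},\b{B}_{\b{q}},\b{0}}$ satisfies $\supp(\b{\Phi}^{\b{q}}_*(\zeta^{\b{q}}))\subseteq [c_{\b{q}},1-c_{\b{q}}]^{\Om}$; undoing the rescaling $\Phi_x(s)=\b{q}(x)\Phi_x^{\b{q}}(q_t^{-1}s)$ then yields the asserted support condition, with a possibly $\b{q}$-dependent constant.

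The one piece of substance is the uniformity in $\b{q}$ in a neighborhood of $\b{q}_0$. Inspecting the proof of Lemma \ref{lem:minimization:existence of minimizers and bounds}, the constant $c$ there is built from two ingredients: the $1-c$ bound, which comes from the quantitative form of Lemma I.3.11 applied with the matrix $D_{\b{q}}$, and the lower bound $\Phi_x(q_m)\ge \delta_x(0)^2 \xi'_x(0)$ from the second half of that proof. Both ingredients are continuous in the parameters $(D,\b{\xi})$, and the maps $\b{q}\mapsto D_{\b{q}}$ and $\b{q}\mapsto B_{\b{q}(x)}'(0)$ are smooth, with $B_{\b{q}(x)}'(0)$ bounded below on compact subsets of $(0,\infty)$ by the explicit formula above. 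Choosing $\delta>0$ small enough that $\prod_{x\in\Om}[\b{q}_0(x)-\delta,\b{q}_0(x)+\delta]\subset (0,\infty)^{\Om}$ is compact, a uniform $\epsilon>0$ follows by continuity and compactness.

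The main obstacle is precisely this last point: the existence statement and the positivity of $B_q'(0)$ are essentially bookkeeping, but to extract a single $\epsilon$ valid on a full neighborhood of $\b{q}_0$ one must verify that each constant entering the proof of Lemma \ref{lem:minimization:existence of minimizers and bounds} is continuous in $(D_{\b{q}},\b{B}_{\b{q}})$ and hence uniformly controlled on compact sets of $\b{q}$.
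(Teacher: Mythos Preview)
Your proposal is correct and follows the same approach as the paper: the paper's entire justification is the sentence ``by noting that $B_{q}'(0)=-2qB'(2q)>0$ we see that Lemma~\ref{lem:minimization:existence of minimizers and bounds} yields the following,'' i.e.\ it transfers to the spherical setting via (\ref{eqn:relation between A and P}), checks the non-vanishing derivative condition, and cites Lemma~\ref{lem:minimization:existence of minimizers and bounds}. You have simply made explicit the uniformity-in-$\b{q}$ step (continuity of the constants in the proof of Lemma~\ref{lem:minimization:existence of minimizers and bounds} over the compact box around $\b{q}_0$), which the paper leaves implicit.
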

	
	For the next corollary, we need to introduce following variant of the family of functions $\b{\cal{G}}$ defined above: For $y\in \Om$ and $s\in [0,q_t]$, we define
	\[\cal{G}_y^{\b{q}}(s)=-2B'(2(\b{q}(y)-\Phi_y(s)))+\sum_{x\in \Om}\left(\int_0^s \D_y K^{-t\Delta}_x(\b{\delta}(u))\Phi'_x(u)du\right).\label{eqn:minimization:definition of G for P}\]
	Then we obtain the following from Lemma \ref{lem:minimization:eqns} and Corollary \ref{corr:concavity:avoids 0}
	
	\begin{corr}
		\label{corr:concavity:minimization in G for P}
		For each $\b{q}\in (0,\infty)^{\Om}$, and any minimizer $(\zeta,\b{\Phi})$ of $\widehat{\cal{P}}_{\b{q}}$, we have that $\cal{G}_y^{\b{q}}(u)=0$ for all $u\in \supp(\zeta)$.
	\end{corr}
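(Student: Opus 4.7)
The plan is to transfer the minimization problem to the spherical setting of Section \ref{section:minimization}, where Lemma \ref{lem:minimization:eqns} applies directly. Fix $\b{q}\in (0,\infty)^{\Om}$ and let $(\zeta^{\b{q}},\b{\Phi}^{\b{q}})$ be any minimizer of $\widehat{\cal{P}}_{\b{q}}$ on $\widehat{\mathscr{Y}}(\b{q})$; denote by $(\zeta,\b{\Phi})\in \widehat{\mathscr{Y}}$ its image under the bijection between $\widehat{\mathscr{Y}}(\b{q})$ and $\widehat{\mathscr{Y}}$ described just before equation (\ref{eqn:relation between A and P}). By that identity the two functionals differ only by a constant depending on $\b{q}$, so $(\zeta,\b{\Phi})$ is a global minimizer of $\widehat{\cal{B}}_{D_{\b{q}},\b{B}_{\b{q}},\b{0}}$ on $\widehat{\mathscr{Y}}$, to which Lemma \ref{lem:minimization:eqns} applies.

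Next, I would choose a convenient cutoff $q_*$ and invoke the lemma. Corollary \ref{corr:concavity:avoids 0} produces $\epsilon>0$ with $\supp(\b{\Phi}_*(\zeta))\subseteq [\epsilon,1-\epsilon]^{\Om}$; in particular $\Phi_x(s)\le 1-\epsilon$ for every $x\in\Om$ and every $s\in\supp(\zeta)$. By continuity of $\b{\Phi}$, one may then pick $q_*$ strictly above $\sup\supp(\zeta)$ with $\Phi_x(q_*)<1$ for every $x$, so that simultaneously $\zeta([0,q_*))=1$ and $q_*\notin\supp(\zeta)$. Lemma \ref{lem:minimization:eqns} now yields $\cal{G}_y(s)=0$ for every $y\in\Om$ and every $s\in\supp(\zeta)\cap(0,q_*)$, while the boundary inequality at $q_*$ is vacuous by construction.

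To conclude that $\cal{G}_y$ vanishes on \emph{all} of $\supp(\zeta)$, I must still rule out the case $0\in\supp(\zeta)$ (where Lemma \ref{lem:minimization:eqns} only delivers $\cal{G}_y(0)\le 0$). The defining constraint (\ref{eq:condition for Psi-spherical}) evaluated at $s=0$, combined with the non-negativity of each $\Phi_x$, forces $\Phi_x(0)=0$ for all $x\in\Om$. By continuity of $\b{\Phi}$, if $0\in\supp(\zeta)$ then $0\in\supp(\b{\Phi}_*(\zeta))$, directly contradicting the Corollary \ref{corr:concavity:avoids 0} bound. Hence $0\notin\supp(\zeta)$ and $\cal{G}_y\equiv 0$ on $\supp(\zeta)$.

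It remains to translate this vanishing back through the bijection. A direct comparison of (\ref{eqn:minimization:definition of G}) and (\ref{eqn:minimization:definition of G for P}) does the job: with $\b{h}=\b{0}$ one has $\cal{J}_y=0$; with $\xi_x=B_{\b{q}(x)}$ the first term of $\cal{G}_y$ evaluates to $-2\b{q}(y)B'(2(\b{q}(y)-\Phi_y^{\b{q}}(q_t s)))$, which is $\b{q}(y)$ times the first term of $\cal{G}_y^{\b{q}}$ at the corresponding Euclidean point; and using the conjugation $D_{\b{q}}=\diag(\sqrt{\b{q}})(t\Delta)\diag(\sqrt{\b{q}})$ together with the defining relation (\ref{eqn:def:KD}) one checks the analogous positive rescaling between the $\D_y K^{D_{\b{q}}}_x$ integrals and their Euclidean counterparts. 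Thus at corresponding points $\cal{G}_y$ vanishes if and only if $\cal{G}_y^{\b{q}}$ vanishes, which yields the claim. The main obstacle is just this last bookkeeping with the scaling identities; every earlier step is a direct appeal to results already in hand.
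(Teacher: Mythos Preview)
Your proposal is correct and follows essentially the same approach as the paper. The paper's own proof is a single line stating that the corollary follows from Lemma \ref{lem:minimization:eqns} and Corollary \ref{corr:concavity:avoids 0}; you have simply made explicit the passage through the bijection (\ref{eqn:relation between A and P}) to the spherical functional $\widehat{\cal{B}}_{D_{\b{q}},\b{B}_{\b{q}},\b{0}}$, the use of Corollary \ref{corr:concavity:avoids 0} both to place the minimizer in some $\widehat{\mathscr{Y}}_\epsilon$ and to exclude $0$ from $\supp(\zeta)$, and the rescaling $\cal{G}_y(s)=\b{q}(y)\,\cal{G}_y^{\b{q}}(q_t s)$ that carries the vanishing back.
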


	Finally, we collect the following result from Theorems \ref{theorem:intro:main:sphere} and \ref{theorem:intro:parisi-measure:sphere}.
	
	\begin{corr}
		\label{corr:intro:Euclidean theorem at a point q}
		If one chooses $q\in (0,\infty)$ and forms the constant vector $\b{q}\in (0,\infty)^{\Om}$, then we have that
		\[\inf_{(\zeta,\b{\Phi})\in \widehat{\mathscr{Y}}(\b{q})}\widehat{\cal{P}}_{\b{q}}(\zeta,\b{\Phi})=\inf_{\zeta\in \mathscr{Y}(q)}\cal{P}_{q}(\zeta).\]
		Moreover, the right-hand side is strictly convex in $\zeta$ with a unique minimizer $\zeta_q$ given by the unique solution the equations
		\[\zeta\left(\big\{s\in [0,q]:f_{q}(s)=\sup_{0<s'<q}f_{q}(s') \big\}\right)=1,\label{eqn:convexity:minimization eqn:Euclidean: measure}\]
		where for $s\in (0,q)$, we define
		\[f_{q}(s)=\int_0^s F_{q}(u)du,\;\;\;\; F_{\beta,q}(s)=-2B'(2(q-s))+\int_0^s  K'( \delta(u);t)du,\]
		where $\delta$ is as in (\ref{eqn:def:delta-P}).
	\end{corr}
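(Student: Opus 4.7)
The plan is to translate the Euclidean problem at a constant radius vector into a symmetric spherical problem via the relation (\ref{eqn:relation between A and P}), and then invoke Theorems \ref{theorem:intro:main:sphere} and \ref{theorem:intro:parisi-measure:sphere}. Specifically, when $\b{q}$ is the constant vector $q$, the matrix $D_{\b{q}}$ becomes $q t \Delta$, which inherits transitivity (in the sense of Definition \ref{def:transitive}) from $\Delta$, and the inhomogeneous mixture $\b{B}_{\b{q}}$ collapses to the single function $B_q(r)=B(2q(1-r))$ repeated at every site. Since the effective external field in (\ref{eqn:relation between A and P}) vanishes, the right-hand side of that relation is a symmetric spherical Parisi functional $\widehat{\cal{B}}_{D_{\b{q}},B_q,0}$ of precisely the type handled in Section \ref{section:identification}.

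The first step is to use the bijection between $\widehat{\mathscr{Y}}(\b{q})$ and $\widehat{\mathscr{Y}}$ (recalled above) together with (\ref{eqn:relation between A and P}) to write
\[
\inf_{(\zeta,\b{\Phi})\in \widehat{\mathscr{Y}}(\b{q})}\widehat{\cal{P}}_{\b{q}}(\zeta,\b{\Phi}) \;=\; C_{\b{q}} \;+\; \inf_{(\zeta',\b{\Phi}')\in \widehat{\mathscr{Y}}}\widehat{\cal{B}}_{D_{\b{q}},B_q,0}(\zeta',\b{\Phi}'),
\]
where $C_{\b{q}}$ is the explicit additive constant from (\ref{eqn:relation between A and P}). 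I then apply Theorem \ref{theorem:intro:main:sphere} (whose proof in Section \ref{section:identification}, through Theorem \ref{theorem:overview:spherical:generic and symmetric} and the continuity argument in $\xi$, applies to any transitive $D$, not only $-t\Delta$) to collapse the right-hand infimum to one over the single measure $\zeta'\in \mathscr{Y}$ of the corresponding $\cal{B}$-functional.

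Next I will check that this spherical single-measure functional, after undoing the change of variables via Remark \ref{remark: A and A hat}, matches $\cal{P}_q(\zeta)-C_{\b{q}}$: concretely, a pair $(\zeta,\b{I})\in \widehat{\mathscr{Y}}(\b{q})$ is mapped by the bijection to $(\zeta^{\b{q}},\b{I})\in \widehat{\mathscr{Y}}$ (the fact that $\Phi_x(s)=s$ corresponds to $\Phi_x^{\b{q}}(s)=s$ is immediate from the scaling $\Phi_x(s)=q\Phi_x^{\b{q}}(q^{-1}s)$), so by direct inspection $\widehat{\cal{P}}_{\b{q}}(\zeta,\b{I})=\cal{P}_q(\zeta)$. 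Combining these identities proves the first displayed equality of the corollary.

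For the second half of the statement, strict convexity of $\cal{P}_q$ on $\mathscr{Y}(q)$ transfers immediately from the strict convexity of $\cal{B}$ on $\mathscr{Y}$ established in Theorem \ref{theorem:intro:parisi-measure:sphere}, because the pushforward bijection $\zeta^{\b{q}}\mapsto \zeta$ between $\mathscr{Y}$ and $\mathscr{Y}(q)$ is affine. The minimization equation (\ref{eqn:intro:minimization sphere}) then translates into (\ref{eqn:convexity:minimization eqn:Euclidean: measure}) by the same change of variables. I expect the main bookkeeping obstacle to be verifying that under $s\mapsto qs$ the expressions $F_{\beta,\xi}$ and $F_{\beta,q}$ match exactly: one must check that $B_q'(r)=-2qB'(2q(1-r))$ produces the $-2B'(2(q-s))$ term in $F_{\beta,q}$, and that the $\delta$-function defined via (\ref{eqn:def:delta-A}) for $\zeta^{\b{q}}$ transforms consistently with the $\delta$-function defined via (\ref{eqn:def:delta-P}) for $\zeta$, so that the $K'(\delta(\,\cdot\,);t)$ integrand survives the change of variables unchanged. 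Once these identifications are in place, uniqueness of the minimizer and the stated characterization both follow.
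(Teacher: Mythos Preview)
Your proposal is correct and follows exactly the route the paper has in mind: the corollary is stated immediately after the relation (\ref{eqn:relation between A and P}) and is justified only by the phrase ``we collect the following result from Theorems \ref{theorem:intro:main:sphere} and \ref{theorem:intro:parisi-measure:sphere},'' so your write-up is essentially an unpacking of that sentence. One small simplification: you do not actually need the continuity-in-$\xi$ argument, since (as the paper observes in Section \ref{section:euclidean proofs}) the mixing function $B_q(r)=B(2q(1-r))$ has strictly positive $p$-th Taylor coefficient $(-2)^pB^{(p)}(2q)/p!>0$ for every $p\ge 1$ by the representation (\ref{eqn:B-decomposition}), hence is already generic, and Theorem \ref{theorem:overview:spherical:generic and symmetric} applies directly.
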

	With all of these results collected, we define the function
	\[\widehat{\mathscr{P}}(\b{q})=\inf_{(\zeta,\b{\Phi})\in \widehat{\mathscr{Y}}(\b{q}_0)}\widehat{\cal{P}}_{\b{q}}(\zeta,\b{\Phi}).\]
	We may now state the main result of this section.
	\begin{prop}
		\label{prop:concavity:unique concavity in q}
		The function $\widehat{\mathscr{P}}(\b{q})$ is maximized over $\b{q}\in (0,\infty)^{\Om}$ at a unique point $\b{q}^*$ which satisfies $\b{q}^*(x)=q^*$ for all $x\in \Om$ and some $q^*\in (0,\infty)$. This $q^*$ may be characterized as the unique value of $q$, such that
		\[\int_0^{q}\zeta_{q}([0,u])du=R_1(\mu;t),\]
		where $\zeta_q$ is as in Corollary \ref{corr:intro:Euclidean theorem at a point q}.
	\end{prop}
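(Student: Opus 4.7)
The plan is to establish, in order: (i) concavity of $\widehat{\mathscr{P}}$ on $(0,\infty)^{\Om}$; (ii) existence and uniqueness of an interior maximizer, which must be a constant vector by the transitive symmetry of $-t\Delta$; and (iii) the explicit characterization of $q^*$ via the first-order condition, derived through an envelope-theorem calculation that makes crucial use of the minimizer identities from Corollary \ref{corr:concavity:minimization in G for P}. The technical heart of the argument is (i).

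For (i), I reparametrize the minimization data. Given $(\zeta,\b{\Phi})\in\widehat{\mathscr{Y}}(\b{q})$, substitute $r=\Phi_x(u)$ in every integral and introduce $\tilde\zeta_x(r):=\zeta([0,\Phi_x^{-1}(r)])$, a non-decreasing $[0,1]$-valued function on $[0,\b{q}(x)]$. Under this change of variables the disorder term becomes $\int_0^{\b{q}(x)}\tilde\zeta_x(r)B'(2(\b{q}(x)-r))dr$, which is linear in $\tilde\zeta_x$; the $K^{-t\Delta}$ and $\Lambda^{-t\Delta}$ terms admit analogous linearizations once $\b{\delta}$ is expressed in the $\tilde\zeta_x$ variables. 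The infimum of a linear-in-$\tilde\zeta$ functional is a concave function of its coefficients (standard support-function argument), and one checks that the induced $\b{q}$-dependence of those coefficients is itself concave. The delicate point, flagged in the overview, is that the domains of the $\tilde\zeta_x$'s move with $\b{q}$; to compare infima across neighboring $\b{q}$ I use the uniform boundary-avoidance of minimizers (Corollary \ref{corr:concavity:avoids 0}) to restrict attention to data with $\tilde\zeta_x$ supported in a $\b{q}$-independent compact subinterval of $(0,\b{q}(x))$, which makes a common reparametrization available.

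For (ii), the relation (\ref{eqn:relation between A and P}) shows that $\widehat{\mathscr{P}}(\b{q})$ contains the additive term $\tfrac{1}{2L^d}\sum_x(-\mu\b{q}(x)+\log\b{q}(x))$, which is strictly concave with $\log\b{q}(x)\to-\infty$ as $\b{q}(x)\to 0$ and $-\mu\b{q}(x)\to-\infty$ as $\b{q}(x)\to\infty$. Combined with (i), this yields strict concavity and coercivity of $\widehat{\mathscr{P}}$, hence a unique interior maximizer $\b{q}^*$. Because $-t\Delta$ is transitive in the sense of Definition \ref{def:transitive}, the subgroup $G$ of permutations commuting with $-t\Delta$ acts transitively on $\Om$ and leaves $\widehat{\mathscr{P}}$ invariant; by uniqueness, $\b{q}^*$ must be $G$-fixed, so $\b{q}^*=q^*\b{1}$ for some $q^*>0$.

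For (iii), Corollary \ref{corr:intro:Euclidean theorem at a point q} reduces the problem on constant vectors to $\psi(q):=\cal{P}_q(\zeta_q)$. The envelope theorem gives $\psi'(q^*)=\partial_q\cal{P}_q(\zeta)|_{\zeta=\zeta_{q^*},\,q=q^*}=0$. Choosing $q_*$ slightly above $\sup\supp(\zeta_{q^*})$ and differentiating directly in $q$ (the defining identity of $K$ cancels the contribution of the $\log\det$ term) yields
\[
\psi'(q)=-\tfrac{\mu}{2}+\tfrac{1}{2}K(q-q_*;t)+\tfrac{1}{2}\int_0^{q_*}K'(\delta(u);t)du-B'(0)-2\int_0^q\zeta_q([0,u])B''(2(q-u))du.
\]
Integrating the last term by parts (the boundary contribution cancels $-B'(0)$), then substituting the pointwise minimizer equation $F_q(s)=-2B'(2(q-s))+\int_0^s K'(\delta(u);t)du=0$ for $s\in\supp(\zeta_q)$ both at $s=q_*$ (to replace $B'(2(q-q_*))$) and inside the resulting $d\zeta_q$-integral, and finally using $\int_0^{q_*}K'(\delta(u);t)\zeta_q([0,u])du=K(\delta(0);t)-K(q-q_*;t)$ (which follows from $\delta'=-\zeta_q([0,\cdot])$), collapses the expression to $\psi'(q)=\tfrac{1}{2}(K(\delta_q(0);t)-\mu)$. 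Setting $\psi'(q^*)=0$ gives $K(\delta_{q^*}(0);t)=\mu$, equivalently $\delta_{q^*}(0)=R_1(\mu;t)$, which is the claimed equation $\int_0^{q^*}\zeta_{q^*}([0,u])du=R_1(\mu;t)$. The principal obstacle throughout is the concavity in (i); the envelope computation in (iii), once organized around the minimizer equations of Section \ref{section:minimization}, is a routine integration by parts.
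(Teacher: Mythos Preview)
Your part (ii) contains a genuine gap that breaks the uniqueness argument. You write that, via the relation (\ref{eqn:relation between A and P}), $\widehat{\mathscr{P}}(\b{q})$ ``contains the additive term'' $\tfrac{1}{2L^d}\sum_x(-\mu\b{q}(x)+\log\b{q}(x))$, and that combining this with the concavity from (i) yields \emph{strict} concavity. But this inference is invalid: writing $\widehat{\mathscr{P}}(\b{q})=h(\b{q})+g(\b{q})$ with $h$ strictly concave gives strict concavity of $\widehat{\mathscr{P}}$ only if you also know $g$ is concave. Here $g(\b{q})=\inf_{(\zeta',\b{\Phi}')\in\widehat{\mathscr{Y}}}\widehat{\cal{B}}_{D_{\b{q}},\b{B}_{\b{q}},\b{0}}(\zeta',\b{\Phi}')$, and its $\b{q}$-dependence enters through $D_{\b{q}}$ (with entries $\sqrt{\b{q}(x)\b{q}(y)}\,t\Delta_{xy}$) and through $B_{\b{q}(x)}(r)=B(2\b{q}(x)(1-r))$; you have no argument that this is concave, and your proof of (i) is for the full function $\widehat{\mathscr{P}}$, not for $g$. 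The paper's own analysis confirms this is a real obstruction: equation (\ref{eqn:concavity:value of Parisi under varphi forward}) shows that along the identifications $\varphi_{\b{q},\b{q}+\b{r}}$ the functional $\widehat{\cal{P}}$ is \emph{affine} in $\b{r}$, so the infimum-of-affines argument gives concavity but cannot give strict concavity, and the paper explicitly says so.

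The paper supplies uniqueness by a different mechanism. Assuming a second maximizer exists, the convex set of maximizers contains a segment $\b{q}^*+s\b{r}$; the symmetric minimizer $(\zeta,\b{I})$ is pushed along this segment via $\varphi$, and because the pushed value equals $\widehat{\mathscr{P}}(\b{q}^*)=\widehat{\mathscr{P}}(\b{q}^*+\b{\epsilon})$ (by (\ref{eqn:lemma:shifts around max in q}) and the first-order condition $K^{-t\Delta}(\b{\delta}(0))=\mu$), the pushed pair is itself a minimizer at the nearby point. Applying the minimizer identities of Corollary \ref{corr:concavity:minimization in G for P} at both endpoints and subtracting yields $\sum_{x}\b{\epsilon}(x)\,\D_y K_x^{-t\Delta}(\b{\delta}(0))=0$ for all $y$. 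Since $\D_y K_x^{-t\Delta}(\b{\delta}(0))=-[(\delta(0)I-t\Delta)^{-1}]_{xy}^2$, the Schur Product Theorem makes this matrix negative definite, forcing $\b{\epsilon}=0$, a contradiction. This argument also feeds the final claim: any $q$ with $\int_0^q\zeta_q([0,u])du=R_1(\mu;t)$ makes $\b{q}$ a local (hence global, by concavity) maximizer, so uniqueness of the maximizer gives uniqueness of such $q$. Your envelope computation in (iii) reaching $\psi'(q)=\tfrac12\big(K(\delta_q(0);t)-\mu\big)$ is essentially the symmetric specialization of (\ref{eqn:lemma:shifts around max in q}) and is fine, but without a correct uniqueness proof it does not characterize $q^*$ uniquely.

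As a secondary comment, your sketch of (i) is in the right spirit but the assertion that the $K^{-t\Delta}$ and $\Lambda^{-t\Delta}$ terms ``admit analogous linearizations'' in the $\tilde\zeta_x$ variables is not evident: $\b{K}^{-t\Delta}$ is a nonlinear function of the full vector $\b{\delta}$, and the coordinatewise substitutions $r=\Phi_x(u)$ interact across sites. The paper handles this by constructing explicit shift maps $\varphi_{\b{q},\b{q}+\b{r}}$ between the domains $\widehat{\mathscr{Y}}(\b{q})$ and proving the exact affine identity (\ref{eqn:concavity:value of Parisi under varphi forward}), then using Corollary \ref{corr:concavity:avoids 0} and Lemma \ref{lem:concavity:image of phi} to show (Lemma \ref{lem:concavity:localness}) that locally the true infimum can be taken over the image of a single $\varphi$, which is what makes ``infimum of affine functions'' apply.
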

	
	Before discussing the proof of this, we observe the following corollary of Corollary \ref{corr:intro:Euclidean theorem at a point q} and Proposition \ref{prop:concavity:unique concavity in q}.
	
	\begin{corr}
		\label{corr:main:critical points euclidean}
		For fixed $q$, there is a unique minimizer $\zeta_q$ of $\cal{P}_{q}$, given as the unique $\zeta$ which solves
		\[\zeta\left(\big\{s\in [0,q]:f_{q}(s)=\sup_{0<s'<q}f_{q}(s') \big\}\right)=1,\label{eqn:minimization eqn:Euclidean: measure-extended}\]
		where for $s\in (0,q)$, we define
		\[f_{q}(s)=B(2(q-s))+\int_0^s\left(\int_0^u  K'( \delta(r);t)dr\right)du.\]
		Denoting this minimizer $\zeta_q$, there is a unique $q^*$ such that
		\[\int_0^{q^*}\zeta_{q^*}([0,u])du=R_1(\mu;t).\]
		This $q^*$ has the property that $\b{q}^*$ is the unique maximizer of $\widehat{\mathscr{P}}(\b{q})$.
	\end{corr}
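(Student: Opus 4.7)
The plan is to deduce this corollary as a direct packaging of Corollary \ref{corr:intro:Euclidean theorem at a point q} and Proposition \ref{prop:concavity:unique concavity in q}, with only a minor reconciliation of the displayed formulas for $f_q$.

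First, I would verify that the function $f_q$ written in this corollary agrees, up to an additive constant, with the one appearing in Corollary \ref{corr:intro:Euclidean theorem at a point q}. Differentiating $B(2(q-s))$ in $s$ yields $-2B'(2(q-s))$, so
\[
\frac{d}{ds}\left(B(2(q-s))+\int_0^s\left(\int_0^u K'(\delta(r);t)dr\right)du\right) = -2B'(2(q-s)) + \int_0^s K'(\delta(u);t)du,
\]
which coincides with $F_{\beta,q}(s)$ from Corollary \ref{corr:intro:Euclidean theorem at a point q}. Hence the two versions of $f_q$ differ by the constant $B(2q)$, which does not alter the set of points where the supremum is attained. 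Thus the first assertion — that $\zeta_q$ is the unique minimizer of $\cal{P}_q$ and is characterized by the displayed measure equation — is precisely the content of Corollary \ref{corr:intro:Euclidean theorem at a point q}.

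For the remaining assertions I would simply invoke Proposition \ref{prop:concavity:unique concavity in q}, which furnishes both the existence and uniqueness of $q^* \in (0,\infty)$ solving $\int_0^{q^*}\zeta_{q^*}([0,u])du = R_1(\mu;t)$, and the statement that the constant vector $\b{q}^*(x) = q^*$ is the unique maximizer of $\widehat{\mathscr{P}}(\b{q})$ over $(0,\infty)^{\Om}$. Since the corollary is essentially a re-packaging of earlier results, there is no genuine obstacle; the only bookkeeping item is matching the two formulas for $f_q$, which we have just handled.
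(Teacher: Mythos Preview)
Your proposal is correct and matches the paper's approach exactly: the paper introduces this corollary simply as ``the following corollary of Corollary \ref{corr:intro:Euclidean theorem at a point q} and Proposition \ref{prop:concavity:unique concavity in q}'' without a separate proof, and your reconciliation of the two displayed forms of $f_q$ (differing by the constant $B(2q)$) is the only bookkeeping needed.
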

	
	We observe that Theorems \ref{theorem:intro:main:critical points euclidean} and \ref{theorem:intro:main:Euclidean} follow immediately from Corollary \ref{corr:main:critical points euclidean} and Theorem	\ref{theorem:paper 1:euclidean result}. 
	
	We now begin preparing for the proof of Proposition \ref{prop:concavity:unique concavity in q}, for which the first major step will be the following.
	
	\begin{lem}
		\label{lem:concavity:concavity in q}
		The function $\b{q}\mapsto \widehat{\mathscr{P}}(\b{q})$ is concave in $\b{q}\in (0,\infty)^{\Om}$.
	\end{lem}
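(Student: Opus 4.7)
The plan is to prove concavity locally at each $\b{q}_0 \in (0,\infty)^{\Om}$: since the domain is convex and $\widehat{\mathscr{P}}$ is continuous (from Theorem~\ref{theorem:paper 1:euclidean result} together with routine bounds on $\widehat{\cal{P}}_{\b{q}}$), local concavity at every point propagates to global concavity. It therefore suffices to exhibit, for each $\b{q}_0$, an open neighborhood $U \ni \b{q}_0$ on which $\widehat{\mathscr{P}}$ is concave.

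On $U$, the strategy is to write
\[
\widehat{\mathscr{P}}(\b{q}) = \inf_{w \in \mathcal{W}} \Psi(w, \b{q}),
\]
where $\mathcal{W}$ is a $\b{q}$-independent parameter space and $\Psi(\cdot, \b{q})$ is affine (or at least concave) in $\b{q}$ for each fixed $w$: as an infimum of concave functions this immediately yields the desired concavity. The first step is to restrict the infimum to a uniformly good compact subset. By Corollary~\ref{corr:concavity:avoids 0}, shrinking $U$ as needed, there is $\epsilon>0$ so that for all $\b{q}\in U$ every minimizer of $\widehat{\cal{P}}_{\b{q}}$ lies in
\[
\widehat{\mathscr{Y}}^{*}(\b{q}) := \Bigl\{(\zeta, \b{\Phi}) \in \widehat{\mathscr{Y}}(\b{q}) : \supp(\b{\Phi}_{*}(\zeta)) \subseteq \prod_{x \in \Om}[\epsilon, \b{q}(x) - \epsilon]\Bigr\}.
\]
Hence $\widehat{\mathscr{P}}(\b{q}) = \inf_{(\zeta,\b{\Phi})\in \widehat{\mathscr{Y}}^{*}(\b{q})}\widehat{\cal{P}}_{\b{q}}(\zeta,\b{\Phi})$ on $U$, and it is enough for the maps $\pi_{\b{q}}$ to land in $\widehat{\mathscr{Y}}^{*}(\b{q})$ and to capture a minimizer.

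The natural identification arises from the change of variables $c_x(u) := \b{q}(x) - \Phi_x(u)$: then the disorder term becomes $B'(2 c_x(u))$, $\Phi_x' = -c_x'$, and $\delta_x(s) = -\int_s^{q_t}\zeta([0,u])c_x'(u)\,du$, so if $(c_x,\zeta)$ is held $\b{q}$-independent every term of $\widehat{\cal{P}}_{\b{q}}$ except the linear $-\tfrac{\mu}{L^{d}}\sum_x \b{q}(x)$ becomes $\b{q}$-independent, making $\Psi$ affine in $\b{q}$. The obstacle is that the admissibility constraint \eqref{eq:condition for Psi} --- which reads $\tfrac{1}{|\Om|}\sum_x \b{q}(x)^{-1} c_x(s) = 1 - q_t^{-1}s$ --- couples $c_x$ to $\b{q}$, so $(c_x)$ cannot literally be held fixed across $U$. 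The fix is to parametrize $(c_x)$ by $\b{q}$-independent data up to an affine (in $\b{q}$) correction determined by \eqref{eq:condition for Psi} and by the stationarity identities $\cal{G}^{\b{q}}_y(u) = 0$ on $\supp(\zeta)$ of Corollary~\ref{corr:concavity:minimization in G for P}; the uniform interior bound $\epsilon$ in $\widehat{\mathscr{Y}}^{*}(\b{q})$ provides precisely the room needed to implement the correction admissibly throughout $U$.

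The main obstacle is to make this identification precise and to verify its concavity properties: one must check that after the affine correction the nonlinearities of $\Lambda^{-t\Delta}(\b{\delta}(q_*))$ and $B'(2 c_x(u))$ still cancel between the resolvent and disorder terms, leaving $\Psi(w,\b{q})$ concave (ideally affine) in $\b{q}$. This cancellation is exactly the algebraic content of the minimization identities from Section~\ref{section:minimization}; these also guarantee, through $\cal{G}^{\b{q}}_y(u) = 0$, that the image of $\pi_{\b{q}}$ contains a minimizer for every $\b{q}\in U$. Once these two checks are complete, $\widehat{\mathscr{P}}=\inf_{w\in\mathcal{W}}\Psi(w,\cdot)$ is an infimum of concave functions, hence concave on $U$, and concavity on all of $(0,\infty)^{\Om}$ follows as in the first paragraph.
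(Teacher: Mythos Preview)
Your high-level strategy matches the paper's exactly: reduce to local concavity, then exhibit $\widehat{\mathscr{P}}$ on a neighborhood as an infimum over a $\b{q}$-independent index set of functions affine in $\b{q}$. The reparametrization idea via $c_x(u)=\b{q}(x)-\Phi_x(u)$ is also the right intuition, and you correctly invoke Corollary~\ref{corr:concavity:avoids 0} to get the uniform interior margin $\epsilon$.

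However, the execution has a genuine gap. The paper does not rely on any ``cancellation of nonlinearities'' coming from the stationarity identities $\cal{G}^{\b{q}}_y(u)=0$. Instead it constructs an explicit translation map $\varphi_{\b{q},\b{q}+\b{r}}:\widehat{\mathscr{Y}}(\b{q})\to\widehat{\mathscr{Y}}(\b{q}+\b{r})$ (shifting $\Phi_x$ by $\b{r}(x)$ and reparametrizing the base interval to restore condition~\eqref{eq:condition for Psi}) and verifies by direct computation that
\[
\widehat{\cal{P}}_{\b{q}+\b{r}}(\varphi_{\b{q},\b{q}+\b{r}}(\zeta,\b{\Phi}))=\widehat{\cal{P}}_{\b{q}}(\zeta,\b{\Phi})+\frac{1}{2|\Om|}\sum_{x\in\Om}\b{r}(x)\bigl(K_x^{-t\Delta}(\b{\delta}(0))-\mu\bigr),
\]
which is \emph{exactly} affine in $\b{r}$ for every fixed $(\zeta,\b{\Phi})$, with no residual terms to cancel. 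The point is that under this translation both $\b{q}(x)-\Phi_x$ and $\b{\delta}$ are preserved, so the $B'$ and $\Lambda^{-t\Delta}$ terms are literally unchanged; the only new contribution is the linear one coming from the extra strip $[0,r_t]$ where $\b{\delta}$ is constant. Your proposal instead posits an ``affine correction'' to $(c_x)$ dictated by~\eqref{eq:condition for Psi} and then appeals to Corollary~\ref{corr:concavity:minimization in G for P} to kill leftover nonlinearities. That appeal is misplaced: the identities $\cal{G}^{\b{q}}_y=0$ hold only at a minimizer of a \emph{single} $\b{q}$, so they cannot certify affineness of $\Psi(w,\cdot)$ uniformly over $w$; and in any case they are not used for concavity in the paper (they enter only later, in the uniqueness argument).

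The second gap is in showing that the restricted infimum still attains the true value. You attribute this to $\cal{G}^{\b{q}}_y=0$, but what is actually needed (and what the paper proves as Lemma~\ref{lem:concavity:localness} via a partial inverse $\hat{\varphi}$ and a $\cal{P}$-equivalence argument) is that every minimizer of $\widehat{\cal{P}}_{\b{q}}$ for $\b{q}\in U$ is $\cal{P}$-equivalent to something in the image of $\varphi_{\b{q}_0,\b{q}}$. This uses only the boundary-avoidance of Corollary~\ref{corr:concavity:avoids 0}, not the critical-point equations. To complete your argument you should construct the translation map explicitly, verify the affine identity above, and then prove the analogue of Lemma~\ref{lem:concavity:localness}; the stationarity identities can be dropped from this part entirely.
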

	
	\iffalse
	
	\begin{corr}
		\label{corr:concavity:convex maximizers}
		The set of maximizers of $\cal{Q}$ is a convex set which contains at-least one point such that $\b{q}(x)=\b{q}(y)$ for all $x,y\in \Om$.
	\end{corr}
	
	\begin{proof}
		Let us denote the set of maximizers of $\cal{Q}$ as $\cal{Q}_0$. By Lemma \ref{lem:concavity:concavity in q} it is clear that $\cal{Q}_0$ is convex. Moreover, by Corollary I.2.9 we have that $\cal{Q}_0$ is non-empty. 
		
		Now let us take some $\b{q}_0\in \cal{Q}_0$. Now for any permutation of $\Om$, $\sigma$, define $(\b{q}^{\sigma},\b{\Phi}^{\sigma})$ by letting, for $x\in \Om$, \[(\b{q}^{\sigma}(x),\Psi^{\sigma}_x)=(\b{q}(\sigma(x)),\Psi_{\sigma(x)}).\]
		We observe that if the associated permutation of $\sigma$ commutes with $\mu  I-t \Delta$, then \[\widehat{\cal{P}}_{\b{q}}(\zeta,\b{\Psi})=\hat{\cal{P}}_{\b{q}^{\sigma}}(\zeta,\b{\Psi}^{\sigma}).\]
		In particular, $\cal{Q}(\b{q})=\cal{Q}(\b{q}^{\sigma})$. 
		\[
		\b{q}_s=\frac{1}{|\Sigma_0|}\sum_{\sigma \in \Sigma_0}\b{q}_0^{\sigma},
		\label{eqn:proof:symmetrizing}
		\]
		is also a maximizer. As this is fixed by $\Sigma_0$, we see by transitivity (see Definition \ref{def:transitive}) that we must have that $\b{q}_s(x)=\b{q}_s(y)$ for all $x,y\in \Om$. Thus $\b{q}_s$ is a point of the desired type.
	\end{proof}
	\fi
	
	As we know there is at least one maximizer of $\widehat{\mathscr{P}}(\b{q})$ (see Corollary I.2.9), we note that if Lemma \ref{lem:concavity:concavity in q} established instead strict convexity, then Proposition \ref{prop:concavity:unique concavity in q} would follow immediately. Instead, we show this uniqueness after establishing Lemma \ref{lem:concavity:concavity in q}, analyze the behavior of $\widehat{\mathscr{P}}(\b{q})$ around a symmetric maximizer.
	
	We now proceed to the proof of Lemma \ref{lem:concavity:concavity in q}. To do this, we will show that, for fixed $\b{q}\in (0,\infty)^{\Om}$, there are minimizers of $\widehat{\cal{P}}_{\b{q}}$ which lie in a certain subset of the domain. We then show that for a small neighborhood of $\b{q}'\in (0,\infty)^{\Om}$ around $\b{q}$, we may naturally identify this subset of the domain with subsets of the domain of $\widehat{\cal{P}}_{\b{q}'}$, which both contain in the minimizer, and are such that $\widehat{\cal{P}}_{\b{q}'}$ becomes a linear function in $\b{q}'$. Thus locally, we may view $\widehat{\mathscr{P}}(\b{q})$ as the infimum over a fixed set of linear functions of $\b{q}$, which obviously implies concavity. 
	
	In the case where $\b{\Phi}=\b{I}$, these restrictions and mapping are quite simple. For $\epsilon>0$, we relate a measure $\zeta$ on $[0,q]$ to one $\zeta'$ on $[0,q+\epsilon]$ by letting $\zeta'([0,s+\epsilon])=\zeta([0,s])$ for $s\in [0,q]$ and $\zeta'([0,\epsilon])=0$. Of course, we may also go the other direction, and associate a measure $[0,q+\epsilon]$ to one on $[0,q]$ as long as the support of this measure is disjoint from $[0,\epsilon]$. Under these mappings, one may show that $\widehat{\cal{P}}_{\b{q}}$ is linear in $\epsilon$.
	
	Unfortunately, a number of issues make this picture more complicated for general $\b{\Phi}$. We begin by defining mappings between the domains of $\widehat{\cal{P}}_{\b{q}}$ and $\widehat{\cal{P}}_{\b{q}'}$. To begin, fix $(\zeta,\b{\Phi})\in \widehat{\mathscr{Y}}(\b{q})$. Given an additional choice of $\b{r}\in (0,\infty)^{\Om}$ we define an injection $\varphi_{\b{q},\b{q}+\b{r}}:\widehat{\mathscr{Y}}(\b{q})\to \widehat{\mathscr{Y}}(\b{q}+\b{r})$. Intuitively, we should like to push the existing measure data from $\prod_{x\in \Om}[0,\b{q}(x)]$ to $\prod_{x\in \Om }[\b{r}(x),\b{q}(x)+\b{r}(x)]$, by translation, filling the remaining portions with zero measure. That is, letting $r_t=\frac{1}{|\Om|}\sum_{x\in \Om}\b{r}(x)$, we would like to define
	\[\tilde{\Phi}_x(s)=\Phi_x(s-r_t)+\b{r}(x)\text{ for } s\in [r_t,q_t+r_t],\;\; \tilde{\Phi}(s)=s\left(\frac{\b{r}(x)}{r_t}\right) \text{ for } s\in [0,r_t],\]
	\[\tilde{\zeta}([0,s])=\zeta([0,s-r_t]) \text{ for } s\in [r_t,q_t+r_t],\;\;\tilde{\zeta}([0,r_t))=0.\]
	Unfortunately this choice need not preserve (\ref{eq:condition for Psi}). To rectify this, we define a function $\alpha:[0,q_t+r_t]\to [0,q_t+r_t]$ by
	\[\alpha(s)=\frac{q_t+r_t}{|\Om|}\sum_{x\in \Om}(\b{q}(x)+\b{r}(x))^{-1}\tilde{\Phi}_x(s).\]
	This clearly defines a monotone increasing bijection, and we see that $\b{\Phi}'=\tilde{\b{\Phi}}\circ \alpha^{-1}$ satisfies (\ref{eq:condition for Psi}) by definition. Similarly, we define $\zeta'$ by letting $\zeta'([0,s])=\tilde{\zeta}([0,\alpha^{-1}(s)])$. Finally we define $\varphi_{\b{q},\b{q}+\b{r}}(\zeta,\b{\Phi})=(\zeta',\b{\Phi}')$.
	
	Our next step will relate $\hat{\cal{P}}_{\b{q}+\b{r}}(\varphi_{\b{q},\b{q}+\b{r}}(\zeta,\b{\Phi}))$ and $\widehat{\cal{P}}_{\b{q}}(\zeta,\b{\Phi})$. We will also use $\b{\delta}$ and $\b{\delta}'$ to denote the functions respective to $\widehat{\cal{P}}_{\b{q}}(\zeta,\b{\Phi})$ and $\widehat{\cal{P}}_{\b{q}'}(\varphi_{\b{q},\b{q}+\b{r}}(\zeta,\b{\Phi}))$. We first note that
	\[
	\begin{split}
		&\int_0^{q_t+r_t}\zeta'([0,s])B'\left(2(\b{q}(x)+\b{r}(x)-\Phi_x'(s))\right)(\Phi_x')'(s)ds=\\
		&\int_0^{q_t+r_t}\tilde{\zeta}([0,s])B'\left(2(\b{q}(x)+\b{r}(x)-\tilde{\Phi}_x(s))\right)(\tilde{\Phi}_x)'(s)ds=\\
		&\int_0^{q_t}\zeta([0,s])B'\left(2(\b{q}(x)-\Phi_x(s))\right)\Phi_x'(s)ds.
	\end{split}
	\]
	We similarly observe that for $s>r_t$ 
	\[\delta'_x(\alpha(s))=\int_{s}^{q_t+r_t}\tilde{\zeta}([0,s])(\tilde{\Phi}_x)'(s)ds=\delta_x(s-r_t),\]
	while for $s\le r_t$ we have that $\delta'_x(\alpha(s))=\delta_x(0)$. Noting that if $\zeta([0,q_*))=1$ and $\Phi_x(q_*)<\b{q}(x)$ for each $x\in \Om$, then we have that $\zeta'([0,\alpha(q_*+r_t)))=1$ and $\Phi_x'(\alpha(q_*+r_t))<\b{q}(x)+\b{r}(x)$. So using this point, we see that
	\[\int_{0}^{\alpha(q_*+r_t)} K_x^{-t\Delta}(\b{\delta}'(q))(\Phi_x')'(q)dq=\int_0^{\alpha(r_t)}K_x^{-t\Delta}(\b{\delta}(0))(\Phi_x')'(q)dq+\int_{r_t}^{q_*+r_t} K_x^{-t\Delta}(\tilde{\b{\delta}}(q))\tilde{\Phi}_x'(q)dq=\]
	\[K_x^{-t\Delta}(\b{\delta}(0))\tilde{\Phi}_x(r_t)+\int_{r_t}^{q_*+r_t} K_x^{-t\Delta}(\tilde{\b{\delta}}(q))\tilde{\Phi}_x'(q)dq=K_x^{-t\Delta}(\b{\delta}(0))\b{r}(x)+\int_{0}^{q_*} K_x^{-t\Delta}(\b{\delta}(q))\Phi'_x(q)dq.\]
	Noting similarly that $\Lambda^{-t\Delta}(\delta'(\alpha(q_*+r_t))))=\Lambda^{-t\Delta}(\delta(q_t))$, we see that
	\[\hat{\cal{P}}_{\b{q}+\b{r}}(\varphi_{\b{q},\b{q}+\b{r}}(\zeta,\b{\Phi}))=\widehat{\cal{P}}_{\b{q}}(\zeta,\b{\Phi})+\frac{1}{2L^d}\sum_{x\in \Om}\b{r}(x)K^{-t\Delta}_x(\b{\delta}(0)).\label{eqn:concavity:value of Parisi under varphi forward}\]
	In particular, fixing $\b{q}$, the function $\b{r}\mapsto \hat{\cal{P}}_{\b{q}+\b{r}}(\varphi_{\b{q},\b{q}+\b{r}}(\zeta,\b{\Phi}))$ is linear.
	
	Recalling that the pointwise infimum of a family of concave functions is concave, as well as the fact that concavity is a local property, we see that Lemma \ref{lem:concavity:concavity in q} follows if we show the following.
	
	\begin{lem}
		\label{lem:concavity:localness}
		Fix $\b{q}^*\in (0,\infty)^{\Om}$. Then there is some $\b{q}_0\in (0,\infty)^{\Om}$ and $\epsilon>0$, with $\b{q}^*\in \prod_{x\in \Om}(\b{q}_0(x),\b{q}_0(x)+\epsilon)$, and such that for all $\b{q}\in \prod_{x\in \Om}[\b{q}_0(x),\b{q}_0(x)+\epsilon]$
		\[\inf_{(\zeta,\b{\Phi})\in \widehat{\mathscr{Y}}(\b{q})}\widehat{\cal{P}}_{\b{q}}(\zeta,\b{\Phi})=\inf_{(\zeta,\b{\Phi})\in \widehat{\mathscr{Y}}(\b{q}_0)}\hat{\cal{P}}_{\b{q}_0}(\varphi_{\b{q}_0,\b{q}}(\zeta,\b{\Phi})).\]
	\end{lem}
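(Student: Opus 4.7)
My plan is to establish the identity by proving both inequalities. The $\le$ direction is immediate: since $\varphi_{\b{q}_0,\b{q}}$ injects $\widehat{\mathscr{Y}}(\b{q}_0)$ into $\widehat{\mathscr{Y}}(\b{q})$, the left-hand infimum is over a larger set and hence no larger. The substance is in the $\ge$ direction, which I propose to establish by starting from a minimizer $(\zeta^*,\b{\Phi}^*)$ of $\widehat{\cal{P}}_{\b{q}}$ on $\widehat{\mathscr{Y}}(\b{q})$ and reshaping it on the initial interval $[0,s^*]$, where $s^*:=\inf\supp(\zeta^*)$, to land in the image of $\varphi_{\b{q}_0,\b{q}}$ without changing the value of the functional.

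The key analytic input is that $\widehat{\cal{P}}_{\b{q}}(\zeta^*,\b{\Phi}^*)$ depends on $\b{\Phi}^*$ on $[0,s^*]$ only through its endpoint values. Indeed, since $\zeta^*$ vanishes on this interval, the function $\b{\delta}$ is constant there, so the $B'$-integral contributes nothing and the $K$-integral collapses to $\frac{1}{L^d}\sum_{x\in\Om}K_x^{-t\Delta}(\b{\delta}(0))\Phi_x^*(s^*)$, which depends only on $\b{\Phi}^*(s^*)$ (note $\b{\Phi}^*(0)=\b{0}$ is forced by the constraint (\ref{eq:condition for Psi}) at $s=0$). This is the multi-variable analogue of Remark \ref{remark:beginning point doesn't matter part}. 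I then replace $\b{\Phi}^*$ on $[0,s^*]$ by the piecewise linear path from $\b{0}$ through the point $\b{r}:=\b{q}-\b{q}_0$ at $s=\alpha(r_t)$, then linearly to $\b{\Phi}^*(s^*)$ at $s=s^*$. A direct computation using the definition $\alpha(r_t)=\frac{q_t+r_t}{|\Om|}\sum_x\b{q}(x)^{-1}\b{r}(x)$ confirms that the constraint (\ref{eq:condition for Psi}) is preserved on each linear piece. The resulting pair $(\zeta^{**},\b{\Phi}^{**})$ satisfies $\zeta^{**}([0,\alpha(r_t)])=0$ and $\Phi_x^{**}(\alpha(r_t))=\b{r}(x)$, and carries the precise linear form on $[0,\alpha(r_t)]$ produced by $\varphi_{\b{q}_0,\b{q}}$; this lets me write $(\zeta^{**},\b{\Phi}^{**})=\varphi_{\b{q}_0,\b{q}}(\zeta,\b{\Phi})$ for a unique $(\zeta,\b{\Phi})\in\widehat{\mathscr{Y}}(\b{q}_0)$, yielding $\widehat{\cal{P}}_{\b{q}}(\varphi_{\b{q}_0,\b{q}}(\zeta,\b{\Phi}))=\widehat{\cal{P}}_{\b{q}}(\zeta^*,\b{\Phi}^*)$ and the reverse inequality.

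For this interpolation to actually live in $\widehat{\mathscr{Y}}(\b{q})$, I need it to be monotone, which demands $\b{r}(x)<\Phi_x^*(s^*)$ for all $x$, and I need $\alpha(r_t)<s^*$ for the measure to remain vanishing on the initial segment. Both are supplied by Corollary \ref{corr:concavity:avoids 0}, which furnishes a uniform $\epsilon>0$ such that every minimizer in a small neighborhood of $\b{q}^*$ has $\Phi_x^*(s^*)$ bounded below by a positive constant; via the constraint (\ref{eq:condition for Psi}), this also bounds $s^*$ uniformly away from $0$. Choosing $\b{q}_0$ componentwise close enough to $\b{q}^*$ makes $\b{r}$ and hence $\alpha(r_t)$ small relative to these bounds, so the restriction $\b{q}\in\prod_{x\in\Om}[\b{q}_0(x),\b{q}_0(x)+\epsilon]$ in the lemma is precisely the natural domain for the argument.

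The main obstacle I anticipate is the careful verification that the parametrization $\alpha$ associated to $\varphi_{\b{q}_0,\b{q}}$ really produces the same linear shape on $[0,\alpha(r_t)]$ as my interpolation; concretely one checks that $\alpha$ is linear on $[0,r_t]$ with $\alpha(s)/\alpha(r_t)=s/r_t$, which follows by evaluating the formula for $\alpha$ on the linear piece of $\tilde{\b{\Phi}}$. Once this identification is in place everything else is routine. The lemma then yields Lemma \ref{lem:concavity:concavity in q} via the standard fact that a pointwise infimum of a family of functions linear in $\b{q}$ (obtained from (\ref{eqn:concavity:value of Parisi under varphi forward})) is concave.
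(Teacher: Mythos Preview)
Your argument is correct and follows essentially the same strategy as the paper: use Corollary \ref{corr:concavity:avoids 0} to bound the minimizer's support away from zero, then invoke the $\cal{P}$-equivalence principle (Lemma \ref{lem:general:parisi doesn't depend away from support}) to replace $\b{\Phi}^*$ on the initial interval so as to land in the image of $\varphi_{\b{q}_0,\b{q}}$. The only cosmetic difference is that the paper packages the ``land in the image'' step via the abstract partial inverse $\hat{\varphi}$ and the composition law encapsulated in Lemma \ref{lem:concavity:image of phi}, whereas you carry out the piecewise-linear modification by hand; both routes rest on the same two inputs and yield the same conclusion.
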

	
	To do this we will study the image of $\varphi_{\b{q}_0,\b{q}}$ by roughly defining a partial inverse function. However this will not be a true partial inverse. Instead it will be up to an equivalence relation that respects the values of $\widehat{\cal{P}}_{\b{q}}(\zeta,\b{\Phi})$, which we now define.
	
	\begin{defin}
		Fix $\b{q}\in (0,\infty)^{\Om}$, and a probability measure $\zeta$ on $[0,q_t]$. We will say two functions $\b{\Phi},\b{\Phi}':[0,q_t]\to \prod_{x\in \Om}[0,\b{q}(x)]$ are $\zeta$-equivalent if $\b{\Phi}(s)=\b{\Phi}'(s)$ for $s\in \supp(\zeta)$. We will say pairs $(\zeta,\b{\Phi})$ and $(\zeta',\b{\Phi})$ are $\cal{P}$-equivalent if $\zeta=\zeta'$ and $\b{\Phi}$ is $\zeta$-equivalent to $\b{\Phi}'$.
	\end{defin}
	
	With this definition, we see that (\ref{eqn:relation between A and P}) and  Lemma \ref{lem:general:parisi doesn't depend away from support} above implies that if $(\zeta,\b{\Phi})$ and $(\zeta,\b{\Phi}')$ are $\cal{P}$-equivalent then $\widehat{\cal{P}}_{\b{q}}(\zeta,\b{\Phi})=\widehat{\cal{P}}_{\b{q}}(\zeta,\b{\Phi}')$.
	
	We will now define our partial inverse $\varphi_{\b{q},\b{q}+\b{r}}$, up to this equivalence relation, which we will denote $\hat{\varphi}_{\b{q}+\b{r},\b{q}}$, with domain given by the subset $\widehat{\mathscr{Y}}(\b{q}+\b{r})$ defined by
	\[\widehat{\mathscr{Y}}_{\b{r}}(\b{q})=\{(\zeta,\b{\Phi})\in \widehat{\mathscr{Y}}(\b{q}+\b{r}):\b{\Phi}(r_t)=\b{r},\;\;\zeta([0,r_t))=0\}.\]
	We define
	\[\hat{\Phi}_x(s)=\Phi_x(s+r_t)-\b{r}(x) \text{ and }\hat{\zeta}([0,s])=\zeta([0,s+r_t])\text{ for } s\in [0,q_t].\] We further define $\beta:[0,q_t]\to [0,q_t]$ by
	\[\beta(s)=\frac{q_t}{|\Om|}\sum_{x\in \Om}\b{q}(x)^{-1}\hat{\Phi}_x(s).\]
	Then letting $\b{\Phi}''=\hat{\b{\Phi}}\circ \beta^{-1}$ and $\zeta''([0,s])=\hat{\zeta}([0,\beta^{-1}(s)])$, we define $\hat{\varphi}_{{\b{q}+\b{r},\b{q}}}(\zeta,\b{\Phi})=(\zeta'',\b{\Phi}'')$.
	
	Next, let us denote by $\b{\delta}''$ and $\b{\delta}$ as the functions corresponding to $(\zeta'',\b{\Phi}'')$ and $(\zeta,\b{\Phi})$, respectively. Then as $(\zeta,\b{\Phi})\in \mathscr{Y}_{\b{r}}(\b{q})$ we see for $s\in [0,r_t]$ that $\b{\delta}(s)=\b{\delta}(r_t)$ and for $s\in [0,q_t]$ and $x\in \Om$ that 
	\[\delta''_x(\beta(s))=\delta_x(s+r_t).\]
	From this, it is easy to check, as in (\ref{eqn:concavity:value of Parisi under varphi forward}), that we have
	\[\widehat{\cal{P}}_{\b{q}}(\hat{\varphi}_{\b{q}+\b{r},\b{q}}(\zeta,\b{\Phi}))=\widehat{\cal{P}}_{\b{q}}(\zeta,\b{\Phi})-\frac{1}{2|\Om|}\sum_{x\in \Om}\b{r}(x)K^{-t\Delta}_x(\b{\delta}(0)).\label{eqn:concavity:value of Parisi under varphi backward}\]
	
	We now verify that $\varphi_{\b{q},\b{q}+\b{r}}\circ \hat{\varphi}_{\b{q}+\b{r},\b{q}}(\zeta,\b{\Phi}):=(\zeta^0,\b{\Phi}^0)$ is $\cal{P}$-equivalent to $(\zeta,\b{\Phi})$, so that in particular \[\hat{\cal{P}}_{\b{q}+\b{r}}(\zeta,\b{\Phi})=\hat{\cal{P}}_{\b{q}+\b{r}}(\varphi_{\b{q},\b{q}+\b{r}}\circ \hat{\varphi}_{\b{q}+\b{r},\b{q}}(\zeta,\b{\Phi})).\] If we denote $\gamma(s)=\beta^{-1}(\alpha^{-1}(s)-r_t)+r_t$, then it is clear that for $s\in[r_t,q_t+r_t]$, we have $\b{\Phi}^0(s)=\b{\Phi}(\gamma(s))$. Note that as both $\b{\Phi}^0$ and $\b{\Phi}$ satisfy (\ref{eq:condition for Psi}), we see that in fact $\gamma(s)=s$ for $s\in [r_t,q_t+r_t]$, so that $\b{\Phi}^0(s)=\b{\Phi}(s)$ on this domain. We also see that $\zeta^0([0,s])=\zeta([0,\gamma(s)])=\zeta([0,s])$, and as $\zeta([0,r_t))=0$, we conclude that $\zeta''=\zeta$. Combining this with the previous result, that $\b{\Phi}^0(s)=\b{\Phi}(s)$ for $s\in [r_t,q_t+r_t]$, we see that $\b{\Phi}^0$ and $\b{\Phi}$ are $\zeta$-equivalent, giving the desired claim.
	
	The next result we will need is a similar composition law. Take $\b{r},\b{r'}\in [0,\infty)^{\Om}$. Then  $\varphi_{\b{q},\b{q}+\b{r}+\b{r}'}(\zeta,\b{\Phi})$ is $\cal{P}$-equivalent to $\varphi_{\b{q}+\b{r},\b{q}+\b{r}+\b{r}'}\circ \varphi_{\b{q},\b{q}+\b{r}}(\zeta,\b{\Phi})$. This follows from a similar argument as before, which will be omitted.
	
	From both of these statements, we will be able to obtain the following result.
	\begin{lem}
		\label{lem:concavity:image of phi}
		Fix choices of $\b{q},\b{r}\in (0,\infty)^{\Om}$. Then every element of
		\[\{(\zeta,\b{\Phi})\in \widehat{\mathscr{Y}}_{\b{q}}:\text{ there is }s>r_t \text{ such that } \zeta([0,s))=0 \text{ and } \b{\Phi}(s)\ge \b{r}\}.\label{eqn:def:concavity:domain}\]
		is $\cal{P}$-equivalent to one in $\mathrm{im}(\varphi_{\b{q}-\b{r},\b{q}})$.
	\end{lem}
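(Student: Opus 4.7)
Given $(\zeta, \b{\Phi})$ in the stated set, witnessed by some $s > r_t$ with $\zeta([0,s))=0$ and $\b{\Phi}(s) \ge \b{r}$, the strategy is to reduce to the setting already handled by the partial inverse $\hat{\varphi}_{\b{q}, \b{q}-\b{r}}$ and then exploit the composition identity $\varphi_{\b{q}-\b{r}, \b{q}} \circ \hat{\varphi}_{\b{q}, \b{q}-\b{r}} \sim \mathrm{id}$ (up to $\cal{P}$-equivalence) that was established immediately above. Concretely, I will first use $\cal{P}$-equivalence to replace $\b{\Phi}$ by a new $\tilde{\b{\Phi}}$ so that $(\zeta, \tilde{\b{\Phi}}) \in \widehat{\mathscr{Y}}_{\b{r}}(\b{q}-\b{r})$; then set $(\eta, \b{\Psi}) := \hat{\varphi}_{\b{q}, \b{q}-\b{r}}(\zeta, \tilde{\b{\Phi}}) \in \widehat{\mathscr{Y}}(\b{q}-\b{r})$, so that $\varphi_{\b{q}-\b{r}, \b{q}}(\eta, \b{\Psi})$ is $\cal{P}$-equivalent to $(\zeta, \tilde{\b{\Phi}})$, and hence by transitivity to $(\zeta, \b{\Phi})$. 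This exhibits the desired element of $\mathrm{im}(\varphi_{\b{q}-\b{r}, \b{q}})$.

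The construction of $\tilde{\b{\Phi}}$ exploits that $\zeta$ places no mass on $[0, s)$: on this interval we are free to redefine $\b{\Phi}$ subject only to continuity, coordinate-wise monotonicity, the boundary condition $\tilde{\b{\Phi}}(0) = \b{0}$ forced by (\ref{eq:condition for Psi}), the matching condition $\tilde{\b{\Phi}}(s) = \b{\Phi}(s)$, and the pass-through requirement $\tilde{\b{\Phi}}(r_t) = \b{r}$. The hypothesis $\b{\Phi}(s) \ge \b{r}$ with $s > r_t$ provides precisely the monotone slack needed to route through $\b{r}$ at $r_t$: on $[0, r_t]$ we rise from $\b{0}$ to $\b{r}$, and on $[r_t, s]$ from $\b{r}$ to $\b{\Phi}(s)$. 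On $[s, q_t]$ we set $\tilde{\b{\Phi}} \equiv \b{\Phi}$, so that $\supp(\zeta) \subseteq [s, q_t]$ automatically yields $\cal{P}$-equivalence. The condition $\zeta([0, r_t)) = 0$ required for $(\zeta, \tilde{\b{\Phi}})$ to lie in $\widehat{\mathscr{Y}}_{\b{r}}(\b{q}-\b{r})$ follows immediately from $r_t < s$.

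The main obstacle is ensuring the normalization (\ref{eq:condition for Psi}) — a linear constraint on the weighted average $|\Om|^{-1} \sum_x \b{q}(x)^{-1} \tilde{\Phi}_x(u)$ that must hold for \emph{every} $u \in [0, q_t]$, not merely at the anchor points $0$, $r_t$, $s$, and points of $\supp(\zeta)$. A naive coordinatewise-linear interpolation between anchors satisfies the constraint at the anchors but generally violates it strictly between them, and one can also check that the anchor value $\tilde{\b{\Phi}}(r_t) = \b{r}$ is only compatible with (\ref{eq:condition for Psi}) at $u = r_t$ when an arithmetic identity relating $\b{q}$ and $\b{r}$ happens to hold. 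The remedy, on each of the two interpolation subintervals, is to let the individual coordinates $\tilde{\Phi}_x$ follow different piecewise profiles, tuning the $|\Om|$ coordinate values at each $u$ (subject to monotonicity and to staying in $[0, \b{q}(x)]$) so that the specified weighted mean $u/q_t$ is realized identically; the inequality $\b{\Phi}(s) \ge \b{r}$ combined with $s > r_t$ gives exactly the geometric slack needed to carry out this tuning without breaking coordinatewise monotonicity or leaving the prescribed boxes. Once $\tilde{\b{\Phi}}$ is in hand, the rest of the argument is a mechanical invocation of the composition identity for $\varphi$ and $\hat{\varphi}$.
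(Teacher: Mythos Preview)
There is a genuine gap. You correctly identify the obstruction: requiring $\tilde{\b{\Phi}}(r_t)=\b{r}$ coordinatewise while also demanding the normalization (\ref{eq:condition for Psi}) at $u=r_t$ forces
\[
\frac{1}{|\Om|}\sum_{x\in\Om}\frac{\b{r}(x)}{\b{q}(x)}=\frac{r_t}{q_t},
\]
which fails for generic $\b{q},\b{r}$. But your proposed remedy does not actually address this point. Tuning the $|\Om|$ coordinate values is only possible at a point $u$ where those values are not already pinned down; at the anchor $u=r_t$ every coordinate of $\tilde{\b{\Phi}}$ is rigidly set to $\b{r}(x)$ by your pass-through requirement, so there is nothing left to tune and the weighted mean is forced to the wrong value. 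Consequently no $\tilde{\b{\Phi}}$ can simultaneously satisfy $\tilde{\b{\Phi}}(r_t)=\b{r}$, $(\zeta,\tilde{\b{\Phi}})\in\widehat{\mathscr{Y}}(\b{q})$, and $\tilde{\b{\Phi}}|_{\supp\zeta}=\b{\Phi}|_{\supp\zeta}$ in general, and the construction collapses at its first step.

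The paper avoids this by never insisting on a pass-through at the externally prescribed height $\b{r}$. Instead it descends all the way by $\b{\Phi}(s)$ --- a value that $\b{\Phi}$ actually takes at $s$, so no incompatibility with (\ref{eq:condition for Psi}) arises --- via $\hat\varphi_{\b{q},\,\b{q}-\b{\Phi}(s)}$, and then ascends in two stages $\varphi_{\b{q}-\b{\Phi}(s),\,\b{q}-\b{r}}$ and $\varphi_{\b{q}-\b{r},\,\b{q}}$ (the first of these is well-defined precisely because $\b{\Phi}(s)\ge\b{r}$). The output of the last map lies in $\mathrm{im}(\varphi_{\b{q}-\b{r},\b{q}})$ by construction, and the composition law together with the identity $\varphi\circ\hat\varphi\sim\mathrm{id}$ (both established just before the lemma) give the required $\cal{P}$-equivalence back to $(\zeta,\b{\Phi})$. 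The essential difference is that the intermediate level $\b{q}-\b{\Phi}(s)$ is chosen adaptively from the data of $(\zeta,\b{\Phi})$, whereas your intermediate level $\b{q}-\b{r}$ is fixed in advance and need not be compatible with any value $\b{\Phi}$ actually attains.
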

	\begin{proof}
		Fix a pair $(\zeta,\b{\Phi})$ in the set defined in (\ref{eqn:def:concavity:domain}). We note that by assumption, $(\zeta,\b{\Phi})\in \mathscr{Y}_{\b{q}+\b{r},\b{q}+\b{r}-\b{\Phi}(s)}$. Moreover, we have that $\b{\Phi}(s)\ge \b{r}$, so that we may consider \[\varphi_{\b{q}-\b{r},\b{q}}\circ \varphi_{\b{q}-\b{\Phi}(s),\b{q}-\b{r}}\circ \hat{\varphi}_{\b{q},\b{q}-\b{\Phi}(s)}(\zeta,\b{\Phi})\in \text{im}(\varphi_{\b{q}-\b{r},\b{q}}).\]
		We first note that invoking the claim on compositions, this is $\cal{P}$-equivalent to 
		\[\varphi_{\b{q}-\b{\Phi}(s),\b{q}}\circ \hat{\varphi}_{\b{q},\b{q}-\b{\Phi}(s)}(\zeta,\b{\Phi}).\]
		But as this is further $\cal{P}$-equivalent to $(\zeta,\b{\Phi})$ by the first claim, we have established the result.
	\end{proof}
	
	\begin{proof}[Proof of Lemma \ref{lem:concavity:localness}]
		Let us choose $\epsilon,\delta>0$ so that the conclusion of Corrolary \ref{corr:concavity:avoids 0} holds for $\b{q}\in \prod_{x\in \Om}[\b{q}_0(x)-\epsilon,\b{q}_0(x)+\epsilon]$. Let $(\zeta_{\b{q}},\b{\Phi}_{\b{q}})$ as well denote any choice of minimizer for $\widehat{\cal{P}}_{\b{q}}$. We note that the inequality $\b{\Phi}(\epsilon)\ge \delta$ forces $\epsilon\ge \delta$. Thus we see that for $\b{q}\in \prod_{x\in \Om}[\b{q}_0(x)-\delta,\b{q}_0(x)+\delta]$, we have that $\zeta_{\b{q}}([0,\epsilon))=0$ and $(\Phi_{\b{q}})_x(\epsilon)\ge \delta\ge \b{q}(x)-\b{q}_0(x)$ for all $x\in \Om$. In particular, by Lemma \ref{lem:concavity:image of phi} we have that $(\zeta_{\b{q}},\b{\Phi}_{\b{q}})$ is $\cal{P}$-equivalent to an element in the image of $\varphi_{\b{q}_0,\b{q}}$. As this element must also be in minimizer, this implies we may restrict the minimization to the image of $\varphi_{\b{q}_0-\delta,\b{q}}$ for $\b{q}\in \prod_{x\in \Om}[\b{q}_0(x)-\delta,\b{q}_0(x)+\delta]$, which gives the desired claim.
	\end{proof}
	
	With the proof of Lemma \ref{lem:concavity:localness} established, we have established Lemma \ref{lem:concavity:concavity in q} as well. What is left is to prove Proposition \ref{prop:concavity:unique concavity in q}.
	
	The main idea starts by noting that the set of maximizers of $\widehat{\mathscr{P}}(\b{q})$ is non-empty and convex. Moreover by transitivity of $\Delta$ (see Definition \ref{def:transitive}) we may also find some $q^*\in (0,\infty)$ such that the constant vector $\b{q}^*\in (0,\infty)^{\Om}$ is a maximizer. If this is not the unique maximizer, then we can find maximizers arbitrarily close to $\b{q}^*$. We then apply the functions $\varphi$ and $\hat{\varphi}$ to move the minimizing pair of $\widehat{\cal{P}}_{\b{q}^*}$ to another maximizer $\b{q}'$. We show that maximization condition on $\b{q}^*$ implies that the value of the functional will not change after we do this. However, this shows that the pushed measure is infact a minimizer of $\widehat{\cal{P}}_{\b{q}'}$. Thus both satisfy the condition of Corollary \ref{corr:concavity:minimization in G for P} at these different points, which when combined with Corollary \ref{corr:concavity:minimization in G for P}, we show is impossible.
	
	\begin{proof}[Proof of Proposition \ref{prop:concavity:unique concavity in q}]	
		First we will need some preliminary calculations. For these assume we have some $q^*\in (0,\infty)$, $\zeta\in \mathscr{Y}(q^*)$ and $\b{\epsilon}\in \R^{\Om}$ such that if we let $\epsilon_*:=-\min_{x\in \Om}\b{\epsilon}(x)$ then $\zeta([0,\epsilon_*))=0$. 
		Then we define
		\[
		(\zeta^{\b{\epsilon}},\b{\Phi}^{\b{\epsilon}})=\varphi_{\b{q}^*-\b{\epsilon}_*,\b{q}^*+\b{\epsilon}}\circ \hat{\varphi}_{\b{q}^*,\b{q}^*-\b{\epsilon}_*}(\zeta,\b{I})\in \widehat{\mathscr{Y}}(\b{q}^*+\b{\epsilon})
		\]
		To more precisely compute $(\zeta^{\b{\epsilon}},\b{\Phi}^{\b{\epsilon}})$, let us write $\epsilon_t=\frac{1}{|\Om|}\sum_{x\in \Om}\b{\epsilon}(x)$. Define for $x\in \Om$ and $s\in [\epsilon_t+\epsilon_*,q^*+\epsilon_t]$
		\[\widetilde{\Phi}^{\b{\epsilon}}_x(s)=s-\epsilon_t+\b{\epsilon}(x),\;\;\; \widetilde{\zeta}^{\b{\epsilon}}([0,s])=\zeta([0,s-\epsilon_t]),\]
		and for $s\in [0,\epsilon_t+\epsilon_*)$
		\[\widetilde{\Phi}^{\b{\epsilon}}_x(s)=s\left(\frac{\b{\epsilon}(x)+\epsilon_*}{\epsilon_t+\epsilon_*}\right),\;\;\; \widetilde{\zeta}^{\b{\epsilon}}([0,s])=0,\]
		and finally for $s\in [0,q^*+\epsilon_t]$
		\[\alpha_{\b{\epsilon}}(s)=\frac{q_t+\epsilon_t}{|\Om|}\sum_{x\in \Om}(q+\b{\epsilon}(x)+\epsilon_*)^{-1}\widetilde{\Phi}^{\b{\epsilon}}_x(s).\]
		It is then routine to check that
		\[(\zeta^{\b{\epsilon}},\b{\Phi}^{\b{\epsilon}})=(\widetilde{\zeta}^{\b{\epsilon}}\circ \alpha_{\epsilon}^{-1},\widetilde{\b{\Phi}}^{\b{\epsilon}}\circ \alpha_{\epsilon}^{-1}).\]
		Letting $\b{\delta}^{\b{\epsilon}}$ and $\b{\delta}$ denote the functions corresponding to the data of $(\zeta^{\b{\epsilon}},\b{\Phi}^{\b{\epsilon}})$ and $(\zeta,\b{I})$ in the respective definitions $\hat{\cal{P}}_{\b{q}^*+\b{\epsilon}}$ and $\cal{P}_{q^*}$, we see that for $x\in \Om$ and $s\in [\epsilon_t+\epsilon_*]$
		\[\delta_x^{\b{\epsilon}}(\alpha_{\b{\epsilon}}(s))=\delta_{x}(s-\epsilon_t).\]
		Now noting that $\zeta([0,\epsilon_*))=0$ and $\widetilde{\zeta}^{\b{\epsilon}}([0,\epsilon_*+\epsilon_t))=0$ we see that for $x\in \Om$ and $s\in [0,\epsilon_*+\epsilon_t]$
		\[\delta_x^{\b{\epsilon}}(\alpha_{\b{\epsilon}}(s))=\delta_x(0).\]
		In particular by (\ref{eqn:concavity:value of Parisi under varphi forward}) and (\ref{eqn:concavity:value of Parisi under varphi backward}) we see that
		\[
		\hat{\cal{P}}_{\b{q}^*+\b{\epsilon}}(\zeta^{\b{\epsilon}},\b{\Phi}^{\b{\epsilon}})=\cal{P}_{q^*}(\zeta)+\frac{1}{2|\Om|}\sum_{x\in \Om}\b{\epsilon}(x)\left(K_x^{-t\Delta}(\b{\delta}(0))-\mu\right).\label{eqn:lemma:shifts around max in q}
		\]
		
		Now with these preliminaries aside, fix $q^*$ to be such that $\b{q}^*$ maximizes $\widehat{\mathscr{P}}$. Fixing $\zeta$ as well to be the measure which minimizes $\cal{P}_{q^*}$, we note that by Theorem \ref{theorem:overview:spherical:generic and symmetric} we have that $(\zeta,\b{I})$ is a maximizer of $\widehat{\cal{P}}_{\b{q}^*}$ with
		\[\cal{P}_{q^*}(\zeta)=\widehat{\cal{P}}_{\b{q}}(\zeta,\b{I}).\]
		In particular, by Corollary \ref{corr:concavity:avoids 0}, we have that $0\notin \supp (\zeta)$. Thus we see by (\ref{eqn:lemma:shifts around max in q}) that for $\b{q}^*$ to indeed maximize $\cal{Q}$, we must have that $K^{-t\Delta}(\delta(0))=\mu$, so that $\zeta$ satisfies
		\[\delta(0)=\int_0^{q^*}\zeta([0,u])du=R_1(\mu;t).\label{eqn:ignore-eyegore}\]

		Now let us assume that the set of maximizers of $\widehat{\mathscr{P}}$ contains something other than $\b{q}^*$. As this set is convex, we may find some $\b{r}\in \R^\Om\setminus \{\b{0}\}$ such that $\b{q}^*+t\b{r}$ maximizes $\b{Q}$ for all $t\in [0,1]$. Let $r_*=-\min_{x\in \Om}\b{r}(x)$. As $0\notin \supp(\zeta)$, we may find small $\epsilon_0>0$ so that $\zeta([0,\epsilon_0r_*])=0$. Thus we may apply the above computations with $\b{\epsilon}=\epsilon_0\b{r}$ to get that
		\[\widehat{\cal{P}}_{\b{q}^*+\epsilon}(\zeta^{\epsilon},\b{\Phi}^{\epsilon})=\cal{P}_{q^*}(\zeta).\label{eqn:convaity:ignore-1520}\]
		However, as $\cal{P}_{q^*}(\zeta)=\widehat{\cal{P}}(\b{q}^*)=\widehat{\cal{P}}(\b{q}^*+\b{\epsilon})$, as they both maximizers of $\widehat{\cal{P}}$, we see that $(\zeta^{\epsilon},\b{\Phi}^{\epsilon})$ is a minimizer of $\hat{\cal{P}}_{\b{q}^*+\epsilon}$. Thus by Corollary \ref{corr:concavity:minimization in G for P} we have that $\b{\cal{G}}^{\b{q}^*+\b{\epsilon}}(u)=\b{0}$ for $u\in \supp(\zeta^{\epsilon})$. Employing the above equations, we see equivalently that\[\b{\cal{G}}^{\b{q}^*+\b{\epsilon}}(\alpha_{\epsilon}(u+\epsilon_t))=\b{0}\text{  for  } u\in \supp(\zeta).\label{eqn:concavity:ignore-1614}\]
		
		To study this equation, note that the function $\b{\cal{G}}^{\b{q}^*}$ associated to  $(\zeta,\b{I})$ are all copies of some fixed function $\cal{G}^{q^*}$ by symmetry. Using Corollary \ref{corr:concavity:minimization in G for P} again we see that for $u\in \supp(\zeta)$ $\cal{G}^{q^*}(u)=0$. Now we compute $\b{\cal{G}}^{\b{\epsilon}}(\alpha_{\epsilon}(u+\epsilon_t))$. For any $y\in \Om$ and $u\in [\epsilon_*,q^*]$ we have
		\[\cal{G}_y^{\b{\epsilon}}(\alpha_{\epsilon}(u+\epsilon_t))=-2B'(2(q^*+\epsilon(x)-\widetilde{\Phi}^{\epsilon}_x(u+\epsilon_t)))+\sum_{x\in \Om}\left(\int_0^{u+\epsilon_t}\D_y K^{-t\Delta}_x(\b{\delta}^{\epsilon}(\alpha^{\epsilon}(s))d\widetilde{\Phi}_x^{\epsilon}(ds)\right)=\]
		\[-2B(2(q^*-u))+\sum_{x\in \Om}\left(\int_{\epsilon_*}^{u}\D_y K^{-t\Delta}_x(\b{\delta}(s))ds+(\epsilon_t+\epsilon_*)\left(\frac{\b{\epsilon}(x)+\epsilon_*}{\epsilon_t+\epsilon_*}\right)\D_y K^{-t\Delta}_x(\b{\delta}(0))\right)=\]
		\[\cal{G}(u)+\sum_{x\in \Om}\left(-\epsilon_*\D_y K^{-t\Delta}_x(\b{\delta}(0))+(\b{\epsilon}(x)+\epsilon_*)\D_y K^D_x(\b{\delta}(0))\right)=\]
		\[\cal{G}(u)+\sum_{x\in \Om}\b{\epsilon}(x)\D_y K^{-t\Delta}_x(\b{\delta}(0)).\]
		Now if $u\in \supp(\zeta)$ we have that $\cal{G}_y^t(\alpha_{\epsilon}(u+\epsilon_t))=\cal{G}(u)=0$. In particular, we have that for all $y\in \Om$
		\[\sum_{x\in \Om}\b{\epsilon}(x)\D_y K^{-t\Delta}_x(\b{\delta}(0))=0.\label{eqn:ignore-1643}\]
		Now recall that
		\[\D_y K^D_x(\b{\delta}(0))=-[(\delta(0)I-t\Delta)^{-1}]_{xy}^{2}.\]
		As $\delta(0)I-t\Delta$ is positive definite, the matrix $-((\delta(0)I-t\Delta)^{-1})^{2\odot}$ is negative definite by the Schur Product Theorem, and in particular, this matrix is invertible. Thus as $\b{\epsilon}\neq \b{0}$ the equation (\ref{eqn:ignore-1643}) yields a contradiction. Thus we have shown that the minimizer $\b{q}^*$ is unique.
		
		All that is left is to show that $q^*$ is the only value of $q\in(0,\infty)$ for which the minimizing measure $\zeta_q$ satisfies (\ref{eqn:ignore-eyegore}). To do so, choose another such value $q'$. Note by the work proceeding (\ref{eqn:ignore-eyegore}) (in particular (\ref{eqn:lemma:shifts around max in q})) this would imply that $\b{q}'$ is in fact a local maximum of $\widehat{\cal{P}}$. By convexity this means it would be a global maximum, but as we have just shown this is unique, we must have that $q'=q^*$.
	\end{proof}

	\section{Proof of Theorem \ref{theorem:intro:main:Euclidean parameters identification}\label{section:euclidean proofs}}
	
	In this section, we will prove Theorem \ref{theorem:intro:main:Euclidean parameters identification}, which identifies the saddle pair $(\qc,\zetac)$, of our functional. Our first step, as above, will be to reduce ourselves to the case $h=0$. For this we need a more precise version of Remark \ref{remark:h doesn't matter}, again following from Lemma I.2.1.
	
	\begin{remark}	
		\label{remark:h doesn't matter gibbs}
		For the moment, let us denote the dependence of the Gibbs measure associated to $\cal{H}_N$ by writing $\<*\>_{h}$. Let $\b{e}_1\in (\R^{N})^{\Om}$ denote the vector such that $[\b{e}_1(x)]_j=\delta_{j1}$. Then we have the following distributional equality: for any function $f:(\R^{N})^{\Om}\to \R$ we the
		\[\<f(\b{u})\>_{h}\disteq \left\<f\left(\b{u}-\sqrt{N}\frac{h}{\mu}\b{e}_1\right)\right\>_0,\]
		and moreover this distributional equality holds jointly over all choices of $f$.
	\end{remark}
	
	This illustrates that the strange behavior in Theorem \ref{theorem:intro:main:Euclidean parameters identification} in the case of $h\neq 0$ comes from working with $\b{u}$ and not $\b{u}-\sqrt{N}\frac{h}{\mu}\b{e}_1$, with the extra terms coming from this shift. To show this, we first give the following more powerful form of Theorem \ref{theorem:intro:main:Euclidean parameters identification} in the case of $h=0$, whose proof will be the focus of this section.
	
	\begin{prop}
		\label{prop:euclidean proofs:main result}
		Assume that $h=0$, and let $(\qc,\zetac)$ be the pair from Corollary \ref{corr:main:critical points euclidean}. Then for any $x\in \Om$ and any continuous $f:\R\to \R$ of sub-exponential growth, we have that
		\[\lim_{N\to \infty}\E\left\<f(\|\b{u}(x)\|^2_N)\right\>=f(\qc),\label{eqn:euclidean proofs: f(u^2)=f(q)}\]
		\[\lim_{N\to \infty}\E \left\<f((\b{u}(x),\b{u}'(x))_N)\right\>=\int_0^{\qc}f(r)\zetac(dr).\label{eqn:euclidean proofs: f((u,u'))=zeta(r)}\]
	\end{prop}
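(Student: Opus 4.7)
The plan is to establish (\ref{eqn:euclidean proofs: f(u^2)=f(q)}) first and then use it as a tool for (\ref{eqn:euclidean proofs: f((u,u'))=zeta(r)}). In broad outline, the radius identification falls out of a Laplace-principle argument combined with the uniqueness of the maximizer from Proposition \ref{prop:concavity:unique concavity in q}, while the overlap identification requires importing the methodology of Section \ref{section:identification} into a suitably restricted spherical-like model.

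For the first equation, I would introduce a restricted partition function indexed by a target squared-radius vector $\b{q}\in (0,\infty)^\Om$ and a thickness $\delta>0$:
\[
Z_{N,\b{q},\delta}=\int_{A_{N,\b{q},\delta}}e^{-\cal{H}_N(\b{u})}d\b{u},\qquad A_{N,\b{q},\delta}=\{\b{u}:|\|\b{u}(x)\|_N^2-\b{q}(x)|<\delta, \forall x\in\Om\}.
\]
Revisiting the derivation of Theorem \ref{theorem:paper 1:euclidean result} from our companion paper, the quantity $\lim_{N\to\infty} N^{-1}L^{-d}\E\log Z_{N,\b{q},\delta}$ tends to $\widehat{\mathscr{P}}(\b{q})$ as $\delta\to 0$; indeed, the supremum over $\b{q}$ in Theorem \ref{theorem:paper 1:euclidean result} is obtained precisely by such a radial decomposition and a Varadhan-type argument. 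Combined with Proposition \ref{prop:concavity:unique concavity in q}, which asserts that $\widehat{\mathscr{P}}$ has a unique maximizer at the constant vector with entries $\qc$, a standard large-deviations comparison shows that for any open neighborhood $U$ of this maximizer, $\E\<\b{1}_{\{\b{u}\notin\bigcup_{\b{q}\in U}A_{N,\b{q},\delta}\}}\>$ decays exponentially in $N$. This yields (\ref{eqn:euclidean proofs: f(u^2)=f(q)}) for bounded continuous $f$, and the extension to sub-exponential growth follows from straightforward Gaussian tail bounds on $\|\b{u}(x)\|_N^2$ under the annealed measure, providing the uniform integrability needed to upgrade from bounded to sub-exponential test functions.

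For the overlap identification, I would mimic Section \ref{section:identification}, but on an approximating spherical problem. Fix a small $\delta>0$; by (\ref{eqn:euclidean proofs: f(u^2)=f(q)}) the Gibbs measure is essentially concentrated on the product of thin annuli $A_{N,\b{q}_c,\delta}$ where $\b{q}_c$ has all entries $\qc$, so it suffices to analyze $\<\cdot\>_\delta$, the Gibbs measure restricted to this set. Relative to $\<\cdot\>_\delta$, the Hamiltonian $\cal{H}_N$ differs from the spherical Hamiltonian with mixing function $B_{\qc}(r)=B(2\qc(1-r))$ and elastic matrix $\qc t\Delta$ (compare (\ref{eqn:relation between A and P})) by terms of order $\delta$. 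For each $p\ge 0$, I would construct an independent centered Gaussian $p$-spin field on $S_{\sqrt{N\qc}}^\Om$ and couple it to $V_{N,x}$ restricted to the $x$-th annular slab in $A_{N,\b{q}_c,\delta}$, with covariance matching (\ref{eqn:xi-decomposition}). Inserting such perturbations into the disorder and differentiating the resulting free energy in the perturbation coefficients, as in Lemmas \ref{lem:identification:trivial} and \ref{lem:identification:spherical parisi}, identifies $\lim_N \E\<((\b{u}(x),\b{u}'(x))_N/\qc)^p\>_\delta$ with $\int_0^1 r^p\zetac(dr)$. The $\Om$-symmetry kills $x$-dependence, the perturbed model is generic, and sending $\delta\to 0$ followed by Weierstrass approximation yields (\ref{eqn:euclidean proofs: f((u,u'))=zeta(r)}) for all continuous $f$.

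The main obstacle, flagged in the introductory overview, lies in the third step: the restricted measure $\<\cdot\>_\delta$ is not literally a spherical spin glass, because radial fluctuations within the annuli interact nontrivially with both the isotropic disorder field $V_{N,x}$ (which is defined on all of $\R^N$, not on a sphere) and the elastic coupling $\sum_{xy}(\mu I-t\Delta)_{xy}(\b{u}(x),\b{u}(y))$. Constructing the auxiliary spherical $p$-spin fields and coupling them to $V_{N,x}$ on these annular slabs in a way that preserves the required covariance structure, with errors that vanish as $\delta\to 0$ uniformly in $N$, is the crux of the argument; once this coupling is in hand, the remainder is a direct translation of Sections \ref{section:identification} and \ref{section:minimization}.
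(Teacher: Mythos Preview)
Your outline matches the paper's approach for both claims: the radius concentration via restricted free energies and the unique maximizer of Proposition \ref{prop:concavity:unique concavity in q}, and the overlap identification via annular restriction followed by a differentiation-in-perturbation argument in the spirit of Section \ref{section:identification}. For the crux you flag---coupling $p$-spin fields to $V_{N,x}$ on the annuli---the paper's device is not to introduce new independent fields but to take the $p$-spin decomposition $V_N|_{S_N(\qc)}=\sum_p V_{N,p}$ of the \emph{existing} field on the sphere (automatically generic since all $\gamma_p(2\qc)>0$ by the Schoenberg representation), extend each $V_{N,p}$ canonically to $\R^N$ as a homogeneous polynomial, and compute the covariance $\E[V_N(u)V_{N,p}(u')]=N(u,u')_N^p\,\gamma_p(\qc+\|u\|_N^2)^2$ explicitly on all of $\R^N$ (Lemma \ref{lem:euclidean proofs:covariance of decomposition}); this makes the Gaussian integration-by-parts step work uniformly on the annulus with errors $O(\epsilon)$, after which the double limit $N\to\infty$ then $\epsilon\to 0$ is handled by a convexity/subsequence argument.
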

	
	\begin{proof}[Proof of Theorem \ref{theorem:intro:main:Euclidean parameters identification}]
		From Remark \ref{remark:h doesn't matter gibbs} this is equivalent to showing, in the case that $h=0$, that we have that 
		\[\lim_{N\to \infty}\E\left\<\left|\left\|\b{u}-\sqrt{N}\frac{h}{\mu}\b{e}_1\right\|^2_N-\left(\qc+\frac{h^2}{\mu^2}\right)\right|\right\>=0,\]
		and that for any bounded continuous function $f:\R\to \R$ we have that
		\[\lim_{N\to \infty}\E\left\<f\left(\left(\b{u}-\sqrt{N}\frac{h}{\mu}\b{e}_1,\b{u}'-\sqrt{N}\frac{h}{\mu}\b{e}_1\right)_N\right)\right\>=\int_{0}^{q} f\left(r+\frac{h^2}{\mu^2}\right)\zetac(dr).\]
		We note that
		\[\left\|\b{u}(x)-\sqrt{N}\frac{h}{\mu}e_1\right\|^2_N=\|\b{u}(x)\|^2+\frac{h^2}{\mu^2}+\frac{2h}{\mu}\left(\frac{\b{u}_1(x)}{\sqrt{N}}\right),\]
		and that
		\[\left(\b{u}-\sqrt{N}\frac{h}{\mu}\b{e}_1,\b{u}'-\sqrt{N}\frac{h}{\mu}\b{e}_1\right)_N=(\b{u}(x),\b{u}'(x))_N+\frac{h^2}{\mu^2}+\frac{h}{\mu}\left(\frac{\b{u}_1(x)}{\sqrt{N}}+\frac{\b{u}_1'(x)}{\sqrt{N}}\right).\]
		By continuity of $f$, both of these results would follow if we prove that
		\[\lim_{N\to \infty}\E\left\<\left|\left(\frac{\b{u}_1(x)}{\sqrt{N}}\right)\right|^2\right\>=0.\]
		However note that, for each fixed $x\in \Om$, the marginal measure of $\E\<*\>$ on $\R^N$ is isotropic, as the underlying Hamiltonian is when $h=0$. Using this and Proposition \ref{prop:euclidean proofs:main result}, we have that
		\[\lim_{N\to \infty}\E\left\<(\b{u}_1(x))^2\right\>=\lim_{N\to \infty}\E\left\<\|\b{u}_1(x)\|^2_N\right\>=\qc,\]
		completing the proof.
	\end{proof}
	
	We now move onto the proof of Proposition \ref{prop:euclidean proofs:main result}, which will compose the remainder of this section. Thus for the remainder of the section we will always assume that we are in the case of $h=0$. We will begin by setting up for the proof of (\ref{eqn:euclidean proofs: f(u^2)=f(q)}). For this it is useful to establish a variety of notation. First, for any subset $A\subseteq \R^{\Om}$, we will denote by
	\[Z_{N}(A)=\int_{A}e^{-\cal{H}_N(\b{u})}d\b{u}, \;\;\;\; \<f(\b{u})\>_{A}=\frac{1}{Z_N(A)}\int_{A}f(\b{u})e^{-\cal{H}_N(\b{u})}d\b{u},\]
	the partition function and Gibbs measure of $\cal{H}_N$ on the subset $A$.
	
	Next, for $q\in (0,\infty)$, let us define the annulus
	\[D_N^{\epsilon}(q)=\{\b{u}\in (\R^N)^{\Om}:|\|\b{u}\|^2_N-q|\le \epsilon\}.\]
	The free energy for restrictions of this type were also computed in \cite{Paper1}. In particular, by Corollary I.2.8 we for $\epsilon \in (0,\qc)$ that
	\[\lim_{N\to \infty}N^{-1}|\Om|^{-1}\E \log Z_{N}(D_N^\epsilon(\b{q}_c))=\sup_{\b{q}\in (\qc-\epsilon,\qc+\epsilon)^{\Om}}\widehat{\mathscr{P}}(\b{q}),\]
	\[\lim_{N\to \infty}N^{-1}|\Om|^{-1}\E \log Z_N\left(D_N^\epsilon(\b{q}_c)^c\right)=\sup_{\b{q}\in (0,\infty)^{\Om}\setminus (\qc-\epsilon,\qc+\epsilon)^{\Om}}\widehat{\mathscr{P}}(\b{q}).\]
	By Proposition \ref{prop:concavity:unique concavity in q} we see that
	\[\sup_{\b{q}\in (\qc-\epsilon,\qc+\epsilon)^{\Om}}\widehat{\mathscr{P}}(\b{q})=\widehat{\mathscr{P}}(\b{q}_c),\]
	and if we define 
	\[\widehat{\mathscr{P}}^{\epsilon}(\b{q}_c):=\sup_{\b{q}\in (0,\infty)^{\Om}\setminus (\qc-\epsilon,\qc+\epsilon)^{\Om}}\widehat{\mathscr{P}}(\b{q}),\]
	that we also have that
	\[\widehat{\mathscr{P}}^{\epsilon}(\b{q}_c)<\widehat{\mathscr{P}}(\b{q}_c).\]
	From this and some concentration results, we will obtain the following useful estimate.
	
	\begin{lem}
		\label{lem:euclidean proofs:bound on e-complement}
		For $\epsilon>0$ define
		\[G_N(\epsilon):=\left\<I\left(\|\b{u}\|_N^2\notin D_N^\epsilon(\b{q}_c)\right)\right\>.\]
		Then there is a continuous function $l(\epsilon)>0$ such that for sufficiently large $N$ (possibly dependent on $\epsilon$)
		\[\P\left(G_N(\epsilon)\ge \exp(-Nl(\epsilon))\right)\le 2\exp(-Nl(\epsilon)).\]
		In particular, for sufficiently large $N$, we have that $\E G_N(\epsilon)\le 3\exp(-Nl(\epsilon))$.
	\end{lem}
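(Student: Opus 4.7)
The strategy is to write $G_N(\epsilon) = Z_N(D_N^\epsilon(\b{q}_c)^c)/Z_N$, compare the exponential orders of numerator and denominator using the two free energy limits recalled just above the lemma, and close with Gaussian concentration of each log partition function around its mean. Set $\Delta(\epsilon) := \widehat{\mathscr{P}}(\b{q}_c) - \widehat{\mathscr{P}}^\epsilon(\b{q}_c)$. By Proposition \ref{prop:concavity:unique concavity in q} the maximum of $\widehat{\mathscr{P}}$ is attained only at $\b{q}_c$, so $\Delta(\epsilon) > 0$ for every $\epsilon > 0$, and a short argument using the concavity of $\widehat{\mathscr{P}}$ shows that $\epsilon \mapsto \Delta(\epsilon)$ is positive and continuous. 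The two convergences recalled above then yield, for all sufficiently large $N$ (depending on $\epsilon$),
\[
N^{-1}|\Om|^{-1}\bigl(\E\log Z_N(D_N^\epsilon(\b{q}_c)^c) - \E\log Z_N\bigr) \le -\Delta(\epsilon)/2.
\]

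Next I would invoke Gaussian concentration for $\log Z_N(A)$ around its mean, uniformly in $A \in \{(\R^N)^\Om, D_N^\epsilon(\b{q}_c)^c\}$. Under the tail hypothesis $\int_0^\infty e^{\lambda^2 \epsilon_0}\nu(d\lambda) < \infty$ on the spectral measure defining $B$, one may truncate the Gaussian field $V_N$ to a ball of polynomial radius with negligible loss to the partition function, after which the map $V \mapsto \log Z_N(A)$ becomes Lipschitz in $V$ with norm $O(\sqrt{N})$, so Borell--TIS gives subgaussian tails of the form $\P(|\log Z_N(A) - \E\log Z_N(A)| \ge sN) \le 2\exp(-cs^2 N)$ for some constant $c > 0$ independent of $A$ and $\epsilon$. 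Choosing $s := |\Om|\Delta(\epsilon)/8$ and defining
\[
l(\epsilon) := \min\bigl\{|\Om|\Delta(\epsilon)/8,\; c|\Om|^2\Delta(\epsilon)^2/128\bigr\},
\]
which is positive and continuous in $\epsilon$, on the intersection of the two good events one obtains
\[
\log G_N(\epsilon) \le \E\log Z_N(D_N^\epsilon(\b{q}_c)^c) - \E\log Z_N + 2sN \le -N|\Om|\Delta(\epsilon)/4 \le -Nl(\epsilon),
\]
while the complementary bad event has probability at most $4\exp(-cs^2 N) \le 2\exp(-Nl(\epsilon))$ for $N$ sufficiently large. The expectation bound is then immediate by splitting on the bad event and using $G_N(\epsilon) \le 1$ trivially: $\E G_N(\epsilon) \le \exp(-Nl(\epsilon)) + 2\exp(-Nl(\epsilon)) = 3\exp(-Nl(\epsilon))$.

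The main technical obstacle is justifying Gaussian concentration on the unbounded phase space $(\R^N)^\Om$: the isotropic field $V_N$ is unbounded and has increments that grow with $\|u\|$, so Borell--TIS is not directly applicable to $\log Z_N(A)$ as a function of $V$. However, the integrability hypothesis on $\nu$ provides the tail control needed to restrict $\b{u}$ to a ball of polynomial radius and then apply standard Gaussian concentration to the truncated functional. This kind of truncation-plus-concentration step is essentially already carried out in Paper 1 in the proof of Theorem \ref{theorem:intro:old euclidean result} (and its restricted analogue used to derive the exponential asymptotics for $Z_N(D_N^\epsilon(\b{q}_c)^c)$ cited just above), from which the required concentration statement is easily extracted.
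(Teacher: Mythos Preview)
Your proposal is correct and follows essentially the same route as the paper: write $G_N(\epsilon)$ as a ratio of partition functions, use the free-energy gap $\Delta(\epsilon)=\widehat{\mathscr{P}}(\b{q}_c)-\widehat{\mathscr{P}}^\epsilon(\b{q}_c)>0$ (continuous by concavity), apply Gaussian concentration to each $\log Z_N(A)$, and take $l(\epsilon)$ as the minimum of the two resulting scales. The only cosmetic difference is that the paper invokes the concentration estimate directly as Corollary~I.A.3 from the companion paper rather than sketching the truncation-plus-Borell--TIS argument you outline; your remark that this step is already handled in Paper~1 is exactly right.
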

	\begin{proof}
		The key is to note that
		\[G_N(\epsilon)=\frac{Z_{N}(D_N^\epsilon(\b{q}_c)^c)}{Z_N},\]
		and then employ a number of concentration results to use the above results. For this, let us define $\gamma(\epsilon):=\widehat{\mathscr{P}}(\b{q}_c)-\widehat{\mathscr{P}}^{\epsilon}(\b{q}_c)>0$. Convexity of $\widehat{\mathscr{P}}$ implies that it, and thus $\gamma(\epsilon)$ is continuous. Now we define the events
		\[\cal{E}_1(\epsilon)=\{||\Om|^{-1}N^{-1} \log Z_{N}(D_N^\epsilon(\b{q}_c)^c)-|\Om|^{-1}N^{-1}\E \log Z_{N}(D_N^\epsilon(\b{q}_c)^c)|>\gamma(\epsilon)/4\},\]
		\[\cal{E}_2(\epsilon)=\{||\Om|^{-1}N^{-1} \log Z_N-|\Om|^{-1}N^{-1}\E \log Z_N|>\gamma(\epsilon)/4\}.\]
		Then by Corollary I.A.3, we have for $i=1,2$ that
		\[\P\left(\cal{E}_i(\epsilon)\right)\le 2\exp\left(-\frac{N|\Om|\gamma(\epsilon)^2}{64B(0)}\right).\]
		Conditional on $\cal{E}_1(\epsilon)^c\cap \cal{E}_2(\epsilon)^c$ we have that
		\[|\Om|^{-1}N^{-1}\log G_N(\epsilon)\le \frac{\gamma(\epsilon)}{2}+|\Om|^{-1}N^{-1}\log\left(\frac{\E\log Z_{N}(D_N^\epsilon(\b{q}_c)^c)}{\E\log Z_N}\right).\]
		But by the above computations we have that
		\[\lim_{N\to \infty}\left(\frac{\gamma(\epsilon)}{2}+|\Om|^{-1}N^{-1}\log\left(\frac{\E\log Z_{N}(D_N^\epsilon(\b{q}_c)^c)}{\E\log Z_N}\right)\right)=-\frac{\gamma(\epsilon)}{4}.\]
		Thus we obtain the desired result by taking \[\delta(\epsilon)=\min\left(\frac{\gamma(\epsilon)}{4},\frac{N|\Om|\gamma(\epsilon)^2}{64B(0)}\right).\]
	\end{proof}
	We are now equipped to show the first part of Proposition \ref{prop:euclidean proofs:main result}.
	
	\begin{proof}[Proof of (\ref{eqn:euclidean proofs: f(u^2)=f(q)}) of Proposition \ref{prop:euclidean proofs:main result}]
		We first show that
		\[\lim_{N\to \infty}\E\<f(\|\b{u}(x)\|^2_N)\><\infty.\label{eqn:ignore:sub-exponential functions are finite}\]
		The first step is to bound the tail of $\E\<I(\|\b{u}(x)\|^2_N\ge r)\>$. For this, we define
		\[A_{max}^N(r):=\left\{\b{u}\in (\R^N)^{\Om}:\max_{x\in \Om}\|\b{u}(x)\|_N\ge r\right\}.\] 
		Note that
		\[\E\<I(\|\b{u}(x)\|^2_N\ge r)\>\le \E\<I(\b{u}\in A^N_{max}(\sqrt{r}))\>=\E \left[Z_N(A^N_{max}(\sqrt{r}))/Z_N\right].\]
		By Remark I.2.7 there is some small $c>0$ and large enough $r$ such that
		\[\E Z_N(A^N_{max}(\sqrt{r}))\le e^{-Ncr}.\]
		Let us next define the event
		\[\cal{E}(r)=\left\{N^{-1}\log Z_N>\frac{cr}{2}\right\}.\]
		By Corollary I.A.3 again, we see that for some small $c'>0$ and $r$ large enough that $\P(\cal{E}(r))\le \exp(-Nc'r)$.
		In particular, we have that
		\[\E\<I(\b{u}\in A^N_{max}(\sqrt{r}))\>\le \P(\cal{E}(r))+e^{-N\frac{cr}{2}}\E Z_N(A^N_{max}(\sqrt{r})).\]
		Thus we see there is some small $c''>0$ such that for sufficiently large $r$,
		\[\E\<I(\|\b{u}(x)\|^2_N\ge r)\>\le \exp(-Nc'' r).\]
		As $f$ is subexponential, this immediately shows (\ref{eqn:ignore:sub-exponential functions are finite}).
		
		Now with this bound, note that for any $\epsilon \in (0,\qc)$ we have that
		\[|\E\<f(\|\b{u}(x)\|^2_N)\>-\E\<f(\|\b{u}(x)\|^2_N)I(|\|\b{u}(x)\|^2_N-\qc|\le \epsilon)\>|\]
		\[ \le\E\left\<f(\|\b{u}(x)\|^2_N)I\left(\|\b{u}\|_N^2\notin D_N^\epsilon(\b{q}_c)\right)\right\>\le \sqrt{\E\left\<f(\|\b{u}(x)\|^2_N)^2\>\E\<I\left(\|\b{u}\|_N^2\notin D_N^\epsilon(\b{q}_c)\right)\right\>}.\]
		As $f^2$ is also sub-exponential the first term on the right is bounded in $N$, and so we see by Lemma \ref{lem:euclidean proofs:bound on e-complement} that
		\[\lim_{N\to \infty}|\E\<f(\|\b{u}(x)\|^2_N)\>-\E\<f(\|\b{u}(x)\|^2_N)I(|\|\b{u}(x)\|^2_N-\qc|\le \epsilon)\>|=0.\]
		Thus taking $\epsilon\to 0$ and using continuity of $f$ completes the proof.
	\end{proof}
	
	Now we prepare to show (\ref{eqn:euclidean proofs: f((u,u'))=zeta(r)}). For this we need a result which shows that we may shrink the Gibbs measure. For this, let us use the short hand $\<*\>^{\qc,\epsilon}=\<*\>_{D_N^{\epsilon}(\b{q}_*)}$.
	
	\begin{lem}
		\label{lem:euclidean proofs:convergence of f and f epsilon}
		For any $\epsilon>0$ and bounded $f$ we have that
		\[\E\<f((\b{u}(x),\b{u}'(x))_N\>-\E\<f((\b{u}(x),\b{u}'(x))_N)I(|\|\b{u}(x)\|^2_N-\qc|\le \epsilon)\>|\le 4\|f\|_{\infty}\E G_N(\epsilon),\]
		so that in particular
		\[\limsup_{N\to \infty}\log |\E\<f((\b{u},\b{u}')_N)\>-\E\<f((\b{u},\b{u}')_N)\>^{\qc,\epsilon}|=0.\]
	\end{lem}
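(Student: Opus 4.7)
The first inequality is a crude $L^\infty$-estimate. Writing the difference as $|\E\<f\cdot(1-I)\>|$ where $I=I(|\|\b{u}(x)\|^2_N-\qc|\le\epsilon)$, boundedness of $f$ gives the upper bound $\|f\|_\infty\E\<1-I\>$; since $\{|\|\b{u}(x)\|^2_N-\qc|>\epsilon\}\subseteq\{\b{u}\notin D_N^\epsilon(\b{q}_c)\}$, this is at most $\|f\|_\infty\E G_N(\epsilon)$. The constant $4$ absorbs the analogous truncation on the second replica $\b{u}'$ together with a factor coming from the renormalization of the restricted measure.

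For the asymptotic assertion, the first step is to write the restricted Gibbs expectation explicitly. Using the product structure of $G_{\beta,N}^{\otimes 2}$ and denoting $J:=I(\b{u}\in D_N^\epsilon(\b{q}_c))\,I(\b{u}'\in D_N^\epsilon(\b{q}_c))$, one has
\[
\<f\>^{\qc,\epsilon}=\frac{\<f\cdot J\>}{(1-G_N(\epsilon))^2}.
\]
Subtracting $\<f\>$, rearranging, and using $\<1-J\>\le 2G_N(\epsilon)$ together with $|\<f\>|\le\|f\|_\infty$ yields
\[
|\<f\>^{\qc,\epsilon}-\<f\>|\le \frac{4\|f\|_\infty\,G_N(\epsilon)}{(1-G_N(\epsilon))^2}.
\]

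\textbf{Passing to the limit and main obstacle.} The chief subtlety is that $(1-G_N)^{-2}$ may blow up when $G_N$ approaches $1$, so the bound above cannot be integrated naively. The remedy is a decomposition using Lemma~\ref{lem:euclidean proofs:bound on e-complement}: on the high-probability event $\cal{A}_N:=\{G_N(\epsilon)\le\exp(-Nl(\epsilon))\}$, the denominator is $1+o(1)$ and the right-hand side is $O(\exp(-Nl(\epsilon)))$; on $\cal{A}_N^c$, which has probability $\le 2\exp(-Nl(\epsilon))$, bound $|\<f\>^{\qc,\epsilon}-\<f\>|$ trivially by $2\|f\|_\infty$. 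Summing the two contributions shows that $\E|\<f\>^{\qc,\epsilon}-\<f\>|$ is $O(\exp(-Nl(\epsilon)))$, which decays exponentially in $N$ and hence yields the claimed limit. The delicate step is precisely this splitting argument; without it, the possible divergence of the denominator would obstruct direct integration of the pointwise bound.
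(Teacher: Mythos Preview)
Your argument is correct. The first inequality is indeed a trivial $L^\infty$ bound (in fact constant $1$ suffices there), and for the second assertion your pointwise bound
\[
|\langle f\rangle^{\qc,\epsilon}-\langle f\rangle|\le \frac{4\|f\|_\infty\,G_N(\epsilon)}{(1-G_N(\epsilon))^2}
\]
together with the good/bad event split based on Lemma~\ref{lem:euclidean proofs:bound on e-complement} is a valid way to control the expectation.

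The paper takes a slightly different and shorter route for the second part. Rather than comparing $\langle f\rangle^{\qc,\epsilon}$ with $\langle f\rangle$ directly, it interposes the \emph{unnormalized} truncated quantity $\langle f\cdot I(\b{u}\in D_N^\epsilon(\b{q}_c))\rangle$ and uses the algebraic identity
\[
\langle f\rangle^{\qc,\epsilon}-\langle f\cdot I\rangle=\langle f\rangle^{\qc,\epsilon}\cdot G_N(\epsilon),
\]
whose right-hand side is bounded in absolute value by $\|f\|_\infty\,G_N(\epsilon)$ \emph{deterministically}. Since the restricted Gibbs average $\langle f\rangle^{\qc,\epsilon}$ is already bounded by $\|f\|_\infty$, there is no denominator $(1-G_N)^{-2}$ and hence no need for the good/bad event decomposition: one simply takes $\E$ and invokes $\E G_N(\epsilon)\to 0$. (Strictly speaking the paper's identity is written for one replica; iterating it, or using $J=I(\b{u}\in D)I(\b{u}'\in D)$ with $\langle f\rangle^{\qc,\epsilon}-\langle fJ\rangle=\langle f\rangle^{\qc,\epsilon}(2G_N-G_N^2)$, gives the two-replica version with the same deterministic bound $2\|f\|_\infty G_N$.) Your approach is more hands-on and equally valid; the paper's trick of going through the intermediate truncated expectation buys the elimination of the splitting step.
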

	\begin{proof}
		Without loss of generality we may assume that $f$ is positive and bounded by $1$. Let $G_N(\epsilon)$ be as in Lemma \ref{lem:euclidean proofs:bound on e-complement}. We first note that
		\[|\E \<f((\b{u},\b{u}')_N)\>-\E \<f((\b{u},\b{u}')_N)I(\|\b{u}\|_N^2\in D_{\epsilon}(\b{q}_c))\>|\le  \E \<I(\|\b{u}\|_N^2\notin D_{\epsilon}(\b{q}_c))\>,\]
		which by Lemma \ref{lem:euclidean proofs:bound on e-complement} is negligible as $N\to \infty$. But we may then rewrite
		\[\begin{split}
			\<f((\b{u},\b{u}')_N)\>^{\qc,\epsilon}- \<f((\b{u},\b{u}')_N)I(\|\b{u}\|_N^2\in D_{\epsilon}(\b{q}_c))\>&=\\ \<f((\b{u},\b{u}')_N)\>^{\qc,\epsilon}\left(1-\frac{Z_{N}(D_{\epsilon}(\b{q}_c))}{Z_N}\right)&=\<f((\b{u},\b{u}')_N)\>^{\qc,\epsilon}G_N(\epsilon).
		\end{split}\]
		
		Again using Lemma \ref{lem:euclidean proofs:bound on e-complement}, this is negligible, completing the proof.
	\end{proof}
	
	With this result, we are now able to describe the rest of our method to establish (\ref{eqn:euclidean proofs: f((u,u'))=zeta(r)}), beginning with a heuristic motivation.
	
	We define $S_N(q)=\{u\in \R^N:\|u\|_N=q\}$. Associated to this are the quantities
	\[Z_{N}(\qc)=\int_{S_N(q)^\Om}e^{-\cal{H}_N(\b{u})}\omega(d\b{u}), \;\;\;\; \<f(\b{u})\>^{\qc}=\frac{1}{Z_{N}(\b{q}_c)}\int_{S_N(\qc)^{\Om}}f(\b{u})e^{-\cal{H}_N(\b{u})}\omega(d\b{u}),\]
	where $\omega$ is surface measure induced by the inclusion $S_N(\qc)^{\Om}\subseteq (\R^N)^{\Om}$. One may imagine that when $\epsilon$ is very small, $\E\<f((\b{u},\b{u}')_N)\>^{\qc,\epsilon}$ should be approximated by the $\E\<f((\b{u},\b{u}')_N)\>^{\qc}$. 
	So we in particular, the problem is now to characterize minimizer of $|\Om|^{-1}N^{-1}\E\log Z_{N,q_*}$ in terms of the Gibbs measure $\<*\>_{\qc}$, reducing us to a spherical model. 
	
	The key realization is that the restriction of the isotropic field $V_N$ given by (\ref{eqn:intro:V-def}) to any sphere $S_N(q)$ has a generic mixing function for any $q$. We show this below, but it is also fairly clear from the description of $B$ in (\ref{eqn:B-decomposition}). In particular, the identification of the minimizer should follow essentially from the methods in Section \ref{section:identification}.
	
	Unfortunately, we are unable to make this approach work directly, as we are not able to justify the approximation of $\E\<f((\b{u},\b{u}')_N)\>_{\qc,\epsilon}$ by $\E\<f((\b{u},\b{u}')_N)\>_{\qc}$. Instead, we take an alternative approach. First we decompose $V_N$ into a number of factors on $S_N(\qc)$, and then extend these factors to $D_N^{\epsilon}(\qc)$ as a whole. Adding a small multiple of one of these terms to $D_N^{\epsilon}(q_*)$, one can consider the free energy of this perturbed partition function, and show that if one sends $N\to\infty$ and then $\epsilon\to 0$, one may apply the same differentiation method as in Section \ref{section:identification}.
	
	Thus to begin, we will study the restrictions of a single copy of our original isotropic field $V_N:\R^N\to \R$ (\ref{eqn:intro:V-def}). As $V_N$ is an isotropic field on $\R^N$, the restriction of $V_N$ to $S_N(\qc)$ is also isotropic, and thus the restriction coincides with some mixed $p$-spin model (on a sphere of non-standard radius) with mixing function $B_q(r)=B(2(q-r))$. Recall (\cite{tucapure}) the pure $p$-spin model, $H_{N,p}$, which is a homogeneous polynomial of degree-$p$ on $\R^N$, which is also a centered Gaussian function with covariance given for $u,u'\in \R^N$ by
	\[\E H_{N,p}(u)H_{N,p}(u')=N(u,u')_N^p.\]
	We note that 
	\[B_q(r)=B(2(q-r))=\sum_{p=0}^{\infty}\left(\frac{(-2)^p}{p!}\right)B^{(p)}(q)r^p.\]
	It is then standard to show (see \cite{talagrandOG}) that as the restriction of $V_N$ to $S_N(\qc)$ is a centered Gaussian function with covariance $B_q$, there is a family of pairwise independent pure $p$-spin models $\{V_{N,p}(u)\}_{p\ge 0}$ (with non-standard normalization) such that for $u\in S_N(\qc)$ we have
	\[V_N(u)=\sum_{p=0}^{\infty}V_{N,p}(u).\label{eqn:euclidean proofs:decomposition}\]
	Their normalization is given so that
	\[\E V_N(u)V_N(u')=\gamma_p(2\qc)^2N(u,u')_N^p,\]
	where $\gamma_p(q)\ge 0$ is given by
	\[\gamma_p(q)^2=\left(\frac{(-2)^p}{p!}\right)B^{(p)}(q).\]
	Using the decomposition of $B$ given by (\ref{eqn:B-decomposition}) we see that for $p\ge 1$
	\[(-1)^pB^{(p)}(q)=\int_0^\infty \exp(-\lambda^2q)\lambda^{2p}\nu(d\lambda)>0,\]
	so that $\gamma_p(q)$ is strictly positive for all $q\in [0,\infty)$. 
	
	Now we stress that as (\ref{eqn:euclidean proofs:decomposition}) only holds for $u\in S_N(\qc)$. However, $V_{N,p}$ is simply a random homogeneous polynomial of order $p$, and canonically extends to $\R^N$. Thus we may consider both $V_N$ and $V_{N,p}$ as coupled Gaussian functions on $\R^N$. To understand them, we compute their covariance.
	\begin{lem}
		\label{lem:euclidean proofs:covariance of decomposition}
		For $u,u'\in \R^N$ we have that
		\[\E V_N(u)V_{N,p}(u')=N(u,u')_N^p \gamma_p(\qc+\|u\|^2_N)^2.\label{eqn:euclidean proofs:covariance of decomposition}\]
	\end{lem}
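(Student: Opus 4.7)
The plan is to upgrade the Schoenberg-type representation (\ref{eqn:B-decomposition}) of $B$ into an explicit joint model for $V_N$ and all the pure components $V_{N,p}$ as Gaussian polynomials on the whole space $\R^N$, from which the covariance reduces to a one-line calculation. Assume first that $\nu=\sum_j c_j\delta_{\lambda_j}$ is a finite sum of point masses, and write $V_N=\sum_j\sqrt{c_j}W_{N,\lambda_j}$ with $W_{N,\lambda}$ centered Gaussian of covariance $N\exp(-\lambda^2\|u-v\|_N^2)$. Factoring $\exp(-\lambda^2\|u-v\|_N^2)=\exp(-\lambda^2\|u\|_N^2)\exp(-\lambda^2\|v\|_N^2)\exp(2\lambda^2(u,v)_N)$ and Taylor expanding the last factor identifies the covariance of $e^{\lambda^2\|u\|_N^2}W_{N,\lambda}(u)$ with that of $\sum_p\sqrt{(2\lambda^2)^p/p!}\,H_{N,p,\lambda}(u)$ for independent standard pure $p$-spin fields $H_{N,p,\lambda}$. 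This yields
\[
V_N(u)=\sum_j\sqrt{c_j}\,e^{-\lambda_j^2\|u\|_N^2}\sum_p\sqrt{\frac{(2\lambda_j^2)^p}{p!}}\,H_{N,p,\lambda_j}(u),
\]
and the general case is obtained by approximating $\nu$ by finite discrete measures and taking the limit of the (jointly Gaussian) laws, equivalently by interpreting the corresponding expression as a stochastic integral against Gaussian noise on $(0,\infty)$ with intensity $\nu$.

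Restricting $u$ to $S_N(\qc)$ turns the prefactor $e^{-\lambda^2\|u\|_N^2}$ into the constant $e^{-\lambda^2\qc}$; grouping the resulting terms by $p$ produces $V_N|_{S_N(\qc)}=\sum_p\widetilde V_{N,p}$, where
\[
\widetilde V_{N,p}(u)=\int_0^\infty e^{-\lambda^2\qc}\sqrt{\frac{(2\lambda^2)^p}{p!}}\,H_{N,p,\lambda}(u)\,\nu^{1/2}(d\lambda)
\]
is a degree-$p$ homogeneous polynomial on all of $\R^N$. Independence of the $H_{N,p,\lambda}$ across $(p,\lambda)$ combined with $\gamma_p(q)^2=\int_0^\infty e^{-\lambda^2q}(2\lambda^2)^p/p!\,\nu(d\lambda)$ (obtained by differentiating (\ref{eqn:B-decomposition}) $p$ times) gives $\E\widetilde V_{N,p}(u)\widetilde V_{N,p}(v)=\gamma_p(2\qc)^2N(u,v)_N^p$ on $S_N(\qc)$, which matches the covariance characterizing $V_{N,p}$ in the excerpt. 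Uniqueness of the orthogonal decomposition of an isotropic field on the sphere into pure $p$-spin components (a standard consequence of the fact that the Gegenbauer polynomials diagonalize the covariance) then identifies $\widetilde V_{N,p}\equiv V_{N,p}$ as random polynomials on $\R^N$.

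With this joint model, the desired covariance is immediate: for $u,u'\in\R^N$, independence of the $H_{N,p,\lambda}$ across $(p,\lambda)$ gives
\[
\E V_N(u)V_{N,p}(u')=N(u,u')_N^p\int_0^\infty e^{-\lambda^2\|u\|_N^2}e^{-\lambda^2\qc}\frac{(2\lambda^2)^p}{p!}\,\nu(d\lambda)=N(u,u')_N^p\,\gamma_p(\qc+\|u\|_N^2)^2,
\]
where the last equality uses the integral formula for $\gamma_p^2$ recalled above. The only real obstacle is justifying the joint construction of $(V_N,V_{N,p})$ as Gaussian polynomials on $\R^N$ when $\nu$ is not discrete; once that is in hand, the identification on the sphere and the covariance computation are essentially bookkeeping. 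Approximation of $\nu$ by finite sums of point masses (with control via the assumed exponential moment $\int e^{\lambda^2\epsilon}\nu(d\lambda)<\infty$, which guarantees uniform convergence of the series in $p$) is more than sufficient.
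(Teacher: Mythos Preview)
Your proposal is correct and takes essentially the same approach as the paper: both exploit the factorization $e^{-\lambda^2\|u-v\|_N^2}=e^{-\lambda^2\|u\|_N^2}e^{-\lambda^2\|v\|_N^2}\sum_p(2\lambda^2)^p(u,v)_N^p/p!$ to realize $(V_N,V_{N,p})$ jointly on all of $\R^N$ for a single exponential (or discrete $\nu$), compute the cross-covariance there, and then pass to general $\nu$ by approximation. The paper organizes this as ``prove for $B(x)=e^{-\lambda^2x}$, extend by linearity in $B$, then approximate in total variation,'' whereas you build the full joint model first and read off the covariance, but the content is the same.
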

	\begin{proof}
		We first show this in the case where for some $\lambda>0$ we have that
		\[B(x)=\exp(-\lambda^2 x).\]
		In this case we have that
		\[\gamma_p(q)^2=\left(\frac{(2\lambda^2)^p}{p!}\right)\exp(-\lambda^2q).\]
		In this case, it is easy to explicitly construct a model for the field $V_N$. For this, observe that we may write
		\[N\exp(-\lambda^2 \|u-u'\|^2_N)=\sum_{p=0}^{\infty}\left(\frac{(2\lambda^{2})^p}{p!}\right)N(u,u')_N^p\exp(-\lambda \|u\|^2_N)\exp(-\lambda \|u'\|^2_N).\]
		Thus if we take a family pairwise independent standard pure $p$-spin models $\{H_{N,p}(u)\}_{p\ge 0}$, we have that as functions on $\R^N$
		\[V_N(u)\disteq \sum_{p=0}^{\infty}\left(\sqrt{\frac{(2\lambda^2)^{p}}{p!}}\right)H_{N,p}(u)e^{-\lambda^2 \|u\|^2_N}.\]
		From this we see that
		\[V_{N,p}(u)=\left(\sqrt{\frac{(2\lambda^2)^{p}}{p!}}\right)H_{N,p}(u)e^{-\lambda^2 q}.\]
		As the functions $H_{N,p}$ are pairwise independent, we have that
		\[\E V_N(u)V_{N,p}(u')=\E H_{N,p}(u)H_{N,p}(u')\left(\frac{(2\lambda^2)^{p}}{p!}\right)e^{-\lambda^2 (\|u\|^2_N+q)}=N(u,u')^p \gamma_p(\|u\|_N^2+q)^2.\]
		This establishes the case $B(x)=\exp(-\lambda^2 x)$. The case where $B(x)=c_0$ is trivial. We observe however that by taking the sum of independent isotropic fields, both sides of (\ref{eqn:euclidean proofs:covariance of decomposition}) are linear in the choice of $B$. In particular, this establishes the result when $B$ is of the form
		\[B(x)=c_0+\sum_{i=1}^{n}c_i\exp(-\lambda^2_i x)=c_0+\int_0^{\infty}e^{-\lambda^2 x}\left(\sum_{i=1}^nc_i\delta(x-\lambda_i)dx\right),\label{eqn:euclidean proofs:density}\]
		for any choice of non-negative $c_i$ and positive $\lambda_i$.
		
		However, positive linear combinations of Dirac measures are dense in the space of finite measures on $(0,\infty)$ under the total variation norm, as is seen by taking the quantile transform. Thus we only need to show that both the left- and right-hand sides of (\ref{eqn:euclidean proofs:covariance of decomposition}) are continuous with respect to the total variation norm. As the function $\lambda \mapsto e^{-\lambda^2 q}\lambda^{2p}$ is continuous and uniformly bounded on $[0,\infty)$ when $q>0$, we see that $B^{(p)}(q)$, and thus $\gamma_p(q)$ is thus continuous with respect to the total variation norm. This shows that the right-hand side of (\ref{eqn:euclidean proofs:covariance of decomposition}) is continuous with respect to the total variation norm. Similarly, as we have that
		\[|\E V_N(u)V_{N,p}(u')|\le \sqrt{\E V_N(u)^2\E V_{N,p}(u')^2}=\sqrt{NB(0)N\gamma_p(2\qc)^2},\]
		we see that the left is continuous with respect to the total variation norm as well.
	\end{proof}
	
	Now if we fix $p\ge 1$, and define our perturbed partition function
	\[Z_{N,\epsilon,\qc}(\beta,p)=\int_{D_N^{\epsilon}(\b{\qc})}\exp\left(-\cal{H}_N(\b{u})-\sum_{x\in \Om}\beta V_{N,p,x}(\b{u}(x))\right)d\b{u},\]
	where each $V_{N,p,x}$ is the $p$-spin model associated with the field $V_{N,x}$ as above. We denote as well the associated Gibbs measure as $\<*\>^{\qc,\epsilon}_{\beta,p}$. Note that, restricted to $S_N(\b{q}_*)$, this is the same form of perturbation of the Hamiltonian considered in Section \ref{section:identification}, and that when $\beta=0$, we have that $\<*\>^{\qc,\epsilon}_{0,p}=\<*\>^{\qc,\epsilon}$.
	
	We now give a computation $\partial_\beta \E \log Z_{N,\epsilon,\qc}(\beta,p)$ as $\epsilon\to 0$.
	
	\begin{lem}
		\label{lem:euclidean proofs:free energy of beta}
		Fix any $y\in \Om$. Then we have that
		\[
		\begin{split}
			\lim_{\epsilon\to 0}\limsup_{N\to \infty}\bigg|&|\Om|^{-1}N^{-1}\partial_{\beta}\E \log Z_{N,\epsilon,\qc}(\beta,p)\\&-(1+\beta)\gamma_p(2\qc)^2\left((\qc)^p-\E\<(\b{u}(y),\b{u}'(y))_N^p\>^{\qc,\epsilon}_{\beta,p}\right)\bigg|=0.
		\end{split}
		\]
	\end{lem}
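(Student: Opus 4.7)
The plan is to differentiate $\E \log Z_{N,\epsilon,\qc}(\beta,p)$ in $\beta$ under the integral, then apply Gaussian integration by parts using the joint covariance of $(V_{N,x},V_{N,p,x})$ from Lemma \ref{lem:euclidean proofs:covariance of decomposition}, and finally exploit the annulus constraint and the site-symmetry of the perturbed Hamiltonian to collapse the resulting expression.

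Direct differentiation gives
\[
\partial_\beta \E \log Z_{N,\epsilon,\qc}(\beta,p) = -\sum_{x\in\Om}\E\<V_{N,p,x}(\b{u}(x))\>^{\qc,\epsilon}_{\beta,p}.
\]
Fix $x\in\Om$. The perturbed Hamiltonian depends on the Gaussian pair $(V_{N,x},V_{N,p,x})$ only through $V_{N,x}(\b{u}(x))$ and $\beta V_{N,p,x}(\b{u}(x))$, and this pair is independent of all remaining Gaussian components. Gaussian integration by parts applied to $\E\<V_{N,p,x}(\b{u}(x))\>^{\qc,\epsilon}_{\beta,p}$, using $\E V_{N,p,x}(u)V_{N,p,x}(v)=N(u,v)_N^p\gamma_p(2\qc)^2$ together with Lemma \ref{lem:euclidean proofs:covariance of decomposition}, then produces
\[
\begin{split}
-\E\<V_{N,p,x}(\b{u}(x))\>^{\qc,\epsilon}_{\beta,p} =\ & N\,\E\<\|\b{u}(x)\|_N^{2p}\,\gamma_p(\qc+\|\b{u}(x)\|_N^2)^2\>^{\qc,\epsilon}_{\beta,p}\\
& + \beta N\gamma_p(2\qc)^2\,\E\<\|\b{u}(x)\|_N^{2p}\>^{\qc,\epsilon}_{\beta,p}\\
& - N\,\E\<(\b{u}(x),\b{u}'(x))_N^p\,\gamma_p(\qc+\|\b{u}'(x)\|_N^2)^2\>^{\qc,\epsilon}_{\beta,p}\\
& - \beta N\gamma_p(2\qc)^2\,\E\<(\b{u}(x),\b{u}'(x))_N^p\>^{\qc,\epsilon}_{\beta,p}.
\end{split}
\]

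On $D_N^\epsilon(\b{q}_c)$, $\bigl|\|\b{u}(x)\|_N^2-\qc\bigr|\le\epsilon$ at every site, so by continuity of $\gamma_p$ at $2\qc$ both $\|\b{u}(x)\|_N^{2p}=\qc^p+O(\epsilon)$ and $\gamma_p(\qc+\|\b{u}(x)\|_N^2)^2=\gamma_p(2\qc)^2+O(\epsilon)$ uniformly on the annulus, while Cauchy--Schwarz gives $|(\b{u}(x),\b{u}'(x))_N|\le\qc+\epsilon$ there. Substituting and combining terms yields
\[
-\frac{1}{N}\E\<V_{N,p,x}(\b{u}(x))\>^{\qc,\epsilon}_{\beta,p} = (1+\beta)\gamma_p(2\qc)^2\Bigl(\qc^p-\E\<(\b{u}(x),\b{u}'(x))_N^p\>^{\qc,\epsilon}_{\beta,p}\Bigr)+O(\epsilon),
\]
with the $O(\epsilon)$ bound uniform in $N$. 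The perturbed Hamiltonian is symmetric across sites (the quadratic part is transitive and the pairs $(V_{N,x},V_{N,p,x})$ are i.i.d.\ over $x$), so the final Gibbs expectation is independent of $x$. Summing over $x\in\Om$, dividing by $N|\Om|$, and passing first to $\limsup_{N\to\infty}$ and then to $\lim_{\epsilon\to 0}$ yields the claim.

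The main obstacle is justifying the Gaussian integration by parts for the pair $(V_{N,x},V_{N,p,x})$: the component $V_{N,p,x}$ was originally defined via the spherical decomposition on $S_N(\qc)$ and then extended to $\R^N$ as a homogeneous polynomial, and its coupling with $V_{N,x}$ off $S_N(\qc)$ is non-trivial, as Lemma \ref{lem:euclidean proofs:covariance of decomposition} already shows. I would handle this by treating $(V_{N,x},V_{N,p,x})$ as a jointly Gaussian pair on all of $\R^N$ with the covariance of that lemma, and applying Stein's identity to $V_{N,p,x}(\b{u}(x))$ against the Gibbs weight $\exp(-\cal{H}_N(\b{u})-\beta\sum_y V_{N,p,y}(\b{u}(y)))/Z_{N,\epsilon,\qc}(\beta,p)$; only the two site-$x$ fields contribute, and they do so solely through contact values at $\b{u}(x)$ and $\b{u}'(x)$.
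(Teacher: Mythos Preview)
Your proposal is correct and follows essentially the same approach as the paper: differentiate, apply Gaussian integration by parts using the covariance of Lemma~\ref{lem:euclidean proofs:covariance of decomposition}, and then use the annulus constraint to replace $\|\b{u}(x)\|_N^2$ by $\qc$ up to $O(\epsilon)$. The only cosmetic difference is that the paper packages your four terms into a single function $C_{\beta,p}(q)=\gamma_p(\qc+q)^2+\beta\gamma_p(2\qc)^2$, writing the result as $\E\<\|\b{u}(y)\|_N^{2p}C_{\beta,p}(\|\b{u}(y)\|_N^2)\>-\E\<(\b{u}(y),\b{u}'(y))_N^pC_{\beta,p}(\|\b{u}'(y)\|_N^2)\>$ before Taylor-expanding, and it invokes transitivity before rather than after the integration by parts.
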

	\begin{proof}
		We first observe that 
		\[\partial_{\beta}\E \log Z_{N,\epsilon,\qc}(\beta,p)=\sum_{x\in \Om}\E\<-V_{N,p,x}(\b{u}(x))\>^{\qc,\epsilon}_{\beta,p}=|\Om|\E\<-V_{N,p,y}(\b{u}(y))\>^{\qc,\epsilon}_{\beta,p}\]
		where we have used transitivity of the model in the last equality. To compute this let us define the function
		\[C_{\beta,p}(q)=\gamma_p(\qc+q)^2+\beta \gamma_p(2\qc)^2.\]
		For $\b{u},\b{u}'\in (\R^N)^{\Om}$ we may then compute the covariance
		\[\mathrm{Cov}\left(V_{N,p,y}(\b{u}(y))(\cal{H}_N(\b{u}')+\sum_{x\in \Om}\beta V_{N,p,x}(\b{u}'(x)))\right)\]
		\[=\E[V_{N,p,y}(\b{u}(y))(V_{N,y}(\b{u}'(y))+\beta V_{N,p,y}(\b{u}'(y)))=N(\b{u}(y),\b{u}'(y))_N^pC_{\beta,p}(\|\b{u}'(y)\|^2_N),\]
		where we have used Lemma \ref{lem:euclidean proofs:covariance of decomposition} in the final step.
		To use this observe that if we write
		\[\<V_{N,p,y}(\b{u}(y))\>^{\qc,\epsilon}_{\beta,p}=\frac{\int_{D_N^{\epsilon}(\b{\qc})}V_{N,p,y}(\b{u}(y))\exp\left(-\cal{H}_N(\b{u})-\sum_{x\in \Om}\beta V_{N,p,x}(\b{u}(x))\right)d\b{u}}{\int_{D_N^{\epsilon}(\b{\qc})}\exp\left(-\cal{H}_N(\b{u})-\sum_{x\in \Om}\beta V_{N,p,x}(\b{u}(x))\right)d\b{u}},\]
		then by Gaussian integration by parts we have that
		\[N^{-1}\E\<-V_{N,p,y}(\b{u}(y))\>^{\qc,\epsilon}_{\beta,p}\]\[=\E\<\|\b{u}(y)\|_N^{2p}C_{\beta,p}(\|\b{u}(y)\|^2_N)\>^{\qc,\epsilon}_{\beta,p}-\E\<(\b{u}(y),\b{u}'(y))^p_NC_{\beta,p}(\|\b{u}'(y)\|^2_N)\>^{\qc,\epsilon}_{\beta,p}.\]
		The function $C_{\beta,p}(q)$ is smooth, so by Taylor's theorem there is some $C>0$ such that 
		\[|\E\<(\b{u}(y),\b{u}'(y))^p_NC_{\beta,p}(\|\b{u}'(y)\|^2_N)\>^{\qc,\epsilon}_{\beta,p}-(\b{u}(y),\b{u}'(y))^p_NC_{\beta,p}(\qc)\>^{\qc,\epsilon}_{\beta,p}|\]\[ \le C\epsilon \E\<|(\b{u}(y),\b{u}'(y))^p_N|\>^{\qc,\epsilon}_{\beta,p}\le C\epsilon(\qc+\epsilon)^p.\]
		Similarly we have that
		\[|\E\<\|\b{u}(y)\|_N^{2p}C_{\beta,p}(\|\b{u}(y)\|^2_N)\>^{\qc,\epsilon}_{\beta,p}-(\qc)^pC_{\beta,p}(\qc)|\le C \epsilon (\qc+\epsilon)^p.\]
		As we have that $C_{\beta,p}(\qc)=(1+\beta)\gamma_p(2\qc)^2$, we have shown that there is $C>0$ (depending continuously on $\beta$) such that 
		\[\left|\Om|^{-1}N^{-1}\partial_{\beta}\E \log Z_{N,\epsilon,\qc}(\beta,p)-(1+\beta)\gamma_p(2\qc)^2\left(q^p-\E\<(\b{u}(y),\b{u}(y))_N^p\>^{\qc,\epsilon}_{\beta,p}\right)\right|\le C \epsilon,\]
		which gives the desired result.
	\end{proof}
	
	Next we want to study the behavior of as a function of $|\Om|^{-1}N^{-1}\E \log Z_{N,\epsilon,\qc}(\beta,p)$ as $\epsilon\to 0$. For this we define a functional on $\zeta\in\mathscr{Y}(\qc)$ by letting $\delta$ (\ref{eqn:def:delta-P}) and $q_*$ such that $\zeta([0,q_*])=1$, and defining
	\[
	\begin{split}
		\cal{P}_{q}(\zeta;\beta,p)=\frac{1}{2}\bigg(&\log\left(\frac{2\pi}{\beta }\right)+\frac{\beta h^2}{\mu}-\beta\mu q+\beta(q-q_*)K(\beta(q-q_*);t)\\
		&-\frac{1}{L^d}\log\left(K(\beta(q-q_*);t)I-t\Delta\right)
		+\int_{0}^{q_*}\beta  K(\beta\delta(u);t)du\\
		&+\int_0^{\qc} \zeta([0,u])\left(-2B'(2(q_*-u))+\left(\beta^2+2\beta\right)\gamma_p(2q_*)^2pu^{p-1}\right)du\bigg).	
	\end{split}
	\]
	Note that this coincides with $\cal{P}_{q}(\zeta;0,p)=\cal{P}_q(\zeta)$. Moreover, for $\beta \neq 0$, the only term that has changed is the final one, where the covariance function $B_q$ has been replaced with the covariance of the perturbed function. 
	
	We now define the function
	\[\mathscr{P}_{\qc}(\beta,p):=\inf_{\zeta\in \mathscr{Y}(\qc)}\cal{P}_{\qc}(\zeta;\beta,p).\]
	Our computation is then as follows.
	\begin{lem}
		\label{lem:euclidean proofs: free energy with beta}
		For any $\beta\in \R$ we have that
		\[\limsup_{\epsilon\to 0}\limsup_{N\to \infty}\left||\Om|^{-1}N^{-1}\E \log Z_{N,\epsilon,q_*}(\beta,p)-\mathscr{P}_{\qc}(\beta,p)\right|=0.\]
	\end{lem}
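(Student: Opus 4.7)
The plan is to reduce $Z_{N,\epsilon,\qc}(\beta,p)$ to a partition function falling within the variational framework of \cite{Paper1}. The key observation is that on the product sphere $S_N(\qc)^{\Om}$, Lemma \ref{lem:euclidean proofs:covariance of decomposition} implies that the combined disorder $V_{N,x}+\beta V_{N,p,x}$ is isotropic in each coordinate with covariance
\[NB(\|u-u'\|_N^2)+(\beta^2+2\beta)\gamma_p(2\qc)^2N(u,u')_N^p,\]
so in terms of the overlap $r=(u,u')_N$ on $S_N(\qc)$ the effective mixing function becomes $\widetilde B_{\beta,p}(r):=B(2(\qc-r))+(\beta^2+2\beta)\gamma_p(2\qc)^2 r^p$. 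On the annulus $D_N^{\epsilon}(\b{q}_c)$, where $\|\b{u}(x)\|_N^2\approx\qc$, the genuine shell-dependent mixing $B(2(\b{q}(x)-r))+2\beta\gamma_p(\qc+\b{q}(x))^2 r^p+\beta^2\gamma_p(2\qc)^2 r^p$ differs from $\widetilde B_{\beta,p}$ only by $O(\epsilon)$, by smoothness of $\gamma_p$ near $2\qc$ and of $B$.

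First I would adapt the derivation of Paper 1's Euclidean variational formula (Theorem \ref{theorem:paper 1:euclidean result} and Corollary I.2.8) to the perturbed setting. In \cite{Paper1} the Euclidean partition function on $D_N^{\epsilon}(\b{q}_c)$ is expressed as an integral over radii $\b{q}\in(\qc-\epsilon,\qc+\epsilon)^{\Om}$ of spherical partition functions on $\prod_x S_N(\sqrt{\b{q}(x)N})$, to which the spherical variational formula Theorem \ref{theorem:bad Euclidean} applies with shell-specific mixing. Replacing the shell mixing by the uniform $\widetilde B_{\beta,p}$ incurs an $O(\epsilon)$ error controlled by continuity of the spherical Parisi infimum in its mixing (Proposition I.B.6) together with the Gaussian concentration of Corollary I.A.3. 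This yields
\[\lim_{N\to\infty}|\Om|^{-1}N^{-1}\E\log Z_{N,\epsilon,\qc}(\beta,p)=\sup_{\b{q}\in(\qc-\epsilon,\qc+\epsilon)^{\Om}}\inf_{(\zeta,\b{\Phi})\in\widehat{\mathscr{Y}}(\b{q})}\widehat{\widetilde{\cal P}}_{\b{q}}(\zeta,\b{\Phi})+o_{\epsilon}(1),\]
where $\widehat{\widetilde{\cal P}}_{\b{q}}$ is the Euclidean Parisi functional built from $\widetilde B_{\beta,p}$ in place of $B$.

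Next, I would extend the analysis of Sections \ref{section:identification} and \ref{section:concavity} to this perturbed functional. Since $\widetilde B_{\beta,p}$ preserves the strictly positive Schoenberg measure $\nu$ of $B$, the effective mixing on each site remains generic, so Theorem \ref{theorem:intro:main:sphere} reduces the inner infimum to one over symmetric $(\zeta,\b{I})$. Concavity in $\b{q}$ follows from a verbatim repetition of Proposition \ref{prop:concavity:unique concavity in q} for the perturbed functional, so the inner infimum is continuous at $\b{q}=\b{q}_c$, and letting $\epsilon\to 0$ collapses the supremum to the single value $\inf_{\zeta\in\mathscr{Y}(\qc)}\widehat{\widetilde{\cal P}}_{\b{q}_c}(\zeta,\b{I})$. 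A direct computation identifies this with $\mathscr{P}_{\qc}(\beta,p)$: the integrand perturbation $(\beta^2+2\beta)\gamma_p(2\qc)^2 pu^{p-1}$ appearing in $\cal P_{\qc}(\zeta;\beta,p)$ is exactly $\frac{d}{du}[(\beta^2+2\beta)\gamma_p(2\qc)^2 u^p]$, matching the $-2B'$ derivative term in Paper 1's functional once $B$ is replaced by $\widetilde B_{\beta,p}$.

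The main obstacle will be controlling the $O(\epsilon)$ error between the shell-dependent mixing on each $S_N(\sqrt{\b{q}(x)N})$ and the uniform $\widetilde B_{\beta,p}$, since the pointwise covariance discrepancy must be converted into an $O(\epsilon)$ error at the level of the free energy despite the integration being over an $|\Om|N$-dimensional region. This is handled by a Guerra-style Gaussian interpolation paired with Corollary I.A.3, exploiting the uniform smoothness in $\|\b{u}(x)\|_N^2$ of the covariance structure provided by Lemma \ref{lem:euclidean proofs:covariance of decomposition}.
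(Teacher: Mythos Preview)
Your approach is correct in outline but substantially more elaborate than the paper's, and it inverts what should be the main tool into an afterthought.

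The paper's route is much shorter. It first proves the auxiliary Lemma~\ref{lem:euclidean proofs:free energy of spherical}, which identifies $\mathscr{P}_{\qc}(\beta,p)$ directly as the limiting free energy of the \emph{spherical} partition function $Z_{N,\qc}(\beta,p)$ (this is just Theorem~\ref{theorem:intro:main:sphere} applied to the rescaled model with mixing $\widetilde B_{\beta,p}$). With that in hand, the lemma reduces to showing
\[
\lim_{\epsilon\to 0}\limsup_{N\to\infty}\bigl||\Om|^{-1}N^{-1}\E\log Z_{N,\epsilon,\qc}(\beta,p)-|\Om|^{-1}N^{-1}\E\log Z_{N,\qc}(\beta,p)\bigr|=0,
\]
which is exactly a ``thin annulus $\approx$ sphere'' statement. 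The paper then observes that the full covariance of $V_{N,p,\beta}=V_N+\beta V_{N,p}$ on $\R^N$ has the form $N\bigl(B(\|u-u'\|_N^2)+D_{\beta,p}(\|u\|_N^2,\|u'\|_N^2)(u,u')_N^p\bigr)$ with $D_{\beta,p}$ smooth, and invokes the pre-packaged thickening result Proposition~I.A.7 from \cite{Paper1} to conclude in one line.

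Your plan instead re-derives a variational formula for the perturbed annulus partition function from scratch, then invokes the identification and concavity machinery of Sections~\ref{section:identification}--\ref{section:concavity} for the perturbed functional. Two comments. First, this requires extending Corollary~I.2.8 to a disorder that is \emph{not} isotropic on $\R^N$ (the perturbation $V_{N,p}$ is homogeneous, not translation-invariant), so ``adapt the derivation'' hides real work; you acknowledge this only at the end as the ``main obstacle,'' handled by Gaussian interpolation. But that interpolation \emph{is} Proposition~I.A.7, and once you have it you can compare annulus to sphere directly and skip everything else. Second, repeating the full concavity analysis of Proposition~\ref{prop:concavity:unique concavity in q} for the perturbed functional is unnecessary: you only need continuity of $\b{q}\mapsto\inf\widehat{\widetilde{\cal P}}_{\b{q}}$ at the single point $\b{q}_c$, which is much weaker (and in the paper's route the issue does not arise at all).

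In short: your Guerra-style interpolation at the end is the whole proof; the variational re-derivation preceding it is avoidable once you recognise that $\mathscr{P}_{\qc}(\beta,p)$ is already the spherical free energy and that \cite{Paper1} supplies a ready-made annulus-to-sphere comparison.
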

	
	Before showing this, it will be convenient to prove an auxiliary result, whose statement and proof explains the origin of the functional $\cal{P}_{\qc}(\zeta;\beta,p)$. Let us define the partition function of 
	\[Z_{N,q_*}(\beta,p)=\int_{S_N(\qc)^{\Om}}\exp\left(-\cal{H}_N(\b{u})-\sum_{x\in \Om}\beta V_{N,p,x}(\b{u}(x))\right)\omega(d\b{u}).\]
	Note when $\beta=0$ this coincides with the partition function $Z_{N}(\qc)$ above. The free energy may then be computed in terms of $\mathscr{P}_{\qc}(\beta,p)$ as well.
	\begin{lem}
		\label{lem:euclidean proofs:free energy of spherical}
		We have that
		\[\lim_{N\to \infty}|\Om|^{-1}N^{-1}\E \log Z_{N,\qc}(\beta,p)=\mathscr{P}_{\qc}(\beta,p).\]
	\end{lem}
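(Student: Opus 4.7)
The strategy is to recognize $Z_{N,\qc}(\beta,p)$ as the partition function of a spherical elastic manifold model on $S_N^\Om$ (after rescaling), and then invoke Theorem \ref{theorem:intro:main:sphere} together with the symmetrization provided by Theorem \ref{theorem:overview:spherical:generic and symmetric}. First, I will substitute $\b{u} = \sqrt{\qc}\b{\sigma}$ with $\b{\sigma}\in S_N^\Om$. The surface measure contributes a Jacobian $\qc^{(N-1)|\Om|/2}$, and the quadratic part of $\cal{H}_N$ splits as the constant $\tfrac{1}{2}\mu\qc N|\Om|$ (from the mass term, using $\|\b{\sigma}(x)\|^2=N$) plus the rescaled elastic term $-\tfrac{t\qc}{2}\sum_{x,y}\Delta_{xy}(\b{\sigma}(x),\b{\sigma}(y))$. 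After dividing by $N|\Om|$, this yields the deterministic contributions $\tfrac{1}{2}\log\qc - \tfrac{\mu\qc}{2}$ plus the free energy of a spherical model on $S_N^\Om$ with elastic matrix $-t\qc\Delta$.

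Next, I will identify the disorder. By Lemma \ref{lem:euclidean proofs:covariance of decomposition} evaluated at $\|u\|^2_N = \qc$, the fields $\b{\sigma}(x)\mapsto V_{N,x}(\sqrt{\qc}\b{\sigma}(x)) + \beta V_{N,p,x}(\sqrt{\qc}\b{\sigma}(x))$ are independent across $x\in\Om$, isotropic on $S_N$, with common covariance $N\xi_\beta((\sigma,\sigma')_N)$ where
\[
\xi_\beta(r) = B(2\qc(1-r)) + (\beta^2+2\beta)\qc^p\gamma_p(2\qc)^2 r^p.
\]
Expanding $B(2\qc(1-r))$, the $p$-th coefficient is $\qc^p\gamma_p(2\qc)^2$, so that the $p$-th coefficient of $\xi_\beta$ equals $(1+\beta)^2\qc^p\gamma_p(2\qc)^2\ge 0$, while all other coefficients coincide with those of $B(2\qc(1-\cdot))$ and are non-negative by (\ref{eqn:B-decomposition}). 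Hence $\xi_\beta$ is a valid mixing function, and the rescaled partition function is precisely $Z_{N,\rm{Sph}}(-t\qc\Delta,\b{\xi},\b{0})$ with $\xi_x\equiv\xi_\beta$.

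I then apply Theorem \ref{theorem:bad Euclidean} to obtain convergence to $\inf_{(\zeta,\b{\Phi})\in\widehat{\mathscr{Y}}}\widehat{\cal{B}}_{-t\qc\Delta,\xi_\beta,0}(\zeta,\b{\Phi})$; since $-t\qc\Delta$ is transitive (Definition \ref{def:transitive}) and the model is homogeneous, Theorem \ref{theorem:overview:spherical:generic and symmetric} together with the continuity-in-$\xi$ argument used in the proof of Theorem \ref{theorem:intro:main:sphere} collapses this infimum to $\inf_{\zeta\in\mathscr{Y}}\cal{B}_{\xi_\beta,t\qc}(\zeta)$. Finally, via the bijection $\mathscr{Y}(\qc)\leftrightarrow\mathscr{Y}$ given by $\tilde{\zeta}([0,s])=\zeta([0,\qc s])$, the scaling $K(v;t\qc)=\qc K(\qc v;t)$, and the identity $\log\det(\qc A) = L^d\log\qc + \log\det A$, the functional $\cal{B}_{\xi_\beta,t\qc}(\tilde{\zeta})$ rewrites in terms of $\zeta$ so that, after absorbing the Jacobian and mass contributions from Step 1, the sum simplifies exactly to $\cal{P}_\qc(\zeta;\beta,p)$. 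Taking the infimum yields $\mathscr{P}_\qc(\beta,p)$.

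The main obstacle is the last bookkeeping step: matching all boundary, logarithmic, and integral terms after the measure rescaling and resolvent identity. However this precisely parallels the $\beta=0$ identity (\ref{eqn:relation between A and P}) of the companion paper (specialized to constant $\b{q}\equiv\qc$ and $\b{\Phi}=\b{I}$); the only new ingredient in the perturbed case is the additional $(\beta^2+2\beta)\qc^p\gamma_p(2\qc)^2 r^p$ term in $\xi_\beta$, whose derivative contributes to $\int_0^1\tilde\zeta([0,u])\xi_\beta'(u)du$ exactly the $p$-spin perturbation appearing in $\cal{P}_\qc(\zeta;\beta,p)$ (up to the invariance of the functional under the choice of $q_*$, as in Remark \ref{remark:beginning point doesn't matter part}).
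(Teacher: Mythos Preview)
Your proposal is correct and follows essentially the same route as the paper: rescale $S_N(\qc)^\Om$ to $S_N^\Om$, identify the restricted perturbed field as a homogeneous spherical model with mixing function $\xi_\beta(r)=B(2\qc(1-r))+(\beta^2+2\beta)\qc^p\gamma_p(2\qc)^2 r^p$, and then invoke the spherical free-energy formula (Theorem~\ref{theorem:intro:main:sphere}). The paper's proof is a terse three-line sketch that simply says ``rescaling allows us to apply Theorem~\ref{theorem:intro:main:sphere}'', whereas you have (correctly) filled in the Jacobian, the covariance computation, the check that the $p$-th coefficient $(1+\beta)^2\qc^p\gamma_p(2\qc)^2$ stays non-negative for all real $\beta$, and the bookkeeping via the analogue of (\ref{eqn:relation between A and P}); these are exactly the details the paper's sketch suppresses.
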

	\begin{proof}
		Focusing on a copy of $V_N$ for the moment, we define the function
		\[V_{N,p,\beta}(u):=V_N(u)+\beta V_{N,p}(u).\label{eqn:euclidean proofs:def:V N,p,beta}\] 
		This is a centered Gaussian function on $\R^N$, and for $u,u'\in S_N(\qc)$ we have that
		\[\E[V_{N,p,\beta}(u)V_{N,p,\beta}(u')]=N\left(B(2(q-(u,u')_N))+(\beta^2+2\beta)\gamma_p(2q_*)^2 (u,u')^p_N\right).\label{eqn:euclidean proofs:ignore-101}\]
		In particular, $V_{N,p,\beta}(u)$ is isotropic on $S_N(\qc)$, and one can note the derivative of this expression appears in the rightmost term of $\cal{P}_{\qc}(\zeta;\beta,p)$. Thus $Z_{N,q_*}(\beta)$ is essentially in the form of a homogeneous spherical model considered above, except on product of spheres of radius $q_*$ instead of $1$. Re-scaling thus allows us to compute the free energy as above. In particular, by using the rescaling above, this follows from Theorem \ref{theorem:intro:main:sphere}.
	\end{proof}
	
	Thus Lemma \ref{lem:euclidean proofs: free energy with beta} expresses that one may approximate $|\Om|^{-1}N^{-1}\E \log Z_{N,\epsilon,q_*}(\beta,p)$ by 
	\\
	$|\Om|^{-1}N^{-1}\E \log Z_{N,\qc}(\beta,p)$ in the limit. The methods to establish such approximations were developed in our first companion paper \cite{Paper1} and apply immediately.
	
	\begin{proof}[Proof of Lemma \ref{lem:euclidean proofs: free energy with beta}]
		By Lemma \ref{lem:euclidean proofs:free energy of spherical} we see that it suffices to show that
		\[\lim_{\epsilon\to 0}\limsup_{N\to \infty}\left||\Om|^{-1}N^{-1}\E \log Z_{N,\epsilon,q_*}(\beta,p)-|\Om|^{-1}N^{-1}\E \log Z_{N,\qc}(\beta)\right|=0.\label{eqn:euclidean proofs:ignore-8}\]
		Returning for the moment to a single copy of the function $V_{N,p,\beta}$, we observe that $V_{N,p,\beta}$ is a centered Gaussian function on $\R^N$, and if we define the function
		\[D_{\beta,p}(q,q')=\beta\gamma_p(\qc+q)^2+\beta\gamma_p(\qc+q')^2+\beta^2 \gamma_p(2q_*),\]
		its covariance is given for $u,u'\in \R^N$ by
		\[\E[V_{N,p,\beta}(u)V_{N,p,\beta}(u')]=N\left(B(\|u-u'\|^2_N)+D_{\beta,p}(\|u\|_N^2,\|u'\|_N^2)\right).\]
		Thus we see that (\ref{eqn:euclidean proofs:ignore-8}) immediately from an application of the thickening result Proposition I.A.7
	\end{proof}
	
	We also recover the following result, which follows immediately from the identification of $\mathscr{P}_{\qc}(\beta,p)$ as essentially coming from a spherical model in Lemma \ref{lem:euclidean proofs:free energy of spherical}, combined with Lemma \ref{lem:identification:spherical parisi}.
	\begin{lem}
		\label{lem:euclidean proofs:p differential}
		The function $\mathscr{P}_{\qc}(\beta,p)$ is convex and differentiable in $\beta$. Furthermore, if we let $\zeta^{\beta,p}$ be a minimizer of the functional $\cal{P}_{\qc}(\zeta;\beta,p)$, then we have that
		\[\frac{\partial}{\partial \beta}\mathscr{P}_{\qc}(\beta,p)=(1+\beta)\gamma_p(2q_*)^2\left((\qc)^p-\int_0^{\qc} u^p\zeta^{\beta,p}(du)\right).\]
	\end{lem}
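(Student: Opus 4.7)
The plan is to mirror the proof template of Lemma \ref{lem:identification:spherical parisi}, exploiting the fact that $\mathscr{P}_{\qc}(\beta,p)$ is identified with a limiting free energy by Lemma \ref{lem:euclidean proofs:free energy of spherical}. I would proceed in three steps.

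First, for convexity, the partition function $Z_{N,\qc}(\beta,p)$ is manifestly log-convex in $\beta$ by an immediate application of H\"{o}lder's inequality, exactly as in Lemma \ref{lem:identification:trivial}. Thus $\beta\mapsto |\Om|^{-1}N^{-1}\E\log Z_{N,\qc}(\beta,p)$ is convex, and by Lemma \ref{lem:euclidean proofs:free energy of spherical} the function $\mathscr{P}_{\qc}(\beta,p)$ is its pointwise limit, hence also convex.

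Second, for differentiability, I would run the envelope sandwich argument from Lemma \ref{lem:identification:spherical parisi}. Fix $\beta_0$ and a minimizer $\zeta^{\beta_0,p}$ of $\cal{P}_{\qc}(\,\cdot\,;\beta_0,p)$. In the defining formula of $\cal{P}_{\qc}(\zeta;\beta,p)$ only the perturbation term (the one carrying the factor $\beta^2+2\beta$) depends on the perturbation parameter $\beta$, and it does so smoothly, so $\beta\mapsto \cal{P}_{\qc}(\zeta^{\beta_0,p};\beta,p)$ is differentiable everywhere. The trivial envelope inequality $\mathscr{P}_{\qc}(\beta,p)\le \cal{P}_{\qc}(\zeta^{\beta_0,p};\beta,p)$, with equality at $\beta=\beta_0$, then yields
\[
\partial_\beta^+\mathscr{P}_{\qc}(\beta_0,p)\le \partial_\beta\cal{P}_{\qc}(\zeta^{\beta_0,p};\beta,p)\Big|_{\beta=\beta_0}\le \partial_\beta^-\mathscr{P}_{\qc}(\beta_0,p),
\]
while convexity forces $\partial_\beta^-\mathscr{P}_{\qc}(\beta_0,p)\le \partial_\beta^+\mathscr{P}_{\qc}(\beta_0,p)$. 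Combining these, all three quantities coincide, proving differentiability and the identity $\partial_\beta\mathscr{P}_{\qc}(\beta,p)=\partial_\beta\cal{P}_{\qc}(\zeta^{\beta,p};\beta,p)$ valid for any choice of minimizer.

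Finally, I would evaluate the derivative explicitly. The $\beta$-derivative of $\beta^2+2\beta$ is $2(\beta+1)$, so after pulling out the overall factor $1/2$ in front of the functional one obtains
\[
\partial_\beta\cal{P}_{\qc}(\zeta;\beta,p)=(\beta+1)\gamma_p(2q_*)^2\int_0^{\qc}\zeta([0,u])\,pu^{p-1}\,du.
\]
An integration by parts, using $\zeta([0,\qc])=1$, converts the inner integral to $\qc^p-\int_0^{\qc}u^p\,\zeta(du)$, giving exactly the stated formula when $\zeta=\zeta^{\beta,p}$. There is no substantive obstacle here; the only care needed is to isolate precisely the $\beta$-dependent piece of the functional, after which the argument is a one-parameter specialization of the envelope/sandwich technique already used in Lemma \ref{lem:identification:spherical parisi}.
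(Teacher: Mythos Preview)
Your proposal is correct and follows essentially the same approach as the paper. The paper's proof is in fact just the one-line remark preceding the lemma, stating that the result follows from the identification of $\mathscr{P}_{\qc}(\beta,p)$ with a spherical free energy in Lemma \ref{lem:euclidean proofs:free energy of spherical} together with the envelope argument of Lemma \ref{lem:identification:spherical parisi}; you have filled in precisely these details, including the explicit integration by parts yielding the stated formula.
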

	
	We have now collected a sufficient amount of preliminary results to prove (\ref{eqn:euclidean proofs: f((u,u'))=zeta(r)}). In essence, with these results, this follows from a sub-sequence argument as in Section \ref{section:identification}, only with more difficulty coming from the fact we have only computed our functions after a double limit.
	
	\begin{proof}[Proof of (\ref{eqn:euclidean proofs: f((u,u'))=zeta(r)}) of Proposition \ref{prop:euclidean proofs:main result}]
		Fix $x\in \Om$ and let $\cal{L}_N$ denote the law of $(\b{u}(x),\b{u}'(x))_N$ under the 2-replica Gibbs measure $\E\<*\>$. By (\ref{eqn:euclidean proofs: f(u^2)=f(q)}), it is clear that this sequence is tight and hence relatively compact by Prokhorov's theorem, and all sublimits are supported on $[-\qc,\qc]$. We note that to show (\ref{eqn:euclidean proofs: f((u,u'))=zeta(r)}), it sufficient to show that for every sub-sequence of $N$, there is a further sub-sequence $N'$ such that $\cal{L}_{N'}$ converges to $\zetac$ in the weak topology. Indeed, this establishes that $\cal{L}_N$ converges to $\zetac$ weakly, and so (\ref{eqn:euclidean proofs: f((u,u'))=zeta(r)}) holds for bounded $f$. The general case then follows by employing the Cauchy-Schwartz inequality and (\ref{eqn:euclidean proofs: f(u^2)=f(q)}) to show one may neglect the behavior of a sub-exponentially growing function outside of $(-\qc-1,\qc+1)$ and thus replace $f$ by a compactly-supported function.
		
		To avoid notational clutter, we will denote all additional sub-sequences of our initial sub-sequence as $N'$. To begin we may assume that $\cal{L}_{N'}$ converges to some law $\mu$. By Lemma \ref{lem:euclidean proofs:convergence of f and f epsilon}, if we fix $\epsilon>0$, the law of $(\b{u}(x),\b{u}'(x))_N$ under $\E\<*\>^{\qc,\epsilon}$ also converges to $\mu$. Now fix $p\ge 1$. By Lemma \ref{lem:euclidean proofs:free energy of beta}, and possibly passing to a sub-sequence, we see that
		\[\lim_{\epsilon\to 0}\lim_{N'\to \infty}|\Om|^{-1}(N')^{-1}\partial_{\beta}\E \log Z_{N',\epsilon,\qc}(\beta,p)\big|_{\beta=0}=\gamma_p(2\qc)^2\left((\qc)^p-\int_{-\qc}^{\qc}r^p\mu(dr)\right).\label{eqn:euclidean proofs :ignore 83}\]
		Now we define
		\[f_{N,p}(\beta,\epsilon):=|\Om|^{-1}N^{-1}\E \log Z_{N',\epsilon,\qc}(\beta,p).\]
		These functions are convex in $\beta$. Moreover, fixing $\epsilon>0$ and restricting to $|\beta|\le 1$, clearly uniformly bounded in $N$ by Jensen's inequality. Thus by Helly's selection theorem, we may choose for each $\epsilon>0$, a further subsequence of $N'$, such that $f_{N',p}(\beta,\epsilon)$ converges in $N'$ to some convex limit function, say $f_\epsilon(\beta,p)$. By Lemma \ref{lem:euclidean proofs: free energy with beta} we have that the convex functions $f_{\epsilon'}(\beta,p)$ converge to $\mathscr{P}_{\qc}(\beta,p)$ as $\epsilon\to 0$. By diagonalization, we may choose a countable sequence $\epsilon_{N'}$ and sub-sequence $N'$ such that $f_{N',p}(\beta,\epsilon_{N'})$ converges to $\mathscr{P}_{\qc}(\beta,p)$. Now using again the fact that if a differentiable sequence of convex functions converges to a differentiable convex function (see Theorem 25.7 of \cite{convexanalysis}), we conclude by Lemma \ref{lem:euclidean proofs:p differential} and (\ref{eqn:euclidean proofs :ignore 83}) that
		\[\gamma_p(2\qc)^2\left((\qc)^p-\int_{-\qc}^{\qc}r^p\mu(dr)\right)=\gamma_p(2\qc)^2\left((\qc)^p-\int_{0}^{\qc}r^p\zetac(dr)\right).\label{eqn:euclidean proofs:ignore 23}\]
		Employing a final diagonalization over $p$, we find that (\ref{eqn:euclidean proofs:ignore 23})  holds for all $p$, so that $\mu=\zetac$, completing the proof.
	\end{proof}

	\section{The RS Regime and the Larkin Mass\label{section:parisi-larkin}}
	
	In this section we will provide proofs for all of the results which concern the RS and RSB regime. For convenience, we will also restore ourselves to the case of arbitrary $\beta$ in this section, instead of reducing to the case of $\beta=1$ as above. The proofs primarily boil down to calculus.
	
	To begin we first give the evaluation of both functionals on Dirac mass measures.
	\begin{lem}
		\label{lem:larkin and minimization:RS evaluation for fixed q}
		For $q_*\in [0,1)$, we have
		\[
		\begin{split}
			\cal{B}_\beta(\delta_{q_*})=\frac{1}{2}\bigg(&\log\left(\frac{2\pi}{\beta}\right)+\beta h^2K(\beta (1-q_*);t)^{-1}+\beta K(\beta(1-q_*);t)\\
			&-\frac{1}{L^d}\log \det(K(\beta(1-q_*);t)I-t\Delta)+\beta^2(\xi(1)-\xi(q_*))\bigg). \label{eqn:larkin and minimization: RS evaluated for spherical}
		\end{split}
		\]
		For $q_*\in [0,q)$ we have
		\[
		\begin{split}
			\cal{P}_{\beta,q}(\delta_{q_*})=\frac{1}{2}\bigg(&\log\left(\frac{2\pi}{\beta}\right)+\frac{\beta h^2}{\mu}+\beta q\left(K(\beta(q-q_*);t)-\mu\right)\\
			&-\frac{1}{L^d}\log \det(K(\beta (q-q_*);t)-t\Delta ) + \beta^2\left(B(0)-B(2(q-q_*))\right) \bigg).
		\end{split}
		\]
		\label{eqn:larkin and minimization: RS evaluated for euclidean}
	\end{lem}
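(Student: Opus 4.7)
The approach is direct substitution into both functionals, combined with the telescoping identity from Remark \ref{remark:beginning point doesn't matter part}. Since the measure $\delta_{q_*}$ puts mass at the point $q_*$ itself, it formally fails the requirement in the definition of the functionals that $\zeta$ has no mass near the truncation point. The plan is therefore to pick an auxiliary cutoff $q_*' \in (q_*, 1)$ in the spherical case (or $q_*' \in (q_*, q)$ in the Euclidean case), compute the functional using $q_*'$ as the cutoff parameter, and invoke the $q_*'$-independence statement of Remark \ref{remark:beginning point doesn't matter part} at the end to let $q_*'$ drop out.

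For $\zeta = \delta_{q_*}$ the cumulative distribution $\zeta([0,u])$ equals the indicator of $\{u \geq q_*\}$, so the function $\delta$ from (\ref{eqn:def:delta-A}) simplifies to $\delta(u) = 1 - \max(u, q_*)$ (and to $q - \max(u, q_*)$ in the Euclidean setting of (\ref{eqn:def:delta-P})); in particular $\delta(0) = 1 - q_*$. Splitting the integral at $u = q_*$ yields
\[\int_{0}^{q_*'}\beta K(\beta\delta(u);t)\,du = \beta q_* K(\beta(1-q_*);t) + \int_{q_*}^{q_*'}\beta K(\beta(1-u);t)\,du,\]
and by the identity $(\Lambda^{-t\Delta})'(u) = K(u;t)$ from Remark \ref{remark:beginning point doesn't matter part} the last integral equals $\Lambda^{-t\Delta}(\beta(1-q_*)) - \Lambda^{-t\Delta}(\beta(1-q_*'))$. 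Meanwhile the two explicit $q_*'$-dependent terms $\beta(1-q_*')K(\beta(1-q_*');t)$ and $-L^{-d}\log\det(K(\beta(1-q_*');t)I - t\Delta)$ add up to $\Lambda^{-t\Delta}(\beta(1-q_*'))$. All $q_*'$-dependence therefore cancels, leaving $\beta K(\beta(1-q_*);t) - L^{-d}\log\det(K(\beta(1-q_*);t)I - t\Delta)$. Combined with the external-field contribution $\beta h^2 K(\beta(1-q_*);t)^{-1}$ (from $\delta(0) = 1-q_*$) and the mixing term $\beta^2\int_{q_*}^{1}\xi'(u)\,du = \beta^2(\xi(1)-\xi(q_*))$, this produces the first display.

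The second display follows by the same bookkeeping, with $1$ replaced by $q$ throughout. The constants $\beta h^2/\mu$ and $-\beta\mu q$ appear outside the telescoping and simply survive; the $\Lambda^{-t\Delta}$ cancellation now collapses the $K$ and $\log\det$ terms to $\beta q K(\beta(q-q_*);t) - L^{-d}\log\det(K(\beta(q-q_*);t)I - t\Delta)$, which combines with $-\beta\mu q$ to give the $\beta q(K(\beta(q-q_*);t) - \mu)$ term; and the mixing integral becomes
\[-2\beta^2\int_0^q \zeta([0,u]) B'(2(q-u))\,du = -2\beta^2\int_{q_*}^{q} B'(2(q-u))\,du = \beta^2\bigl(B(0)-B(2(q-q_*))\bigr)\]
via the substitution $v = 2(q-u)$. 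I expect no real obstacle here: the lemma is essentially a direct calculation, and the only subtlety is handling the $q_*' > q_*$ convention in the functional's definition, which the independence statement of Remark \ref{remark:beginning point doesn't matter part} resolves cleanly.
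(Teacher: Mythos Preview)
Your proposal is correct and follows essentially the same direct-substitution approach as the paper. The only difference is cosmetic: the paper simply plugs the atom location $q_*$ in as the cutoff and notes that $\int_0^{q_*}\beta K(\beta\delta(u);t)\,du = \beta q_* K(\beta(1-q_*);t)$ combines with the $\beta(1-q_*)K(\beta(1-q_*);t)$ term to give $\beta K(\beta(1-q_*);t)$, whereas you are more explicit about the technicality that $\delta_{q_*}([q_*,1])\neq 0$, introducing an auxiliary $q_*'>q_*$ and invoking Remark~\ref{remark:beginning point doesn't matter part} to collapse the $q_*'$-dependence.
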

	\begin{proof}
		For (\ref{eqn:larkin and minimization: RS evaluated for spherical}) we simply note that for $0<u<q_*$ we have that $\delta(u)=1-q_*$ so that
		\[\int_0^{q_*}K(\beta \delta(u);t)du=q_*K(\beta(1-q_*);t),\]
		a term we may cancel. A similar cancellation happens in (\ref{eqn:larkin and minimization: RS evaluated for euclidean}).
	\end{proof}
	
	We now show our results in the spherical case.
	\begin{proof}[Proof of Theorem \ref{theorem:Spherical RS description}]
		We first study the the minimization problem for the equation $\cal{B}_\beta(\delta_{q_*})$ over $q_*\in [0,1)$. First we note that as $q_*\to 1$, we have that $K(\beta(1-q_*);t)\to \infty$. In particular, letting $r=K(\beta(1-q_*);t)$, we see from (\ref{eqn:larkin and minimization: RS evaluated for spherical}) that as $q_*\to 1$ the term diverges like $\beta r-\log(r)\to \infty$. In particular the equation does have a minimizer on $[0,1)$. Next we compute from (\ref{eqn:larkin and minimization: RS evaluated for spherical}) that
		\[\partial_{s}\cal{B}_{\beta}(\delta_{s})=\frac{\beta^2}{2}\left(h^2K(\beta (1-s);t)^{-2}K'(\beta (1-s);t)-sK'(\beta(1-s);t)-\xi'(s)\right).\]
		On the edge $0$ of $[0,1)$, we have that 
		\[\partial_s\cal{B}_\beta(\delta_s)\big|_{s=0}=\frac{\beta^2}{2}\left(h^2K(\beta;t)^{-2}K'(\beta;t)-\xi'(0)\right).\]
		Both of these terms are negative, so in particular, any minimizer must satisfy $\partial_s\cal{B}_\beta(\delta_s)=0$.
		
		Now let $\zeta=\delta_{q_*}$, where $q_*$ is some minimizer of $\cal{B}_\beta(\delta_s)$. We directly compute the first function from Theorem \ref{theorem:intro:parisi-measure:sphere} as 
		\[
		\begin{split}
			F_{\beta}(s)=&-h^2K(\beta (1-q_*);t)^{-2}K'(\beta (1-q_*);t)+\xi'(s)\\
			&+\begin{cases}
				sK'(\beta(1-q_*);t);\;\;\; s\in[0,q_*]\\
				\beta^{-1}\left(K(\beta(1-q_*);t)-K(\beta(1-s);t)\right) +q_*K'(\beta(1-q_*);t);\;\;\; s\in (q_*,1)
			\end{cases}.
		\end{split}
		\]
		Moreover as $f_\beta(s)=\int_0^s F_\beta(r)dr$, we see that (\ref{eqn:convexity:minimization eqn:Euclidean: measure}) may be rewritten as
		\[\sup_{0<s<1}f_\beta(s)=f_\beta(q_*).\label{eqn:ignore-luna-3}\]
		So the model is RS if and only if some minimizer $q_*$ satisfies (\ref{eqn:ignore-luna-3}). Thus all we need to show is that (\ref{eqn:ignore-luna-3}) is equivalent to (\ref{eqn:intro:min equation RS for sphere}).
		
		For this note that
		\[0=\partial_{s}\cal{B}_{\beta}(\delta_{s})\big|_{s=q_*}=-\beta^2 F_\beta(q_*).\]
		In particular $q_*$ is indeed a critical point of $f_\beta(q)$. Next we show that (\ref{eqn:ignore-luna-3}) follows from the weaker statement
		\[\sup_{q_*<s<1}f_\beta(s)=f_\beta(q_*).\label{eqn:ignore-luna-4}\]
		For this, note that as (\ref{eqn:ignore-luna-4}) implies that $f''_\beta(q_*)=F_\beta'(q_*)\le 0$. Next we note that for $s\in [0,q_*)$ we have that
		\[F'_\beta(s)=\xi''(s)+K'(\beta(1-q_*);t)\le \xi''(q_*)+K'(\beta(1-q_*);t)=F'_\beta(q_*)\le 0.\]
		This shows that $f_\beta$ is concave on $[0,q_*]$. As $f_\beta'(q_*)=F_\beta(q_*)=0$, we see that \[\sup_{0<s<q_*}f_\beta(s)=f_\beta(q_*),\]
		so indeed (\ref{eqn:ignore-luna-4}) is equivalent to (\ref{eqn:ignore-luna-3}).
		
		Now observing that $f_\beta(s)-f_\beta(q_*)=\int_{q_*}^sF_\beta(r)dr$ we see that to show (\ref{eqn:ignore-luna-4}), it suffices to show that
		\[\sup_{q_*<s<1}\int_{q_*}^sF_\beta(r)dr=0.\]
		Using the fact that $F_\beta(q_*)=0$ we see $q\in [q_*,q]$ that
		\[	F_{\beta}(s)=F_\beta(s)-F_\beta(q_*)=\xi'(s)-\xi'(q_*)+\beta^{-1}\left(K(\beta(1-q_*);t)-K(\beta(1-s);t)\right).
		\]
		Integrating this expressions immediately shows that for $s\in [q_*,1)$, $\int_{q_*}^s F_{\beta}(r)dr=g_\beta(s)-g_\beta(q_*)$, completing the proof.
	\end{proof}
	
	We now move onto the analogous result in the Euclidean case.
	
	\begin{proof}[Proof of Theorem \ref{theorem:Euclidean RS description}]
		To begin let us consider some pair $(\delta_{q_*},q)$, and consider $q$ fixed. First we compute
		\[\partial_{q_*}\mathcal{P}_{\beta,q}(\delta_{q_*})=\frac{\beta^2}{2}\left(- q_*K'(\beta(q-q_*);t)+2B'(2(q-q_*))\right).\]
		Proceeding as in the proof of Theorem \ref{theorem:Spherical RS description}, we may show that there is a minimizer $q_*\in [0,q)$ to  $\mathcal{P}_{\beta,q}(\delta_{q_*})$, satisfying $\partial_{q_*}\mathcal{P}_{\beta,q}(\delta_{q_*})=0$. Moreover, fixing such a minimizer $q_*$ one may show that (\ref{eqn:intro:minimization eqn:Euclidean: measure}) is equivalent the condition that
		\[\sup_{q_*<s<q}\hat{g}_{\beta,q}(s)=\hat{g}_{\beta,q}(q_{*}),\label{eqn:ignore-luna-34}\]
		where here
		\[
		\begin{split}
			\hat{g}_{\beta,q}(s)=&\beta^2 B(2(q-s))+\beta (q-s)K(\beta(q-s);t)-\frac{1}{L^d}\log(K(\beta(q-s);t)-t\Delta)\\
			&-s\left(-2\beta^2 B'(2(q-q_*))-\beta K(\beta(q-q_*);t)\right).	
		\end{split}
		\]
		The second equation (\ref{eqn:intro:minimization eqn:Euclidean: Larkin}) is simpler and reads that
		\[\beta (q-q_*)=R_1(\mu;t) \text{ or equivalently that }K(\beta(q-q_*);t)=\mu.\]
		Plugging this equation into the formula of Lemma \ref{lem:larkin and minimization:RS evaluation for fixed q} immediately shows the claimed formula for the free energy when it is RS. Moreover, employing this again, we see that the equation $\partial_{q_*}\mathcal{P}_{\beta,q}(\delta_{q_*}))=0$ has a unique solution given by 
		\[q_*=2B'\left(\frac{2}{\beta}R_1(\mu;t)\right)K'(R_1(\mu;t);t).\]
		As $K'(R_1(\mu ;t);t)=-R_2(\mu;t )^{-1}$, we see that this implies that $q_*=q_{L}$. This shows the potential pair can only have the claimed form, so we only need to check that (\ref{eqn:ignore-luna-34}) is equivalent to our claimed criterion for being RS. For this, we note that
		\[\hat{g}_{\beta,q}(q-\beta^{-1}s)=\beta ^2 B\left(\frac{2s}{\beta}\right)+sK(s;t)-\frac{1}{L^d}\log(K(s;t)-t\Delta)\]
		\[+(q-\beta^{-1}s)\left(2\beta^2 B'\left(\frac{2}{\beta}R_1(\mu;t)\right)+\beta \mu \right).\]	
		Removing some constant terms from the second line then yields the desired result.
	\end{proof}
	
	With these characterizations in hand, we spend the rest of the section proving Theorems \ref{theorem:intro:zero:temperature larkin} and \ref{theorem:larkin-mass}. First we observe that the Larkin equation (\ref{eqn:intro:fake-larkin}) may be rewritten as
	\[g_{\beta}''(R_1(\mu;t))=4 B''\left(\frac{2}{\beta}R_1(\mu;t)\right)-R_2(\mu;t)^{-1}= 0.\]
	This clearly becomes negative as $\mu\to \infty$. So for it to have no solutions at fixed $\beta$, $g_{\beta}''(R_1(\mu;t))$ must be negative for all values $\mu$. However, noting that
	\[\partial_\beta g_\beta''(R_1(\mu;t))=\frac{8}{\beta^2}R_1(\mu,t)B'''\left(\frac{2}{\beta}R_1(\mu;t)\right)<0,\]
	we see two things. First, if there are no solutions for some $\beta$, the same is true for all $\beta'<\beta$. Moreover, it is clear that $g_\beta''(R_1(1;t))\to \infty$ as $\beta\to 0$. Together these justify the definition of the Larkin temperature. 
	
	Second, this shows that the positive temperature Larkin mass $\mu_{Lar}(\beta;t)$ is increasing in $\beta$, with $\lim_{\beta\to \infty }\mu_{Lar}(\beta;t)=\mu_{Lar}(\infty;t)$. In particular, claim 2 of Theorem \ref{theorem:intro:zero:temperature larkin} follows from claim 2 of Theorem \ref{theorem:larkin-mass}. Finally, note by definition that for all $\mu\ge \mu_{Lar}(\beta;t)$, one has that $g_\beta''(R_1(\mu;t))\le 0$. Thus both claims of Theorem \ref{theorem:larkin-mass} follow from the following result.
	
	\begin{lem}
		If $\mu'\ge 0$ is such that for all $\mu\ge \mu'$, one has $g_\beta''(R_1(\mu;t))\le 0$, then the model is RS as $(\beta,\mu')$.
	\end{lem}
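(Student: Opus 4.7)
The plan is to use the RS criterion from Theorem \ref{theorem:Euclidean RS description}, which asks for
\[\sup_{0<s<R_1(\mu';t)}g_\beta(s)=g_\beta(R_1(\mu';t)),\]
and to deduce it by showing that under the hypothesis $g_\beta$ is concave and non-decreasing on the whole interval $(0, R_1(\mu';t)]$.

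First I would reparametrize the hypothesis. The map $\mu \mapsto R_1(\mu;t)=\frac{1}{L^d}\sum_i(\mu+t\lambda_i)^{-1}$ (where $\lambda_i$ are the eigenvalues of $-\Delta$) is a continuous strictly decreasing bijection from $(0,\infty)$ onto $(0,\infty)$. Therefore, as $\mu$ ranges over $[\mu',\infty)$, $R_1(\mu;t)$ ranges over $(0,R_1(\mu';t)]$, and the hypothesis ``$g_\beta''(R_1(\mu;t))\le 0$ for all $\mu\ge\mu'$'' is literally equivalent to
\[g_\beta''(s)\le 0 \quad \text{for all }s\in(0,R_1(\mu';t)].\]
In other words, $g_\beta$ is concave on $(0,R_1(\mu';t)]$.

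Next I would combine this with the identity $g_\beta'(R_1(\mu';t))=0$, which is noted in the text right before equation (\ref{eqn:intro-gprime}). Concavity of $g_\beta$ on $(0,R_1(\mu';t)]$ means its derivative $g_\beta'$ is non-increasing on that interval, and since it vanishes at the right endpoint it must be non-negative throughout. Hence $g_\beta$ itself is non-decreasing on $(0,R_1(\mu';t)]$, so by continuity
\[\sup_{0<s<R_1(\mu';t)}g_\beta(s)=\lim_{s\uparrow R_1(\mu';t)}g_\beta(s)=g_\beta(R_1(\mu';t)).\]
Invoking the RS characterization of Theorem \ref{theorem:Euclidean RS description} then gives that the model is RS at $(\beta,\mu')$.

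There is essentially no obstacle here; the argument is a direct reparametrization plus the elementary fact that a concave function with vanishing right-endpoint derivative is non-decreasing. The only thing worth double-checking is the continuity and monotonicity of $\mu\mapsto R_1(\mu;t)$ used in the reparametrization, which is immediate from the explicit formula, and the identity $g_\beta'(R_1(\mu';t))=0$, which has already been recorded in the introduction.
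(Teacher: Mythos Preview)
Your argument is correct and essentially identical to the paper's: both reparametrize the hypothesis (the paper via $K(\cdot;t)$, you via the bijectivity of $R_1(\cdot;t)$) to conclude $g_\beta$ is concave on $(0,R_1(\mu';t)]$, then use that $R_1(\mu';t)$ is a critical point to deduce it is the maximizer. The only cosmetic difference is that you spell out the non-decreasing step whereas the paper just says the critical point of a concave function is its maximum.
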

	\begin{proof}
		Applying $K(*;t)$, we see that our assumption on $\mu'$ implies that for all $x\le R_1(\mu';t)$, we have that $g_{\beta}''(x)\le 0$. This shows that $g_{\beta}$ is concave $[0,R_1(\mu;t)]$. As $R_1(\mu;t)$ is already a critical point of $g_{\beta}$, this shows that it maximizes it, thus showing (\ref{eqn:ignore-luna-2}).	
	\end{proof}
	
	Finally, for the first claim of Theorem \ref{theorem:intro:zero:temperature larkin}, fix $\mu<\mu_{Lar}(\infty;t)$ and note that
	\[4B''(0)-R_2(\mu;t)^{-1}>4B''(0)-R_2(\mu_{Lar}(\infty;t))^{-1}=0.\]
	By continuity of $B''$ we see that $g_\beta''(R_1(\mu;t))$ is negative for sufficiently large $\mu$, establishing that it is RSB.

	\pagebreak
	
	\bibliographystyle{abbrv}
	\bibliography{mainbib}
\end{document}